\DeclareMathOperator{\Split}{Split}
\DeclareMathOperator{\Fact}{Fact}
\DeclareMathOperator{\Ext}{Ext}
\DeclareMathOperator{\sdim}{sdim}
\newcommand{\Fl}{\mathbf{Fl}}
\newcommand{\gl}{\mathfrak{gl}}
\newcommand{\ord}{\mathrm{ord}}
\newcommand{\sing}{\mathrm{sing}}
\newcommand{\Fil}{\mathrm{Fil}}
\newcommand{\univ}{\mathrm{univ}}
\newcommand{\cq}{/\!\!/}
\newcommand{\Syl}{\mathrm{Syl}}
\newcommand{\stacks}[1]{\cite[Tag \href{https://stacks.math.columbia.edu/tag/#1}{#1}]{stacks-project}}
\title[Flag supervarieties and determinantal ideals]{Cohomology of flag supervarieties and\\ resolutions of determinantal ideals}
\date{August 28, 2021}
\author{Steven V Sam}
\address{Department of Mathematics, University of California, San Diego, CA}
\email{\href{mailto:ssam@ucsd.edu}{ssam@ucsd.edu}}
\urladdr{\url{http://math.ucsd.edu/~ssam/}}
\thanks{SS was supported by NSF grant DMS-1812462.}
\author{Andrew Snowden}
\address{Department of Mathematics, University of Michigan, Ann Arbor, MI}
\email{\href{mailto:asnowden@umich.edu}{asnowden@umich.edu}}
\urladdr{\url{http://www-personal.umich.edu/~asnowden/}}
\thanks{AS was supported by NSF grant DMS-1453893.}
\begin{document}

\begin{abstract}
We study the coherent cohomology of generalized flag supervarieties. Our main observation is that these groups are closely related to the free resolutions of (certain generalizations of) determinantal ideals. In the case of super Grassmannians, we completely compute the cohomology of the structure sheaf: it is composed of the singular cohomology of a Grassmannian and the syzygies of a determinantal variety. The majority of the work involves studying the geometry of an analog of the Grothendieck--Springer resolution associated to the super Grassmannian; this takes place in the world of ordinary (non-super) algebraic geometry. Our work gives a conceptual explanation of the result of Pragacz--Weyman that the syzygies of determinantal ideals admit an action of the general linear supergroup. In a subsequent paper, we will treat other flag supervarieties in detail.
\end{abstract}

\maketitle
\tableofcontents

\section{Introduction}

Super geometry is a compelling generalization of algebraic geometry, with important connections to physics and pure mathematics. However, a number of fundamental objects in super geometry are still poorly understood. For example, the cohomology of natural vector bundles on flag supervarieties is not known in general. In this paper, we develop a general method for attacking this problem in some new cases, and use it to completely compute the cohomology of the structure sheaf on the super Grassmannian.

\subsection{General approach} \label{ss:intro-gen}

Let $G$ be a complex reductive supergroup, let $P$ be a parabolic subsupergroup, and let $X=G/P$ be the associated flag supervariety. We outline a general approach to study $\rH^*(X, \cO_X)$.

Let $G_0$, $P_0$, and $X_0$ be the reduced subschemes of $G$, $P$, and $X$. Then $G_0$ is a complex reductive group, $P_0$ is a parabolic subgroup, and $X_0=G_0/P_0$ is a flag variety. Let $\cI$ be the ideal sheaf defining $X_0$ inside of $X$. Since $X$ is a smooth supervariety, $\cI/\cI^2$ is a locally free coherent sheaf on $X_0$, and $\gr(\cO_X)$ (formed with respect to the $\cI$-adic filtration) is the exterior algebra on $\cI/\cI^2$. It follows that we have a spectral sequence
\begin{displaymath}
\rE_1^{p,q} = \rH^{p+q}(X_0, \lw^p(\cI/\cI^2)) \implies \rH^{p+q}(X, \cO_X).
\end{displaymath}
This is our primary tool for connecting the cohomology of supervarieties to ordinary varieties.

To make use of this spectral sequence, we need to understand the cohomology of $\lw(\cI/\cI^2)$. The genesis of this paper was the observation that the cohomology of exterior algebras, especially on flag varieties, appears in another context: namely, the calculation of syzygies of determinantal varieties (and similar varieties) via the geometric method developed by Kempf, Lascoux, and Weyman, among others (see \cite{weyman} for an exposition). This allows us to relate the cohomology of $\lw(\cI/\cI^2)$ to syzygies, in certain cases.

We now explain how this works. Let $\fg$ and $\fp$ be the Lie superalgebras of $G$ and $P$. For a point $x=gP_0$ of $X_0$, let $\fp^x$ be the Lie superalgebra of $gPg^{-1}$. Let $Y$ be the vector bundle over $X_0$ whose fiber over $x$ is $\fp^x_1$, the odd part of $\fp^x$; this is a closed subvariety of $X_0 \times \fg_1$. Let $Z$ be the image of $Y$ in $\fg_1$; explicitly, $Z$ is the union of all $G_0$-conjugates of $\fp_1$. In many cases, $Z$ is a determinantal variety, or something of a similar flavor. Let $\tilde{Z}$ be the affinization of $Y$, which is a finite cover of $Z$. We refer to this ensemble of varieties as the \defn{Grothendieck--Springer theory} for $X$, since it is analogous to the classical Grothendieck--Springer resolution (and contains some instances of it). We emphasize that $X_0$, $Y$, $Z$, and $\tilde{Z}$ are ordinary (not super) varieties.

Combining the above spectral sequence and the geometric method, we obtain the following theorem. It establishes a link between the cohomology of flag supervarieties and syzygies of determinantal-like varieties.

\begin{theorem} \label{mainthm2}
Suppose that $\rH^i(Y, \cO_Y)=0$ for $i>0$. Letting $S=\Sym(\fg_1^*)$, we have a canonical isomorphism
\begin{displaymath}
\rH^q(X, \lw^{p+q}(\cI/\cI^2)) = \Tor_p^S(\cO_{\tilde{Z}}, \bC)_{p+q}
\end{displaymath}
and a spectral sequence
\begin{displaymath}
\rE_1^{p,q} = \Tor_{-q}^S(\cO_{\tilde{Z}}, \bC)_p \implies \rH^{p+q}(X, \cO_X).
\end{displaymath}
\end{theorem}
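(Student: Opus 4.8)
The plan is to unwind the Grothendieck--Springer construction so that the two pieces of the theorem become, respectively, a statement about the cohomology of the bundle $\lw(\cI/\cI^2)$ on $X_0$ and a repackaging of the spectral sequence from Section~\ref{ss:intro-gen}. First I would identify the locally free sheaf $\cI/\cI^2$ on $X_0$. Because $X=G/P$ and $X_0=G_0/P_0$, at the point $x=gP_0$ the tangent space to $X$ is $\fg/\fp^x$ and the tangent space to $X_0$ is $(\fg/\fp^x)_0=\fg_0/\fp^x_0$; hence the conormal space $(\cI/\cI^2)_x$ is the dual of the odd part $(\fg/\fp^x)_1=\fg_1/\fp^x_1$. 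So $\cI/\cI^2$ is the dual of the vector bundle on $X_0$ with fiber $\fg_1/\fp^x_1$, which is exactly the quotient of the trivial bundle $X_0\times\fg_1$ by the subbundle $Y$. Dualizing, $\lw^k(\cI/\cI^2)$ is the $k$-th exterior power of the conormal bundle of $Y\subset X_0\times\fg_1$, i.e.\ (up to a twist by $\lw^{\mathrm{top}}$ of the normal bundle, which I will need to track carefully) the relevant term in the Koszul complex resolving $\cO_Y$ as a sheaf on $X_0\times\fg_1$.

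Next I would run the geometric method of Kempf--Lascoux--Weyman in the setup $Y\subset X_0\times V$ with $V=\fg_1$, $S=\Sym(V^*)=\Sym(\fg_1^*)$. Let $p\colon X_0\times V\to V$ and $q\colon X_0\times V\to X_0$ be the projections. The Koszul complex on $X_0\times V$ resolving $\cO_Y$, pushed forward along $p$, is computed by the hyperdirect image, and the Leray spectral sequence for $q$ (using that $X_0$ is proper, so cohomology is finite dimensional and the pushforward to a point is just taking cohomology over $X_0$) gives a complex of free $S$-modules whose homology is $\rR^\bullet p_*\cO_Y = \rH^\bullet(Y,\cO_Y)$. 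The hypothesis $\rH^i(Y,\cO_Y)=0$ for $i>0$ means this homology is concentrated in degree $0$, where it equals $\rH^0(Y,\cO_Y)=\cO_{\tilde Z}$ by definition of the affinization $\tilde Z$ (here I would note $p$ factors through $\tilde Z$, which is finite over $Z=\mathrm{im}(Y)$, so the pushforward of $\cO_Y$ along $p$ really is $\cO_{\tilde Z}$ as an $S$-module). Therefore the complex of free $S$-modules is, after minimalizing, the minimal free resolution of $\cO_{\tilde Z}$ over $S$, and its $p$-th term in internal degree $p+q$ is built out of $\rH^q(X_0,\lw^{p+q}\text{(conormal bundle)})$. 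Matching this against the identification of the previous paragraph yields the first isomorphism $\rH^q(X,\lw^{p+q}(\cI/\cI^2))=\Tor^S_p(\cO_{\tilde Z},\bC)_{p+q}$; one has to be a little careful that there is no higher differential in the spectral sequence computing the hyperdirect image, which again is exactly the vanishing hypothesis.

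For the spectral sequence, I would simply substitute the isomorphism just obtained into the $\cI$-adic spectral sequence $\rE_1^{p,q}=\rH^{p+q}(X_0,\lw^p(\cI/\cI^2))\Rightarrow\rH^{p+q}(X,\cO_X)$ from Section~\ref{ss:intro-gen}, re-indexing so that the exterior power and the $\Tor$-homological degree are tracked separately: writing $\rH^{p+q}(X_0,\lw^p(\cI/\cI^2))$ with cohomological degree $p+q$ and exterior degree $p$ means setting $j=-(q)$ in the new indexing, $\rH^{a}(X_0,\lw^{b}(\cdot))=\Tor^S_{b-a}(\cO_{\tilde Z},\bC)_{b}$; collecting terms by total degree $p+q$ and $\Tor$-degree $-q$ then gives $\rE_1^{p,q}=\Tor^S_{-q}(\cO_{\tilde Z},\bC)_p$. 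I expect the main obstacle to be bookkeeping rather than conceptual: keeping the three gradings (cohomological degree on $X_0$, exterior/Koszul degree, internal $S$-degree) consistent, handling the $\det$ twists from the conormal versus normal bundle in the geometric technique, and checking that ``canonical'' really holds---that the identifications are functorial in $G$ and $P$ and do not secretly depend on choices---will be where the care is needed. The two genuinely substantive inputs, the structure of $\gr(\cO_X)$ as an exterior algebra and the collapse of the geometric-method spectral sequence, are both handed to us (the former in the excerpt, the latter via the hypothesis $\rH^{>0}(Y,\cO_Y)=0$).
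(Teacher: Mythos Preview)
Your proposal is correct and follows essentially the same approach as the paper: identify $\cI/\cI^2$ fiberwise via the Grothendieck--Springer description (exactly as the paper does in \S\ref{ss:gs}), apply the geometric method to the resulting short exact sequence $0\to\cI/\cI^2\to\fg_1^*\otimes\cO_{X_0}\to\eta\to0$, and then substitute into the $\cI$-adic spectral sequence of Proposition~\ref{prop:ss}. The only packaging difference is that the paper proves the geometric-method step (Proposition~\ref{prop:gm1}) via a derived base-change isomorphism $\rL g^*\rR f_*(\cO_Y)=\rR f'_*\rL(g')^*(\cO_Y)$ rather than by building the Weyman complex and minimalizing; these are equivalent and both are standard. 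Your anticipated obstacle about a $\det$ twist does not arise: the Koszul resolution of $\cO_Y$ on $X_0\times\fg_1$ has terms $\cO_{X_0\times\fg_1}\otimes\lw^k(\cI/\cI^2)$ with no twist, since $\cI/\cI^2$ is already the kernel of the surjection $\epsilon\to\eta$.
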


\subsection{The case of Grassmannians} \label{ss:intro-grass}

Suppose now that $X=\Gr_{r|s}(\bC^{n|m})$ is the super Grassmannian. We apply the method discussed above to study $\rH^*(X, \cO_X)$. Since $\Gr_{r|s}(\bC^{n|m}) \cong \Gr_{s|r}(\bC^{m|n})$, we may assume, without loss of generality, that $r \ge s$. We summarize some of the key points here.

Let $V_0=\bC^n$ and $V_1=\bC^m$, and put
\begin{displaymath}
W_0=\Hom(V_0,V_1), \quad W_1=\Hom(V_1, V_0), \quad W=W_0 \times W_1.
\end{displaymath}
Thus $W$ is identified with the odd part of the Lie superalgebra $\fgl_{n|m}$. The space $Y$ defined in \S \ref{ss:intro-gen} can be described as follows: a point corresponds to a tuple $(f,g,R_0,R_1)$ where
\begin{itemize}
\item $R_0$ is an $r$-dimensional subspace of $V_0$
\item $R_1$ is an $s$-dimensional subspace of $V_1$
\item $f \colon V_0 \to V_1$ is a linear map satisfying $f(R_0) \subset R_1$
\item $g \colon V_1 \to V_0$ is a linear map satisfying $g(R_1) \subset R_0$.
\end{itemize}
The projection map $Y \to W$ takes $(f,g,R_0,R_1)$ to $(f,g)$. The only condition on points in the image is that the nullity of $f$ must be at least $r-s$. Hence $Z = Z_0\times W_1$ where $Z_0$ is the determinantal variety consisting of linear maps with nullity at least $r-s$; if $n-m \ge r-s$, then $Z_0=W_0$ and hence $Z=W$.

It turns out that the affinization $\tilde{Z}$ of $Y$ is typically not $Z$; that is, there are global functions on $Y$ that do not factor through $Z$. To see this, suppose that $(f,g,R_0,R_1)$ is a point of $Y$. Then $fg$ is an operator on $V_1$ that preserves the subspace $R_1$. Thus the characteristic polynomial of $fg \vert_{R_1}$ is a factor of the characteristic polynomial of $fg$, and its coefficients give additional global functions on $Y$. We show that these generate all the additional functions on $Y$; this is a non-trivial theorem. From this, we see that $\tilde{Z}$ can be described as the space of tuples $(f,g,p)$ where $f \in Z_0$, $g \in Z_1$, and $p$ is an appropriate factor of the characteristic polynomial of $fg$.

We prove a number of results about the geometry of this situation. Notably, we show that $\tilde{Z}$ has rational singularities, which allows us to show that $\rH^i(Y, \cO_Y)=0$ for $i>0$. We also show that $\cO_{\tilde{Z}}$ is a free $\cO_Z$-module of finite rank, and that $\cO_{\tilde{Z}} \otimes_{\cO_Z} \bC$ is the singular cohomology ring of a certain Grassmannian. Thus the free resolution of $\cO_{\tilde{Z}}$ over $S=\Sym(W^*)$ can be determined from that of $\cO_Z$, which was explicitly computed by Lascoux \cite{lascoux}. Applying Theorem~\ref{mainthm2}, we obtain a spectral sequence computing $\rH^*(X, \cO_X)$, where the terms are composed of the resolution of $\cO_Z$ and the singular cohomology of a Grassmannian. Using Lascoux's work, we show that this spectral sequence degenerates. The final result is the following theorem:

\begin{theorem} \label{mainthm}
Let $X=\Gr_{r|s}(\bC^{n|m})$ with $r \ge s$. We have the following:
\begin{enumerate}
\setlength{\itemsep}{4pt}
\item Suppose $n-m \ge r-s$. 
\begin{enumerate}[label=(\roman*)]
\item $\rH^*(X, \cO_X)$ is naturally isomorphic to $\rH^*_{\rm sing}(\Gr_s(\bC^m), \bC)$ as a graded algebra.
\item The $\GL_{n|m}$ action on $\rH^*(X, \cO_X)$ is trivial.
\end{enumerate}
\item Suppose $r-s \ge n-m$, and let $A^*=\rH^*_{\rm sing}(\Gr_s(\bC^{n+s-r}), \bC)$.
\begin{enumerate}[label=(\roman*)]
\item We have a natural isomorphism $\rH^*(X, \cO_X)^{\GL_{n|m}}=A^*$ of graded algebras.
\item There is a graded $\GL_{n|m}$-representation $E^*$ such that $\rH^*(X, \cO_X)$ is isomorphic to $A^* \otimes E^*$, as a graded $\GL_{n|m}$-equivariant $A^*$-module.
\item Let $Z_0 \subset \Hom(\bC^n, \bC^m)$ be the determinantal variety consisting of linear maps of rank $\le n+s-r$, and regard $\cO_{Z_0}$ as a quotient of $S=\Sym(\bC^n \otimes (\bC^m)^*)$. Then we have an isomorphism of $\GL_n \times \GL_m$ representations
\begin{displaymath}
E^i = \bigoplus_{p \ge 0} \Tor_p^S(\cO_{Z_0}, \bC)_{i+p},
\end{displaymath}
where the subscript on $\Tor$ denotes the appropriate graded piece. That is, $E^i$ is the $i$th linear strand of the free resolution of $\cO_{Z_0}$.
\end{enumerate}
\end{enumerate}
\end{theorem}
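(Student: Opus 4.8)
The plan is to derive Theorem~\ref{mainthm} from Theorem~\ref{mainthm2}, once the geometric and homological inputs sketched above for the super Grassmannian are in place: that $\tilde Z$ has rational singularities, that $\cO_{\tilde Z}$ is a graded free $\cO_Z$-module, that $A^* := \cO_{\tilde Z} \otimes_{\cO_Z} \bC$ is the singular cohomology ring of a Grassmannian ($\Gr_s(\bC^{n+s-r})$ in case~(b), $\Gr_s(\bC^m)$ in case~(a)), and that $\cO_{Z_0}$ has Lascoux's explicit minimal $\GL_n \times \GL_m$-equivariant free resolution over $S_0 := \Sym(W_0^*)$.

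First I would verify the hypothesis $\rH^i(Y, \cO_Y) = 0$ for $i > 0$ of Theorem~\ref{mainthm2}: granting that $\tilde Z$ has rational singularities, the affinization morphism $Y \to \tilde Z$ has vanishing higher direct images, so $\rH^i(Y, \cO_Y) = \rH^i(\tilde Z, \cO_{\tilde Z}) = 0$ for $i > 0$ since $\tilde Z$ is affine. Theorem~\ref{mainthm2} then supplies the $\GL_{n|m}$-equivariant spectral sequence $\rE_1^{p,q} = \Tor_{-q}^S(\cO_{\tilde Z}, \bC)_p \Rightarrow \rH^{p+q}(X, \cO_X)$, with $S = \Sym(\fg_1^*) = \Sym(W^*)$. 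Next I would identify the $\rE_1$ page. Writing $S = S_0 \otimes \Sym(W_1^*)$ and using $Z = Z_0 \times W_1$ together with freeness of $\Sym(W_1^*)$ over itself, a K\"unneth computation gives $\Tor^S(\cO_Z, \bC) = \Tor^{S_0}(\cO_{Z_0}, \bC)$. Since $\cO_{\tilde Z}$ is graded free over $\cO_Z$, reductivity of $\GL_n \times \GL_m$ yields a graded equivariant isomorphism $\cO_{\tilde Z} \cong \cO_Z \otimes_\bC A^*$, with $A^*$ carrying the trivial group action; hence $\Tor_k^S(\cO_{\tilde Z}, \bC) \cong \Tor_k^{S_0}(\cO_{Z_0}, \bC) \otimes_\bC A^*$ as graded $\GL_n \times \GL_m$-representations, where $A^*$ is the cohomology ring of the relevant Grassmannian. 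Feeding in Lascoux's description of $\Tor^{S_0}(\cO_{Z_0}, \bC)$ --- in which each irreducible $\GL_n \times \GL_m$-constituent occurs in a single internal degree and a single homological degree --- a direct bookkeeping then identifies, in total degree $d$, the $\rE_1$ page with $\bigoplus_j A^j \otimes E^{d-j}$, where $E^i := \bigoplus_k \Tor_k^{S_0}(\cO_{Z_0}, \bC)_{i+k}$ is the $i$-th linear strand of the resolution of $\cO_{Z_0}$.

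The main obstacle is proving that the spectral sequence degenerates. A differential $d_r$ raises the internal grading by $r$ and the homological degree of $\Tor$ by $r - 1$. For $r \ge 2$ it vanishes by $\GL_n \times \GL_m$-equivariance: after stripping off the trivially acted-on (hence degree-carrying) factor $A^*$, a nonzero component would produce an irreducible constituent common to $\Tor_k^{S_0}(\cO_{Z_0}, \bC)$ and $\Tor_{k+r-1}^{S_0}(\cO_{Z_0}, \bC)$, which Lascoux's resolution forbids. The delicate case is $d_1$, which preserves the homological degree and so is not excluded on representation-theoretic grounds alone; here I would use that the $\cI$-adic spectral sequence is multiplicative, that $A^* = \rH^*(X, \cO_X)^{\GL_{n|m}}$ consists of permanent cocycles, and that --- via $\GL_{n|m}$-equivariance and the compatibility of the construction with the Pragacz--Weyman $\GL_{n|m}$-action on the syzygies --- the remaining algebra generators of $\rE_1$ also lift to permanent cocycles, forcing $d_1 = 0$. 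I expect pinning down these differentials to be the technical heart of the argument.

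Once degeneration is established the theorem follows. In case~(b) we get $\gr \rH^*(X, \cO_X) \cong A^* \otimes E^*$; to upgrade this to the honest statement in (b)(ii) I would show $\rH^*(X, \cO_X)$ is free over the subring $A^*$ (so the abutment filtration splits), set $E^* := \rH^*(X, \cO_X) \otimes_{A^*} \bC$, and identify its $\GL_n \times \GL_m$-structure with the linear strands $\bigoplus_p \Tor_p^S(\cO_{Z_0}, \bC)_{i+p}$ (with $S$ now $= S_0 = \Sym(\bC^n \otimes (\bC^m)^*)$), which is (b)(iii); part (b)(i) is then immediate since $E^0 = \bC$ and $E^i$ for $i > 0$ has no trivial $\GL_n \times \GL_m$-constituent. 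In case~(a) one has $Z_0 = W_0$, so $\cO_Z = S$ and $\Tor^S(\cO_{\tilde Z}, \bC)$ is concentrated in homological degree $0$; the spectral sequence collapses trivially, each $\rH^d(X, \cO_X)$ acquires the trivial filtration, and multiplicativity promotes $\gr \rH^*(X, \cO_X) \cong A^* = \rH^*_{\rm sing}(\Gr_s(\bC^m), \bC)$ to an isomorphism of graded algebras, proving (a)(i). Finally (a)(ii) holds because $\cO_{\tilde Z}$ is generated over $\cO_W$ by $\GL_{n|m}$-invariant functions --- the coefficients of the relevant characteristic polynomial --- so $A^*$, hence $\rH^*(X, \cO_X)$, is a quotient of a polynomial ring on invariants and carries the trivial $\GL_{n|m}$-action.
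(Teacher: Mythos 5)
Your overall plan tracks the paper's: apply the general spectral sequence, use the Grothendieck--Springer geometry to supply the vanishing $\rH^i(Y,\cO_Y)=0$, compute the $\rE_1$ page via Lascoux's description and the freeness of $\cO_{\tilde{Z}}$ over $\cO_Z$, kill the differentials by $\GL_n\times\GL_m$-equivariance, and then split off the invariant subalgebra $A^*$. However, there is a genuine error in your treatment of degeneration, and a gap in upgrading $E^*$ to a $\GL_{n|m}$-subrepresentation.

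The error is your claim that $d_1$ ``is not excluded on representation-theoretic grounds alone.'' It is. Put $L_i = \bigoplus_p \Tor_p^{S_0}(\cO_{Z_0},\bC)_{p+i}$ and $\tilde{L}_k = \bigoplus_p \Tor_p^S(\cO_{\tilde{Z}},\bC)_{p+k}$; the total-degree-$k$ piece of $\rE_1$ is $\tilde{L}_k$, and every differential (of every page) is a $\GL_n\times\GL_m$-equivariant map $\tilde{L}_k \to \tilde{L}_{k+1}$. By the freeness isomorphism you already invoked, $\tilde{L}_k \cong \bigoplus_{i} A_{k-i}\otimes L_i$. The crucial additional observation, which you did not make, is that $A$ is concentrated in \emph{even} degrees, so $\tilde{L}_k$ is built only from those $L_i$ with $i \equiv k \pmod 2$. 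Combined with the fact that $\bigoplus_i L_i$ is multiplicity-free as a $\GL_n\times\GL_m$-representation (Lascoux), this shows $\tilde{L}_k$ and $\tilde{L}_{k+1}$ share no simple constituents whatsoever, so every differential --- including $d_1$ --- vanishes. Your proposed fallback for $d_1$ via multiplicativity and ``compatibility with the Pragacz--Weyman $\GL_{n|m}$-action'' is not only unnecessary, it is circular in spirit: the $\GL_{n|m}$-action on the syzygies is precisely what this theorem is meant to construct, and in any case the sketch (``the remaining algebra generators of $\rE_1$ also lift to permanent cocycles'') is too vague to carry the argument.

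Second, in part (b)(ii)--(iii) you define $E^* := \rH^*(X,\cO_X)\otimes_{A^*}\bC$ and assert its $\GL_n\times\GL_m$-structure, but the theorem asserts $E^*$ is a $\GL_{n|m}$-representation. You need to show that the $\GL_n\times\GL_m$-isotypic complement to $A^*\cdot\rH^{<i}$ inside $\rH^i$ is actually $\GL_{n|m}$-stable. The paper handles this with an $\Ext^1_{\GL_{n|m}}$-vanishing argument: using Lascoux's explicit partitions together with Pieri's rule one shows that the action of the odd part of $\fgl_{n|m}$ cannot map between modules that restrict to distinct $L_i$'s, so the unique $\GL_n\times\GL_m$-splitting is automatically $\GL_{n|m}$-equivariant. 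Your proposal also needs to establish $\rH^*(X,\cO_X)^{\GL_{n|m}} = \rH^*(X,\cO_X)^{\GL_n\times\GL_m}$; the argument that ``$\cO_{\tilde Z}$ is generated over $\cO_W$ by invariant functions'' does not directly transfer to an action statement on cohomology (the characteristic-polynomial coefficients live on $Z$, not on $X$, and $\GL_{n|m}$ is not a group acting on $W$), whereas the paper derives this from the non-occurrence of $V_1^*\otimes V_0$ as a $\GL_n\times\GL_m$-constituent of $\rH^*(X,\cO_X)$.
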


We note that in (b) above, $Z_0 = \Hom(\bC^n, \bC^m)$ if $r=s$ (and $m \ge n$). In particular, $E^i=0$ for $i \ne 0$ and $E^0=\bC$ and so this situation behaves like case (a).

\subsection{Supergroup representations on syzygies}

In 1985, Pragacz and Weyman \cite{pragacz-weyman} discovered that the free resolutions of determinantal ideals carry a natural (but very much non-obvious) representation of a general linear Lie superalgebra. Their construction was generalized and simplified in some subsequent work of Akin--Weyman \cite{akin-weyman, akin-weyman2, akin-weyman3} and Raicu--Weyman \cite{raicu-weyman}. Theorem~\ref{mainthm} leads to a much more direct and conceptual construction of this action. Indeed, by considering the case where $s=0$, we find that the $i$th linear strand of the resolution of a determinantal variety is identified with $\rH^i(X, \cO_X)$ for an appropriate super Grassmannian $X$, and this obvious carries an action of the super general linear group (as it acts on $X$). See \S \ref{ss:pw} for more details.

The first author generalized the results of Pragacz--Weyman to (skew-)symmetric matrices in \cite{sam}. Here the resolution carries an action of the periplectic superalgebra. The methods of this paper also apply to that case. The details will be carried out in \cite{superres2}.

Rather than compute syzygies of determinantal ideals with respect to a polynomial ring, one can consider them with respect to an intermediate quotient ring. Using a certain class of complete intersections as such an intermediate quotient ring, a third class of superalgebra actions was discovered in \cite{sam-osp} using the orthosymplectic Lie superalgebra. These representations are infinite-dimensional and do not appear to fall into the mold of the current paper. However, one can attempt to treat the orthosymplectic Lie superalgebra using the methods of this paper; see the comments in the next section.

\subsection{Further work}

We have used Theorem~\ref{mainthm2} to compute the cohomology of various other flag supervarieties. The details of this work will appear in a subsequent paper \cite{superres2}. We summarize some of the results here:
\begin{itemize}
\item Let $X$ be the partial flag supervariety parametrizing flags $V_1 \subset \cdots \subset V_k$ in $\bC^{n|m}$ where $\dim(V_i)=r_i|s_i$. Suppose that $n \ge m$ and $n-m \ge r_i-s_i$ for each $i$. Then we compute $\rH^*(X, \cO_X)$ completely. The result is similar to Theorem~\ref{mainthm}(a), in that no interesting syzygies appear.
\item Let $V$ be a super vector space of dimension $n|n$ equipped with a periplectic form, and let $X$ be the Grassmannian parametrizing totally isotropic subspaces of $V$ of dimension $r|n-r$. Then we compute $\rH^*(X, \cO_X)$ completely, and the result is analogous to Theorem~\ref{mainthm}. In this case, the cohomology is composed of the singular cohomology of an isotropic Grassmannian for a symplectic group and the syzygies of a determinantal ideal of (skew-)symmetric matrices. This result gives a conceptual explanation of the work of the first author \cite{sam} that these syzygies carry a representation of the periplectic supergroup.
\item Consider an isotropic super Grassmannian $X$ associated to the orthosymplectic supergroup. In this case, the variety $Z$ is a somewhat mysterious analog of a determinantal variety that we have not previously encountered. Our results in this case are more limited, as we do not know the syzygies of $Z$ in general, and there are examples where the spectral sequence computing $\rH^*(X, \cO_X)$ does not degenerate.
\item Let $X$ be a Grassmannian associated to the isomeric (also known as type Q, see \cite[\S 1.5]{isomeric}) supergroup. In this case, $X_0$ is an ordinary Grassmannian and $\cI/\cI^2$ is the sheaf of 1-forms on $X_0$. We thus see that $\rH^*(X, \lw(\cI/\cI^2))$ is the de Rham cohomology of $X_0$ and the map $Y \to Z$ is the usual Grothendieck--Springer map for the general linear Lie algebra. This gives a very direct connection between $\rH^*(X, \cO_X)$ and the singular cohomology of $X_0$. We can also handle partial flag varieties associated to the isomeric supergroup.
\item We can also treat some exceptional supergroups. For example, if $X$ is a complete flag supervariety associated to $D(2,1;\alpha)$ then we compute $\rH^*(X, \cO_X)$ completely. The variety $Z$ in this case is the zero locus of the hyperdeterminant on $(\bC^2)^{\otimes 3}$.
\end{itemize}

There is one other direction coming out of Theorem~\ref{mainthm} that we are currently pursuing. Let $X=\Gr_{r|s}(\bC^{\infty|\infty})$ be the infinite super Grassmannian (still with $r \ge s$), and let $\cR$ and $\cQ$ be the tautological sub and quotient bundles on $X$. A natural problem is to study the cohomology of the equivariant bundle $\cE=\bS_{\lambda}(\cQ) \otimes \bS_{\mu}(\cR)$, where $\bS_{\lambda}$ denotes a Schur functor. Theorem~\ref{mainthm} implies that $\rH^*(X, \cO_X)$ is isomorphic to a polynomial ring $\bC[c_1, \ldots, c_s]$ in this case, where $r \ge s$ and $c_i$ has degree $2i$. We thus see that $\rH^*(X, \cE)$ is a module over this polynomial ring. In \cite{superbwb}, we study $\rH^*(X, \cE)$ from this perspective. The module structure allows us to prove interesting results about the cohomology groups, and offers a kind of explanation for the failure of the classical Borel--Weil--Bott theorem in this setting.

\begin{remark}
Generalizations of Borel--Weil--Bott to flag supervarieties appear in the literature: see, for instance, \cite{coulembier,gruson,penkov,penkov-serganova}. However, the existing work (that we are aware of) is of a quite different flavor than this paper, and generally does not treat the case of the structure sheaf, or the vector bundles considered above, with the exception of super projective space, see \cite{gruson, musson-serganova, serganova}.
\end{remark}

\subsection{Questions and comments}

The results of this paper raise a few issues that deserve further exploration.

\begin{itemize}
\item We show that the spectral sequence in Theorem~\ref{mainthm2} degenerates for the super Grassmannian. Our proof is rather special and relies on the fact that the representations appearing in the Tor group of a determinantal ideal are multiplicity-free. Is this a more conceptual reason for this degeneration that does not rely on such explicit combinatorial calculation? One obstacle is that, as noted above, there are examples for the orthosymplectic group in which the spectral sequence does not degenerate.
  
\item Theorem~\ref{mainthm} shows that the coherent cohomology of a super Grassmannian is related to the singular cohomology of some ordinary Grassmannian. The path between these two objects in our proof is somewhat circuitous. Is there a more direct connection? As noted above, in the case of isomeric Grassmannians, the connection to singular cohomology (through de Rham cohomology) is quite clear.
  
\item Are there other homogeneous bundles on the super Grassmannian whose cohomology can be completely computed using the methods of this paper, such as $\Omega^1$? Each bundle still has a filtration and associated graded, but the problem is getting a good description of the cohomology of the resulting sheaf.
  
\item A morphism from a superscheme $X$ to the super Grassmannian is equivalent to the data provided by the pullback of the tautological sequence. Hence, given a subbundle of a trivial superbundle on $X$, we get a morphism from the coherent cohomology of a super Grassmannian to $\rH^0(X,\cO_X)$. This allows one to define ``coherent Chern classes'' for superbundles.
\end{itemize}

\subsection{Outline}

In \S \ref{s:general}, we develop our general method for studying the cohomology of flag supervarieties. The rest of the paper is devoted to carrying out this method in the case of super Grassmannians. In \S \ref{s:split} and \S \ref{s:prep}, we give some preparatory material needed for this. The heart of the paper is \S \ref{s:gs}, where we study the Grothendieck--Springer theory associated to the super Grassmannian. Finally, in \S \ref{s:coh}, we apply this work to compute the cohomology of the super Grassmannian.

\subsection{Conventions}

Since we treat both superalgebras and ordinary (non-super) algebras, we will reserve the term ``algebra'' or ``ring'' to mean an ordinary structure, and always use the prefix ``super'' when it is used. The same applies to other objects, such as groups and schemes. 

If $X$ is an affine scheme, we use $\cO_X$ to denote both its structure sheaf and also the ring of global sections; the meaning will be clear from context.

\section{Cohomology of flag supervarieties and syzygies} \label{s:general}

In this section we outline a general approach to studying the cohomology of flag supervarieties by connecting them to certain determinantal-like varieties appearing in an analog of Grothendieck--Springer theory.

\subsection{Super geometry}

We begin by recalling some fundamental notions from super geometry. For our purposes, a \defn{(commutative) super algebra} is a $\bZ/2$-graded associative unital $\bC$-algebra $R$ such that $xy=(-1)^{\vert x \vert \vert y \vert} yx$ holds for all homogeneous elements $x,y \in R$, where $\vert x \vert \in \bZ/2$ denotes the degree of $x$. We use the term \defn{ordinary algebra} to describe a super algebra concentrated in degree~0. Let $R$ be a super algebra. We let $J=J_R$ be the ideal of $R$ generated by its degree~1 elements. Every element of $J$ is nilpotent, and the quotient $R/J$ is an ordinary algebra. We let $\Spec(R)$ be the topological space $\Spec(R/J)$ equipped with a sheaf of super algebras derived from localizations of $R$ in the usual manner.

A \defn{super scheme} over $\bC$ is a topological space equipped with a sheaf of super algebras that is locally isomorphic (in a suitable sense) to a space of the form $\Spec(R)$, where $R$ is a super algebra. We use the term \defn{ordinary scheme} to describe a super scheme for which the structure sheaf is concentrated in degree~0; such an object is a scheme in the classical sense. Let $X$ be a super scheme. We let $\cJ=\cJ_X$ be the ideal sheaf generated by the degree~1 elements of $\cO_X$. The quotient $\cO_X/\cJ_X$ is a sheaf of ordinary rings, and $(X, \cO_X/\cJ_X)$ is an ordinary scheme. We call this the \defn{underlying ordinary scheme} and denote it by $X_{\ord}$. We let $\gr^n(\cO_X)=\cJ^n/\cJ^{n+1}$ and $\gr(\cO_X)=\bigoplus_{n \ge 0} \gr^n(\cO_X)$. Each $\gr^n(\cO_X)$ is a quasi-coherent sheaf on $X_{\ord}$ and $\gr(\cO_X)$ is a quasi-coherent sheaf of super algebras.

We define a super scheme to be \defn{smooth} if $X_{\ord}$ is a smooth algebraic variety, $\cJ/\cJ^2$ is a locally free coherent sheaf, and the natural map $\lw(\cJ/\cJ^2) \to \gr(\cO_X)$ is an isomorphism. We sometimes use the term ``supervariety'' when speaking of smooth super schemes. Suppose $X$ is smooth and irreducible. We define its \defn{dimension} to be the pair $d_0 \vert d_1$, where $d_0$ is the dimension of $X_{\ord}$ and $d_1$ is the rank of $\cJ/\cJ^2$. Given a point $x \in X$, the \defn{cotangent superspace} of $x$ is the super vector space $\fm_x/\fm_x^2$, where $\fm_x$ is the maximal ideal at $x$; this is easily seen to have dimension $d_0 \vert d_1$. The \defn{tangent superspace} of $x$ is the dual of the cotangent superspace.

The following proposition, which is immediate from the definitions, is our primary tool for connecting the coherent cohomology of super varieties to ordinary algebraic geometry:

\begin{proposition} \label{prop:ss}
  \addtocounter{equation}{-1}
  \begin{subequations}
Let $X$ be a smooth super scheme. There is a natural $E_1$ spectral sequence
\begin{equation} \label{eq:spectral}
\rE^{p,q}_1 = \rH^{p+q}(X, \lw^p(\cJ/\cJ^2)) \implies \rH^{p+q}(X, \cO_X).
\end{equation}
\end{subequations}
\end{proposition}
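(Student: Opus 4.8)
The plan is to read the spectral sequence off the $\cJ$-adic filtration on $\cO_X$, so the proof amounts to bookkeeping. First I would equip $\cO_X$ with the decreasing filtration by quasi-coherent ideal sheaves $F^p = \cJ^p$ (with $F^p = \cO_X$ for $p \le 0$) on the topological space underlying $X$, which is that of $X_{\ord}$. By construction $F^p/F^{p+1} = \gr^p(\cO_X)$, and the definition of smoothness identifies this canonically with $\lw^p(\cJ/\cJ^2)$, a coherent sheaf of $\cO_{X_{\ord}}$-modules.

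The only step that uses the hypotheses, rather than pure homological algebra, is checking that this filtration is \emph{finite}, which is what makes the spectral sequence converge. Here I would argue that $\cJ^{N+1}=0$ for some $N$ (one may take $N$ to be the rank of $\cJ/\cJ^2$): locally a smooth super scheme has the form $\Spec(\cO_{X_{\ord}}[\xi_1,\dots,\xi_N])$ with the $\xi_i$ odd, so locally $\cJ=(\xi_1,\dots,\xi_N)$ and hence $\cJ^{N+1}=0$, since $2$ is invertible in $\bC$ (so each $\xi_i$ squares to $0$) and any product of $N+1$ of the $\xi_i$ must repeat one; as $X_{\ord}$ is quasi-compact, this holds globally. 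Alternatively, $\gr^p(\cO_X)=\lw^p(\cJ/\cJ^2)=0$ for $p>N$ forces the chain $\cJ^N \supseteq \cJ^{N+1} \supseteq \cdots$ to be constant, and its common value vanishes because $\cJ$ is nilpotent. With finiteness in hand, the standard spectral sequence of a bounded filtered object, for the left exact functor $\Gamma(X,-)$ on sheaves of $\bC$-vector spaces on that space, yields a convergent spectral sequence
\begin{displaymath}
\rE_1^{p,q} = \rH^{p+q}(X, \gr^p(\cO_X)) \implies \rH^{p+q}(X, \cO_X),
\end{displaymath}
with differential $d_1 \colon \rE_1^{p,q} \to \rE_1^{p+1,q}$ the connecting homomorphism of the short exact sequence $0 \to \gr^{p+1}(\cO_X) \to F^p/F^{p+2} \to \gr^p(\cO_X) \to 0$, and with $\rE_\infty^{p,q}$ the $p$th graded piece of $\rH^{p+q}(X,\cO_X)$ for the induced filtration. (Concretely one may build this from the exact couple formed by the long exact cohomology sequences of $0 \to \cJ^{p+1} \to \cJ^p \to \gr^p(\cO_X) \to 0$, or by compatibly filtering an injective resolution of $\cO_X$.) Substituting $\gr^p(\cO_X)=\lw^p(\cJ/\cJ^2)$ gives \eqref{eq:spectral}.

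For the naturality assertion I would observe that a morphism $X' \to X$ of smooth super schemes carries the $\cJ$-adic filtration on $\cO_X$ to that on $\cO_{X'}$ via the pullback maps, so the filtered objects, their resolutions, and hence the spectral sequences are all functorial. Aside from the finiteness of the filtration, which rests on the nilpotence of $\cJ$ and the quasi-compactness built into the notion of a variety, everything here is formal, so I do not anticipate a genuine obstacle.
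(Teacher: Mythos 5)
Your proof is correct and is precisely what the paper has in mind: the paper states that this proposition "is immediate from the definitions" and gives no further details, the implicit content being exactly the spectral sequence of the finite $\cJ$-adic filtration on $\cO_X$, together with the identification $\gr^p(\cO_X)\cong\lw^p(\cJ/\cJ^2)$ built into the definition of smoothness. Your discussion of finiteness of the filtration (via local nilpotence of $\cJ$ and quasi-compactness of the variety $X_{\ord}$) and of naturality is exactly the bookkeeping the paper is eliding.
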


\subsection{The geometric method} \label{ss:gm}

Let $X$ be a complex projective variety, and suppose that
\begin{displaymath}
0 \to \xi \to \epsilon \to \eta \to 0
\end{displaymath}
is an exact sequence of locally free coherent sheaves on $X$, with $\epsilon =W^* \otimes \cO_X$ globally free. We make the following definitions:
\begin{itemize}
\item Let $S=\Sym(W^*)$, a polynomial ring, and $W=\Spec(S)$, an affine space.
\item Let $Y=\Spec(\Sym(\eta))$. This is the total space of a vector bundle over $X$, and a closed subvariety of $X \times W$.
\item Let $Z \subset W$ be the image of $Y$ under the projection map $X \times W \to W$. This is a closed subvariety of $W$ as $X$ is projective.
\item Let $\tilde{Z}$ be the affinization of $Y$, i.e., $\tilde{Z}$ is the affine variety with coordinate ring $\cO_{\tilde{Z}}=\Gamma(Y, \cO_Y)$. The Stein factorization shows that the map $Y \to Z$ factors through a finite morphism $\tilde{Z} \to Z$. In particular, $\cO_{\tilde{Z}}$ is a finite $S$-module.
\end{itemize}
The following proposition describes how to determine the terms of the minimal free resolution of $\cO_{\tilde{Z}}$, in certain situations. We remark that $\rH^i(Y, \cO_Y) = \rH^i(X, \Sym(\eta))$.

\begin{proposition} \label{prop:gm1}
Suppose that $\rH^i(Y, \cO_Y)=0$ for $i>0$. Then we have a natural isomorphism of vector spaces
\begin{displaymath}
\Tor^S_p(\cO_{\tilde{Z}}, \bC)_{p+q} = \rH^q(X, \lw^{p+q}{\xi}),
\end{displaymath}
for all $p,q \in \bZ$, where the subscript indicates the appropriate graded piece.
\end{proposition}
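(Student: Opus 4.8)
The plan is to run the geometric technique of Kempf--Lascoux--Weyman; see \cite[Ch.~5]{weyman}. First I would set up the relevant Koszul resolution on $X \times W$. Since $\epsilon = W^* \otimes \cO_X$ is trivial, we may identify $X \times W = \Spec_X(\Sym \epsilon)$ with $\Sym \epsilon = S \otimes_\bC \cO_X$, and then $Y = \Spec_X(\Sym \eta)$ is the closed subscheme cut out by the ideal sheaf generated by the copy of $\xi$ inside $\epsilon = \Sym^1 \epsilon$. Working locally on $X$, a splitting $\epsilon \cong \xi \oplus \eta$ exhibits $\Sym \eta$ as $\Sym \epsilon$ modulo a regular sequence of length $\operatorname{rank} \xi$, so the Koszul complex $K_\bullet$ on $\xi$ --- with $K_i = p^*(\lw^i \xi)$ placed in internal degree $i$ and differential given by the tautological section $\xi \hookrightarrow \Sym \epsilon$ --- is a locally free resolution of $\cO_Y$ on $X \times W$. (Equivalently: $Y \to X$ is a vector bundle, so $Y$ has the expected codimension $\operatorname{rank} \xi = \operatorname{rank} \epsilon - \operatorname{rank} \eta$ in $X \times W$, which forces the Koszul complex to be exact.)

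Next I would push forward along the projection $q \colon X \times W \to W$. Since $W$ is affine, $\rH^\bullet(Y, \cO_Y) = \rH^\bullet(X \times W, K_\bullet)$ is computed as the hypercohomology of $K_\bullet$. Flat base change along $X \times W \to X$ (over $\Spec \bC$) together with the projection formula gives $R^j q_* K_i = \rH^j(X, \lw^i \xi) \otimes_\bC S$, a free graded $S$-module whose bottom piece $\rH^j(X, \lw^i \xi)$ sits in internal degree $i$, and whose induced $d_1$ differential raises internal degree by one. Assembling these data as in the basic theorem of the geometric technique produces a complex of graded free $S$-modules
\begin{displaymath}
\mathbf{F}_\bullet, \qquad F_i = \bigoplus_{j \ge 0} \rH^j(X, \lw^{i+j} \xi) \otimes_\bC S(-i-j),
\end{displaymath}
equipped with a quasi-isomorphism $\mathbf{F}_\bullet \simeq \mathbf{R}\Gamma(Y, \cO_Y)$ of complexes of graded $S$-modules; in particular $\rH_i(\mathbf{F}_\bullet) = \rH^{-i}(Y, \cO_Y)$, and $\mathbf{F}_\bullet$ is minimal because every component of its differential strictly raises internal degree (there is no component realizing $j \mapsto j+1$, as that would require a differential changing $S$-degree by $0$).

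Now I would invoke the hypothesis. Since $\rH^i(Y, \cO_Y) = 0$ for $i > 0$, and trivially for $i < 0$, the complex $\mathbf{F}_\bullet$ is acyclic away from homological degree $0$, where $\rH_0(\mathbf{F}_\bullet) = \Gamma(Y, \cO_Y) = \cO_{\tilde{Z}}$. Thus $\mathbf{F}_\bullet$ is the minimal free resolution of $\cO_{\tilde{Z}}$ over $S$ (in particular $F_i = 0$ for $i < 0$). By minimality, $\Tor^S_p(\cO_{\tilde{Z}}, \bC) = F_p \otimes_S \bC = \bigoplus_{j \ge 0} \rH^j(X, \lw^{p+j} \xi)$, where the summand $\rH^j(X, \lw^{p+j} \xi)$ lies in internal degree $p+j$. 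Extracting the degree-$(p+q)$ part (so $j = q$) gives $\Tor^S_p(\cO_{\tilde{Z}}, \bC)_{p+q} = \rH^q(X, \lw^{p+q} \xi)$, and naturality is visible from the construction.

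The main obstacle is the construction in the second paragraph: producing $\mathbf{F}_\bullet$ with exactly the stated terms, checking that it is a complex, identifying $\rH_\bullet(\mathbf{F}_\bullet)$ with $\rH^{-\bullet}(Y, \cO_Y)$ as graded $S$-modules, and verifying minimality. This is the technical heart of the geometric method, and rather than reprove it I would cite \cite[Ch.~5]{weyman}, applied to the vector bundle data above. If one only wants the isomorphism of vector spaces, a cheaper route suffices: $\mathbf{R}q_* \cO_Y$ is represented by a bounded complex $T^\bullet$ of finite graded free $S$-modules (cohomology and base change over the affine $W$), its cohomology is $\cO_{\tilde{Z}}$ concentrated in degree $0$ by hypothesis, and its minimal summand is the minimal free resolution of $\cO_{\tilde{Z}}$; a degree count in the hypercohomology spectral sequence $E_1^{-i,j} = \rH^j(X, \lw^i \xi) \otimes S \Rightarrow \rH^{j-i}(Y, \cO_Y)$ then pins down the graded Betti numbers as $\dim_\bC \rH^q(X, \lw^{p+q} \xi)$. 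The functorial statement, however, really wants the explicit complex above.
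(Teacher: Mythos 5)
Your argument is correct, and it is the classical Kempf--Lascoux--Weyman route: build the Koszul resolution $K_\bullet$ of $\cO_Y$ on $X \times W$, push forward to $W$ to get the hypercohomology spectral sequence, assemble the complex $\mathbf F_\bullet$ of graded free $S$-modules, argue minimality from the internal degree count, and conclude that $\mathbf F_\bullet$ is the minimal free resolution of $\cO_{\tilde Z}$ once the vanishing hypothesis is invoked. The paper instead runs a derived base change argument: with $g \colon \Spec(\bC) \hookrightarrow W$ the inclusion of the origin and $g' \colon X \hookrightarrow X \times W$ the zero section, the isomorphism $\rL g^* \rR f_*(\cO_Y) \cong \rR f'_* \rL(g')^*(\cO_Y)$ identifies the left side with $\Tor_*^S(\cO_{\tilde Z}, \bC)$ (using $\rR f_* \cO_Y = \cO_{\tilde Z}$) and the right side with $\bigoplus_k \rH^*(X, \lw^k \xi)$, because restricting the Koszul complex along $g'$ kills the differentials outright. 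The two routes are technically equivalent and both lean on \cite[Ch.~5]{weyman} for the hard parts, but the paper's version is shorter (no minimality verification needed) and, more importantly, the base change isomorphism is one of lax monoidal functors, which is how the paper later gets the compatibility with multiplicative structures in Remark~\ref{rmk:Tor-alg}; your explicit-resolution argument would need extra work to recover that algebra structure. Your minimality argument is sound (every $d_r$ raises internal degree by $r\ge 1$), but note it is a property of the particular complex $\mathbf F_\bullet$ that Weyman constructs, not something automatic from the spectral sequence data alone, so the reference to Weyman is doing real work there.
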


\begin{proof}
  \addtocounter{equation}{-1}
  \begin{subequations}
We briefly sketch the proof. Consider the cartesian diagram
\begin{displaymath}
\xymatrix@C=4em{
X \ar[r]^{g'} \ar[d]_{f'} & X \times W \ar[d]^f \\
\Spec(\bC) \ar[r]^g & W }
\end{displaymath}
where $g'$ is the zero section and $g$ is the inclusion of $0$ in $W$. Regard $\cO_Y$ as a coherent sheaf on $X \times W$. Then we have a natural base change isomorphism \stacks{08IB}
\begin{equation} \label{eq:bc}
\rL g^* \rR f_*(\cO_Y) = \rR f'_* \rL (g')^* (\cO_Y).
\end{equation}
By our assumptions, $\rR f_*(\cO_Y)=\cO_{\tilde{Z}}$, and so the left side above computes $\Tor^S_*(\cO_{\tilde{Z}}, \bC)$. Now, we have a Koszul resolution $\cO_W \otimes \lw(\xi) \to \cO_Y$. Since this is a flat resolution, it can be used to compute $\rL (g')^*$. We find that $\rL (g')^*(\cO_Y) = \lw(\xi)$, where $\lw(\xi)$ is the complex with $\lw^k(\xi)$ in cohomological degree $-k$ and all differentials zero. The derived pushforward of this under $f'$ is $\bigoplus_{k \ge 0} \rH^*(X, \lw^k(\xi))$ (ignoring degrees). We refer to \cite[Ch.~5]{weyman} for additional details.
\end{subequations}
\end{proof}

\begin{remark} \label{rmk:Tor-alg}
The functors $\rL g^* \rR f_*$ and $\rR f'_* \rL(g')^*$ are both lax monoidal, and the base change map $\rL g^* \rR f_* \to \rR f'_* \rL(g')^*$ is one of lax monoidal functors. It follows that the two sides of \eqref{eq:bc} are algebras in the derived category of vector spaces, and the isomorphism is one of algebras. From this, it follows that the isomorphism in Proposition~\ref{prop:gm1} is compatible with the multiplicative structures on each side. This compatibility is used at two points in the proof of Theorem~\ref{mainthm}. 
\end{remark}

\subsection{The geometric method in super geometry} \label{ss:super-geom}

Combining Propositions~\ref{prop:ss} and~\ref{prop:gm1}, we obtain the following important result:

\begin{theorem} \label{thm:super-gm}
Let $X$ be a smooth super variety with $X_{\ord}$ projective. Suppose that
\begin{displaymath}
0 \to \cJ/\cJ^2 \to \epsilon \to \eta \to 0
\end{displaymath}
is an exact sequence of locally free coherent sheaves on $X_{\ord}$, with $\epsilon=W^* \otimes \cO_{X_{\ord}}$ globally free. Let $S=\Sym(W^*)$, let $Y=\Spec(\Sym(\eta))$, and let $\tilde{Z}$ be the affinization of $Y$. Suppose that $\rH^i(Y, \cO_Y)=0$ for $i>0$. Then we have a natural isomorphism
\begin{displaymath}
\Tor^S_p(\cO_{\tilde{Z}}, \bC)_{p+q} = \rH^q(X, \lw^{p+q}(\cI/\cI^2)),
\end{displaymath}
and a spectral sequence
\begin{displaymath}
\rE^{p,q}_1 = \Tor^S_{-q}(\cO_{\tilde{Z}}, \bC)_p \implies \rH^{p+q}(X, \cO_X).
\end{displaymath}
\end{theorem}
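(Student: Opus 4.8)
The plan is to deduce the statement almost mechanically, by substituting the conclusion of Proposition~\ref{prop:gm1} into the spectral sequence of Proposition~\ref{prop:ss}; essentially all of the mathematical content already lives in those two results, and what remains is bookkeeping with indices. Throughout, $\cI/\cI^2$ and $\cJ/\cJ^2$ denote the same locally free sheaf on $X_{\ord}$, since $X$ is smooth.

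First I would check that the hypotheses of the theorem are exactly those required by Proposition~\ref{prop:gm1}, applied to the projective variety $X_{\ord}$ with globally free sheaf $\epsilon = W^* \otimes \cO_{X_{\ord}}$, sub-bundle $\xi = \cJ/\cJ^2$, and quotient $\eta$: the requisite short exact sequence is one of the hypotheses, the space $Y = \Spec(\Sym(\eta))$ and its affinization $\tilde{Z}$ coincide with the objects built in \S\ref{ss:gm}, and the vanishing $\rH^i(Y, \cO_Y) = 0$ for $i > 0$ is assumed. Proposition~\ref{prop:gm1} then produces a natural isomorphism
\[
\Tor^S_p(\cO_{\tilde{Z}}, \bC)_{p+q} = \rH^q(X_{\ord}, \lw^{p+q}(\cJ/\cJ^2)),
\]
and, since $\lw^{p+q}(\cJ/\cJ^2)$ is a coherent sheaf supported on $X_{\ord}$ while $X$ and $X_{\ord}$ have the same underlying topological space, its cohomology may equally be computed over $X$; this gives the first displayed isomorphism of the theorem.

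Next I would feed this identification back into the $E_1$ spectral sequence of Proposition~\ref{prop:ss},
\[
\rE_1^{p,q} = \rH^{p+q}(X, \lw^p(\cJ/\cJ^2)) \implies \rH^{p+q}(X, \cO_X),
\]
rewriting its $E_1$-page term by term. In the isomorphism above, matching the exterior power to $p$ and the cohomological degree to $p+q$ forces the homological Tor-degree to equal $p - (p+q) = -q$ and the internal degree to equal $p$; hence $\rH^{p+q}(X, \lw^p(\cJ/\cJ^2))$ is canonically $\Tor^S_{-q}(\cO_{\tilde{Z}}, \bC)_p$. Substituting this at each spot on the $E_1$-page affects neither the differentials nor the convergence, so one recovers precisely the asserted spectral sequence, with unchanged abutment $\rH^{p+q}(X, \cO_X)$.

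I do not expect a genuine obstacle: the content is carried entirely by Propositions~\ref{prop:ss} and~\ref{prop:gm1}, and the awkward-looking hypothesis $\rH^i(Y, \cO_Y) = 0$ for $i>0$ is exactly what is needed to invoke the geometric method. The only points demanding care are (i) verifying that the indexing conventions of the two propositions line up so that the substitution on the $E_1$-page is literally term-by-term, and (ii) the harmless passage between sheaf cohomology over $X$ and over $X_{\ord}$. One could additionally observe, using Remark~\ref{rmk:Tor-alg} and the multiplicativity of the spectral sequence of Proposition~\ref{prop:ss}, that all these identifications are compatible with the natural algebra structures, though this refinement is not part of the stated assertion.
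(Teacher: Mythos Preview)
Your proposal is correct and matches the paper's approach exactly: the paper offers no separate proof of Theorem~\ref{thm:super-gm} beyond the sentence ``Combining Propositions~\ref{prop:ss} and~\ref{prop:gm1}, we obtain the following important result,'' and your write-up simply supplies the index bookkeeping that this sentence leaves implicit. Your reindexing $\rH^{p+q}(X,\lw^p(\cJ/\cJ^2)) \cong \Tor^S_{-q}(\cO_{\tilde Z},\bC)_p$ is precisely the substitution intended, and your closing remark about multiplicativity anticipates Remark~\ref{rmk:Tor-alg}.
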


We note one corollary of the theorem. Define the \defn{super dimension} of a super vector space $V$ by $\sdim(V) = \dim(V_0)-\dim(V_1)$, and define the \defn{super Euler characteristic} of a super variety $X$ by
\begin{displaymath}
\chi(X) = \sum_{i \ge 0} (-1)^i \sdim{\rH^i(X, \cO_X)}.
\end{displaymath}

\begin{corollary} \label{cor:euler}
Suppose we are in the setting of Theorem~\ref{thm:super-gm}. Then $\chi(X)$ is the degree of the map $\tilde{Z} \to W$ over the generic point of $W$.
\end{corollary}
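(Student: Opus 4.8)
The plan is to read $\chi(X)$ off the $\rE_1$-page of the spectral sequence in Theorem~\ref{thm:super-gm}, express it as the alternating sum of the total Betti numbers of $\cO_{\tilde{Z}}$ over $S$, and then recognize that alternating sum as the generic rank of the $S$-module $\cO_{\tilde{Z}}$ — which is by definition the degree of $\tilde{Z}\to W$ over the generic point of $W=\Spec S$. Since $\cO_{\tilde{Z}}$ is a finite $S$-module, each $\Tor^S_i(\cO_{\tilde{Z}},\bC)$ is a finite-dimensional vector space, vanishing for $i<0$ or $i>\dim W$, so every sum below is finite and the spectral sequence is bounded.

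The first and most delicate point is the sign bookkeeping that lets one descend from $\rH^*(X,\cO_X)$ to the $\rE_1$-page without losing the super structure. Locally $\gr^p(\cO_X)=\cJ^p/\cJ^{p+1}=\lw^p(\cJ/\cJ^2)$ is spanned by products of exactly $p$ odd generators, hence is concentrated in super degree $p\bmod 2$; therefore $\rE_1^{p,q}=\rH^{p+q}(X,\lw^p(\cJ/\cJ^2))$, and hence every subquotient $\rE_r^{p,q}$ and every associated graded piece $\gr^p\rH^{p+q}(X,\cO_X)=\rE_\infty^{p,q}$, is concentrated in super degree $p\bmod 2$. From the second fact, $\sdim\rH^n(X,\cO_X)=\sum_{p+q=n}(-1)^p\dim_{\bC}\rE_\infty^{p,q}$, and therefore
\begin{displaymath}
\chi(X)=\sum_n(-1)^n\sdim\rH^n(X,\cO_X)=\sum_{p,q}(-1)^{p+q}(-1)^p\dim_{\bC}\rE_\infty^{p,q}=\sum_{p,q}(-1)^q\dim_{\bC}\rE_\infty^{p,q}.
\end{displaymath}
Next I claim that $\sum_{p,q}(-1)^q\dim_{\bC}\rE_r^{p,q}$ is independent of $r$. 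Since $d_r$ is a morphism of super vector spaces but shifts $p$ by $r$, it vanishes for $r$ odd, so the claim is clear there; for $r$ even, $d_r$ changes $q$ by the odd number $1-r$, so $(\rE_r,d_r)$ is a $\bZ/2$-graded complex (graded by $q\bmod 2$) and its signed Euler characteristic is unchanged on passing to homology $\rE_{r+1}$. Hence, using that $\rE_1^{p,q}=\Tor^S_{-q}(\cO_{\tilde{Z}},\bC)_p$,
\begin{displaymath}
\chi(X)=\sum_{p,q}(-1)^q\dim_{\bC}\rE_1^{p,q}=\sum_{i\ge 0}(-1)^i\dim_{\bC}\Tor^S_i(\cO_{\tilde{Z}},\bC).
\end{displaymath}

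It remains to identify this with the asserted degree. Choosing a graded free resolution $F_\bullet\to\cO_{\tilde{Z}}$ over $S$, one has $\Tor^S_i(\cO_{\tilde{Z}},\bC)=\rH_i(F_\bullet\otimes_S\bC)$, so $\sum_i(-1)^i\dim_{\bC}\Tor^S_i(\cO_{\tilde{Z}},\bC)=\sum_i(-1)^i\operatorname{rank}_S F_i$. Tensoring $F_\bullet$ with $\mathrm{Frac}(S)$, an exact operation, gives a complex of free $\mathrm{Frac}(S)$-modules with homology $\cO_{\tilde{Z}}\otimes_S\mathrm{Frac}(S)$ concentrated in degree $0$, so $\sum_i(-1)^i\operatorname{rank}_S F_i=\dim_{\mathrm{Frac}(S)}\bigl(\cO_{\tilde{Z}}\otimes_S\mathrm{Frac}(S)\bigr)$. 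Finally, $\cO_{\tilde{Z}}\otimes_S\mathrm{Frac}(S)$ is the coordinate ring of the fiber of the finite morphism $\tilde{Z}\to W$ over the generic point of $W$, so its dimension over $\mathrm{Frac}(S)$ is exactly the degree of $\tilde{Z}\to W$ over that point. (When $\tilde{Z}\to W$ is not dominant, i.e.\ $Z\subsetneq W$, the module $\cO_{\tilde{Z}}$ is $S$-torsion and both sides vanish.) This proves the corollary.

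The main obstacle is the second paragraph: one must notice that the Euler characteristic to propagate through the spectral sequence is $\sum(-1)^q\dim_{\bC}\rE_r^{p,q}$ and not the ``plain'' one $\sum(-1)^{p+q}\dim_{\bC}\rE_r^{p,q}$ (which would only recover the non-super Euler characteristic of $\rR\Gamma(X,\cO_X)$), and that making this work rests on the parity observation that the $p$-th column of every page of the spectral sequence lies in super degree $p\bmod 2$, which in particular forces the odd differentials to vanish. The remaining step — that the alternating sum of Betti numbers equals the generic rank — is standard commutative algebra.
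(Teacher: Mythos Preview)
Your proof is correct and follows essentially the same approach as the paper: read off $\chi(X)$ from the $\rE_1$-page as $\sum_p(-1)^p\dim\Tor^S_p(\cO_{\tilde Z},\bC)$ and identify this alternating sum with $\dim_{\Frac(S)}(\cO_{\tilde Z}\otimes_S\Frac(S))$. You simply supply the details the paper omits---notably the parity observation that $\gr^p(\cO_X)$ sits in super degree $p\bmod 2$, which forces odd differentials to vanish and makes the $(-1)^q$-weighted count invariant across pages (an alternative, slightly quicker route is to use additivity of the super Euler characteristic directly on the filtration $\cO_X\supset\cJ\supset\cJ^2\supset\cdots$, bypassing the page-by-page analysis).
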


\begin{proof}
From the spectral sequence, we find $\chi(X)=\sum_{p \ge 0} (-1)^p \dim \Tor^S_p(\cO_{\tilde{Z}}, \bC)$, which is easily seen to coincide with the dimension of the vector space $\Frac(S) \otimes_S \cO_{\tilde{Z}}$.
\end{proof}

We note that the corollary implies that $\chi(X)=0$ if $Z$ is a proper subvariety of $W$, i.e., if $Y \to W$ is not surjective. In this case, assuming $\sdim{\rH^0(X, \cO_X)}\ne 0$, we find that $\cO_X$ necessarily has higher cohomology. One can prove a number of variants of the above corollary, e.g., allowing for cases where $\rH^i(Y, \cO_Y)$ is non-vanishing, or replacing ``super dimension'' with ``super character'' in the definition of $\chi(X)$ when there is a group acting.

\subsection{Grothendieck--Springer theory for flag supervarieties} \label{ss:gs}

Let $G$ be a connected reductive supergroup, let $P$ be a parabolic subsupergroup of $G$, and let $X=G/P$ be the associated flag supervariety. We now take a look at what Theorem~\ref{thm:super-gm} amounts to in this case. We note that this discussion is included only to sketch the general situation and provide context; our main results do not depend on it.

Let $G_0$, $P_0$, and $X_0$ be the ordinary varieties underlying $G$, $P$, and $X$. Then $G_0$ is a connected reductive group, $P_0$ is a parabolic subgroup of $G_0$, and $X_0=G_0/P_0$ is a generalized flag variety. Let $x=gP_0$ be a complex point of $X_0$. Let $P^x={}^gP$ and $P^x_0={}^gP_0$, where ${}^g(-)$ denotes conjugation by $g$. In this way, $X_0$ can be seen as parametrizing $G_0$-conjugates of either $P$ or $P_0$. Let $\fg$, $\fp$, and $\fp^x$ be the Lie superalgebras of $G$, $P$, and $P^x$. The tangent superspace to $G/P$ at $x$ is $\fg/\fp^x$, while the tangent space to $G_0/P_0$ at $x$ is $\fg_0/\fp^x_0$. Since $\cJ/\cJ^2$ is the conormal bundle to $X_0 \subset X$, we see that the fiber of $\cJ/\cJ^2$ at $x$ is $(\fg_1/\fp^x_1)^*$. Let $\epsilon=\fg_1^* \otimes \cO_X$ and let $\eta$ be the locally free coherent sheaf on $X_0$ whose fiber at $x$ is $\fp_1^*$. We then have a short exact sequence
\begin{displaymath}
0 \to \cJ/\cJ^2 \to \epsilon \to \eta \to 0.
\end{displaymath}
So far, we have just explained that there is such a sequence for each fiber. However, since the sheaves are all $G_0$-equivariant and $G_0$ acts transitively on $X_0$, it follows that there is such a sequence of the sheaves.

We thus see that we are indeed in the setting of Theorem~\ref{thm:super-gm}. The space $W$ is $\fg_1$. The space $Y$ is the vector bundle over $X_0$ whose fiber at $x$ is $\fp^x_1$. In other words, $Y$ consists of points $(x,v)$ where $x \in X_0$ and $v \in \fp^x_1$. Recall that $Z \subset \fg_1$ is the image of $Y$ under the projection map $X_0 \times W \to W$. From the description of $Y$, we see that $Z=\bigcup_{g \in G_0} {}^g\fp_1$.

The space $\tilde{Z}$ is more subtle. Let $\fh \subset \fp_0$ be a Cartan subalgebra of $\fg_0$ and let $\fW$ be its Weyl group. We have maps
\begin{displaymath}
\fg_1 \to \fg_0 \to \fg_0\cq G_0 \xrightarrow{\cong} \fh\cq\fW,
\end{displaymath}
where the first map is self-bracket (i.e., $v \mapsto [v,v]$), the second map is the GIT quotient map, and the third is the isomorphism coming from the Chevalley restriction theorem. We note that $\fh\cq\fW$ is the spectrum of the invariant ring $\Sym(\fh^*)^{\fW}$. As $Z$ is a subvariety of $\fg_1$, we thus obtain a natural map $Z \to \fh\cq\fW$. We have similar maps
\begin{displaymath}
\fp^x_1 \to \fp^x_0 \to \fh\cq\fW'
\end{displaymath}
where $\fW'$ is the subgroup of $\fW$ associated to $P_0$. Since each element of $Y$ belongs to some $\fp^x_1$, these maps define a map $Y \to \fh\cq\fW'$, and this map factors through $\tilde{Z}$ since the target is affine. This leads to a commutative diagram
\begin{equation} \label{eq:hmodW}
\begin{aligned}
\xymatrix{
\tilde{Z} \ar[r] \ar[d] & \fh\cq\fW' \ar[d] \\
Z \ar[r] & \fh\cq\fW. }
\end{aligned}
\end{equation}
As we will see in \S \ref{s:gs}, this diagram is not cartesian in general. However, in all cases we have considered, a slight modification is cartesian.

We thus have the following general plan for studying $\rH^{\bullet}(X, \cO_X)$:
\begin{enumerate}
\item Show that $\rH^i(Y, \cO_Y)$ vanishes for $i>0$.
\item Determine the minimal free resolution of $\cO_{\tilde{Z}}$ over $S=\Sym(\fg_1^*)$.
\item Analyze the spectral sequence in Theorem~\ref{thm:super-gm}.
\end{enumerate}
In the remainder of this paper, we carry out this plan in detail for $X=\Gr_{r|s}(\bC^{n|m})$.

\section{Splitting rings} \label{s:split}

In \S \ref{s:gs}, we study Grothendieck--Springer theory in detail for the super Grassmannian. In this section and the next section, we gather some preliminary material that will be needed for that. This section develops the theory of splitting and factorization rings. These have also been considered in \cite{ekedahl-laksov, GSS, laksov}.

\subsection{Splitting rings} \label{ss:split}

Let $A$ be a ring and let $f=\sum_{i=0}^n a_{n-i} u^i$ be a monic polynomial with coefficients in $A$ (so $a_0=1$). We define the \defn{splitting ring} of $f$, denoted $\Split_A(f)$, to be the quotient $A[\xi_1, \ldots, \xi_n]/I$ where $I$ is the ideal generated by the elements
\[
  a_i - (-1)^i e_i(\xi_1, \ldots, \xi_n), \qquad 1 \le i \le n,
\]
where $e_i$ is the $i$th elementary symmetric function. Thus $\Split_A(f)$ is the universal quotient of $A[\xi_i]$ in which we have $f(u)=\prod_{i=1}^n(u-\xi_i)$. If $A$ is graded and $a_i$ has degree $2i$ then $\Split_A(f)$ is naturally graded, with $\xi_i$ of degree~2. (We include a factor of~2 here for later convenience.) The symmetric group $\fS_n$ acts on $\Split_A(f)$ by permuting the $\xi_i$'s and fixing $A$. Formation of the splitting ring is compatible with base change: if $A \to A'$ is a homomorphism, and $f'$ is the image of $f$ under $A[u] \to A'[u]$, then we have a natural isomorphism
\[
  \Split_{A'}(f')=A' \otimes_A \Split_A(f).
\]
We fix $A$ and $f$ for the remainder of this section, and let $B=\Split_A(f)$. We also assume $A$ to be noetherian.

\subsection{The universal case}

Let $A^{\univ}=\bZ[a_1, \ldots, a_n]$, let $f^{\univ} \in A^{\univ}[u]$ be the polynomial $u^n+\sum_{i=0}^{n-1} a_{n-i} u^i$, and let $B^{\univ}$ be the splitting ring of $f^{\univ}$. The map $\bZ[\xi_1, \ldots, \xi_n] \to B^{\univ}$ is surjective, and has no kernel since $B^{\univ} \otimes \bC$ clearly has Krull dimension $n$. Thus $B^{\univ}=\bZ[\xi_1, \ldots, \xi_n]$. 

The polynomial $f \in A[u]$ is the image of $f^{\univ}$ under a unique ring homomorphism $A^{\univ} \to A$. Since formation of splitting rings is compatible with base change, we have $B=A \otimes_{A^{\univ}} B^{\univ}$. This can be a useful tool for proving results about general splitting rings.

The above discussion shows that we have a cartesian diagram
\begin{displaymath}
\xymatrix{
\Spec(B) \ar[r] \ar[d] & \Spec(B^{\univ}) \ar[d] \\
\Spec(A) \ar[r] & \Spec(A^{\univ}) }
\end{displaymath}
We can identify $\Spec(B^{\univ})$ with $\bA^n$ and $\Spec(A^{\univ})$ with the quotient $\bA^n \cq \fS_n$ (which is isomorphic to $\bA^n$). Here, $\bA^n$ and $\fS_n$ can be identified with the Cartan subalgebra and Weyl group of $\gl_n$. The above diagram is thus similar to \eqref{eq:hmodW} (with $P_0$ a Borel, so that $\fW'=1$), and this is essentially how splitting rings will be relevant in \S \ref{s:gs}.

\subsection{Basic results}

Let $\Delta \in A$ be the discriminant of $f$. We have $\Delta=\prod_{i \ne j} (\xi_i-\xi_j)$ in $B$. Recall that a ring homomorphism is \defn{syntomic} if it is flat, of finite presentation, and all of the fibers are local complete intersections \stacks{00SL}.

\begin{proposition} \label{prop:split}
We have the following:
\begin{enumerate}
\item As an $A$-module, $B$ is free of rank $n!$.
\item The map $A \to B$ is a syntomic.
\item If $A$ satisfies Serre's condition $(S_k)$, then so does $B$. In particular, if $A$ is Cohen--Macaulay, then so is $B$.
\item If $\Delta$ is a unit of $A$ then $A \to B$ is \'etale.
\item If $A$ is reduced and $\Delta$ is a non-zerodivisor then $B$ is reduced.
\item The inclusion $A \to B$ admits an $A$-linear splitting.
\end{enumerate}
\end{proposition}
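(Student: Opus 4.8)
The plan is to establish (a) first, after which (b)--(f) follow by standard commutative algebra, using repeatedly the base-change identity $B=A\otimes_{A^{\univ}}B^{\univ}$ to reduce delicate points to the transparent universal case. For (a) I would adjoin roots one at a time: $B_1:=A[\xi_1]/(f(\xi_1))$ is $A$-free of rank $n$ since $f$ is monic of degree $n$, and division by the monic factor $u-\xi_1$ gives $f=(u-\xi_1)f_1$ in $B_1[u]$ with $f_1$ monic of degree $n-1$; comparing universal properties (a map out of $\Split_A(f)$ is a root of $f$ together with a map out of the splitting ring of the residual factor) yields $\Split_A(f)\cong\Split_{B_1}(f_1)$, so induction on $\deg f$ gives that $B$ is $A$-free of rank $n!$. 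Equivalently one can invoke the classical fact that $B^{\univ}=\bZ[\xi_1,\dots,\xi_n]$ is free over $A^{\univ}=\bZ[e_1,\dots,e_n]$ on the ``staircase'' monomials $\xi_1^{c_1}\cdots\xi_n^{c_n}$ with $0\le c_i\le n-i$, which I will reuse for (f).

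Part (b) is then formal: freeness gives flatness, the presentation ($n$ equations in the $n$ variables $\xi_i$) gives finite presentation, and each fibre $B\otimes_A\kappa(\mathfrak{p})=\Split_{\kappa(\mathfrak{p})}(f)$ is a finite $\kappa(\mathfrak{p})$-algebra of Krull dimension $0$, obtained from the $n$-dimensional Cohen--Macaulay ring $\kappa(\mathfrak{p})[\xi_1,\dots,\xi_n]$ by killing $n$ elements; hence those elements form a regular sequence and the fibre is a complete intersection over $\kappa(\mathfrak{p})$, in particular lci, so $A\to B$ is syntomic. For (c) I would run the usual flat-ascent argument: for $\mathfrak{q}\in\Spec B$ over $\mathfrak{p}\in\Spec A$, the map $A_{\mathfrak{p}}\to B_{\mathfrak{q}}$ is flat local with Cohen--Macaulay fibre (a localization of the complete-intersection fibre above), so dimension and depth are additive along it, giving $\operatorname{depth}B_{\mathfrak{q}}=\operatorname{depth}A_{\mathfrak{p}}+(\dim B_{\mathfrak{q}}-\dim A_{\mathfrak{p}})$; comparing with $\min(k,\dim B_{\mathfrak{q}})$ in the two cases $\dim A_{\mathfrak{p}}\ge k$ and $\dim A_{\mathfrak{p}}<k$ (in the latter $(S_k)$ forces $A_{\mathfrak{p}}$ Cohen--Macaulay) shows $B$ is $(S_k)$, and letting $k\to\infty$ gives the Cohen--Macaulay claim.

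For (d) I would differentiate the presentation $B=A[\xi_1,\dots,\xi_n]/(g_1,\dots,g_n)$ with $g_i=(-1)^ie_i(\xi)-a_i$ (the $a_i\in A$ contribute no differentials): $\Omega_{B/A}$ is the cokernel of the $B$-linear map $B^n\to B^n$ given by the Jacobian matrix $\bigl((-1)^i\,\partial e_i/\partial\xi_j\bigr)$, whose determinant is, up to sign, the Vandermonde $\prod_{i<j}(\xi_i-\xi_j)$, and the square of the Vandermonde is $\pm\Delta$. So if $\Delta\in A^{\times}$ the Vandermonde is a unit of $B$, the Jacobian is invertible, $\Omega_{B/A}=0$, and together with flatness and finite presentation this makes $A\to B$ \'etale. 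For (e): since $B$ is $A$-free, $\Delta$ is a non-zerodivisor on $B$, so $B\hookrightarrow B[\Delta^{-1}]=\Split_{A[\Delta^{-1}]}(f)$; over $A[\Delta^{-1}]$, which is reduced, $\Delta$ is a unit, hence by (d) the map $A[\Delta^{-1}]\to B[\Delta^{-1}]$ is \'etale, and an \'etale algebra over a reduced ring is reduced --- concretely, $B[\Delta^{-1}]$ is free over the reduced noetherian ring $A[\Delta^{-1}]$, hence embeds in its base change to the total quotient ring $\prod_i\kappa_i$, which is $\prod_i\Split_{\kappa_i}(f)$ with each factor reduced because the image of $f$ is separable over the field $\kappa_i$. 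Thus $B$ embeds in a reduced ring and is itself reduced.

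For (f), an $A^{\univ}$-module splitting of $A^{\univ}\to B^{\univ}$ base-changes to an $A$-module splitting of $A\to B$, so it suffices to split in the universal case; there $B^{\univ}=\bZ[\xi_1,\dots,\xi_n]$ is free over $A^{\univ}=\bZ[e_1,\dots,e_n]$ on the staircase monomials, of which $1$ is one, so $A^{\univ}\cdot 1=A^{\univ}$ is a direct summand and the projection onto it is the desired retraction; equivalently, already over $A$ the element $1$ extends to a basis of $B$. No single step here is genuinely hard --- the work is in assembling standard facts --- but the two load-bearing external inputs are the additivity of dimension and depth along flat local maps used in (c) and the classical freeness of $\bZ[\xi]$ over the ring of symmetric polynomials (equivalently, that $1$ extends to an $A$-basis of $B$) used in (a)/(f); the one non-routine explicit identity, that the Jacobian of the elementary symmetric functions is the Vandermonde, is classical. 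If anything warrants a careful citation, it is the flat-ascent argument of (c).
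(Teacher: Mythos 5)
Your proposal is correct, and it diverges from the paper's proof in several places that are worth noting. For (a), the paper proves freeness in the universal case via miracle flatness (a finite map of polynomial rings of equal dimension is flat, citing the Stacks project), whereas your inductive ``adjoin one root at a time'' argument, using $\Split_A(f)\cong\Split_{B_1}(f_1)$, is more hands-on and simultaneously exhibits the staircase basis, which you then reuse. For (c), the paper simply cites that $(S_k)$ is syntomic-local; you instead unwind the flat-ascent argument (additivity of depth and dimension along flat local maps with Cohen--Macaulay fibers), which is more self-contained but otherwise the same in spirit. For (d), the paper differentiates $f(\xi_i)=0$ to get $f'(\xi_i)\,d\xi_i=0$ directly, while you compute the full Jacobian of the elementary symmetric functions and identify it with the Vandermonde; both kill $\Omega_{B/A}$. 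For (e), the paper uses Serre's $(R_0)+(S_1)$ criterion (invoking (c) and the codimension of $V(\Delta)$), while you give a more concrete argument by embedding $B$ into $B[\Delta^{-1}]$ and then into its base change to the total quotient ring of $A$, where each factor $\Split_{\kappa_i}(f)$ is an \'etale algebra over a field and hence reduced. The most noteworthy divergence is (f): the paper invokes Laksov's Gysin homomorphism (an external, nontrivial construction, further twisted by multiplication by $\prod_i\xi_i^{n-i}$), whereas you simply observe that $1$ is a member of the staircase $A$-basis of $B$ (preserved under base change from the universal case), so projection onto the $A\cdot 1$ summand already gives the retraction. Your argument for (f) is genuinely simpler and more elementary than the paper's, and is a clean payoff of carrying the explicit basis through the proof.
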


\begin{proof}
  (a) It suffices, by base change, to prove the statement in the universal case where $A=\bZ[a_1, \ldots, a_n]$ and $f=u^n+\sum_{i=0}^{n-1} a_{n-i} u^i$. This is well-known, and there are several proofs, but we include one. The map $A \to B$ is finite, as each $\xi_i$ satisfies the universal polynomial. Since $A \to B$ is a finite map of polynomial rings of the same dimension, it is flat \stacks{00R4}. Therefore $B$ is projective as an $A$-module, and thus free, as any projective $A$-module is free (this statement is easy in this case as everything is graded). The rank can be computed over the generic point, where it is well-known to be $n!$.

(b) Suppose that $\fp$ is a prime of $A$. Then $B \otimes_A \kappa(\fp)$ is finite over $\kappa(\fp)$ by~(a), and therefore of Krull dimension~0. This ring is a quotient of $\kappa(\fp)[\xi_1,\dots,\xi_n]$ by $n$ relations, and is therefore a complete intersection. It follows that $A \to B$ is syntomic.

(c) Since $(S_k)$ is syntomic local \stacks{036A}, the result follows from (b).

(d) We have $0=f(\xi_i)$ and so $0=f'(\xi_i) d\xi_i$. However, $f'(\xi_i)=\prod_{j \ne i} (\xi_i-\xi_j)$ divides $\Delta$ and is therefore a unit. Thus $d\xi_i=0$. We conclude that $\Omega_{B/A}=0$. Since $B$ is finite flat over $A$ by (a), it is therefore \'etale.

(e) Since $A$ is reduced, it satisfies $(R_0)$ and $(S_1)$. Thus $B$ satisfies $(S_1)$ by part~(c). Since $V(\Delta) \subset \Spec(A)$ has codimension~1 and $A[1/\Delta] \to B[1/\Delta]$ is \'etale, it follows that $B$ satisfies $(R_0)$. Thus $B$ is reduced.

(f) This follows from the existence of the Gysin homomorphism constructed in \cite[Theorem 8.1]{laksov} (it has to be precomposed with multiplication by $\prod_i \xi_i^{n-i}$ which is also $A$-linear).
\end{proof}

\begin{proposition} \label{prop:split-rep}
Suppose that $n!$ is invertible in $A$. Then $B$ is free of rank one as an $A[\fS_n]$-module.
\end{proposition}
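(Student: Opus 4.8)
The plan begins by using that formation of both the splitting ring and the group ring commutes with base change. Writing $A'=\bZ[1/n!][a_1,\dots,a_n]$ and letting $A'\to A$ be the map sending $a_i$ to the coefficients of $f$ (legitimate because $n!$ is invertible in $A$), we get $B\cong A\otimes_{A'}\Split_{A'}(f^{\univ})$ as $A[\fS_n]$-modules, where $\Split_{A'}(f^{\univ})=\bZ[1/n!][\xi_1,\dots,\xi_n]=:B'$. So it suffices to treat $A=A'$. The gain is that $A'$ is graded (put $\deg a_i=i$, $\deg\xi_i=1$) and $\fS_n$ acts on $B'$ by degree-preserving $A'$-algebra automorphisms, so graded-commutative-algebra tools are available.

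\textbf{Reduction to the coinvariant algebra.} Since $n!$ is invertible, $A'[\fS_n]$ is a separable $A'$-algebra, hence any $A'[\fS_n]$-module that is projective over $A'$ is projective over $A'[\fS_n]$; in particular $B'$ is, being $A'$-free. Now regard $A'[\fS_n]$ as a graded ring with degree-zero part $\Lambda_0:=\bZ[1/n!][\fS_n]$. A relative graded Nakayama argument (the graded analogue of ``projective over a local ring is free'', valid since $B'$ is projective over the noetherian ring $A'[\fS_n]$ and bounded below) gives an isomorphism of $A'[\fS_n]$-modules $B'\cong A'[\fS_n]\otimes_{\Lambda_0}\overline{B'}$, where $\overline{B'}=B'/(A'[\fS_n])_{>0}B'$. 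One checks $(A'[\fS_n])_{>0}B'=\mathfrak{m}B'$ for $\mathfrak{m}=(a_1,\dots,a_n)$, so $\overline{B'}=\bZ[1/n!][\xi_1,\dots,\xi_n]/(e_1,\dots,e_n)$ is the coinvariant algebra $C$ of $\fS_n$ over $\bZ[1/n!]$. Hence it is enough to prove that $C$ is free of rank one as a $\bZ[1/n!][\fS_n]$-module.

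\textbf{Invoking Chevalley's theorem.} For this I would use two inputs: that $\bZ[1/n!]$ is a PID, and that the Wedderburn decomposition of $\bQ[\fS_n]$ already descends, $\bZ[1/n!][\fS_n]\cong\prod_\lambda\mathrm{Mat}_{d_\lambda}(\bZ[1/n!])$, where $\lambda$ runs over partitions of $n$ and $d_\lambda=\dim S^\lambda$ (Specht module). The latter holds because the $S^\lambda$ are defined over $\bZ$, stay absolutely irreducible modulo every prime not dividing $n!$, and satisfy $\sum_\lambda d_\lambda^2=n!$, which forces the tautological map $\bZ[1/n!][\fS_n]\to\prod_\lambda\mathrm{End}_{\bZ[1/n!]}(S^\lambda)$ to be an isomorphism. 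Given this, Morita theory over the PID $\bZ[1/n!]$ shows that a $\bZ[1/n!][\fS_n]$-module which is $\bZ[1/n!]$-projective is free of rank one exactly when the multiplicity of each $S^\lambda$ in it is $d_\lambda$. Finally $C$ is $\bZ[1/n!]$-free (a standard monomial basis, e.g. the staircase monomials $\xi_1^{a_1}\cdots\xi_n^{a_n}$ with $a_i\le n-i$), hence $\bZ[1/n!][\fS_n]$-projective by separability; its $\fS_n$-multiplicities are unchanged by extending scalars to $\bQ$; and over $\bQ$ they equal the $d_\lambda$ by the classical Chevalley theorem that the coinvariant algebra of $\fS_n$ is isomorphic to the regular representation. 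Base change along $A'\to A$ then recovers the general case.

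\textbf{Where the difficulty sits.} The only genuinely non-formal ingredient is Chevalley's theorem, and the step I expect to require the most care is the passage from its characteristic-zero form to the version over $\bZ[1/n!]$ used above — the statement really does have arithmetic content, since it is false over $\bZ$ itself, and the invertibility of $n!$ enters exactly through the fact that $\bZ[1/n!]$ is a PID and $\bZ[1/n!][\fS_n]$ is a maximal order over it (a product of matrix algebras). Alternatives would be to cite an integral form of Chevalley's theorem directly, or to extract the conclusion from Laksov's formalism of splitting algebras; one could instead try to write an explicit $\fS_n$-free generator of $B$ — for $n=2$ one may take $b=\tfrac12(a_1-1)+\xi_1$ — but a clean closed form for general $n$ seems fiddly, which is why I would prefer the structural route above.
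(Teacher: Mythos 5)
Your proof is correct, and it takes a genuinely different route from the paper's. The paper decomposes $B$ into its $S^\lambda$-isotypic pieces $N_\lambda=\Hom_{\Lambda[\fS_n]}(S^\lambda,B)$, observes each $N_\lambda$ is a graded projective (hence free) $A$-module, and compares ranks at the \emph{generic} point of $\Spec A$, where the identification of $\bQ(\xi_1,\dots,\xi_n)$ with the regular representation of $\fS_n$ over $\bQ(a_1,\dots,a_n)$ is the normal basis theorem in Galois theory. You instead pass to the \emph{special} fiber: you use separability of $A'[\fS_n]/A'$ to promote $A'$-freeness to $A'[\fS_n]$-projectivity, then graded Nakayama to show $B'$ is extended from the coinvariant algebra $C=B'/\fm B'$, so the whole question collapses to Chevalley's theorem (plus the Wedderburn/Morita bookkeeping over $\bZ[1/n!]$). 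The two routes are essentially dual — both hinge on $B'$ being $A'$-free so that a rank computation can be made at a single point of $\Spec A'$, and the two classical facts invoked (normal basis vs. Chevalley's coinvariant theorem) are themselves closely linked. What the paper's choice buys is brevity: the normal basis theorem is a one-line citation and avoids the graded-projective-is-extended-from-degree-$0$ lemma and the integral Wedderburn decomposition, both of which you have to set up carefully. What your version buys is transparency: it isolates the dependence on Chevalley's theorem and on the arithmetic of $\bZ[1/n!][\fS_n]$, which is the conceptual reason $n!$ must be invertible. One remark: you correctly flag the descent of the Wedderburn decomposition to $\bZ[1/n!]$ as the delicate point; note that the paper sidesteps precisely this by citing that Young idempotents live in $\bZ[1/n!][\fS_n]$ and working only with the Specht summands rather than the full matrix-algebra structure.
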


\begin{proof}
Let $\Lambda=\bZ[1/n!]$ and let $S^{\lambda}$ be the Specht module for $\fS_n$ over $\Lambda$. If $M$ is any $\Lambda[\fS_n]$-module, then the natural map
\begin{displaymath}
\bigoplus_{\vert \lambda \vert=n} \Hom_{\Lambda[\fS_n]}(S^{\lambda}, M) \otimes S^{\lambda} \to M
\end{displaymath}
is an isomorphism. To see this, note that $S^{\lambda}$ is a projective $\Lambda[\fS_n]$-module: Young idempotents are defined over $\Lambda[\fS_n]$ (see \cite[Lemma 4.26]{fultonharris}), so $S^\lambda$ is a direct summand of $\Lambda[\fS_n]$. Hence both sides above are exact functors of $M$. It thus suffices to prove the result for $M=S^{\mu}$ (since these are enough projectives), where it is clear.

Now, it suffices to prove the proposition in the universal case $A=\Lambda[a_1, \ldots, a_n]$. By the above isomorphism, we see that $N_{\lambda}=\Hom_{\Lambda[\fS_n]}(S^{\lambda}, B)$ is a summand of $B$, and thus projective as an $A$-module; since $N_{\lambda}$ is also graded, it is free. To prove the result, it is enough to show that the $A$-rank of $N_{\lambda}$ coincides with the $\Lambda$-rank of $S^{\lambda}$. This can be checked over $\Frac(A)$, where it is well known: $\bQ(\xi_1, \ldots, \xi_n)$ is isomorphic to the regular representation of $\fS_n$ over $\bQ(a_1, \ldots, a_n) = \bQ(\xi_1,\dots, \xi_n)^{\fS_n}$ by the normal basis theorem in Galois theory.
\end{proof}

\begin{remark}
The hypothesis that $n!$ is invertible is necessary: indeed, if $n=2$, $A=\bF_2$, and $f=u^2$ then $B \cong \bF_2[t]/t^2$ with trivial $\fS_2$-action, but $\bF_2[\fS_2]$ has non-trivial $\fS_2$-action.
\end{remark}

We pause to give a geometric source of splitting rings (see \cite[Theorem 6.1]{GSS}). Let $X$ be a smooth variety over an algebraically closed field and let $\cE$ be a rank $n$ vector bundle on $X$. Let $A$ be the Chow ring of $X$ and let $a_i = (-1)^i c_i(\cE)$, where $c_i(\cE)$ is the $i$th Chern class of $\cE$. Then the splitting ring $B$ is the Chow ring of the relative flag variety $\Fl(\cE)$. Furthermore, on $\Fl(\cE)$, the pullback of $\cE$ has a complete flag of subbundles (i.e., whose successive quotients are line bundles), and the Chern classes of these line bundles are identified with the $-\xi_i$. 

An important case for us is when $X = \Spec(\bC)$ and $\cE=\bC^n$, so that $f=u^n$. In that case, this discussion gives the following result (we note that the Chow ring and singular cohomology ring of $\Fl(\bC^n)$ are isomorphic since it has a cellular decomposition).

\begin{proposition} \label{prop:split-flag}
Suppose that $f=u^n$. Regard $A$ as graded and concentrated in degree~$0$, and $B$ as graded with each $\xi_i$ of degree~$2$. Then we have a natural isomorphism of graded rings
\begin{displaymath}
B = A \otimes \rH^*_{\sing}(\Fl(\bC^n), \bZ).
\end{displaymath}
\end{proposition}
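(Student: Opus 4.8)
The plan is to reduce immediately to the universal base $A=\bZ$ by base change, and then to recognize the resulting ring as the Borel presentation of $\rH^*(\GL_n/B)$ — which is exactly the geometric source of splitting rings recalled just above. Concretely: the polynomial $f=u^n\in A[u]$ is the image of $u^n\in\bZ[u]$ under the structure map $\bZ\to A$, so compatibility of the splitting ring with base change (established in \S\ref{ss:split}) gives a natural isomorphism of graded rings
\[
B=\Split_A(u^n)=A\otimes_{\bZ}\Split_{\bZ}(u^n),
\]
where $A$ is placed in degree $0$ and each $\xi_i$ has degree $2$. It therefore suffices to produce a natural graded ring isomorphism $\Split_{\bZ}(u^n)\cong\rH^*_{\sing}(\Fl(\bC^n),\bZ)$ and then tensor with $A$.

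For the case $A=\bZ$, unwinding the definition of the splitting ring gives $\Split_{\bZ}(u^n)=\bZ[\xi_1,\dots,\xi_n]/(e_1(\xi),\dots,e_n(\xi))$, since every coefficient $a_i$ of $u^n$ with $i\ge 1$ vanishes. To identify this with the cohomology of the flag variety in our framework, I would apply the geometric description of splitting rings recalled above to $X=\Spec(\bC)$ and $\cE=\bC^n$ the trivial bundle: here $A=\bZ$ is the Chow ring of a point and $a_i=(-1)^i c_i(\bC^n)=0$, so $\Split_{\bZ}(u^n)$ is identified with the Chow ring of $\Fl(\bC^n)$, with the classes $-\xi_i$ corresponding to the first Chern classes of the tautological line subquotients. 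Since $\Fl(\bC^n)$ is paved by Schubert cells, its odd-degree cohomology vanishes and the cycle class map is a ring isomorphism $\mathrm{CH}^*(\Fl(\bC^n))\xrightarrow{\sim}\bigoplus_k\rH^{2k}_{\sing}(\Fl(\bC^n),\bZ)$; after the degree-doubling built into our convention ($\xi_i$ in degree $2$) this is the asserted isomorphism. Tensoring with $A$ over $\bZ$ then yields the proposition. (Alternatively, one can invoke the flag-bundle version of the geometric source directly, \cite[Theorem 6.1]{GSS}, with base a point, bypassing the explicit Chow-ring step.)

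Since all of the input is classical — this is the Borel presentation of $\rH^*(\GL_n/B)$ — there is no genuine obstacle here; the only points that require care are bookkeeping ones. First, one must confirm that the grading conventions match: an algebraic generator $\xi_i$ of degree $2$ must go to a topological class of degree $2$, which is why the factor of $2$ was inserted into the grading on splitting rings in \S\ref{ss:split}. Second, one must verify that the identification is an isomorphism of \emph{rings} and not merely of graded abelian groups, which is precisely what the Borel presentation (equivalently, the ring structure on the Chow ring together with the isomorphism $\mathrm{CH}^*\cong\rH^{2*}$) provides. Naturality of the isomorphism in $A$ is automatic from the base-change isomorphism used in the reduction step.
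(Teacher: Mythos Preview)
Your proof is correct and follows essentially the same approach as the paper: the paper's argument is the paragraph immediately preceding the proposition, which invokes the geometric description of splitting rings from \cite[Theorem~6.1]{GSS} with $X=\Spec(\bC)$ and $\cE=\bC^n$, together with the remark that Chow and singular cohomology agree for $\Fl(\bC^n)$ because of its cellular decomposition. You have made explicit the base-change reduction to $A=\bZ$ that the paper leaves implicit, but otherwise the reasoning is identical.
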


\subsection{First normality criterion} \label{ss:norm}

We now turn our attention to the question of when $A$ is normal. We will give an initial criterion here, and a variant in \S \ref{ss:norm2} that is somewhat more convenient. Let $\tilde{E} \subset \Spec(B)$ be the closed set where at least three of the $\xi_i$'s coincide or there are two pairs of $\xi_i$ which coincide, and let $E \subset \Spec(A)$ be the image of $\tilde{E}$. Note that $E$ is closed since $A \to B$ is finite. We say that an element $f$ of a normal ring $R$ is \defn{squarefree} if $v_{\fp}(f) \in \{0,1\}$ for all height one primes $\fp$ of $R$, where $v_{\fp}$ denotes the valuation associated to $\fp$. We say that a subset of $\Spec(A)$ has \defn{codimension $\ge c$} if all primes it contains have height $\ge c$.

\begin{proposition} \label{prop:norm-crit}
Suppose the following conditions hold:
\begin{enumerate}
\item $A$ is normal,
\item $\Delta$ is squarefree and a non-zerodivisor,
\item $E$ has codimension $\ge 2$ in $\Spec(A)$.
\end{enumerate}
Then $B$ is normal.
\end{proposition}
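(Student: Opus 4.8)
The plan is to verify normality via Serre's criterion: $B$ is normal iff it satisfies $(R_1)$ and $(S_2)$. The $(S_2)$ condition is immediate: since $A$ is normal it satisfies $(S_2)$, and Proposition~\ref{prop:split}(c) propagates $(S_k)$ from $A$ to $B$. So the entire content is the $(R_1)$ condition, i.e.\ that $B$ is regular in codimension~$1$. The idea is that $B$ can only fail to be regular where the splitting map $A \to B$ fails to be \'etale, and by Proposition~\ref{prop:split}(d) that locus is contained in $V(\Delta)$; the hypotheses on $\Delta$ and $E$ are designed to control exactly what happens along $V(\Delta)$.

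First I would reduce to checking regularity of $B$ at the generic points $\fq$ of the preimage of $V(\Delta)$ (equivalently, height-one primes $\fq$ of $B$ containing $\Delta$; any height-one prime not containing $\Delta$ lies in the \'etale locus $\Spec(B[1/\Delta])$ over the regular ring $\Spec(A[1/\Delta])$, hence $B_\fq$ is regular). Let $\fp = \fq \cap A$. Because $A \to B$ is finite and flat (Proposition~\ref{prop:split}(a)), $\fp$ has height one as well, so $A_\fp$ is a DVR by normality of $A$. Since $E$ has codimension $\ge 2$, the point $\fp$ lies \emph{outside} $E$; by the definition of $E$ and $\tilde E$, this means that over $\fp$ at most two of the $\xi_i$ coincide, and in at most one such pair. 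In other words, after a faithfully flat (in fact finite \'etale away from $\Delta$, but here we just need a local argument) or a direct combinatorial bookkeeping argument, the fiber $B \otimes_A \kappa(\fp)$, and more precisely the completion or localization at $\fq$, looks like the splitting ring of a polynomial with a single double root: one can split off the $\xi_i$'s that are forced to be distinct modulo $\fp$, reducing to the universal quadratic case.

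Concretely, the key local model computation is this: in the universal case $n=2$, $A = \bZ[a_1,a_2]$, $B = \bZ[\xi_1,\xi_2]$, and $\Delta = (\xi_1-\xi_2)^2$ pulls back from $a_1^2 - 4a_2$ which is squarefree in $A$; here $B$ is a regular ring, and $A \to B$ is ramified precisely along the squarefree divisor $\{\xi_1 = \xi_2\}$. The point is that squarefreeness of $\Delta$ guarantees that $\Delta$ generates the maximal ideal of the DVR $A_\fp$ (it is a uniformizer, not a higher power), and then $B_\fq$ is obtained from $A_\fp$ by adjoining a square root of a uniformizer — a ramified extension of DVRs, which is again a DVR, hence regular. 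More carefully: localize at $\fq$, use Proposition~\ref{prop:split}(a) to split $B \otimes_A A_\fp$ as a product over the pattern of coincidences of the $\xi_i$ mod $\fp$; the factor containing $\fq$ is, up to \'etale base change, $A_\fp[t]/(t^2 - \Delta')$ where $\Delta'$ is a uniformizer of $A_\fp$ (this uses squarefreeness to identify $v_\fp(\Delta')=1$), and such a ring is a DVR. Therefore $B_\fq$ is regular, establishing $(R_1)$.

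The main obstacle I anticipate is making the "split off the distinct roots / reduce to the quadratic case" step rigorous: the elementary symmetric functions do not literally factor, so one needs either a Hensel-type argument after completing at $\fp$, or an argument via the universal case together with the cartesian diagram $\Spec B \to \Spec A$ over $\Spec B^{\univ} \to \Spec A^{\univ}$ identifying the ramification divisor on $B^{\univ}$ with the (reduced) preimage of the discriminant hypersurface. A clean way is: work over $A_\fp$, pass to a finite \'etale cover of $\Spec A_\fp[1/\Delta]$ splitting $f$ completely, then take normalizations to see that the only new ramification at $\fp$ is a single simple ramification coming from the one coinciding pair, controlled in codimension one by the squarefreeness hypothesis. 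One should double-check that hypothesis (c) — rather than just "two of the $\xi_i$ coincide generically is codimension one" — is exactly what rules out the bad configurations where $B_\fq$ could fail to be regular, e.g.\ a triple point (which would give something like $t^3 = $ uniformizer, still a DVR, so in fact perhaps $E$ only needs to exclude the two-pairs case — but excluding both keeps the local model a single quadratic and the proof uniform). I would present the argument with the completion-and-Hensel reduction, as that keeps the bookkeeping of root-coincidence patterns cleanest.
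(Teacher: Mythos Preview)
Your proposal is correct and follows essentially the same route as the paper: verify $(S_2)$ via Proposition~\ref{prop:split}(c), then establish $(R_1)$ by working at a height-one prime $\fp$ of $A$ containing $\Delta$, using (c) to ensure at most one pair of $\xi_i$ collide over $\fp$, splitting off the simple roots by a Hensel-type argument, and reducing to the quadratic local model where squarefreeness of $\Delta$ makes the discriminant a uniformizer. The paper implements your ``completion-and-Hensel'' step via strict henselization of $A_\fp$ (so that $f$ factors as $q(u)(u-x_3)\cdots(u-x_n)$ with $q$ quadratic), then shows directly that the relevant local ring is $A[u]/q$ and that this is a DVR; your description ``$A_\fp[t]/(t^2-\Delta')$ with $\Delta'$ a uniformizer'' is exactly this after completing the square, valid when $2$ is invertible. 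The paper also treats characteristic~$2$ separately (where $\bar q$ may be irreducible and one cannot complete the square), which your sketch does not, but over $\bC$ this is irrelevant. Your parenthetical about triple points is slightly off: if three roots coincide at $\fp$ then $\Delta$ lies in $\fm_\fp^2$ (cf.\ Lemma~\ref{lem:disc-3}), so squarefreeness of $\Delta$ already excludes this at height-one primes and the local model would not be $t^3=\text{uniformizer}$; but this does not affect your argument.
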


\begin{proof}
  First suppose that $A$ is a strictly henselian discrete valuation ring. We show that $B$ is regular. If $\Delta$ is a unit of $A$ then $B$ is \'etale over $A$ and thus regular. Assume then that $\Delta$ is not a unit; by hypothesis (b), it is a uniformizer of $A$. Let $\ol{f}$ be the image of $f$ in $\kappa[u]$, where $\kappa$ is the residue field of $A$. Since $\Delta$ maps to~0 in $\kappa$ it follows that $\ol{f}$ has a repeated root; by hypothesis (c), it has only one (i.e., it has $n-1$ distinct roots). We thus have a factorization $\ol{f}=\ol{q} \cdot \ol{g}$ over $\kappa$, where $\ol{q}$ is a degree~2 polynomial with a repeated root over $\ol{\kappa}$ and $\ol{g}$ has distinct roots over $\ol{\kappa}$. Since $\kappa$ is separably closed, it follows that $\ol{g} = (u-\ol{x}_3) \cdots (u-\ol{x}_n)$ splits completely for $\ol{x}_3,\dots,\ol{x}_n \in \kappa$ distinct (and distinct from the unique root of $\ol{q}$). By the henselian property, we thus have a factorization $f(u)=q(u) (u-x_3) \cdots (u-x_n)$, where $x_i \in A$ lifts $\ol{x}_i$ for $i \ge 3$ and $q(u)$ is a quadratic polynomial with coefficients in $A$.

Now let $\fp$ be a prime of $B$ above the maximal ideal of $A$, and work in $B_{\fp}$ in what follows. Applying a permutation if necessary, we can assume that $\ol{\xi}_i=\ol{x}_i$ for $i \ge 3$, where $\ol{\xi}_i$ is the image of $\xi_i$ in $B_\fp/\fp \cong \kappa$. For $i \ge 3$, it follows that $q(\xi_i)$ and $\xi_i-x_j$, for $j \ne i$, are non-zero in $\kappa$, and thus units of $B_\fp$; since $f(\xi_i)=0$, we conclude that $\xi_i=x_i$. This shows that $B_{\fp}$ is generated as an $A$-algebra by $\xi_1$ and $\xi_2$. We have
\[
  q(u) \prod_{i \ge 3} (u-\xi_i)=\prod_{i \ge 1} (u-\xi_i).
\]
Since monic polynomials are non-zerodivisors, it follows that $q(u) = (u-\xi_1)(u-\xi_2)$. From this, we see that $B_{\fp} \cong A[u]/q$.

We consider two cases. If $\ol{q}$ is irreducible (this is only possible if $\kappa$ has characteristic $2$), then $B_\fp/\fm B_\fp$ is a degree $2$ field extension of $\kappa$ and hence $\fm B_\fp$ is the maximal ideal of $B_\fp$ which is generated by a single non-nilpotent element, so $B_\fp$ is a DVR.

Otherwise, we may assume that $\ol{q} = (u-\ol{x}_1)^2$ for some $\ol{x}_1 \in \kappa$. Let $\pi$ be the discriminant of $q$. This is an element of $A$ that divides $\Delta$ and is~0 in $\kappa$, and is therefore a uniformizer. Up to a linear substitution in $A$, we may assume that $\ol{q} = u^2$. Write $q = u^2 + \alpha_1 u + \alpha_2$. Then $\alpha_1,\alpha_2 \in \fm$ but $\alpha_2 \notin \fm^2$ since $\pi = \alpha_1^2 - 4\alpha_2$ is a uniformizer, and hence $\alpha_2$ is also a uniformizer for $A$. The maximal ideal of $B_\fp$ is generated by $u$ and $\fm B_\fp$, but the latter is generated by $\alpha_2 = -(u+\alpha_1)u$, and so just $u$ suffices to generate. Finally, $u$ is not nilpotent since $\alpha_2$ is not, which shows that $B_\fp$ is a DVR.

We now treat the general situation. Let $\fp$ be a height one prime of $A$. We show that $B_{\fp}$ is regular. Let $A_{\fp}^{\rm sh}$ be the strict henselization of the DVR $A_{\fp}$. By the previous paragraphs, we see that $B_{\fp} \otimes_{A_{\fp}} A_{\fp}^{\rm sh}$ is regular. Now, $A_{\fp}^{\rm sh}$ is the direct limit of a family $\{A_i\}$ of rings, each of which is an \'etale cover of $A_{\fp}$. The above arguments apply with $A_i$ in place of $A_{\fp}^{\rm sh}$ for $i$ sufficiently large. We conclude that $B_{\fp} \otimes_{A_{\fp}} A_i$ is regular for some $i$. Since regularity is \'etale local, we conclude that $B_{\fp}$ is regular. Thus $B$ is regular in codimension~1. Finally, since $A$ is normal, it satisfies Serre's condition $(S_2)$, and hence the same is true for $B$ by Proposition~\ref{prop:split}(c), so $B$ is therefore normal.
\end{proof}

\subsection{Preliminaries on discriminants}

In the following subsection, we refine the above normality criterion. Here we prove a few lemmas that will be needed to do this.

\begin{lemma} \label{lem:disc-1}
Let $S$ be a local ring with maximal ideal $\fm$, and let $X$ be an $n \times n$ matrix with entries in $S$. Suppose that the reduction of $X$ modulo $\fm$ has nullity $\ge r$. Then $\det(X) \in \fm^r$.
\end{lemma}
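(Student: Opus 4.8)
The plan is to use the multilinearity and alternating property of the determinant together with column operations that are available over the local ring $S$. Since the reduction $\bar X$ of $X$ modulo $\fm$ has nullity $\ge r$, there is an $r$-dimensional subspace of the kernel of $\bar X$ inside $\kappa^n$, where $\kappa = S/\fm$. Choosing a basis of this kernel and completing it to a basis of $\kappa^n$, we obtain an invertible matrix $\bar U \in \GL_n(\kappa)$ such that the first $r$ columns of $\bar X \bar U$ are zero. Lift $\bar U$ to a matrix $U$ with entries in $S$; since $\det(\bar U) \in \kappa^\times$, the determinant $\det(U)$ is a unit of $S$, so $U \in \GL_n(S)$. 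Replacing $X$ by $X U$ changes $\det(X)$ only by the unit $\det(U)$, so it suffices to prove the claim for $XU$. Hence we may assume from the start that each of the first $r$ columns of $X$ has all its entries in $\fm$.

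Now expand $\det(X)$ as a multilinear function of its columns. Each term in the Leibniz expansion is a product of $n$ entries of $X$, one from each row and one from each column; in particular, each such product contains exactly one factor from each of the first $r$ columns, and those $r$ factors all lie in $\fm$. Therefore every term lies in $\fm^r$, and hence $\det(X) \in \fm^r$, as desired.

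This argument is short and the only point that needs care is the lifting step: one must know that an invertible matrix over $\kappa$ can be lifted to an invertible matrix over the local ring $S$, which is immediate since a matrix over a local ring is invertible precisely when its determinant is a unit, and the determinant of any lift reduces to the (nonzero) determinant of $\bar U$. I do not anticipate a genuine obstacle here; the statement is essentially a formal consequence of the definition of the determinant once the columns have been normalized. One can alternatively phrase the whole proof without choosing bases by directly producing $r$ columns of $X$ that become $S$-linearly dependent after a unit-valued change of basis, but the column-reduction formulation above is the cleanest.
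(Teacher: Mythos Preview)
Your proof is correct and somewhat cleaner than the paper's. The paper argues by induction on $n$: if the first column of $X$ lies in $\fm$, a Laplace expansion along that column reduces to the $(n-1)\times(n-1)$ case with nullity $\ge r-1$; if some entry of the first column is a unit, row operations bring the first column to $e_1$, and deleting the first row and column gives an $(n-1)\times(n-1)$ matrix with the same determinant and nullity $\ge r$ modulo $\fm$.

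Your approach instead handles all $r$ columns at once: you lift an invertible change-of-basis matrix from $\kappa$ to $S$ (using that $S$ is local, so a lift of a unit is a unit) to arrange that the first $r$ columns of $X$ lie in $\fm$, and then read off $\det(X)\in\fm^r$ directly from the Leibniz formula. This avoids induction entirely and is arguably more transparent; the paper's version, on the other hand, never needs to invoke the lifting of an invertible matrix and stays closer to bare Laplace expansion. Both are short and elementary, just organized differently.
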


\begin{proof}
We argue by induction on $n$. For $n=0$ the result is vacuous. Suppose now the result is known for matrices of size $<n$ and let us prove it for matrices of size $n$.
  
Suppose that the first column of $X$ has entries in $\fm$. Let $X_i$ be the matrix obtained from $X$ by deleting the first column and the $i$th row. Then each $X_i$ has nullity $\ge r-1$ modulo $\fm$. Thus, by induction, $\det(X_i) \in \fm^{r-1}$ for each $i$. Taking the Laplace expansion for $\det(X)$ along the first column, we see that $\det(X) \in \fm^r$.

Now suppose that some entry in the first column of $X$ is a unit. Performing row operations, we can reduce to the case where the first column is the first standard basis vector. Let $X'$ be the matrix obtained by deleting the first row and column of $X$. Then $\det(X')=\det(X)$. But $X'$ has nullity $\ge r$ modulo $\fm$, so $\det(X') \in \fm^r$ by induction.
\end{proof}

We recall some basic facts about Sylvester matrices and discriminants. Let $S$ be a commutative ring and let $f(u) = a_0 x^n + \cdots + a_n$ and $g(u) = b_0 x^m + \cdots + b_m$ be univariate polynomials with coefficients in $S$ such that $a_0 \ne 0$ (we do not require anything about $g$ and in fact allow the case that it is identically $0$). We define their \defn{Sylvester matrix} to be the square matrix of size $n+m$ as follows:
\[
  \Syl_{n,m}(f,g) =
  \begin{bmatrix} a_0 & a_{1} & a_{2} & \cdots & a_n & 0 & 0 & \cdots & 0 \\
    0 & a_0 & a_{1} & \cdots & & a_n & 0 & \cdots & 0\\
    \vdots\\
    0&0&0&&&&&\cdots & a_n\\
    b_0 & b_{1} & b_{2} & \cdots\\
    \vdots\\
    0&0&0&&&&&\cdots & b_m
  \end{bmatrix}.
\]
In other words, the first $m$ rows consist of shifts of the sequence $a_0,\dots,a_n$ and the last $n$ rows consist of shifts of the sequence $b_0,\dots,b_m$.

The following is well-known, but we include proofs to keep the discussion self-contained.

\begin{proposition} \label{prop:sylvester}
  \begin{enumerate}
  \item  If $S$ is a field, then 
    \[
      \deg(\gcd(f,g)) = \dim \ker \Syl_{n,m}(f,g).
    \]
  \item If $S$ is graded such that $\deg(a_i)=\deg(b_i)=i$, then $\det(\Syl_{n,m}(f,g))$ is homogeneous of degree $mn$.
  \item If $n = \deg(f)$, then the discriminant of $f$ is $\det(\Syl_{n,n-1}(f,f'))$ (up to a sign) where $f'$ is the derivative of $f$ with respect to $x$.
  \end{enumerate}
\end{proposition}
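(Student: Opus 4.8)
The plan is to treat the three parts in order, reading $\Syl_{n,m}(f,g)$ throughout as the matrix, up to transpose, of the $S$-linear map $\phi\colon S[u]_{<m}\times S[u]_{<n}\to S[u]_{<n+m}$, $(p,q)\mapsto pf+qg$, with respect to the monomial bases. For (a), since transposing does not change the kernel dimension of a square matrix over a field, it suffices to compute $\dim\ker\phi$. The case $g=0$ is immediate ($\ker\phi=0\times S[u]_{<n}$, of dimension $n=\deg\gcd(f,0)$). If $g\ne0$, write $h=\gcd(f,g)$, $d=\deg h$, $f=hf_1$, $g=hg_1$ with $f_1,g_1$ coprime; then $pf+qg=0$ forces $pf_1=-qg_1$, hence $p=g_1t$ and $q=-f_1t$ for a unique $t\in S[u]$, and since $\deg g\le m$ the binding constraint among $\deg p<m$ and $\deg q<n$ is the latter, namely $\deg t<d$. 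Thus $\dim\ker\phi=d=\deg\gcd(f,g)$.

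For (b), I would argue straight from the Leibniz expansion of the displayed matrix: its $(k,l)$ entry is $a_{l-k}$ for $k\le m$ and $b_{l-k+m}$ for $k>m$ (with $a_i,b_i$ zero outside their ranges), so each entry is either zero or homogeneous, of degree $l-k$, respectively $l-(k-m)$. Summing these degrees along any nonzero term $\prod_k(\Syl_{n,m}(f,g))_{k,\sigma(k)}$ telescopes — the contribution $\sum_k\sigma(k)$ being independent of $\sigma$ — to $\binom{n+m+1}{2}-\binom{m+1}{2}-\binom{n+1}{2}=mn$; hence $\det\Syl_{n,m}(f,g)$ is homogeneous of degree $mn$.

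For (c), I would first reduce to the universal split monic case: both $\det\Syl_{n,n-1}(f,f')$ and the discriminant $\prod_{i\ne j}(\xi_i-\xi_j)$ are universal integer polynomials in the coefficients of a monic degree-$n$ polynomial (the latter because it is a symmetric function of the roots), so it is enough to verify the identity in $\bZ[\xi_1,\dots,\xi_n]$ for $f=\prod_{i=1}^n(u-\xi_i)$. There $\det\Syl_{n,n-1}(f,f')$ is symmetric in the $\xi_i$, being built from the coefficients of $f$ and $f'$. Over the fraction field of the domain $\bZ[\xi_1,\dots,\xi_n]/(\xi_i-\xi_j)$ the polynomial $f$ acquires a repeated root, so $\gcd(f,f')\ne1$ there and part (a) forces $\det\Syl_{n,n-1}(f,f')$ to vanish; hence $\xi_i-\xi_j$ divides it in $\bZ[\xi_1,\dots,\xi_n]$. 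Writing $\det\Syl_{n,n-1}(f,f')=(\xi_i-\xi_j)Q$ and applying the transposition $(ij)$ shows $Q$ is anti-invariant under $(ij)$, hence also divisible by $\xi_i-\xi_j$; so $\prod_{i<j}(\xi_i-\xi_j)^2$ divides $\det\Syl_{n,n-1}(f,f')$. By part (b), taken with $\deg\xi_i=1$ (so $\deg a_i=i$), the determinant is homogeneous of degree $n(n-1)=2\binom n2$, matching the degree of $\prod_{i<j}(\xi_i-\xi_j)^2$; therefore $\det\Syl_{n,n-1}(f,f')=c\prod_{i<j}(\xi_i-\xi_j)^2$ for some $c\in\bZ$. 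Finally $c=\pm1$: evaluating at $f=u^n-1$ gives $\det\Syl_{n,n-1}(u^n-1,nu^{n-1})=\pm n^n$ by a block-matrix computation, while $\prod_{i<j}(\xi_i-\xi_j)^2$ evaluates to $\pm n^n$ (the discriminant of $u^n-1$). Since $\prod_{i<j}(\xi_i-\xi_j)^2=(-1)^{\binom n2}\prod_{i\ne j}(\xi_i-\xi_j)$, this is the claim.

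I expect parts (a) and (b) to be routine. The only place calling for care is (c): the reduction to the universal case, and then pinning down the leftover sign — but this is exactly the classical content of resultants and discriminants, so nothing beyond standard commutative algebra enters. (Alternatively, (c) can be deduced from the classical product formula $\det\Syl_{n,m}(f,g)=\pm a_0^m\prod_i g(\xi_i)$ for split $f$, by taking $g=f'$ and $a_0=1$, so that $\prod_i f'(\xi_i)=\prod_{i\ne j}(\xi_i-\xi_j)$; that formula is itself proved by the same divisibility argument in a UFD, now in the coefficients of $g$.)
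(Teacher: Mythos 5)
Your proof is correct, and parts (a) and (b) are essentially the paper's own arguments: for (a) the paper likewise reads a vector in $\ker\Syl_{n,m}(f,g)^T$ as a pair $(\alpha,\beta)$ of polynomials with $\alpha f+\beta g=0$, deduces $\alpha=g_0\gamma$, $\beta=f_0\gamma$ with $\deg\gamma<\deg h$, and counts (the paper folds the $g=0$ case into this implicitly; your separate treatment of it is cleaner); for (b) the paper replaces your Leibniz telescoping with an equivalent row/column rescaling, but arrives at the same count $\binom{n+m}{2}-\binom n2-\binom m2=mn$. Where you genuinely diverge is (c). The paper stays in the unsplit universal ring $\bZ[a_1,\dots,a_n]$, uses (a) to see that $\det\Syl_{n,n-1}(f,f')$ vanishes on the discriminant locus, and invokes \emph{irreducibility of the discriminant} together with the degree count from (b) to conclude the two agree up to a scalar, then asserts the scalar is $\pm1$ without further computation. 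You instead pass to the split universal ring $\bZ[\xi_1,\dots,\xi_n]$, show by the anti-invariance trick that $(\xi_i-\xi_j)^2$ divides $\det\Syl_{n,n-1}(f,f')$ for every pair and hence $\prod_{i<j}(\xi_i-\xi_j)^2$ divides it, match degrees via (b), and actually evaluate at $u^n-1$ to pin the constant down to $\pm1$. Your route is longer but self-contained (no appeal to the nontrivial fact that the discriminant is irreducible over $\bZ$) and, unlike the paper's sketch, completes the normalization step showing the constant is a unit rather than merely a nonzero integer; the paper's route is shorter at the price of quoting irreducibility and leaving the $\pm1$ implicit. Your parenthetical alternative via the product formula $\mathrm{Res}(f,g)=\pm a_0^m\prod_i g(\xi_i)$ is a third standard deduction of the same fact.
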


\begin{proof}
  (a) Let $\alpha = \alpha_{0} u^{m-1} + \cdots + \alpha_{m-1}$ and $\beta = \beta_{0} u^{n-1} + \cdots + \beta_{n-1}$. The coefficients of $\alpha f + \beta g$ are the entries of $\begin{bmatrix} \alpha_{0} & \cdots & \alpha_{m-1} & \beta_{0} & \cdots & \beta_{n-1} \end{bmatrix} \Syl_{n,m}(f,g)$, so that $\ker \Syl_{n,m}(f,g)^T$ is the space of pairs $(\alpha, \beta)$ with $\deg \alpha < m$ and $\deg \beta < n$ such that $\alpha f + \beta g = 0$. Let $h = \gcd(f,g)$ and $f_0=f/h$ and $g_0=g/h$. Then we get $\alpha f_0 = -\beta g_0$ which implies that there is a polynomial $\gamma$ such that $\alpha = g_0 \gamma$ and $\beta = f_0 \gamma$ and
  \[
    \deg \gamma = \deg \beta - \deg f_0 < \deg h.
  \]
  On the other hand, if $\gamma$ is any polynomial with $\deg \gamma < \deg h$, then $\alpha = g_0 \gamma$ has degree $< \deg g \le m$ and $\beta = f_0 \gamma$ has degree $< \deg f = n$, and so we have an isomorphism between the polynomials of degree $< \deg h$ and $\ker \Syl_{n,m}(f,g)$.

  (b) Scaling each $a_i$ and $b_i$ by $\lambda^i$ in $\Syl_{n,m}(f,g)$ is the same as scaling the $i$th column by $\lambda^{i-1}$, the first $m$ rows by $1,\lambda^{-1},\dots,\lambda^{-(m-1)}$, and the last $n$ rows by $1,\lambda^{-1},\dots,\lambda^{-(n-1)}$. Hence we see that $\det(\Syl_{n,m}(f,g))$ is homogeneous of degree $\binom{n+m}{2} - \binom{n}{2} - \binom{m}{2} = mn$.
  
  (c) It suffices to prove this in the universal case $S = \bZ[a_0,\dots,a_n]$ since both the discriminant of $f$ and $\det(\Syl_{n,n-1}(f,f'))$ are polynomials in the coefficients of $f$. We grade $S$ by $\deg(a_i) = i$. In this case, both expressions are homogeneous polynomials of degree $n(n-1)$, this holds for $\det(\Syl_{n,n-1}(f,f'))$ by (b). The discriminant is an irreducible polynomial and vanishes whenever $f$ has a multiple root. By (a), $\det(\Syl_{n,n-1}(f,f'))$ vanishes whenever $\gcd(f,f')$ has positive degree, which is equivalent to $f$ having a multiple root. Hence the two polynomials agree up to sign. 
\end{proof}

\begin{lemma} \label{lem:disc-3}
Suppose $S$ is a local ring with maximal ideal $\fm$ and let $g \in S[u]$ be a monic polynomial of degree $n$. Suppose that the reduction of $g$ modulo $\fm$ has at most $r$ distinct roots in the algebraic closure of $S/\fm$. Then the discriminant of $g$ belongs to $\fm^{n-r}$.
\end{lemma}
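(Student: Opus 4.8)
The plan is to identify the discriminant of $g$ with the determinant of a Sylvester matrix over $S$ and then apply Lemma~\ref{lem:disc-1} to that matrix. By Proposition~\ref{prop:sylvester}(c), the discriminant of $g$ equals $\pm\det(\Syl_{n,n-1}(g,g'))$, a square matrix of size $2n-1$ whose entries are polynomial functions of the coefficients of $g$ and $g'$ and hence lie in $S$. Since $S$ is local with maximal ideal $\fm$, Lemma~\ref{lem:disc-1} reduces the statement to showing that the reduction of $\Syl_{n,n-1}(g,g')$ modulo $\fm$ has nullity at least $n-r$ over $\kappa=S/\fm$. That reduction is exactly $\Syl_{n,n-1}(\ol{g},\ol{g}')$, the Sylvester matrix formed with the same formal degrees $n$ and $n-1$ from $\ol{g}$ and $\ol{g}'$ over $\kappa$; note that $\ol{g}$ is still monic of degree $n$, since its leading coefficient is a unit, while $\ol{g}'$ may have degree below $n-1$ or even vanish, which is permitted in Proposition~\ref{prop:sylvester}(a).

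Next I would compute this nullity. By Proposition~\ref{prop:sylvester}(a) it equals $\deg\gcd(\ol{g},\ol{g}')$, and since neither the rank of a matrix nor the degree of a gcd of polynomials changes under extension of the base field, I may pass to the algebraic closure $\ol{\kappa}$ and factor $\ol{g}=\prod_i(u-\alpha_i)^{m_i}$ with the $\alpha_i\in\ol{\kappa}$ pairwise distinct. By hypothesis there are at most $r$ of them, and $\sum_i m_i=n$.

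Finally I would observe that $(u-\alpha_i)$ divides $\ol{g}$ to order $m_i$ and $\ol{g}'$ to order at least $m_i-1$; in general its order in $\ol{g}'$ is $\ge m_i-1$, and in fact $\ge m_i$ when $\operatorname{char}\kappa$ divides $m_i$, but only the weaker bound is needed, and the case $\ol{g}'=0$ is trivial because then $\gcd(\ol{g},\ol{g}')=\ol{g}$. Hence $(u-\alpha_i)$ divides $\gcd(\ol{g},\ol{g}')$ to order at least $m_i-1$ for every $i$, so $\deg\gcd(\ol{g},\ol{g}')\ge\sum_i(m_i-1)=n-\#\{\alpha_i\}\ge n-r$. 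Chaining the three steps gives $\det(\Syl_{n,n-1}(g,g'))\in\fm^{n-r}$, which is the claim. I do not anticipate a serious obstacle: the argument is essentially bookkeeping layered on Proposition~\ref{prop:sylvester} and Lemma~\ref{lem:disc-1}, the only genuinely delicate point being the positive-characteristic behaviour of $\ol{g}'$ in the last step, and there the possible deviation from the naive separable-derivative count only enlarges the gcd.
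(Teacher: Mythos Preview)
Your proof is correct and follows essentially the same route as the paper: identify the discriminant with $\pm\det(\Syl_{n,n-1}(g,g'))$ via Proposition~\ref{prop:sylvester}(c), invoke Lemma~\ref{lem:disc-1} to reduce to bounding the nullity of the reduced Sylvester matrix, and then use Proposition~\ref{prop:sylvester}(a) together with the factorization of $\ol{g}$ to show $\deg\gcd(\ol{g},\ol{g}')\ge n-r$. Your version is more detailed (handling the field extension, the case $\ol{g}'=0$, and the characteristic subtlety explicitly), but the argument is the same.
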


\begin{proof}
Let $f$ be the reduction of $g$ modulo $\fm$. By Lemma~\ref{lem:disc-1} and Proposition~\ref{prop:sylvester}, the discriminant of $g$ belongs to $\fm^d$ where $d = \deg(\gcd(f,f'))$. For $\lambda$ in the algebraic closure of $S/\fm$, if $(u-\lambda)^e$ divides $f$, then $(u-\lambda)^{e-1}$ divides $f'$, and so $d \ge n-r$.
\end{proof}

\subsection{Second normality criterion} \label{ss:norm2}

We now give a variant of Proposition~\ref{prop:norm-crit}. Define $V(\Delta, \partial \Delta) \subset \Spec(A)$ to be the set of points $x$ at which $\Delta$ vanishes to order two, in the sense that it belongs to $\fm_x^2$. If $A$ is finitely generated over a field $k$ and $x$ is a smooth point of $\Spec(A)$ then $x$ belongs to $V(\Delta, \partial \Delta)$ if and only if $\Delta=0$ in $\kappa(x)$ and $d\Delta=0$ in $\Omega^1_{A/k} \otimes_A \kappa(x)$; since $\Omega^1_{A/k}$ is locally free on the smooth locus, this shows that $V(\Delta, \partial \Delta)$ is closed in the smooth locus. Recall the set $E \subset \Spec(A)$ from \S \ref{ss:norm}.

\begin{lemma} \label{lem:E-partialDelta}
We have $E \subset V(\Delta, \partial \Delta)$.
\end{lemma}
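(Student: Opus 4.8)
The plan is to fix a point $x \in E$, lift it to a point of $\tilde E$, use the fundamental identity $f(u) = \prod_i (u-\xi_i)$ in $B[u]$ to read off the multiplicities of the roots of the reduction of $f$, and then feed this into Lemma~\ref{lem:disc-3} to conclude that $\Delta$ vanishes to order at least two at $x$.

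In detail: let $x$ correspond to a prime $\fp \subset A$. Since $E$ is the image of $\tilde E$ under the finite morphism $\Spec(B) \to \Spec(A)$, one may choose a point $y \in \tilde E$ lying over $x$; write $\kappa(y)$ for its residue field and $\bar\xi_i \in \kappa(y)$ for the image of $\xi_i$. Reducing the identity $f(u) = \prod_{i=1}^n (u-\xi_i)$ — which holds in $B[u]$ by the very definition of the splitting ring — modulo the prime at $y$ gives
\[
\bar f(u) = \prod_{i=1}^n (u - \bar\xi_i) \quad \text{in } \kappa(y)[u],
\]
where $\bar f \in \kappa(\fp)[u] \subseteq \kappa(y)[u]$ is the reduction of $f$ modulo $\fp$. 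Thus the multiset of roots of $\bar f$ (in $\overline{\kappa(\fp)}$) is exactly $\{\bar\xi_1, \dots, \bar\xi_n\}$, so the number of distinct roots of $\bar f$ equals the number of distinct values among the $\bar\xi_i$; and for $i \ne j$ one has $\bar\xi_i = \bar\xi_j$ precisely when $\xi_i - \xi_j$ vanishes at $y$, which is how membership in $\tilde E$ is detected.

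The key step is then a short case analysis on the two configurations in the definition of $\tilde E$. If at least three of the $\bar\xi_i$ coincide, then $\bar f$ has a root of multiplicity $\ge 3$, hence at most $n-2$ distinct roots. If instead there are two disjoint pairs $\{i,j\}$, $\{k,l\}$ with $\bar\xi_i = \bar\xi_j$ and $\bar\xi_k = \bar\xi_l$, then either the two pair-values agree — so four of the $\bar\xi$'s coincide and $\bar f$ has a root of multiplicity $\ge 4$, hence at most $n-3$ distinct roots — or they differ, so $\bar f$ has two distinct roots each of multiplicity $\ge 2$, hence at most $n-2$ distinct roots. In all cases $\bar f$ has at most $n-2$ distinct roots in $\overline{\kappa(\fp)}$. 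Finally one applies Lemma~\ref{lem:disc-3} with $S = A_{\fp}$, with $g$ the image of $f$ in $S[u]$ (monic of degree $n$), and $r = n-2$, getting that the discriminant of $g$ lies in $\fm_x^{n-r} = \fm_x^2$; since the discriminant is a fixed universal polynomial in the coefficients (Proposition~\ref{prop:sylvester}(c)), it is the image of $\Delta$ in $A_{\fp}$, so $\Delta \in \fm_x^2$, i.e.\ $x \in V(\Delta, \partial\Delta)$. I do not expect a real obstacle: the only thing needing care is the bookkeeping in the case analysis (and observing that ``two pairs which coincide'' must mean two \emph{disjoint} pairs, since overlapping pairs already fall under the ``three coincide'' case), together with being careful that ``discriminant of the reduced polynomial'' really is ``reduction of $\Delta$''.
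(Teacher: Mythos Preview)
Your proposal is correct and follows the same route as the paper: show that the reduction of $f$ at a point of $E$ has at most $n-2$ distinct roots, then invoke Lemma~\ref{lem:disc-3} to conclude $\Delta \in \fm_x^2$. The paper's proof simply asserts the root count ``by the definition of $E$'' in one line, whereas you spell out the lift to $\tilde E$, the reduction of the splitting identity, and the case analysis explicitly; this is a harmless expansion of the same argument.
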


\begin{proof}
Let $x \in E$, and let $\fp=\fp_x$ be the corresponding prime ideal of $A$. By the definition of $E$, the polynomial $f \in A_{\fp}[u]$ has at most $\deg(f)-2$ distinct roots in the residue field. Thus by Lemma~\ref{lem:disc-3}, we see that $\Delta \in \fp^2 A_{\fp}=\fm_x^2$, and so $x \in V(\Delta, \partial \Delta)$.
\end{proof}

\begin{proposition} \label{prop:norm-crit2}
Suppose the following conditions hold:
\begin{enumerate}
\item $A$ is normal,
\item $\Delta$ is a non-zerodivisor,
\item $V(\Delta, \partial \Delta)$ has codimension $\ge 2$.
\end{enumerate}
Then $\Delta$ is squarefree and $B$ is normal.
\end{proposition}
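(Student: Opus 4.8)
The plan is to deduce this directly from Proposition~\ref{prop:norm-crit} together with Lemma~\ref{lem:E-partialDelta}; the only genuinely new input is the squarefreeness of $\Delta$, which I would extract straight from hypothesis~(c).

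First I would establish that $\Delta$ is squarefree. Let $\fp$ be a height one prime of $A$. Since $A$ is normal, $A_\fp$ is a discrete valuation ring with maximal ideal $\fm_x = \fp A_\fp$, where $x$ is the point of $\Spec(A)$ corresponding to $\fp$, and $v_\fp$ is its valuation. Suppose for contradiction that $v_\fp(\Delta) \ge 2$. Then $\Delta \in \fp^2 A_\fp = \fm_x^2$, which is precisely the condition for $x$ to lie in $V(\Delta, \partial\Delta)$. But $\fp$ has height one, so $x$ has codimension one, contradicting hypothesis~(c). Hence $v_\fp(\Delta) \le 1$ for every height one prime $\fp$, i.e., $\Delta$ is squarefree (which is meaningful since $\Delta$ is a non-zerodivisor by hypothesis~(b)).

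Next I would check the hypotheses of Proposition~\ref{prop:norm-crit}. Its condition~(a) is hypothesis~(a) here; its condition~(b) holds because $\Delta$ is a non-zerodivisor by hypothesis~(b) and squarefree by the previous paragraph; and its condition~(c)---that $E$ has codimension $\ge 2$---follows from Lemma~\ref{lem:E-partialDelta}, which gives $E \subset V(\Delta, \partial\Delta)$, together with hypothesis~(c): any prime contained in $E$ is contained in $V(\Delta, \partial\Delta)$ and hence has height $\ge 2$. Applying Proposition~\ref{prop:norm-crit} then gives that $B$ is normal.

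The argument is essentially formal once the earlier results are in hand, so there is no substantial obstacle; the one point requiring a little care is unwinding the definition of $V(\Delta, \partial\Delta)$ at a height one prime, namely recognizing that ``$\Delta$ vanishes to order two'' there is equivalent to $v_\fp(\Delta) \ge 2$, which is immediate from the fact that $A_\fp$ is a DVR.
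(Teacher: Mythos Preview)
Your proposal is correct and follows essentially the same approach as the paper: both establish squarefreeness by showing $v_\fp(\Delta)\ge 2$ forces $\fp\in V(\Delta,\partial\Delta)$, then invoke Lemma~\ref{lem:E-partialDelta} and Proposition~\ref{prop:norm-crit}. Your squarefreeness argument is in fact slightly more direct than the paper's, which unnecessarily routes through a factorization $\Delta=ab^k$.
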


\begin{proof}
We apply Proposition~\ref{prop:norm-crit}. The set $E$ there has codimension $\ge 2$ by the present assumption (c) and Lemma~\ref{lem:E-partialDelta}. It thus suffices to prove that $\Delta$ is squarefree.

Let $\fp$ be a height one prime of $A$. If $V(\fp)$ is not an irreducible component of $V(\Delta)$ then $v_{\fp}(\Delta)=0$. Suppose now that $V(\fp)$ is an irreducible component of $V(\Delta)$. Then $v_{\fp}(\Delta) \ge 1$, and we show that equality holds. Suppose by way of contradiction that $v_{\fp}(\Delta) \ge 2$. We can then write $\Delta=ab^k$ where $k \ge 2$, $a$ is a rational function on $\Spec(A)$ that is a unit at $\fp$, and $b$ is a rational function on $\Spec(A)$ that is a uniformizer at $\fp$. Since $a$ is a unit of $A_{\fp}$ and $b$ belongs to $\fp A_{\fp}$, we see that $\Delta$ belongs to $\fp^2 A_{\fp}$. Thus $\fp \in V(\Delta, \partial \Delta)$, which contradicts (c). We conclude that $\Delta$ is squarefree.
\end{proof}

\subsection{Factorization rings} \label{ss:factor}

Let $p$ and $q$ be non-negative integers such that $p+q=n$, and put $g=\sum_{i=0}^p b_{p-i} u^i$ and $h=\sum_{i=0}^q c_{q-i} u^i$, where $b_0=c_0=1$ and the remaining $b_i$ and $c_i$ are formal symbols. We define the \defn{$(p,q)$-factorization ring} of $f$, denoted $\Fact^{p,q}_A(f)$ to be $A[b_1, \ldots, b_p, c_1, \ldots, c_q]/I$, where $I$ is the ideal generated by the elements
\[
  a_{n-i} - \sum_{j=0}^i b_{p-j} c_{q-i+j} \qquad 0 \le i \le n-1.
\]
Thus $\Fact^{p,q}_A(f)$ is the universal quotient of $A[b_i,c_j]$ in which we have $f(u)=g(u) h(u)$. It follows from the above formula and the condition $b_0=1$ that $\Fact^{p,q}_A(f)$ is generated as an $A$-algebra by $b_1, \ldots, b_p$. If $A$ is graded and $a_i$ is homogeneous of degree $2i$ then $\Fact^{p,q}_A(f)$ is graded and $b_i$ and $c_i$ have degree $2i$. Formation of the factorization ring is compatible with base change, as with the splitting ring.

In what follows, we let $B=\Split_A(f)$ and $C=\Fact^{p,q}_A(f)$.

\begin{proposition} \label{prop:fact}
We have the following:
\begin{enumerate}
\item We have a natural $A$-algebra isomorphism $B=\Split_C(g) \otimes_C \Split_C(h)$.
\item As an $A$-module, $C$ is free of rank $\binom{n}{p}$.
\item The map $A \to C$ is syntomic. 
\item If $A$ satisfies Serre's condition $(S_k)$, then so does $C$. In particular, if $A$ is Cohen--Macaulay, then so is $C$.
\item If $B$ is reduced (resp., integral, normal), then $C$ is reduced (resp., integral, normal).
\end{enumerate}
\end{proposition}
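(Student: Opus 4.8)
The plan is to prove part (a) by a Yoneda argument and then to deduce (b)--(e) from the corresponding facts about splitting rings (Proposition~\ref{prop:split}) applied over the base ring $C$. For (a), note that for any $A$-algebra $R$ an $A$-algebra homomorphism $\Split_A(f)\to R$ is the same datum as an ordered tuple $(\lambda_1,\dots,\lambda_n)\in R^n$ with $f(u)=\prod_{i=1}^n(u-\lambda_i)$ in $R[u]$, while an $A$-algebra homomorphism $\Fact^{p,q}_A(f)\to R$ is the same datum as a factorization $f=g_Rh_R$ with $g_R,h_R\in R[u]$ monic of degrees $p$ and $q$. Since $\Split_C(g)\otimes_C\Split_C(h)$ is the coproduct of $\Split_C(g)$ and $\Split_C(h)$ in the category of $C$-algebras, an $A$-algebra map out of it is the data of an $A$-algebra map $C\to R$ --- equivalently a factorization $f=g_Rh_R$ --- together with a $C$-algebra map $\Split_C(g)\to R$ and a $C$-algebra map $\Split_C(h)\to R$ over it, i.e.\ an ordered complete factorization of $g_R$ and one of $h_R$. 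Concatenating the two lists of roots gives an ordered complete factorization of $f$, and conversely $g_R$ and $h_R$ are recovered from such a list as the products of its first $p$ and its last $q$ factors; so both sides corepresent the same functor on $A$-algebras, and Yoneda gives the isomorphism of $A$-algebras.

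For (b), combine (a) with Proposition~\ref{prop:split}(a) applied over the base $C$: then $\Split_C(g)$ is free of rank $p!$ over $C$ and $\Split_C(h)$ is free of rank $q!$ over $C$, so $B=\Split_C(g)\otimes_C\Split_C(h)$ is free of rank $p!\,q!$ over $C$; in particular $C\to B$ is a finite, faithfully flat, split injection. As $B$ is finite over $A$ and $A$ is noetherian, $C$ is a finite $A$-module. It is also flat over $A$: for an $A$-module $M$ and $i>0$ we have $\Tor^A_i(M,C)\otimes_C B\cong\Tor^A_i(M,B)=0$ because $B$ is flat over both $A$ and $C$, and faithful flatness of $C\to B$ then forces $\Tor^A_i(M,C)=0$. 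Thus $C$ is finite projective over $A$; reducing to the universal case $A=\bZ[a_1,\dots,a_n]$ by base change (formation of $C$ commutes with base change), where everything is graded, upgrades this to freeness, exactly as in Proposition~\ref{prop:split}(a). Finally the rank is forced by $n!=\operatorname{rk}_A(B)=\operatorname{rk}_C(B)\cdot\operatorname{rk}_A(C)=p!\,q!\cdot\operatorname{rk}_A(C)$, so $\operatorname{rk}_A(C)=\binom{n}{p}$.

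Parts (c) and (d) then follow as for splitting rings. By (b), $A\to C$ is flat, and it is of finite presentation by construction; since $C$ is finite over $A$, each fibre $C\otimes_A\kappa(\fp)$ is a zero-dimensional quotient of $\kappa(\fp)[b_1,\dots,b_p,c_1,\dots,c_q]$ by the $n=p+q$ defining relations, hence a complete intersection, so $A\to C$ is syntomic; this is (c). Then (d) holds because Serre's condition $(S_k)$ is syntomic-local \stacks{036A}.

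For (e), use that $C\hookrightarrow B$ is finite and faithfully flat. A subring of a reduced ring, or of a domain, is again reduced, or a domain, which handles the first two assertions. If $B$ is normal I would verify Serre's criterion $(R_1)+(S_2)$ on $C$: given a prime $\fp$ of $C$, choose a prime $\fq$ of $B$ lying over it; then $C_\fp\to B_\fq$ is flat local with zero-dimensional fibre, so $\operatorname{depth}C_\fp=\operatorname{depth}B_\fq$ and $\dim C_\fp=\dim B_\fq$, whence $(S_2)$ passes from $B_\fq$ to $C_\fp$; and if $\operatorname{ht}\fp\le1$ then $\dim B_\fq\le1$, so $B_\fq$ is regular and hence $C_\fp$ is regular, since a flat local homomorphism with regular target has regular source. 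Therefore $C$ is normal. The step I expect to require the most care is the reduction to the universal graded case in (b), needed to pass from ``projective'' to ``free''; the functorial identification in (a) is the conceptual heart of the argument, and everything after it is essentially bookkeeping.
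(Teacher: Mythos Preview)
Your argument is correct and follows essentially the same architecture as the paper: establish (a) via universal properties, then bootstrap (b)--(d) from the corresponding facts for splitting rings over $C$, and read (e) off the inclusion $C\hookrightarrow B$. Two points of genuine divergence are worth noting.

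For (b), the paper is slightly more direct: from (a) and Proposition~\ref{prop:split}(a) one immediately gets an $A$-module isomorphism $C^{\oplus p!q!}\cong B\cong A^{\oplus n!}$, whence $C$ is projective of rank $\binom{n}{p}$ without any Tor argument; your route via $\Tor^A_i(M,C)\otimes_C B\cong\Tor^A_i(M,B)$ is correct but a small detour. For the normality part of (e), the paper instead invokes Proposition~\ref{prop:split}(f) to obtain a $C$-linear splitting of $C\hookrightarrow B$ and uses the standard direct-summand argument (a retract of a normal ring is normal). Your approach via Serre's criterion and descent of regularity and depth along the flat local map $C_\fp\to B_\fq$ is equally valid---the depth formula for flat local maps with zero-dimensional fibre gives $(S_2)$, and regularity descends along flat local homomorphisms \stacks{00OF}---and has the mild advantage of not relying on the somewhat special splitting from Proposition~\ref{prop:split}(f).
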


\begin{proof}
(a) Let $\eta_1, \ldots, \eta_p$ be the generators of $\Split_C(g)$ and $\eta_{p+1}, \ldots, \eta_{p+q}$ those for $\Split_C(h)$. Put $B'=\Split_C(g) \otimes_C \Split_C(h)$. Since $f(u)=\prod_{i=1}^n (u-\eta_i)$ holds over $B'$, we have an $A$-algebra homomorphism $\phi \colon B \to B'$ given by $\phi(\xi_i)=\eta_i$. Let $g^*(u)=\prod_{i=1}^p(u-\xi_i)$ and $h^*(u)=\prod_{i=p+1}^n(u-\xi_i)$ be polynomials in $B[u]$. The factorization $f(u)=g^*(u)h^*(u)$ gives an $A$-algebra homomorphism $C \to B$ mapping $g(u)$ to $g^*(u)$ and $h(u)$ to $h^*(u)$. The tautological splittings of $g^*(u)$ and $h^*(u)$ over $B$ yield an $A$-algebra homomorphism $\psi \colon B' \to B$ given by $\psi(\eta_i)=\xi_i$. Since $\phi$ and $\psi$ are clearly inverses, the result follows.

(b) By Proposition~\ref{prop:split}(a), we have an $A$-module isomorphism $B \cong A^{\oplus n!}$ and $C$-module isomorphisms $\Split_C(g) \cong C^{\oplus p!}$ and $\Split_C(h) \cong C^{\oplus q!}$. Comparing with (a), we obtain an $A$-module isomorphism $C^{\oplus p! q!} \cong A^{\oplus n!}$. It follows that $C$ is projective as an $A$-module of constant rank $\binom{n}{p}$. As in the proof of Proposition~\ref{prop:split}(a), it follows that $C$ is free in the universal case, and thus in all cases.

(c) Let $\fp$ be a prime of $A$. Then $C \otimes_A \kappa(\fp)$ is finite over $\kappa(\fp)$ by (b), and therefore of Krull dimension~0. This ring is a quotient of $\kappa(\fp)[b_1,\dots,b_p,c_1,\dots,c_q]$ by $p+q$ relations, and is therefore a complete intersection. Thus $A \to C$ is syntomic.

(d) This follows since the property is syntomic local.

(e) From (a), $C$ is isomorphic to a subring of $B$, which handles the reduced and integral conditions. For the normality condition, we use that $B$ is an iterated splitting ring over $C$ by (a), and hence the inclusion $C \to B$ admits a $C$-linear splitting by Proposition~\ref{prop:split}(f).
\end{proof}

\begin{remark}
Using Proposition~\ref{prop:split-rep} and Proposition~\ref{prop:fact}(a), we can show that $C=B^{\fS_p \times \fS_q}$ if $n!$ is invertible in $A$. This gives a $C$-linear splitting of $C \to B$ and an alternative proof of Proposition~\ref{prop:fact}(e). In fact, the assumption that $n!$ is invertible is relaxed substantially in \cite[Theorem 3.1]{ekedahl-laksov}: it suffices to know that either $2$ or the discriminant of $f$ is a non-zerodivisor in $A$ to show that $C=B^{\fS_p\times \fS_q}$.
\end{remark}

As with splitting rings, there is a geometric source of factorization rings (see \cite[Theorem 6.1]{GSS}). Let $X$ be a smooth variety over an algebraically closed field and let $\cE$ be a rank $n$ vector bundle on $X$.  Then the $(p,n-p)$ factorization ring $C$ (again taking $A$ to be the Chow ring of $X$ and $a_i = (-1)^i c_i(\cE)$) is the Chow ring of the relative Grassmannian $\Gr(p,\cE)$. When $X = \Spec(\bC)$, we get the following result.

\begin{proposition} \label{prop:fact-grass}
Suppose that $f=u^n$. Regard $A$ as graded and concentrated in degree~$0$, and $C$ as graded in the usual manner. Then we have a natural isomorphism of graded rings
\begin{displaymath}
C = A \otimes \rH^*_{\sing}(\Gr_p(\bC^n), \bZ).
\end{displaymath}
\end{proposition}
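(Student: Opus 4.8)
The plan is to reduce to the universal base $A=\bZ$ by the base-change compatibility of factorization rings, and then to identify $\Fact^{p,q}_{\bZ}(u^n)$ with $\rH^*_{\sing}(\Gr_p(\bC^n),\bZ)$ directly, using the Whitney relation on the Grassmannian together with the rank statement of Proposition~\ref{prop:fact}(b).

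For the reduction, I would observe that when $f=u^n$ all the coefficients $a_i$ vanish except $a_0=1$, so the defining relations of $\Fact^{p,q}_A(u^n)$ involve only the variables $b_i$ and $c_j$; since formation of factorization rings commutes with base change, this yields a graded isomorphism $\Fact^{p,q}_A(u^n)=A\otimes_{\bZ}\Fact^{p,q}_{\bZ}(u^n)$, with $A$ sitting in degree $0$. It therefore suffices to exhibit a graded ring isomorphism $\Fact^{p,q}_{\bZ}(u^n)\cong \rH^*_{\sing}(\Gr_p(\bC^n),\bZ)$, after which naturality in $A$ is automatic.

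To build this isomorphism, I would bring in the tautological exact sequence $0\to\cR\to\cO^n\to\cQ\to 0$ on $\Gr_p(\bC^n)$, with $\cR$ of rank $p$ and $\cQ$ of rank $q$. Dividing $g(u)h(u)=u^n$ by $u^n$ and substituting $t=u^{-1}$ turns the defining relations of the factorization ring into $(1+b_1t+\cdots+b_pt^p)(1+c_1t+\cdots+c_qt^q)=1$, which is precisely the Whitney relation $c(\cR)c(\cQ)=c(\cO^n)=1$; hence $b_i\mapsto c_i(\cR)$, $c_j\mapsto c_j(\cQ)$ defines a graded ring homomorphism $\phi\colon\Fact^{p,q}_{\bZ}(u^n)\to\rH^*_{\sing}(\Gr_p(\bC^n),\bZ)$ (the grading matches since $c_i(\cR)$ lies in cohomological degree $2i$ and our $b_i$ in degree $2i$). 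Then $\phi$ is surjective, since the cohomology ring of a Grassmannian is generated by the Chern classes of $\cR$ and $\cQ$ (a classical fact, e.g.\ via Schubert calculus); and both sides are free $\bZ$-modules of rank $\binom{n}{p}$ --- the source by Proposition~\ref{prop:fact}(b), the target because $\Gr_p(\bC^n)$ has a cell decomposition into $\binom{n}{p}$ Schubert cells and hence torsion-free integral cohomology of total rank $\binom{n}{p}$. A surjection of free $\bZ$-modules of the same finite rank is an isomorphism, so $\phi$ is the desired isomorphism.

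I do not expect a substantive obstacle here: the argument is a comparison of presentations plus a rank count, and the only point that needs attention is keeping the grading conventions consistent, i.e.\ that the factor of $2$ built into the factorization ring matches the doubling of degrees in passing from Chow groups (or our $b_i,c_j$) to singular cohomology. One could equally well deduce the statement by specializing the geometric description of factorization rings recalled above (from \cite[Theorem~6.1]{GSS}) to $X=\Spec(\bC)$ and $\cE=\bC^n$ --- which identifies $\Fact^{p,q}_{\bZ}(u^n)$ with the Chow ring of $\Gr_p(\bC^n)$ --- and then invoking the cellular structure to pass from the Chow ring to singular cohomology; this is the route that parallels Proposition~\ref{prop:split-flag}.
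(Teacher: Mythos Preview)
Your proof is correct and essentially matches the paper's approach. The paper states Proposition~\ref{prop:fact-grass} as a direct consequence of the geometric interpretation of factorization rings from \cite[Theorem~6.1]{GSS} specialized to $X=\Spec(\bC)$, $\cE=\bC^n$ (together with the equality of Chow ring and singular cohomology for cellular varieties, and an implicit base change to general $A$); your primary argument unpacks this same identification directly via the Whitney relation and the rank count from Proposition~\ref{prop:fact}(b), and you explicitly note the paper's citation-based route as an alternative at the end.
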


\begin{remark}
One can also form partial splitting rings which are intermediate between $B$ and $C$, and all of the above properties generalize. For these rings, one sees the cohomology of a partial flag variety in the analog of Proposition~\ref{prop:fact-grass}.
\end{remark}

\section{Some additional preparatory material} \label{s:prep}

In this section, we give a bit more material that will be needed in \S \ref{s:gs}.

\subsection{Rational singularities} \label{ss:rat-sing}

We first recall some generalities on rational singularities. Let $X$ be an irreducible variety over the complex numbers. We say that $X$ has \defn{rational singularities} if there exists a proper birational map $\pi \colon Y \to X$ with $Y$ smooth (i.e., a resolution of singularities) such that $\pi_*(\cO_Y)=\cO_X$ and $\rR^i \pi_*(\cO_Y)=0$ for $i>0$. If $X$ has rational singularities then for any resolution of singularities $\pi' \colon Y' \to X$ we have $\pi'_*(\cO_{Y'})=\cO_X$ and $\rR^i \pi'_*(\cO_{Y'})=0$ for $i>0$; furthermore, $X$ is normal and Cohen--Macaulay \cite[Theorem 5.10]{kollarmori}. We require the following two additional results concerning rational singularities:
 
\begin{proposition} \label{prop:rat-sing}
  Suppose that $X$ is normal and Cohen--Macaulay and let $\pi \colon Y \to X$ be a resolution of singularities. Let $U \subset X$ be a subset which has rational singularities and suppose that $\pi^{-1}(X \setminus U)$ has codimension $\ge 2$ in $Y$. Then $X$ has rational singularities.
\end{proposition}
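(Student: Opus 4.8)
The plan is to reduce the statement to the vanishing of the higher direct images $\rR^i\pi_*\cO_Y$ for $i>0$, and then to force that vanishing by a Grothendieck--Serre duality argument, exploiting the hypotheses only through the facts that these sheaves are supported in codimension $\ge 2$ and that Grauert--Riemenschneider vanishing is available.

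First I would collect the elementary inputs (taking $U$ to be open, so $X\setminus U$ is closed). Since $X$ is normal and $\pi$ is proper birational, $\pi_*\cO_Y=\cO_X$, so it remains to show $\rR^i\pi_*\cO_Y=0$ for $i>0$; put $K=\rR\pi_*\cO_Y$, a complex with $\mathcal{H}^{<0}(K)=0$ and $\mathcal{H}^0(K)=\cO_X$. The morphism $\pi^{-1}(U)\to U$ is a resolution of $U$ (proper by base change, birational, with smooth source), so since $U$ has rational singularities the facts recalled in \S\ref{ss:rat-sing} give $\mathcal{H}^i(K)|_U=0$ for $i>0$; hence each coherent sheaf $\mathcal{H}^i(K)$ with $i\ge1$ is supported on $X\setminus U$. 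As $\pi$ is surjective, $\dim(X\setminus U)\le\dim\pi^{-1}(X\setminus U)\le\dim Y-2$, so $X\setminus U$ has codimension $\ge2$ in $X$; here I may assume $\dim X\ge1$ (the case $\dim X=0$ being trivial) and $U\ne\emptyset$.

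The heart of the proof is a duality computation, and this is the step I expect to require the most care. Set $d=\dim X=\dim Y$, let $\omega_X^{\bullet}=\omega_X[d]$ be the normalized dualizing complex of $X$ --- a single sheaf in degree $-d$, since $X$ is Cohen--Macaulay --- and put $D(-)=\rR\mathcal{H}om_X(-,\omega_X^{\bullet})$. Grothendieck duality for the proper morphism $\pi$, together with $\pi^{!}\omega_X^{\bullet}=\omega_Y[d]$ ($Y$ smooth of dimension $d$), identifies $D(K)$ with $\rR\pi_*(\omega_Y[d])$; Grauert--Riemenschneider vanishing then gives $\rR^i\pi_*\omega_Y=0$ for $i>0$, so $D(K)=(\pi_*\omega_Y)[d]$, again concentrated in degree $-d$. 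The augmentation $\cO_X\to K$ realizes $\cO_X$ as $\tau_{\le0}K$, giving a distinguished triangle $\cO_X\to K\to K'\to\cO_X[1]$ with $K'=\tau_{\ge1}K$; applying the contravariant functor $D$ and using $D(\cO_X)=\omega_X[d]$ produces a distinguished triangle $D(K')\to(\pi_*\omega_Y)[d]\to\omega_X[d]\to D(K')[1]$. I would then check that the sheaf map $\phi\colon\pi_*\omega_Y\to\omega_X$ underlying the middle arrow is an isomorphism over $U$ (there $K|_U=\cO_U$, so the augmentation restricts to an isomorphism), and hence injective, since $U$ is dense and $\pi_*\omega_Y$ is torsion-free. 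Writing $\mathcal{Q}$ for the cokernel of $\phi$ --- supported on $X\setminus U$, so with support of dimension $\le d-2$ --- the triangle yields $D(K')=\mathcal{Q}[d-1]$.

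Finally I would apply $D$ once more. By biduality for $\omega_X^{\bullet}$ (legitimate since $K'\in\mathrm{D}^{\mathrm{b}}_{\mathrm{coh}}(X)$), $K'=D(\mathcal{Q})[1-d]$. Since $\mathcal{Q}$ is a coherent sheaf whose support has dimension $\le d-2$, the complex $D(\mathcal{Q})$ has cohomology only in degrees within $[-(d-2),0]$, so $K'$ has cohomology only in degrees $\le 1-d$. But $K'=\tau_{\ge1}K$ has cohomology only in degrees $\ge1$, and $1-d<1$; therefore $K'=0$, i.e.\ $\rR^i\pi_*\cO_Y=0$ for all $i\ge1$. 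Together with $\pi_*\cO_Y=\cO_X$ this exhibits $X$ as having rational singularities. The genuinely delicate points are the bookkeeping in the duality step --- the normalization conventions for dualizing complexes, the identity $\pi^{!}\omega_X^{\bullet}=\omega_Y[d]$, and the appeal to Grauert--Riemenschneider --- together with the verification that $\phi$ is an injective sheaf map which is an isomorphism over $U$.
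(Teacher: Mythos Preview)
Your duality set-up is correct through the point where you obtain the injective map $\phi\colon\pi_*\omega_Y\to\omega_X$ with cokernel $\mathcal Q$ supported on $X\setminus U$ and the identification $D(K')=\mathcal Q[d-1]$. The error is in the very last step. With the convention you fixed (so that $\omega_X[d]$ sits in degree $-d$), one has $D(A[n])=D(A)[-n]$, hence
\[
K'=D\bigl(\mathcal Q[d-1]\bigr)=D(\mathcal Q)[1-d],
\]
and since $D(\mathcal Q)$ lives in degrees $[-(d-2),0]$, the complex $K'$ lives in degrees $[1,d-1]$, not in degrees $\le 1-d$. This range is perfectly compatible with $K'=\tau_{\ge 1}K$, so no contradiction arises and the argument does not force $K'=0$. (The slip is a sign in the shift under $D$; your claimed bound would follow from $D(A[n])=D(A)[n]$, which is false.)

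What actually has to be shown is $\mathcal Q=0$, i.e.\ that $\phi$ is an isomorphism, and your framework does not yet supply that. The paper proves exactly this, but by a much shorter Hartogs-type argument: it invokes the Koll\'ar--Mori criterion that, for $X$ normal and Cohen--Macaulay, rational singularities are equivalent to surjectivity of $\phi\colon\pi_*\omega_Y\to\omega_X$, then checks this locally by comparing global sections. On an affine chart one has a commutative square
\[
\xymatrix{
\Gamma(Y,\omega_Y)\ar[r]\ar[d] & \Gamma(X,\omega_X)\ar[d]\\
\Gamma(\pi^{-1}(U),\omega_Y)\ar[r] & \Gamma(U,\omega_X).
}
\]
The bottom arrow is an isomorphism because $U$ has rational singularities, and both vertical arrows are isomorphisms because the complements have codimension $\ge 2$ and $\omega_Y$, $\omega_X$ are $S_2$ (indeed $\omega_Y$ is a line bundle on the smooth $Y$, and $\omega_X$ is the dualizing sheaf of a Cohen--Macaulay scheme). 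Hence the top arrow is an isomorphism. Your approach can be repaired along the same lines: once you know $\phi$ is an injection of $S_2$ sheaves that is an isomorphism away from a closed set of codimension $\ge 2$, it is an isomorphism, whence $\mathcal Q=0$ and then your triangle gives $K'=0$. But note that this is exactly the paper's argument, and the excursion through Grothendieck duality and Grauert--Riemenschneider is then unnecessary.
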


\begin{proof}
  Since $X$ is Cohen--Macaulay, let $\omega_X$ be its dualizing sheaf. We have a natural map $\pi_* \omega_Y \to \omega_X$ obtained by duality from $\cO_X \to \pi_* \cO_Y$, and $X$ has rational singularities if and only if this map is surjective \cite[Theorem 5.10]{kollarmori}. This property is local, so we may assume that $X$ is affine. Consider the following commutative diagram:
\begin{displaymath}
\xymatrix{
\Gamma(Y, \omega_Y) \ar[r] \ar[d] & \Gamma(X, \omega_X) \ar[d] \\
\Gamma(\pi^{-1}(U), \omega_Y) \ar[r] & \Gamma(U, \omega_X) }
\end{displaymath}
The bottom map is an isomorphism since $U$ has rational singularities. The vertical maps are isomorphisms since the complements of $U$ and $\pi^{-1}(U)$ have codimension $\ge 2$ (for example, by \cite[Proposition 1.11]{hartshorne}). We conclude that the top map is an isomorphism, as required.
\end{proof}

\begin{proposition} \label{prop:rat-fiber} 
Let $\pi \colon Y \to X$ be a proper morphism of varieties, where $Y$ is smooth and $X$ has rational singularities. Suppose that there is an open dense subset $U$ of $X$ such that $\pi^{-1}(U)$ is isomorphic (as a variety over $U$) to $U \times W$ for some irreducible projective variety $W$ with rational singularities and satisfying $\rH^i(W, \cO_W)=0$ for $i>0$. Then $\rR \pi_*(\cO_Y)=\cO_X$.
\end{proposition}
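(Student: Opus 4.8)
The plan is to turn $\pi$ into a birational morphism onto $X\times W$ and thereby reduce $\rR\pi_*\cO_Y$ to the evident computation for the projection $\mathrm{pr}_1\colon X\times W\to X$. Note first that $\pi^{-1}(U)\cong U\times W$ is a nonempty open subset of $Y$; as $Y$ is irreducible it is dense, and $\pi$ is dominant, hence (being proper, with $X$ irreducible) surjective.

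First I would build the common resolution. Using the identification $\pi^{-1}(U)\cong U\times W$, form the locally closed immersion $j\colon\pi^{-1}(U)\hookrightarrow Y\times W$, $y\mapsto(y,\mathrm{pr}_W(y))$ — this is the graph of a morphism, so its image is closed in the open set $\pi^{-1}(U)\times W$ — and let $\Gamma\subset Y\times W$ be the closure of the image. Then $\Gamma$ is an irreducible variety, proper over $Y$, and the image of $j$ is an open subvariety of $\Gamma$ isomorphic to $\pi^{-1}(U)$; consequently the projection $\Gamma\to Y$ is an isomorphism over $\pi^{-1}(U)$, and the composite $\Gamma\hookrightarrow Y\times W\xrightarrow{\pi\times\mathrm{id}_W}X\times W$ is an isomorphism over $U\times W$. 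Choose a resolution of singularities $\rho\colon Y'\to\Gamma$ which is an isomorphism over $\Gamma_{\mathrm{sm}}$; since the image of $j$ is an open subvariety of $\Gamma$ isomorphic to the smooth variety $\pi^{-1}(U)$, it lies in $\Gamma_{\mathrm{sm}}$, so $\rho$ is an isomorphism over it. Set $a\colon Y'\xrightarrow{\rho}\Gamma\to Y$ and $\psi\colon Y'\xrightarrow{\rho}\Gamma\to X\times W$. Then $a$ and $\psi$ are proper, $Y'$ is smooth, $a$ is an isomorphism over the dense open $\pi^{-1}(U)\subset Y$, and $\psi$ over the dense open $U\times W\subset X\times W$; hence $a$ is a resolution of singularities of $Y$ and $\psi$ one of $X\times W$. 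The point is that $\mathrm{pr}_1\circ\psi=\pi\circ a$, since both equal $\rho$ followed by $\Gamma\hookrightarrow Y\times W\xrightarrow{\pi\circ\mathrm{pr}_Y}X$.

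Then I would collect the consequences. Since $Y$ is smooth it has rational singularities, so $\rR a_*\cO_{Y'}=\cO_Y$. The product $X\times W$ also has rational singularities: if $\phi\colon\widetilde X\to X$ and $\sigma\colon\widetilde W\to W$ are resolutions with $\rR\phi_*\cO_{\widetilde X}=\cO_X$ and $\rR\sigma_*\cO_{\widetilde W}=\cO_W$, then $\phi\times\sigma$ is a resolution of $X\times W$ and $\rR(\phi\times\sigma)_*\cO_{\widetilde X\times\widetilde W}=\cO_{X\times W}$ by two applications of flat base change; hence $\rR\psi_*\cO_{Y'}=\cO_{X\times W}$. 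Finally, $\mathrm{pr}_1\colon X\times W\to X$ is the base change of $W\to\Spec\bC$ along $X\to\Spec\bC$, so flat base change gives $\rR(\mathrm{pr}_1)_*\cO_{X\times W}=\cO_X\otimes_\bC\rR\Gamma(W,\cO_W)=\cO_X$, using that $W$ is irreducible and projective (whence $\rH^0(W,\cO_W)=\bC$) together with the hypothesis $\rH^i(W,\cO_W)=0$ for $i>0$. Stringing the isomorphisms together,
\[
\rR\pi_*\cO_Y=\rR\pi_*\rR a_*\cO_{Y'}=\rR(\pi\circ a)_*\cO_{Y'}=\rR(\mathrm{pr}_1\circ\psi)_*\cO_{Y'}=\rR(\mathrm{pr}_1)_*\rR\psi_*\cO_{Y'}=\rR(\mathrm{pr}_1)_*\cO_{X\times W}=\cO_X,
\]
which is the assertion.

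The one genuinely delicate step is producing a single smooth $Y'$ that dominates both $Y$ and $X\times W$ through resolutions compatible over $X$; the graph construction accomplishes this automatically, the only thing to verify being that $\rho$ can be taken to be an isomorphism over the locus of $\Gamma$ lying over $U$ — immediate, since that locus is an open subvariety of the smooth variety $Y$. Everything else is routine manipulation of derived pushforwards plus two standard facts: a variety with rational singularities (in particular a smooth variety, and a product of varieties with rational singularities) satisfies $\rR(-)_*\cO=\cO$ for every resolution of singularities, and flat base change.
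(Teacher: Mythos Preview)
Your proof is correct and follows essentially the same route as the paper's: take the closure $\Gamma$ of the graph of $\pi^{-1}(U)\to U\times W$ inside $Y\times W$, resolve it, and use the resulting smooth variety as a common roof over $Y$ and $X\times W$ to factor $\rR\pi_*\cO_Y$ through $\rR(\mathrm{pr}_1)_*\cO_{X\times W}$. The paper's $Z\subset Y\times_X(X\times W)=Y\times W$ is precisely your $\Gamma$, and its $\tilde{Z}$ is your $Y'$; your argument is in fact slightly more detailed (you spell out why $X\times W$ has rational singularities and why $\rR(\mathrm{pr}_1)_*\cO_{X\times W}=\cO_X$ via flat base change), but the strategy is identical.
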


\begin{proof}
Let $Z \subset Y \times W = Y \times_X (X \times W)$ be the closure of the graph of the isomorphism $\pi^{-1}(U) \to U \times W$, and let $\tilde{Z} \to Z$ be a resolution of singularities. Consider the diagram
\begin{displaymath}
\xymatrix@C=4em{
\tilde{Z} \ar[r]^-{\pi'} \ar[d]_{\rho'} & X \times W \ar[d]^{\rho} \\
Y \ar[r]^-{\pi} & X }
\end{displaymath}
Since $Z$ is the closure of the graph of an isomorphism of open sets, the projection maps $Z \to Y$ and $Z \to X \times W$ are birational. Thus $\pi'$ and $\rho'$ are birational; also, all maps above are proper. We now have
\begin{displaymath}
\rR \pi_*(\cO_Y) = \rR\pi_*(\rR \rho'_*(\cO_{\tilde{Z}})) = \rR \rho_*(\rR \pi'_*(\cO_{\tilde{Z}})) = \rR \rho_*(\cO_{X \times W}) = \cO_X.
\end{displaymath}
In the first step we used that $\rR\rho'_*(\cO_{\tilde{Z}})=\cO_Y$ since $\rho'$ is a proper birational map of smooth varieties; in the second step, we used the commutativity of the diagram; in the third step we used that $\rR\pi'_*(\cO_{\tilde{Z}})=\cO_{X \times W}$ since $\pi'$ is a resolution of $X \times W$, which has rational singularities; and in the final step, we used the assumption on the cohomology of $\cO_W$.
\end{proof}

\subsection{Some linear algebra} \label{ss:linalg}

In this section, we work over an algebraically closed field $\bk$ of arbitrary characteristic.
Consider linear maps $f \colon V_0 \to V_1$ and $g \colon V_1 \to V_0$ such that $V_0$ and $V_1$ are finite-dimensional vector spaces. By picking bases, what form can we put the matrices in?

To answer this question, note that the tuples $(V_0, V_1, f, g)$ form an abelian category $\cA$. Concisely, we can regard $V_0 \oplus V_1$ as a $\bZ/2$-graded module over $\bk[t]$, where $t$ has degree~1; $t$ acts on $V_0$ by $f$ and on $V_1$ by $g$. We define some basic objects in this category.
\begin{itemize}
\item For $\lambda \in \bk$ let $A_n(\lambda)$ be the object with $V_0=V_1=\bk^n$, and where $f$ is the identity matrix and $g$ is a single Jordan block with eigenvalue $\lambda$.
\item Let $A_n(\infty)$ be the object with $V_0=V_1=\bk^n$, and where $f$ is a single nilpotent Jordan block and $g$ is the identity.
\item Let $B_n$ be the module $\bk[t]/(t^{2n+1})$. This has basis $x_0, \ldots, x_{2n}$, where $x_i$ has parity $i$, and $tx_i=x_{i+1}$ for $i<2n$, and $tx_{2n}=0$.
\end{itemize}
For an object $M$ of $\cA$, we let $M[1]$ be the object where the even and odd pieces of $M$ are swapped. One easily verifies that $A_n(\lambda)[1]$ is isomorphic to $A_n(\lambda^{-1})$ for $\lambda \in \bk \cup \{\infty\}$.

\begin{proposition} \label{prop:indecomp}
The $A_n(\lambda)$, $B_n$, and $B_n[1]$ are exactly the indecomposable objects of $\cA$.
\end{proposition}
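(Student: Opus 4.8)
The plan is to identify $\cA$ with the category of finite-dimensional $\bZ/2$-graded $\bk[t]$-modules, where $t$ has degree $1$ and acts by $f$ on $V_0$ and by $g$ on $V_1$; a morphism in $\cA$ is then exactly a homogeneous degree-$0$ $\bk[t]$-linear map, and $M[1]$ is the degree shift. Two things must be shown: every object on the list is indecomposable in $\cA$, and every indecomposable object of $\cA$ is isomorphic to one on the list.

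For the first, each of $A_n(\lambda)$, $B_n$, $B_n[1]$ is a \emph{cyclic} $\bk[t]$-module. Indeed $A_n(\lambda)\cong\bk[t]/(t^2-\lambda)^n$ for $\lambda\ne 0$ (generator in degree $0$), while $A_n(0)$ and $A_n(\infty)$ are the two graded forms of $\bk[t]/(t^{2n})$ (generator in degree $0$, resp.\ degree $1$), and $B_n$, $B_n[1]$ the two graded forms of $\bk[t]/(t^{2n+1})$. Hence $\mathrm{End}_{\bk[t]}(M)\cong\bk[t]/\mathrm{ann}(M)$, and $\mathrm{End}_{\cA}(M)$ is its degree-$0$ part, which is the image of $\bk[t^2]$, hence a quotient of $\bk[t^2]$ by a primary ideal and so a local Artinian ring; thus $M$ is indecomposable.

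For the converse I would proceed in two stages. \emph{Stage 1 (reduction to one eigenvalue).} The even operator $t^2$ acts as $gf$ on $V_0$ and as $fg$ on $V_1$; since $t$ commutes with $t^2$ and, from $(fg)f=f(gf)$ and $(gf)g=g(fg)$, carries the generalized $\mu$-eigenspace of $gf$ into that of $fg$ and vice versa, the generalized-eigenspace decomposition of $t^2$ (available because $\bk$ is algebraically closed) is a decomposition in $\cA$. So an indecomposable $M$ has a single eigenvalue $\mu$. \emph{Stage 2a ($\mu\ne 0$).} Then $t^2$, hence $t$, is invertible, so $f$ and $g$ are isomorphisms and $\dim V_0=\dim V_1=:n$; any $gf$-stable direct-sum splitting of $V_0$ would propagate through $t$ to a splitting of $M$ in $\cA$, so $gf$ must be a single Jordan block with eigenvalue $\mu$, and using $f$ to identify $V_1$ with $V_0$ exhibits $M\cong A_n(\mu)$.

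\emph{Stage 2b ($\mu=0$).} Here $t$ is nilpotent, and the crux is a graded structure theorem: a finite-dimensional $\bZ/2$-graded $\bk[t]$-module with $t$ nilpotent is a direct sum of graded cyclic modules $\bk[t]/(t^{d_i})$ with homogeneous generators of (possibly varying) degrees. I would prove this by induction on dimension: pick a homogeneous element $x$ of maximal order $d$ — possible because $t$ is odd, so the order of any element is the maximum of the orders of its two homogeneous components — whence $t^dM=0$, so $M$ is a module over the self-injective ring $\bk[t]/(t^d)$; then the isomorphism $\langle x\rangle\xrightarrow{\ \sim\ }\bk[t]/(t^d)$ sending $x\mapsto 1$ extends to a $\bk[t]$-linear map $M\to\bk[t]/(t^d)$, and replacing this extension by its homogeneous component of the appropriate degree gives a graded retraction, so $\langle x\rangle$ splits off in $\cA$ and one induces on the complement. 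Consequently an indecomposable $M$ is a single $\bk[t]/(t^d)$ with generator in degree $0$ or $1$, and reading off $f$ and $g$ on the monomial basis $1,t,\dots,t^{d-1}$ identifies the four resulting objects: $A_n(0)$ and $A_n(\infty)$ when $d=2n$, and $B_n$ and $B_n[1]$ when $d=2n+1$. A short final check records that the only coincidences among the listed objects are the stated $A_n(\lambda)[1]\cong A_n(\lambda^{-1})$. The main obstacle is this $\mu=0$ case — precisely, arranging the maximal-order cyclic submodule to split off \emph{equivariantly} for the grading; once the graded structure theorem is in hand, the rest is bookkeeping with eigenspaces, Jordan blocks, and monomial bases.
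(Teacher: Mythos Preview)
Your argument is correct and follows essentially the same route as the paper: decompose via the action of $t^2$ into an invertible and a nilpotent piece, reduce the invertible case to Jordan normal form of $fg$, and in the nilpotent case exhibit a cyclic decomposition. The paper is terser---it does not separately verify indecomposability of the listed objects, and for the nilpotent case it simply invokes ``the usual proof of the structure theorem for modules over a PID'' rather than your self-injectivity/splitting argument---but the strategy is the same.
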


\begin{proof}
Consider an object $(V_0, V_1, f, g)$. Then $V_0$ and $V_1$ are $\bk[t^2]$-modules, and $f$ and $g$ are maps of modules such that $fg=gf=t^2$. By the structure theorem for modules over a PID, $V_0$ decomposes as $V_0' \oplus V_0''$ where $t^2$ is invertible on $V_0'$ and nilpotent on $V_0''$; of course, $V_1$ decomposes similarly. Since $f$ and $g$ are maps of modules, they respect these decompositions. Thus the whole object decomposes into two pieces, and it suffices to consider these pieces separately.

First suppose that $t^2$ is invertible. Since $fg=t^2$, it follows that $f \colon V_0 \to V_1$ is an isomorphism. We thus see that our object is isomorphic to the object $(V_1, V_1, \id, fg)$, which is easily seen to decompose in terms of the $A_n(\lambda)$ with $\lambda \in \bk \setminus \{0\}$.

Now suppose that $t^2$ is nilpotent. Then the usual proof of the structure theorem for modules over a PID applies, and we see that our module decomposes into cyclic modules, which are $B_n$, $B_n[1]$, $A_n(0)$, or $A_n(\infty)$.
\end{proof}

\begin{remark}
The category $\cA$ is the representation category of the extended Dynkin quiver $\tilde{A}_1$. For more in this direction, see \cite[\S 7.1]{DerksenWeyman}.
\end{remark}

\section{Grothendieck--Springer theory for the super Grassmannian} \label{s:gs}

In this section we study the Grothendieck--Springer theory associated to the super Grassmannian $\Gr_{r|s}(\bC^{n|m})$ (see \S \ref{ss:gs}). We prove a number of results about the various spaces. These results are used to compute the coherent cohomology of $\Gr_{r|s}(\bC^{n|m})$ in the next section. This section takes place entirely in the world of ordinary (non-super) mathematics.

\subsection{Statement of results} \label{ss:detvar}

We introduce a number of objects:
\begin{itemize}
\item Let $V_0$ and $V_1$ be complex vector spaces of dimensions $n$ and $m$.
\item Let $r$ and $s$ be non-negative integers with $r \ge s$. Put
\begin{displaymath}
\delta = \begin{cases}
m-n+r-s & \text{if $r-s>n-m$} \\
0 & \text{if $n-m \ge r-s$} \end{cases}.
\end{displaymath}
Note that $m-\delta \le \min(n,m)$.
\item Put $W_0=\Hom(V_0,V_1)$ and $W_1=\Hom(V_1,V_0)$ and $W=W_0 \times W_1$. We regard these as affine varieties.
\item Let $Z_0 \subset W_0$ be the determinantal variety consisting of linear maps of rank $\le m-\delta$. Let $Z_1=W_1$ and put $Z=Z_0 \times Z_1$. (Note: when $\delta=0$ we have $Z_0=W_0$. This case is still interesting, though somewhat simpler.)
\item Letting $f \colon V_0 \otimes \cO_Z \to V_1 \otimes \cO_Z$ and $g \colon V_1 \otimes \cO_Z \to V_0 \otimes \cO_Z$ denote the universal linear maps, let $\chi(u) \in \cO_Z[u]$ be the characteristic polynomial of $fg$, and let $\ol{\chi}(u)=u^{-\delta} \chi(u)$. Since $fg$ has rank $\le m-\delta$, it follows that $\ol{\chi}(u)$ is a polynomial.
  
\item Let $\tilde{Z}$ be the affine scheme whose coordinate ring is the $(s,m-\delta-s)$-factorization ring for $\ol{\chi}(u)$ introduced in \S \ref{ss:factor}. Thus for a $\bC$-algebra $T$, a $T$-point of $\tilde{Z}$ is a triple $(f,g,p)$ where $(f,g)$ is a $T$-point of $Z$ and $p=p(u)$ is a degree $s$ monic polynomial over $T$ that divides $\ol{\chi}(u) \in T[u]$. There is a natural map $\tilde{Z} \to Z$ given by forgetting $p$.
  
\item Let $Y$ be the scheme defined as follows: a $T$-point of $Y$ is a tuple $(f,g,R_0,R_1)$ where:
\begin{itemize}
\item $R_0 \subset (V_0)_T$ is a $T$-submodule that is locally a rank $r$ summand.
\item $R_1 \subset (V_1)_T$ is a $T$-submodule that is locally a rank $s$ summand.
\item $f \colon (V_0)_T \to (V_1)_T$ is a map of $T$-modules such that $f(R_0) \subseteq R_1$.
\item $g \colon (V_1)_T \to (V_0)_T$ is a map of $T$-modules such that $g(R_1) \subseteq R_0$.
\end{itemize}
One easily sees that $Y$ is the total space of a vector bundle over $\Gr_r(V_0) \times \Gr_s(V_1)$, and is thus smooth and irreducible.
\item Let $\pi \colon Y \to \tilde{Z}$ be the map taking $(f,g,R_0,R_1)$ to $(f,g,p)$, where $p$ is the characteristic polynomial of $fg$ on $R_1$. We show that this is well-defined in Corollary~\ref{cor:tildeZ-divide}.
\end{itemize}
This set-up turns out to be the Grothendieck--Springer theory associated to the super Grassmannian, as explained in \S \ref{ss:supergrass}. The purpose of this section is to study the above situation, especially the varieties $Y$ and $\tilde{Z}$. Our main results are summarized in the following theorem:

\begin{theorem} \label{thm:detvar}
We have the following:
\begin{enumerate}
\item $\tilde{Z}$ is integral and has rational singularities (and is thus normal and Cohen--Macaulay).
\item The map $\tilde{Z} \to Z$ is finite and flat; in fact, $\cO_{\tilde{Z}}$ is a free $\cO_Z$-module of rank $\binom{m-\delta}{s}$.
\item The graded ring $\cO_{\tilde{Z}} \otimes_{\cO_Z} \bC$ is isomorphic to $\rH^*_{\sing}(\Gr_s(\bC^{m-\delta}), \bC)$.
\item The map $\pi \colon Y \to \tilde{Z}$ is projective and satisfies $\pi_*(\cO_Y)=\cO_{\tilde{Z}}$ and $\rR^i \pi_*(\cO_Y)=0$ for $i>0$. If $\delta>0$ or $r=s$ then $\pi$ is birational. If $\delta=0$ then there is an open dense subset $U$ of $\tilde{Z}$ such that $\pi^{-1}(U)$ is isomorphic to $U \times \Gr_{r-s}(\bC^{n-m})$.
\end{enumerate}
\end{theorem}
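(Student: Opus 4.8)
\emph{Strategy.} The plan is to establish (b), (c), (a), (d) in that order, feeding the splitting/factorization ring theory of \S\ref{s:split} and the smooth variety $Y$ into the general machinery. Parts (b) and (c) are quick. First note that $\bar\chi$ is monic of degree $m-\delta$ with coefficients in $\cO_Z$: the coefficient of $u^{m-k}$ in $\chi$ is, up to sign, the trace of $\lw^k(fg)$, and since the $(m-\delta+1)$-minors of $f$ vanish on $Z_0$ we have $\lw^{m-\delta+1}f=0$, hence $\lw^k(fg)=0$ for $k>m-\delta$ and $u^\delta\mid\chi$. Part (b) is then Proposition~\ref{prop:fact}(b) applied with $A=\cO_Z$ and $(p,q)=(s,\,m-\delta-s)$, with finiteness and flatness coming from freeness. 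For (c), grade $\cO_Z$ so the entries of $f,g$ have degree $1$ (so the coefficients of $\bar\chi$ have degree $2i$ as in \S\ref{ss:factor}); then $\cO_{\tilde Z}\otimes_{\cO_Z}\bC$ is the specialization of the factorization ring at the origin $f=g=0$ of $Z$, where $\bar\chi$ becomes $u^{m-\delta}$, and Proposition~\ref{prop:fact-grass} identifies this with $\rH^*_{\sing}(\Gr_s(\bC^{m-\delta}),\bC)$.

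\emph{Part (a).} Since $Z_0$ is a classical determinantal variety it is normal and Cohen--Macaulay with rational singularities, hence so is $\cO_Z=\cO_{Z_0}\otimes_{\bC}\cO_{W_1}$, and Cohen--Macaulayness of $\cO_{\tilde Z}$ follows from Proposition~\ref{prop:fact}(d). For normality I would apply the second normality criterion (Proposition~\ref{prop:norm-crit2}) to the splitting ring $B=\Split_{\cO_Z}(\bar\chi)$ and descend to $\cO_{\tilde Z}$ by Proposition~\ref{prop:fact}(e). Of its hypotheses, $\cO_Z$ is normal, and the discriminant $\Delta$ of $\bar\chi$ is a non-zerodivisor because $\cO_Z$ is a domain and $Z$ contains a point at which $f$ has rank exactly $m-\delta$ and $fg$ has $m-\delta$ distinct nonzero eigenvalues. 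The crucial remaining hypothesis, that $V(\Delta,\partial\Delta)$ has codimension $\ge2$ in $Z$, I would verify by stratifying $Z$ by the rank of $f$ and the partition type of the eigenvalues of $fg$ and using Lemmas~\ref{lem:disc-1}, \ref{lem:disc-3} and Proposition~\ref{prop:sylvester} to bound the vanishing order of $\Delta$: the loci where $f$ drops rank by $\ge2$, where $\bar\chi$ has a triple root, and where $\bar\chi$ has two distinct double roots all have codimension $\ge2$, and at the generic point of $V(\Delta)$ one checks $\Delta$ vanishes to order exactly one. For integrality: $Z$ is irreducible, $\tilde Z\to Z$ is finite flat, and $\tilde Z$ is normal, so it suffices that $\tilde Z$ be connected; since the generic fiber of $\tilde Z\to Z$ is the set of $s$-subsets of the roots of $\bar\chi$ with the monodromy action, this follows once the Galois group of $\bar\chi$ over $\Frac(\cO_Z)$ is all of $\fS_{m-\delta}$, which one checks from the algebraic independence of the coefficients of $\bar\chi$ on $Z$ by a specialization argument. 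Finally, having $\tilde Z$ normal, Cohen--Macaulay and integral, rational singularities follow from Proposition~\ref{prop:rat-sing}: take a resolution (e.g.\ $Y$ itself when $\delta>0$ or $r=s$, since then $\pi$ is birational and projective by the fiber count in part (d); in general a resolution built from a Springer-type resolution of $Z_0$, or of the intermediate variety $Y'=\{(f,g,R_1):fg(R_1)\subseteq R_1\}$ through which $\pi$ factors birationally) and take $U\subseteq\tilde Z$ to lie over $Z\setminus E$, where $E$ is the codimension-$\ge2$ set above; over $Z\setminus E$ the cover $\tilde Z\to Z$ is étale or a simply-branched double cover over $Z$, so $U$ has rational singularities, while $\tilde Z\setminus U$ pulls back to codimension $\ge2$ in the resolution. (In characteristic zero one may instead write $\cO_{\tilde Z}=B^{\fS_s\times\fS_{m-\delta-s}}$ and apply Boutot's theorem, reducing to rational singularities of $\Spec B$.)

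\emph{Part (d).} Well-definedness of $\pi$ is Corollary~\ref{cor:tildeZ-divide}. For projectivity: $Y$ is closed in $(\Gr_r(V_0)\times\Gr_s(V_1))\times W$, so $Y\to W$ is proper, and it factors through the affine---hence separated---map $\tilde Z\to W$, whence $Y\to\tilde Z$ is proper; it is also quasi-projective. The identity $\pi_*\cO_Y=\cO_{\tilde Z}$ together with $\rR^i\pi_*\cO_Y=0$ for $i>0$ I would obtain from Proposition~\ref{prop:rat-fiber} applied to $\pi\colon Y\to\tilde Z$: $\tilde Z$ has rational singularities by part (a), and over a dense open $U\subseteq\tilde Z$ the fibers of $\pi$ form a trivial bundle with fiber a fixed irreducible projective variety---a point when $\delta>0$ or $r=s$, and $\Gr_{r-s}(\bC^{n-m})$ when $\delta=0$---whose structure sheaf has vanishing higher cohomology. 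Indeed, over a generic $(f,g,p)\in\tilde Z$ one has $\bar\chi(0)\ne0$, so $R_1$ is forced to be the sum of the eigenspaces of $fg$ prescribed by $p$, after which $R_0$ ranges over $\{g(R_1)\subseteq R_0\subseteq f^{-1}(R_1)\}\cong\Gr_{r-s}(\bC^{n-m})$, a point precisely when $\delta>0$ or $r=s$; this same computation yields the birationality assertions and the product description of $\pi^{-1}(U)$. (Alternatively, $\pi_*\cO_Y=\cO_{\tilde Z}$ can be proved directly as the ``affinization theorem''---$\cO_{\tilde Z}\hookrightarrow\Gamma(Y,\cO_Y)$ are finite $\cO_Z$-modules with $\cO_{\tilde Z}$ normal, so one matches their generic fibers over $Z$---in which case integrality of $\tilde Z$ becomes automatic, as $\Gamma(Y,\cO_Y)$ is a domain.)

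\emph{Main obstacle.} The technical heart is the behaviour of the discriminant $\Delta$ of $\bar\chi$ on the determinantal-type variety $Z$---the codimension-$\ge2$ estimate for $V(\Delta,\partial\Delta)$, equivalently the squarefreeness of $\Delta$---since this is what powers both normality and (together with the precise fiber structure of $\pi$) rational singularities of $\tilde Z$; once it and the dimension counts for $\pi$ are in hand, the rest is formal.
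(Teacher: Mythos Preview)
Your treatment of (b), (c), and (d) matches the paper's. The differences and the one real gap are in (a).

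\textbf{Integrality and the codimension estimate.} The paper does not use a Galois/monodromy argument; it shows directly that $\pi$ is surjective on closed points (Proposition~\ref{prop:rho-surj}), constructing a preimage of an arbitrary $(f,g,p)$ via the classification of indecomposable $\tilde A_1$-quiver representations (\S\ref{ss:linalg}). Since $Y$ is irreducible this gives irreducibility of $\tilde Z$, and reducedness follows from $\Delta\ne0$. Your monodromy route is plausible but showing the Galois group of $\bar\chi$ is all of $\fS_{m-\delta}$ needs care when $\delta>0$, since the coefficients of $\bar\chi$ are not obviously algebraically independent on the determinantal variety $Z_0$. For $V(\Delta,\partial\Delta)$, the paper also avoids stratification: it proves $V(\Delta)\subset Z$ is \emph{irreducible} (by joining any two points with an explicit rational curve, again using the quiver normal form) and then exhibits a single smooth point of $V(\Delta)$ where $d\Delta\ne0$.

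\textbf{Rational singularities---the gap.} Your open set $U\subset\tilde Z$ is the preimage of $Z\setminus V(\Delta,\partial\Delta)$, and you claim $U$ has rational singularities because $\tilde Z\to Z$ is ``\'etale or simply-branched'' there. Over the branch locus this is not justified: you have not shown $\tilde Z$ is smooth there, and simple branching over a possibly singular base does not give rational singularities for free. The paper's remedy is to introduce the auxiliary \emph{full-flag} space $\cY$ (a vector bundle over a product of partial flag varieties, hence smooth) mapping to the splitting-ring variety $\tilde\cZ$. On the open set $U_5$ where $\bar\chi$ has at most one double root, that root is nonzero, and its Jordan block is nonscalar, the map $\rho\colon\cY\to\tilde\cZ$ is an \emph{isomorphism} (Proposition~\ref{prop:birational}), so $U_5(\tilde\cZ)$ is smooth; together with the \'etale locus $U_2(\tilde\cZ)$ this covers $\tilde\cZ$ up to a set whose $\rho$-preimage has codimension $\ge2$, and the latter is checked by an elementary upper-triangular computation in a single fiber of $\cY$ (Proposition~\ref{prop:gl-codim2}). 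Then $\tilde Z$ inherits rational singularities as a finite-group quotient of $\tilde\cZ$. For the case $\delta=0$ with $r\ne s$, your intermediate $Y'=\{(f,g,R_1):fg(R_1)\subset R_1\}$ is cut out by quadratic equations and is not obviously smooth, so it does not serve as a resolution; the paper instead observes that when $\delta=0$ the variety $\tilde\cZ$ depends only on $s$ (not on $r$), so one may replace $(r,s)$ by $(s,s)$ and fall back on the birational case.
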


\begin{remark} \label{rmk:proof}
The proofs for the various parts are spread out:

(a): Integrality is the content of \S\ref{ss:integrality} and rational singularities is proven in Proposition~\ref{prop:Z-rat-sing}.

(b) and (c) are deduced in \S\ref{ss:first}.

(d): The first claim is Proposition~\ref{prop:pushforward-Z}, the second claim is Proposition~\ref{prop:birational}, and the third claim is Proposition~\ref{prop:delta0-fiber}.
\end{remark}

\begin{remark}
We make a clarifying remark about gradings. As $W$ is a vector space, it carries a natural $\bG_m$ action via usual scalar multiplication, and this action induces all the gradings. In terms of rings, we have
\begin{displaymath}
\cO_{W_0}=\Sym(V_0 \otimes V_1^*), \quad \cO_{W_1}=\Sym(V_1 \otimes V_0^*), \quad \cO_W=\cO_{W_0} \otimes \cO_{W_1},
\end{displaymath}
and the elements of $V_0 \otimes V_1^*$ and $V_1 \otimes V_0^*$ are given degree~1. The space $Y$ is a subbundle of $X_0 \times W$, where $X_0=\Gr_r(V_0) \times \Gr_s(V_1)$; again, the $\bG_m$ action on $W$ induces the grading on $\cO_{\tilde{Z}}=\Gamma(Y, \cO_Y)$. Precisely, writing $Y=\Sym(\eta)$, where $\eta$ is a quotient of $\cO_{X_0} \otimes W^*$, the degree $d$ piece of $\cO_{\tilde{Z}}$ is $\Gamma(X_0, \Sym^d(\eta))$. We note that the gradings we use do not necessarily coincide with the gradings induced by the central tori in $\GL(V_0)$ and $\GL(V_1)$.
\end{remark}

\subsection{Some auxiliary spaces}

To prove Theorem~\ref{thm:detvar}, we will use a few auxiliary spaces, which we now introduce.  Conceptually, these spaces come from the Grothendieck--Springer theory associated to a flag supervariety.
\begin{itemize}
\item Let $\tilde{\cZ}$ be the affine scheme whose coordinate ring is the splitting ring for $\ol{\chi}(u)$ introduced in \S \ref{ss:split}.
  \item Let $\cY$ be the scheme defined as follows: a $T$-point of $\cY$ is a tuple $(f,g,F_\bullet,G_\bullet)$ where:
    \begin{itemize}
    \item $F_{r-s} \subset F_{r-s+1} \subset \cdots \subset F_{r-s+m-\delta} \subset (V_0)_T$ is a flag of $T$-submodules which are locally summands with ranks prescribed by the subscript.
    \item $0=G_0 \subset G_1 \subset \cdots \subset G_{m-\delta} \subset (V_1)_T$ is a flag of $T$-submodules which are locally summands with ranks prescribed by the subscript.
\item $f \colon (V_0)_T \to (V_1)_T$ is a map of $T$-modules such that $f(F_{i+r-s}) \subseteq G_i$ for all $i \ge 0$.
\item $g \colon (V_1)_T \to (V_0)_T$ is a map of $T$-modules such that $g(G_i) \subseteq F_{i+r-s}$ for all $i \ge 0$.
\end{itemize}
In fact, $\cY$ is the total space of a vector bundle over a product of partial flag varieties $\Fl(r-s,\dots,r-s+m-\delta;V_0) \times \Fl(1,2,\dots,m-\delta;V_1)$.
\item Let $\rho \colon \cY \to \tilde{\cZ}$ be the map taking $(f,g,F_\bullet,G_\bullet)$ to $(f,g,\prod_{i=1}^{m-\delta} (u-\lambda_i))$, where $\lambda_i$ is the eigenvalue of $fg$ on $G_i/G_{i-1}$. We show that this is well-defined in Proposition~\ref{prop:tildecZ-divide}.
\end{itemize}
We prove the following analog of Theorem~\ref{thm:detvar}:

\begin{theorem} \label{thm:detvar2}
We have the following:
\begin{enumerate}
\item $\tilde{\cZ}$ is integral and has rational singularities (and is thus normal and Cohen--Macaulay).
\item The map $\tilde{\cZ} \to Z$ is finite and flat; in fact, $\cO_{\tilde{\cZ}}$ is a free $\cO_Z$-module of rank $(m-\delta)!$.
\item The graded ring $\cO_{\tilde{\cZ}} \otimes_{\cO_Z} \bC$ is isomorphic to $\rH^*_{\sing}(\Fl(\bC^{m-\delta}), \bC)$.
\item The map $\rho \colon \cY \to \tilde{\cZ}$ is projective and satisfies $\rho_*(\cO_{\cY})= \cO_{\tilde{\cZ}}$ and $\rR^i \rho_*(\cO_{\cY}) =0$ for $i>0$. If $\delta>0$ or $r=s$ then $\rho$ is birational. If $\delta=0$ then there is an open dense subset $U$ of $\tilde{\cZ}$ such that $\rho^{-1}(U)$ is isomorphic to $U \times \Gr_{r-s}(\bC^{n-m})$. 
\end{enumerate}
\end{theorem}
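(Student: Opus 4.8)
The plan is to prove Theorem~\ref{thm:detvar2} in close parallel with Theorem~\ref{thm:detvar}, replacing the factorization ring $\Fact^{s,m-\delta-s}_{\cO_Z}(\ol{\chi})$ throughout by the splitting ring $\Split_{\cO_Z}(\ol{\chi})$ and the Grassmannian $\Gr_s(\bC^{m-\delta})$ by the full flag variety $\Fl(\bC^{m-\delta})$. Write $A=\cO_Z$, a noetherian domain since $Z=Z_0\times W_1$ with $Z_0$ an integral determinantal variety, and let $\ol{\chi}\in A[u]$ be the monic polynomial of degree $m-\delta$ whose coefficients lie in degrees $2,4,\dots,2(m-\delta)$; by construction $\cO_{\tilde{\cZ}}=\Split_A(\ol{\chi})$, and since $\Split$ factors through $\Fact$ (iterate Proposition~\ref{prop:fact}(a)) there is a natural finite map $\tilde{\cZ}\to\tilde{Z}$. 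Parts (b) and (c) are then immediate. Part~(b) is Proposition~\ref{prop:split}(a),(b) applied to $A$ and $\ol{\chi}$: $\cO_{\tilde{\cZ}}$ is a free $A$-module of rank $(m-\delta)!$ and $A\to\cO_{\tilde{\cZ}}$ is syntomic, hence finite and flat. For part~(c), since formation of splitting rings commutes with base change, $\cO_{\tilde{\cZ}}\otimes_A\bC$ --- base change along evaluation at the origin $f=g=0$, where $fg=0$ forces $\chi=u^m$ and $\ol{\chi}=u^{m-\delta}$ --- equals $\Split_\bC(u^{m-\delta})$, which Proposition~\ref{prop:split-flag} identifies with $\rH^*_{\sing}(\Fl(\bC^{m-\delta}),\bC)$ as graded rings, the gradings matching since the coefficients of $\ol{\chi}$ are built from the degree-$2$ entries of $fg$.

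The geometric heart is the fiber analysis of $\rho\colon\cY\to\tilde{\cZ}$ over the dense open locus where $\ol{\chi}$ has $m-\delta$ distinct (nonzero) roots and $f$ has rank exactly $m-\delta$; this is pure linear algebra, mirroring the analysis of $\pi\colon Y\to\tilde{Z}$. There $fg$ acting on $V_1$ has these $m-\delta$ eigenvalues together with $0$ of multiplicity $\delta$, and a point of a fiber of $\rho$ is forced to have $G_i$ equal to the sum of the first $i$ of the nonzero eigenspaces in the order prescribed by the chosen splitting of $\ol{\chi}$; each $G_i$ then lies in $\operatorname{im}(f)$ and $g$ is injective on it. The flag $F_\bullet$ in $V_0$ is constrained by $g(G_i)\subseteq F_{i+r-s}\subseteq f^{-1}(G_i)$, and since $\dim(\ker f)=n-m+\delta$ one gets $\dim f^{-1}(G_i)=(n-m+\delta)+i$ while $\dim g(G_i)=i$. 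When $\delta>0$ or $r=s$ (and more generally when $r-s=n-m$) this forces $F_{i+r-s}=f^{-1}(G_i)$, so $\rho$ is birational; when $\delta=0$ and $r-s<n-m$ one finds $F_{i+r-s}=F_{r-s}\oplus g(G_i)$, leaving a free $\Gr_{r-s}(\ker f)=\Gr_{r-s}(\bC^{n-m})$ worth of choices of $F_{r-s}$, so $\rho^{-1}(U)\cong U\times\Gr_{r-s}(\bC^{n-m})$ over the relevant open $U$. Projectivity of $\rho$ is formal: $\cY$ is closed in $B\times W$ for $B$ the relevant product of flag varieties, $\cY\to B\times\tilde{\cZ}$ is a closed immersion since $\tilde{\cZ}\to Z$ is finite, and $B\times\tilde{\cZ}\to\tilde{\cZ}$ is projective. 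Finally $\rho_*(\cO_\cY)=\cO_{\tilde{\cZ}}$ will follow from part~(a) below, and $\rR^i\rho_*(\cO_\cY)=0$ for $i>0$ from Proposition~\ref{prop:rat-sing} when $\rho$ is birational and Proposition~\ref{prop:rat-fiber} when $\delta=0$, using that $\tilde{\cZ}$ has rational singularities and $\Gr_{r-s}(\bC^{n-m})$ is smooth with $\rH^i(\cO)=0$ for $i>0$.

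For part~(a), first note that the discriminant $\Delta$ of $\ol{\chi}$ is a nonzerodivisor in the domain $A$: taking $f$ of rank exactly $m-\delta$ and factoring it as $V_0\xrightarrow{\ol{f}}U\xrightarrow{\iota}V_1$ with $U=\operatorname{im}(f)$ of dimension $m-\delta$, the identity $\det(uI_{V_1}-fg)=u^\delta\det(uI_U-\ol{f}g\iota)$ shows $\ol{\chi}$ is the characteristic polynomial of $\ol{f}g\iota\in\operatorname{End}(U)$, and for fixed such $f$ the map $g\mapsto\ol{f}g\iota$ is surjective, so some $g$ makes it regular semisimple and $\Delta\ne0$. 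Hence $\cO_{\tilde{\cZ}}$ is reduced by Proposition~\ref{prop:split}(e) and Cohen--Macaulay by Proposition~\ref{prop:split}(c) (as $\cO_Z=\cO_{Z_0}\otimes\cO_{W_1}$ is Cohen--Macaulay), and it is normal by Proposition~\ref{prop:norm-crit2} once one checks $V(\Delta,\partial\Delta)\subset\Spec(A)$ has codimension $\ge2$ --- a computation on $Z$, parallel to the one for $\tilde{Z}$. For integrality and $\rho_*(\cO_\cY)=\cO_{\tilde{\cZ}}$: the natural map $\cO_{\tilde{\cZ}}\to\Gamma(\cY,\cO_\cY)$ is injective (it is generically an isomorphism by the fiber analysis, and $\cO_{\tilde{\cZ}}$ is $A$-free) with a domain as target (since $\cY$, the total space of a vector bundle over a product of flag varieties, is irreducible), so $\cO_{\tilde{\cZ}}$ is a normal domain and $\tilde{\cZ}$ is integral; and since $\Gamma(\cY,\cO_\cY)$ is finite over $A$ (because $\cY\to Z$ is proper), it is integral over $\cO_{\tilde{\cZ}}$ with the same fraction field, hence equal to it. Rational singularities is then obtained from the resolution $\cY$ together with Proposition~\ref{prop:rat-sing} when $\delta>0$ or $r=s$, and for $\delta=0$ by noting that $Z=W$ and $\cO_{\tilde{\cZ}}=\Split_{\cO_W}(\chi)$ is independent of $r$ and $s$ in this range, so one may reduce to the birational subcase $r=s$; this is the package of arguments behind Proposition~\ref{prop:Z-rat-sing}.

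The main obstacle is twofold. First, the codimension-$\ge2$ estimate for $V(\Delta,\partial\Delta)$ --- i.e.\ understanding how the discriminant of the reduced characteristic polynomial of $fg$ degenerates along the determinantal variety $Z$ --- since this underpins the normality of $\tilde{\cZ}$ and, through the argument above, essentially all of part~(a) and the pushforward in part~(d). Second, the bookkeeping needed to fit the geometry of $\rho$ into Propositions~\ref{prop:rat-sing} and~\ref{prop:rat-fiber} (controlling where $\rho$ fails to be an isomorphism, and handling the $\delta=0$ reduction), from which rational singularities and the vanishing of higher direct images follow. Everything else is a formal consequence of Propositions~\ref{prop:split}, \ref{prop:split-flag}, \ref{prop:rat-sing}, and~\ref{prop:rat-fiber}, together with the linear-algebra fiber computation, which closely parallels the one for $\pi\colon Y\to\tilde{Z}$.
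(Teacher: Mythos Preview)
Your outline matches the paper's proof closely: parts~(b) and~(c) come straight from Propositions~\ref{prop:split} and~\ref{prop:split-flag}; normality via Proposition~\ref{prop:norm-crit2} and the codimension estimate on $V(\Delta,\partial\Delta)$; rational singularities via Proposition~\ref{prop:rat-sing} together with the reduction ``$\delta=0$, $r\ne s$ $\Rightarrow$ replace $(r,s)$ by $(s,s)$''; and the pushforward via rational singularities in the birational case and Proposition~\ref{prop:rat-fiber} when $\delta=0$. The two items you flag as the main obstacles---the $V(\Delta,\partial\Delta)$ codimension estimate and identifying an open $U\subset\tilde{\cZ}$ with rational singularities whose complement pulls back to codimension $\ge 2$ in $\cY$---are exactly where the paper does the real work, so your assessment is accurate.

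Two remarks. First, your integrality argument (embed $\cO_{\tilde{\cZ}}$ into the domain $\Gamma(\cY,\cO_\cY)$, using that $\cO_{\tilde{\cZ}}$ is $A$-free to get injectivity from the generic fiber analysis) is a genuine simplification over the paper, which instead proves that $\rho$ is surjective on \emph{all} $\bC$-points (Proposition~\ref{prop:rho-surj}) via the classification of indecomposables in \S\ref{ss:linalg}; that classification is not used elsewhere, so your route is cleaner. Second, there is a small slip in your fiber computation: when $r=s$ and $\delta=0$ with $n>m$, the space $f^{-1}(G_i)$ has dimension $n-m+i>i$, so $F_i$ is not forced to equal $f^{-1}(G_i)$ but rather $g(G_i)$ (which has the correct dimension $i$ since $g$ is injective on $G_i$); the birationality conclusion is unaffected.
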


See Remark~\ref{rmk:proof} for the locations of the proofs of the various statements (we handle this and the previous case in parallel).

\subsection{First observations} \label{ss:first}

It is well-known that $Z_0$ has rational singularities \cite[Proposition 6.2.3]{weyman}, and is in particular normal and Cohen--Macaulay. Since $Z_1$ is an affine space, it follows that the same holds for $Z=Z_0 \times Z_1$. Theorem~\ref{thm:detvar}(b,c) and Theorem~\ref{thm:detvar2}(b,c), as well as the fact that $\tilde{Z}$ and $\tilde{\cZ}$ are Cohen--Macaulay, follow immediately from Propositions~\ref{prop:split}, \ref{prop:split-flag}, \ref{prop:fact}, and~\ref{prop:fact-grass}. The following proposition and corollary ensure that $\rho$ and $\pi$ are well-defined.

\begin{proposition} \label{prop:tildecZ-divide}
Suppose $(f,g,F_\bullet,G_\bullet)$ is a $T$-point of $\cY$. Let $\lambda_i$ be the eigenvalue of $fg$ on $G_i/G_{i-1}$. Then $\prod_{i=1}^{m-\delta}(u-\lambda_i) = \ol{\chi}(u)$.
\end{proposition}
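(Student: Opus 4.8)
The plan is to show that, with respect to the flag $G_\bullet$, the operator $fg$ acting on $(V_1)_T$ is ``upper triangular'' with the scalars $\lambda_i$ on the diagonal, and moreover that $fg$ vanishes on the final quotient $(V_1)_T/G_{m-\delta}$, which is locally free of rank $\delta$. The characteristic polynomial $\chi(u)$ of $fg$ then factors as $u^{\delta}\prod_{i=1}^{m-\delta}(u-\lambda_i)$, and dividing by $u^{\delta}$ gives exactly $\ol{\chi}(u)$.

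Concretely, I would argue in three steps. \emph{First}, $fg$ preserves every $G_i$: this is immediate from the defining conditions of $\cY$, since $g(G_i)\subseteq F_{i+r-s}$ and $f(F_{i+r-s})\subseteq G_i$, so $(fg)(G_i)=f(g(G_i))\subseteq G_i$. Hence $fg$ induces an endomorphism of the invertible $T$-module $G_i/G_{i-1}$ for $1\le i\le m-\delta$; since any endomorphism of an invertible module is multiplication by a unique element of $T$, this is how the scalar $\lambda_i$ of the statement is defined. \emph{Second}, $fg$ annihilates the quotient $(V_1)_T/G_{m-\delta}$. If $\delta=0$ this quotient vanishes: $G_{m-\delta}=G_m$ is a locally split rank-$m$ submodule of the free rank-$m$ module $(V_1)_T$, hence equals it. If $\delta>0$ then $\delta=m-n+r-s$, so $r-s+(m-\delta)=n$, and therefore $F_{r-s+(m-\delta)}$ is a locally split rank-$n$ submodule of the free rank-$n$ module $(V_0)_T$, i.e.\ equals $(V_0)_T$; the defining condition $f(F_{r-s+(m-\delta)})\subseteq G_{m-\delta}$ then forces the image of $f$, and hence the image of $fg$, to lie in $G_{m-\delta}$. \emph{Third}, I invoke the multiplicativity of the characteristic polynomial along a filtration of a locally free module (which one checks after Zariski-localizing $\Spec T$ so that all the bundles are trivial and choosing a basis adapted to $G_\bullet$, whereupon the matrix of $fg$ becomes block upper triangular): applied to $0=G_0\subseteq\cdots\subseteq G_{m-\delta}\subseteq(V_1)_T$ together with the first two steps, this yields $\chi(u)=\big(\prod_{i=1}^{m-\delta}(u-\lambda_i)\big)u^{\delta}$. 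In particular $u^{\delta}$ divides $\chi(u)$ in $T[u]$, so $\ol{\chi}(u)=u^{-\delta}\chi(u)=\prod_{i=1}^{m-\delta}(u-\lambda_i)$, as claimed. (Note the second step also shows $f$ has rank $\le m-\delta$, so that $(f,g)$ is in fact a $T$-point of $Z$ and $\ol{\chi}$ is meaningful.)

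I do not anticipate a real obstacle: the whole argument is an unwinding of the definition of $\cY$ combined with the rank bookkeeping encoded in the definition of $\delta$. The two points needing a little care are the correct interpretation of ``the eigenvalue of $fg$ on $G_i/G_{i-1}$'' over an arbitrary base ring $T$ (it is the unique scalar through which the induced endomorphism of the line bundle $G_i/G_{i-1}$ factors), and the case split in the second step, which is the only place the precise value of $\delta$ is used. The multiplicativity of characteristic polynomials along a filtration is standard and reduces to the free case by localizing on $\Spec T$.
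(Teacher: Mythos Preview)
Your proposal is correct and follows essentially the same approach as the paper: both split on whether $\delta=0$ or $\delta>0$, observe in the latter case that $F_{r-s+(m-\delta)}=(V_0)_T$ forces $\im(f)\subseteq G_{m-\delta}$ so that $fg$ vanishes on $(V_1)_T/G_{m-\delta}$, and then read off the factorization of $\chi(u)$. The paper's proof is terser and leaves implicit the facts you spell out in your first and third steps (that $fg$ preserves each $G_i$ and that the characteristic polynomial is multiplicative along the filtration), but the argument is the same.
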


\begin{proof}
If $\delta=0$, then $G_{m-\delta}=V_1$ and $\ol{\chi}(u)=\chi(u)$, so there is nothing to prove. Otherwise, suppose that $\delta = m-n+r-s > 0$. Then $F_{r-s+m-\delta}=V_0$, so that the image of $f$ is contained in $G_{m-\delta}$. So $fg$ induces the zero endomorphism on $V_1/G_{m-\delta}$, which implies that the characteristic polynomial of $fg$ on $G_{m-\delta}$ is $\ol{\chi}(u)$, and which proves the result.
\end{proof}

\begin{corollary} \label{cor:tildeZ-divide}
Suppose $(f,g,R_0,R_1)$ is a $T$-point of $Y$. Let $p$ be the characteristic polynomial of $fg$ on $R_1$. Then $p(u)$ divides $\ol{\chi}(u)$.
\end{corollary}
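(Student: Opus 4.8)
The plan is to reduce the divisibility $p(u)\mid\ol\chi(u)$ to the single assertion that $u^\delta$ divides the characteristic polynomial $p'(u)$ of $fg$ acting on the quotient $\bar V_1:=(V_1)_T/R_1$. First I would note that $fg$ stabilizes $R_1$, since $fg(R_1)=f(g(R_1))\subseteq f(R_0)\subseteq R_1$ by the two inclusions defining a point of $Y$. As $R_1$ is Zariski-locally a direct summand of the free module $(V_1)_T$, characteristic polynomials are multiplicative along $0\to R_1\to(V_1)_T\to\bar V_1\to 0$, so $\chi(u)=p(u)\,p'(u)$ with $p'(u)$ monic of degree $m-s$. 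Since $\ol\chi(u)=u^{-\delta}\chi(u)=u^{-\delta}p(u)p'(u)$, we will have $p(u)\mid\ol\chi(u)$ exactly when $u^{-\delta}p'(u)$ is again a polynomial, i.e.\ when $u^\delta\mid p'(u)$ (and there is nothing to prove when $\delta=0$).

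The heart of the matter is to bound the exterior powers of $\bar N:=fg\colon\bar V_1\to\bar V_1$. Using $g(R_1)\subseteq R_0$ and $f(R_0)\subseteq R_1$, the maps $g$ and $f$ descend to maps $\bar g\colon\bar V_1\to(V_0)_T/R_0$ and $\bar f\colon(V_0)_T/R_0\to\bar V_1$, and one checks directly that $\bar N=\bar f\circ\bar g$. Now $(V_0)_T/R_0$ is Zariski-locally free of rank $n-r$, so $\lw^i\bigl((V_0)_T/R_0\bigr)=0$, and therefore $\lw^i(\bar N)=0$, for every $i>n-r$. The coefficient of $u^{(m-s)-i}$ in $p'(u)=\det(u\cdot\mathrm{id}-\bar N)$ is $\pm\mathrm{tr}(\lw^i\bar N)$, so every coefficient of $u^j$ in $p'$ with $j<(m-s)-(n-r)$ vanishes; that is, $u^{(m-s)-(n-r)}\mid p'(u)$. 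In the case $\delta>0$ we have $\delta=m-n+r-s=(m-s)-(n-r)$, so $u^\delta\mid p'(u)$, which is what we needed. (Here $n-r\ge 0$ and $m-s\ge 0$ because $R_0\subseteq V_0$ and $R_1\subseteq V_1$.)

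There is no real obstacle here; the only points calling for a small amount of care are the multiplicativity of characteristic polynomials over the arbitrary base $T$ (checked Zariski-locally on $\Spec(T)$, where $(V_1)_T$ splits as $R_1\oplus\bar V_1$ and $fg$ becomes block upper triangular) and keeping track of the two cases in the definition of $\delta$. One could instead deduce the corollary from Proposition~\ref{prop:tildecZ-divide} by producing a point of $\cY$ lying above the given point of $Y$ — refining $R_0$ and $R_1$ to the flags $F_\bullet,G_\bullet$ — but the exterior-power argument above is shorter and applies verbatim over any base.
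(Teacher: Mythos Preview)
Your argument is correct and takes a genuinely different route from the paper. The paper deduces the corollary in one line from Proposition~\ref{prop:tildecZ-divide}: it invokes the surjection $\cY\to Y$, $(f,g,F_\bullet,G_\bullet)\mapsto(f,g,F_r,G_s)$, so any $T$-point of $Y$ lifts (at least Zariski locally, or after reduction to $\bC$-points using that $Y$ is reduced) to a point of $\cY$, and the proposition then gives $\prod_i(u-\lambda_i)=\ol\chi(u)$ with $p(u)=\prod_{i\le s}(u-\lambda_i)$ a visible factor. Your approach instead stays entirely on $Y$: you factor $fg$ on $\bar V_1=(V_1)_T/R_1$ through the rank-$(n-r)$ module $(V_0)_T/R_0$ and use $\lw^i(\bar N)=0$ for $i>n-r$ to force $u^{\delta}\mid p'(u)$. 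This is a cleaner argument over an arbitrary base $T$, since it avoids the lifting step (which, strictly speaking, requires either a local section of the relative flag variety $\cY\to Y$ or a reduction to closed points). The paper's approach, on the other hand, is what one would naturally write once Proposition~\ref{prop:tildecZ-divide} is in hand, and it emphasizes the role of $\cY$ in the overall architecture. Your remark at the end, anticipating the paper's method, is accurate.
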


\begin{proof}
This is immediate from the fact that we have a surjective map $\cY \to Y$ given by $(f,g,F_\bullet,G_\bullet) \mapsto (f,g,F_r,G_s)$.
\end{proof}

\subsection{Integrality} \label{ss:integrality}

We now show that $\tilde{Z}$ and $\tilde{\cZ}$ are integral. Let $\Delta \in \cO_Z$ be the discriminant of $\ol{\chi}$. We begin with the following:

\begin{proposition} \label{prop:Deltane0}
We have $\Delta\ne 0$, and so $\tilde{\cZ}$ and $\tilde{Z}$ are reduced.
\end{proposition}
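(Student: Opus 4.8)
The plan is to reduce the nonvanishing of $\Delta$ to the existence of a single $\bC$-point of $Z$ at which $\ol{\chi}$ has pairwise distinct roots, and then to obtain reducedness of $\tilde{\cZ}$ and $\tilde{Z}$ from the results of \S\ref{s:split}. First I would record that $\cO_Z$ is a domain, since $Z_0$ is an irreducible determinantal variety and $Z_1$ is an affine space, so that $Z = Z_0 \times Z_1$ is integral; in particular it is enough to produce one point at which the discriminant of $\ol{\chi}$ is nonzero.

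To produce such a point, I would set $k = m - \delta$ and use the inequality $m - \delta \le \min(n,m)$ recorded in \S\ref{ss:detvar} to pick a map $f \colon V_0 \to V_1$ of rank $k$. Concretely, fixing decompositions $V_0 = \bC^k \oplus \bC^{n-k}$ and $V_1 = \bC^k \oplus \bC^{m-k}$, I would take $f$ to be the identity on the first summands and zero on the rest, and $g \colon V_1 \to V_0$ to be $\mathrm{diag}(\lambda_1, \dots, \lambda_k)$ on the first summands and zero elsewhere, with $\lambda_1, \dots, \lambda_k$ distinct nonzero scalars. Since $f$ has rank $k = m - \delta$, the pair $(f,g)$ is a point of $Z$, and $fg$ is the diagonal operator on $V_1$ with eigenvalues $\lambda_1, \dots, \lambda_k, 0, \dots, 0$; hence $\chi(u) = u^\delta \prod_{i=1}^{k}(u - \lambda_i)$, so that $\ol{\chi}(u) = \prod_{i=1}^{k}(u - \lambda_i)$ has distinct roots and its discriminant at this point equals $\prod_{i \ne j}(\lambda_i - \lambda_j) \ne 0$. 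Therefore $\Delta \ne 0$ in the domain $\cO_Z$, and in particular $\Delta$ is a non-zerodivisor.

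For the remaining assertion I would invoke \S\ref{s:split}: $\cO_{\tilde{\cZ}} = \Split_{\cO_Z}(\ol{\chi})$ is reduced by Proposition~\ref{prop:split}(e) (as $\cO_Z$ is reduced and $\Delta$ is a non-zerodivisor), and then $\cO_{\tilde{Z}} = \Fact^{s,m-\delta-s}_{\cO_Z}(\ol{\chi})$ is reduced by Proposition~\ref{prop:fact}(e), which propagates reducedness from the full splitting ring. I do not expect a real obstacle here: the only step requiring attention is checking that the explicit $f$ indeed lies in $Z_0$, which is exactly what the inequality $m - \delta \le \min(n,m)$ guarantees, and everything else is a direct computation of a characteristic polynomial.
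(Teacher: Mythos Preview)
Your proof is correct and follows essentially the same approach as the paper: exhibit an explicit point of $Z$ at which $\ol{\chi}$ has pairwise distinct roots, conclude that $\Delta$ is a non-zerodivisor in the integral ring $\cO_Z$, and then invoke Proposition~\ref{prop:split}(e) and Proposition~\ref{prop:fact}(e). The only cosmetic difference is that you place the eigenvalues $\lambda_i$ on $g$ rather than on $f$, and you additionally require them to be nonzero; neither change affects the argument.
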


\begin{proof}
We construct a $\bC$-point of $Z$ where $\Delta \ne 0$. Let $x_1, \ldots, x_n$ and $y_1, \ldots, y_m$ be bases for $V_0$ and $V_1$. Let $\lambda_1, \ldots, \lambda_{m-\delta}$ be distinct complex numbers. Define $f \colon V_0 \to V_1$ by $f(x_i)=\lambda_i y_i$ for $1 \le i \le m-\delta$ and $f(x_i)=0$ for $i>m-\delta$. Define $g \colon V_1 \to V_0$ by $g(x_i)=y_i$ for $1 \le i \le m-\delta$ and $g(y_i)=0$ for $i>m-\delta$. Note that $(f,g)$ defines a $\bC$-point of $Z$. The composition $fg$ has characteristic polynomial $\chi(u)=u^{\delta} \prod_{i=1}^{m-\delta} (u-\lambda_i)$. Thus $\Delta \ne 0$ at $(f,g)$. Since $Z$ is integral, it follows that $\Delta$ is a non-zerodivisor, and so $\tilde{\cZ}$ is reduced (Proposition~\ref{prop:split}(e)), and so $\tilde{Z}$ is as well by Proposition~\ref{prop:fact}(e).
\end{proof}

To complete the proof of integrality, it suffices to show that $\tilde{Z}$ and $\tilde{\cZ}$ are irreducible. Since $Y$ and $\cY$ are irreducible, this follows from the following result:

\begin{proposition} \label{prop:rho-surj}
The maps $\rho \colon \cY \to \tilde{\cZ}$ and $\pi \colon Y \to \tilde{Z}$ are surjective.
\end{proposition}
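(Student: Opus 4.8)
\emph{Overview.} The plan is to prove both statements by a single device: each of $\rho$ and $\pi$ is proper, hence has closed image, so it suffices to produce, for every $\bC$-point of a conveniently chosen \emph{dense} open subset of the target, a $\bC$-point of the source lying over it.

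\emph{Properness and a dense open.} The scheme $\cY$ is the closed subscheme of $\Fl(r-s,\dots,r-s+m-\delta;V_0)\times\Fl(1,\dots,m-\delta;V_1)\times W$ cut out by the incidence conditions $f(F_{i+r-s})\subseteq G_i$ and $g(G_i)\subseteq F_{i+r-s}$, so $\cY\to W$ is projective, and similarly $Y\to W$ is projective. Since $\cO_{\tilde{\cZ}}$ and $\cO_{\tilde{Z}}$ are free $\cO_Z$-modules of finite rank (Propositions~\ref{prop:split}(a) and~\ref{prop:fact}(b)), the maps $\tilde{\cZ}\to Z$ and $\tilde{Z}\to Z$ are finite and flat; hence $\tilde{\cZ}\to Z\hookrightarrow W$ and $\tilde{Z}\to W$ are separated, and as the triangles over $W$ commute the maps $\rho$ and $\pi$ are proper, so $\rho(\cY)$ and $\pi(Y)$ are closed. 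Put $k=m-\delta$ and let $Z^\circ=\{\Delta\ne 0\}\cap\{\ol{\chi}(0)\ne 0\}\subseteq Z$, the open locus where $\ol{\chi}$ has $k$ distinct nonzero roots. The $\bC$-point of $Z$ built in the proof of Proposition~\ref{prop:Deltane0}, with the $\lambda_i$ chosen distinct and nonzero, lies in $Z^\circ$, so $Z^\circ$ is nonempty and hence dense in the integral scheme $Z$. Since $\tilde{\cZ}$ and $\tilde{Z}$ are flat over $Z$, every irreducible component of each dominates $Z$, so the preimages $\tilde{\cZ}^\circ,\tilde{Z}^\circ$ of $Z^\circ$ are dense and open; it thus suffices to hit their $\bC$-points.

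\emph{The construction.} Fix a $\bC$-point $(f,g)\in Z^\circ$. Since $f\in Z_0$ we have $\operatorname{rank}f\le k$; since the characteristic polynomial of $fg$ is $u^\delta\ol{\chi}(u)$ with $\ol{\chi}(0)\ne 0$, the geometric multiplicity $m-\operatorname{rank}(fg)$ of the eigenvalue $0$ of $fg$ is at most $\delta$, so $\operatorname{rank}(fg)\ge m-\delta=k$; combined with $\operatorname{rank}(fg)\le\operatorname{rank}f$ these are all equalities, whence $\dim\ker f=n-k$, and $n-k\ge r-s$ directly from the definition of $\delta$ (equality when $\delta>0$). For $\rho$, a $\bC$-point of $\tilde{\cZ}^\circ$ over $(f,g)$ is an ordering $\xi_1,\dots,\xi_k$ of the $k$ distinct nonzero roots of $\ol{\chi}$. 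Let $L_i\subseteq V_1$ be the $\xi_i$-eigenline of $fg$ and put $G_i=L_1\oplus\cdots\oplus L_i$; this flag is $fg$-stable, $fg$ acts by $\xi_i$ on $G_i/G_{i-1}$, and $G_i\subseteq\operatorname{im}f$ since $\xi_i\ne 0$. Build $F_\bullet$ greedily, keeping $g(G_i)\subseteq F_{r-s+i}\subseteq f^{-1}(G_i)$: start from any $(r-s)$-dimensional $F_{r-s}\subseteq\ker f$; given $F_{r-s+i-1}$, the subspace $F_{r-s+i-1}+g(G_i)=F_{r-s+i-1}+g(L_i)$ has dimension at most $r-s+i$ and lies in $f^{-1}(G_i)$ (because $fg(G_i)\subseteq G_i$), while $\dim f^{-1}(G_i)=(n-k)+i\ge r-s+i$ (because $G_i\subseteq\operatorname{im}f$), so an $F_{r-s+i}$ of dimension $r-s+i$ between them exists. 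Then $(f,g,F_\bullet,G_\bullet)$ is a $\bC$-point of $\cY$ whose $\rho$-image is the chosen point. The identical recipe proves $\pi$ surjective: a degree-$s$ monic factor of $\ol{\chi}$ has the form $\prod_{j\in S}(u-\xi_j)$ for a size-$s$ subset $S$ of the roots, and one takes $R_1=\bigoplus_{j\in S}L_j$ (so $fg|_{R_1}$ has that characteristic polynomial) together with any $r$-dimensional $R_0$ with $g(R_1)\subseteq R_0\subseteq f^{-1}(R_1)$, which exists since $\dim g(R_1)\le s\le r\le(n-k)+s=\dim f^{-1}(R_1)$. (Alternatively, $\pi$ follows from $\rho$, the two paths of the square $\cY\xrightarrow{\rho}\tilde{\cZ}\to\tilde{Z}$, $\cY\to Y\xrightarrow{\pi}\tilde{Z}$ agreeing and $\tilde{\cZ}\to\tilde{Z}$ being finite faithfully flat, $\cO_{\tilde{\cZ}}$ being $\cO_{\tilde{Z}}$-free by Propositions~\ref{prop:fact}(a) and~\ref{prop:split}(a).)

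\emph{Main obstacle.} Nothing here is deep; the delicate point is the dimension count in the greedy step. It succeeds because the lower bound $F_{r-s+i-1}+g(G_i)$ grows by at most one dimension per stage---which is why one writes $g(G_i)=g(G_{i-1})+g(L_i)$---while the upper bound $f^{-1}(G_i)$ has dimension $(n-k)+i$, and the inequality $n-(m-\delta)\ge r-s$ reconciling them is precisely the one encoded in the definition of $\delta$; when $\delta>0$ it is an equality, so the flag is forced but still exists. Reducing to $Z^\circ$ is what lets us bypass the cases of repeated or zero eigenvalues, which would otherwise call for the indecomposable classification of Proposition~\ref{prop:indecomp}.
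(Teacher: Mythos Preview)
Your proof is correct and takes a genuinely different route from the paper's. The paper constructs, for \emph{every} $\bC$-point of $\tilde{\cZ}$, a preimage in $\cY$: to cope with repeated or zero eigenvalues it invokes the classification of indecomposable objects in the category $\cA$ of \S\ref{ss:linalg} (Proposition~\ref{prop:indecomp}) and builds a chain of subobjects $M^0\subset\cdots\subset M^{m-\delta}$ by induction, with a separate argument when the next required eigenvalue is zero. You instead observe that $\rho$ and $\pi$ are proper, hence have closed image, and reduce to hitting the dense open where $\ol{\chi}$ has distinct nonzero roots; there the eigenspace construction is elementary and the dimension bookkeeping is transparent. Your route is more economical---it bypasses \S\ref{ss:linalg} entirely---while the paper's direct construction is uniform over all points and does not rely on the properness/flatness reduction. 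The density step deserves a word of praise: arguing via flatness of $\tilde{\cZ}\to Z$ (so that nonzero elements of $\cO_Z$ stay non-zerodivisors in $\cO_{\tilde{\cZ}}$, whence principal opens pull back to dense opens) correctly avoids assuming $\tilde{\cZ}$ irreducible, which in the paper's logic is deduced only \emph{after} this proposition.
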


\begin{proof}
Let $(f,g,\prod_{i=1}^{m-\delta}(u-\lambda_i))$ be a $\bC$-point of $\tilde{\cZ}$. Recall the category $\cA$ from \S \ref{ss:linalg}. Let $M$ be the object $(V_0,V_1,f,g)$. We inductively construct a chain of subobjects $M^0 \subset \cdots \subset M^{m-\delta}$ of $M$, with the following properties:
\begin{enumerate}
\item $\dim(M^i_0)=i+r-s$ and $\dim(M^i_1)=i$.
\item For $1 \le i \le m-\delta$, the eigenvalues of $t^2$ on $M^i/M^{i-1}$ are $\lambda_i$.
\end{enumerate}
To begin, we take $M^0_0$ to be a subspace of $\ker(f)$ of dimension $r-s$ (which exists since $f \in Z_0$) and $M^0_1=0$. Suppose now that we have defined $M^{i-1}$ for some $1 \le i \le m-\delta$.

We claim that there is an $\cA$-subobject of $M/M^{i-1}$ of dimension $1 \vert 1$ such that the eigenvalues of $t^2$ are both $\lambda_i$; granted this, we take $M^i/M^{i-1}$ to be this subobject. By considering the characteristic polynomial of $t^2=fg$ on $M_1$, we see that $\lambda_i$ occurs as an eigenvalue of $t^2$ on $M^i_1/M^{i-1}_1$. If $\lambda_i \ne 0$ then the claim follows from the form of indecomposables given in \S \ref{ss:linalg}. Suppose now $\lambda_i=0$. For an object $N$ of $\cA$, let $\Phi(N)$ be the generalized $0$-eigenspace of $t^2$ and let $\Psi(N)$ be the sum of the generalized $\lambda$-eigenspaces with $\lambda \ne 0$. We have $\dim(\Psi(N)_0)=\dim(\Psi(N)_1)$. Let $k$ be the number of $\lambda_i$'s that are zero. Then
\begin{displaymath}
\dim(\Phi(M)_1)=\delta+k, \quad \dim(\Psi(M)_1)=m-(\delta+k), \quad \dim(\Phi(M)_0)=n-m+\delta+k.
\end{displaymath}
We thus find
\begin{displaymath}
\dim(\Phi(M/M^0)_0)=n-m+s-r+\delta+k \ge k.
\end{displaymath}
Now, since $\lambda_i=0$, it follows that $<k$ of $\lambda_1, \ldots, \lambda_{i-1}$ are~0, and so $\dim(\Phi(M/M^{i-1})_0)>0$. We thus see that $\Phi(M/M^{i-1})$ has non-zero even and odd parts, and thus (by the classification of indecomposables) has a subobject of dimension $1|1$.

We now define a $\bC$-point $(f,g,F_{\bullet},G_{\bullet})$ of $\cY$ by taking $F_{r-s+i}=M^i_0$ and $G_i=M^i_1$. It is clear that this is indeed a $\bC$-point of $\cY$ and lifts the point of $\tilde{\cZ}$ under consideration. Thus $\rho$ is surjective. We have a commutative diagram
\begin{displaymath}
\xymatrix{
\cY \ar[r] \ar[d]_{\rho} & Y \ar[d]^{\pi} \\
\tilde{\cZ} \ar[r] & \tilde{Z} }
\end{displaymath}
where the top map takes $(f,g,F_{\bullet},G_{\bullet})$ to $(f,g,F_r,G_s)$ and the bottom map is the natural one. Since the bottom map is clearly surjective, so is $\pi$.
\end{proof}

\subsection{Normality}

We now aim to show that $\tilde{Z}$ and $\tilde{\cZ}$ are normal. We will use the criterion in  Proposition~\ref{prop:norm-crit2}. We therefore begin by studying the geometry of $V(\Delta)$ and $V(\Delta,\partial \Delta)$.

\begin{proposition}
Let $(f,g)$ be a $\bC$-point of $Z$. We can choose bases of $V_0$ and $V_1$ such that the matrices for $f$ and $g$ have the form
\begin{displaymath}
\begin{pmatrix}
0 & \ast \\
0 & A
\end{pmatrix}
\qquad
\begin{pmatrix}
\ast & \ast \\
0 & B
\end{pmatrix}
\end{displaymath}
where $A$ and $B$ are upper-triangular square matrices with $m-\delta$ rows and columns (this determines the sizes of the other blocks). Moreover, if $(f,g) \in V(\Delta)$ then one can also assume that the first two diagonal entries of $AB$ are equal.
\end{proposition}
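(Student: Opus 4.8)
Here is how I would prove the final assertion.

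\medskip

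\noindent\emph{Proof proposal.}
The plan is to translate the statement into the language of the abelian category $\cA$ of \S\ref{ss:linalg} and solve it there by a flag-building argument in the spirit of the proof of Proposition~\ref{prop:rho-surj}. Write $M=(V_0,V_1,f,g)$ for the associated object of $\cA$, and set $c=n-(m-\delta)=n-m+\delta\ge0$. To prove the first assertion it suffices to find a subspace $V_0'\subseteq\ker f$ of dimension $c$ and a subspace $V_1'\subseteq V_1$ of dimension $\delta$ with $g(V_1')\subseteq V_0'$, so that $(V_0',V_1')$ is a subobject of $M$, such that the quotient $\bar M:=M/(V_0',V_1')$ carries a complete flag of subobjects $0=\bar M^0\subset\bar M^1\subset\cdots\subset\bar M^{m-\delta}=\bar M$ whose successive quotients have dimension $1\vert1$. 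Indeed, given such a flag one lifts it to $M$, chooses bases $w_\bullet$ of $V_0'$, $z_\bullet$ of $V_1'$, and bases $x_\bullet,y_\bullet$ of complements $V_0''$, $V_1''$ adapted to the lifted flags, and checks directly that with respect to the ordered bases $(w_\bullet,x_\bullet)$ of $V_0$ and $(z_\bullet,y_\bullet)$ of $V_1$ the maps $f,g$ have the asserted block shape, with $A,B$ upper triangular.

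First I would produce the subobject $(V_0',V_1')$. Since $f\in Z_0$ we have $\operatorname{rank}(f)\le m-\delta$, hence $\dim\ker(fg)\ge m-\operatorname{rank}(f)\ge\delta$; combined with the trivial bound $\dim\ker g\ge m-n$, a short computation shows one can choose a $\delta$-dimensional $V_1'\subseteq\ker(fg)$ with $\dim g(V_1')\le c$, and then a $c$-dimensional $V_0'\subseteq\ker f$ containing $g(V_1')$ (note $g(V_1')\subseteq\ker f$ since $V_1'\subseteq\ker(fg)$). This $(V_0',V_1')$ is a subobject of $M$, and $\bar M$ is \emph{balanced}: $\dim\bar M_0=\dim\bar M_1=m-\delta$. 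So everything reduces to the claim that any balanced object of $\cA$ admits a complete flag of subobjects with $1\vert1$ quotients; by induction it is enough to produce one $1\vert1$ subobject in any nonzero balanced $N=(U_0,U_1,\phi,\psi)$. Here a brief case analysis suffices, noting that $\phi$ (resp.\ $\psi$) is an isomorphism as soon as it is injective: if $\psi$ is not injective, take a nonzero $w\in\ker\psi$ and a nonzero $u\in\phi^{-1}(\bC w)$; if $\psi$ is an isomorphism and so is $\phi$, take an eigenvector $u$ of $\psi\phi$ and set $w=\phi(u)$; if $\psi$ is an isomorphism and $\ker\phi\ne0$, take a nonzero $u\in\ker\phi$ and a nonzero $w\in\psi^{-1}(\bC u)$. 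In each case $(\bC u,\bC w)$ is a $1\vert1$ subobject. This settles the first assertion.

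For the refinement, observe that $g(V_1')\subseteq V_0'\subseteq\ker f$ forces $fg$ to kill $V_1'$, so $\bar\chi$ is the characteristic polynomial of the operator $t^2$ induced on $\bar M_1$; moreover, in the block form just obtained $AB$ is exactly the upper-triangular matrix of $t^2|_{\bar M_1}$ in the flag basis, so its diagonal entries are the $t^2$-eigenvalues of the successive $1\vert1$ quotients $\bar M^i/\bar M^{i-1}$. Now suppose $(f,g)\in V(\Delta)$, i.e.\ $\bar\chi$ has a repeated root $\lambda$; I would arrange the flag of $\bar M$ so that its first two quotients both have $t^2$-eigenvalue $\lambda$. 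Let $\bar M^{(\lambda)}\subseteq\bar M$ be the generalized $\lambda$-eigenspace of $t^2$; since $t^2$ is block diagonal and commutes with $t$, this is a subobject, and $\dim\bar M^{(\lambda)}_1\ge2$. If $\lambda\ne0$ then $t^2$ is invertible on $\bar M^{(\lambda)}$, so $\bar f$ restricts there to an isomorphism, and for a $t^2$-stable $2$-dimensional subspace $U_1\subseteq\bar M^{(\lambda)}_1$ (which exists since $t^2-\lambda$ is nilpotent there) the pair $P=(\bar f^{-1}(U_1),U_1)$ is a $2\vert2$ subobject with $t^2|_{P_1}$ of characteristic polynomial $(u-\lambda)^2$. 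If $\lambda=0$, then the $\bar M^{(\nu)}$ for $\nu\ne0$ are balanced by the previous case, so $\bar M^{(0)}$ is a balanced object with $\dim\bar M^{(0)}_1\ge2$ and contains a $2\vert2$ subobject $P$ by the lemma above, automatically with $t^2|_{P_1}$ nilpotent. In either case, choose a complete $1\vert1$-flag $0\subset N^1\subset N^2=P$ of $P$, extend it to a complete $1\vert1$-flag of $\bar M$ (possible since $\bar M/P$ is balanced), and transport the result to $M$: the first two diagonal entries of $AB$ are then equal to $\lambda$.

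The bookkeeping connecting the flag of $\bar M$ to the block form of $f$ and $g$ is routine once the flag is at hand, and the case analysis producing $1\vert1$ subobjects is elementary. The step that needs genuine care is the very first one---arranging $V_1'\subseteq\ker(fg)$ with $g(V_1')$ small enough to fit inside a $c$-dimensional subspace of $\ker f$---since this is precisely where the determinantal hypothesis $f\in Z_0$ enters; the rest is a mechanical unwinding of the classification of indecomposables in $\cA$ (Proposition~\ref{prop:indecomp}), together with the generalized-eigenspace trick for the refinement.
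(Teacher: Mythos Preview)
Your argument is correct, and it takes a genuinely different route from the paper's proof. Both proofs begin the same way: one first constructs the subobject $(V_0',V_1')$ with $V_0'\subseteq\ker f$ and $g(V_1')\subseteq V_0'$, so that the quotient $\bar M$ is balanced of dimension $(m-\delta)\vert(m-\delta)$. (In fact you are more careful than the paper here: the paper picks an arbitrary $U_1\subseteq\ker(fg)$ of dimension $\delta$ and asserts that $g(U_1)$ fits inside a $c$-dimensional subspace of $\ker f$, which when $m>n$ requires exactly the extra input $\dim\ker g\ge m-n$ that you flag.) The divergence is in how the upper-triangularization of $A,B$ is achieved. The paper invokes the classification of indecomposables in $\cA$ (Proposition~\ref{prop:indecomp}): it decomposes $\bar M$ as a direct sum of $A_n(\lambda)$'s, $B_n$'s and $B_n[1]$'s, pairs off the $B_n$'s with $B_n[1]$'s to keep the blocks square, and chooses adapted bases; for the $V(\Delta)$ refinement it simply reorders the summands so that those carrying the repeated eigenvalue come first. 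You instead build a complete $1\vert1$-flag in $\bar M$ by an elementary inductive case analysis, and for the refinement you locate a $2\vert2$ subobject inside the relevant generalized $t^2$-eigenspace directly. Your approach is more self-contained (it never uses the full classification) and is closer in spirit to the proof of Proposition~\ref{prop:rho-surj}; the paper's approach is shorter once Proposition~\ref{prop:indecomp} is available and makes the structural source of the normal form (the indecomposable types) more transparent.
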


\begin{proof}
Since $f$ has rank $\le m-\delta$ its kernel has dimension $\ge n-m+\delta$; similarly, since $fg$ has rank $\le m-\delta$, its kernel has dimension $\ge \delta$. Let $U_1 \subset \ker(fg)$ be a subspace of dimension $\delta$. Then $g(U_1)$ is a subspace of $\ker(f)$ of dimension $\le \delta$. Let $U_0 \subset \ker(f)$ be a subspace of dimension $n-m+\delta$ containing $g(U_1)$. Now, pick a basis $x_1, \ldots, x_n$ of $V_0$ such that $x_1, \ldots, x_{n-m+\delta}$ is a basis of $U_0$, and pick a basis $y_1, \ldots, y_m$ of $V_1$ such that $y_1, \ldots, y_{\delta}$ is a basis for $U_1$. In these bases, the matrices for $f$ and $g$ have the stated form, except that $A$ and $B$ may not be upper-triangular. Let $U_0'$ be the span of $x_{n-m+\delta+1}, \ldots, x_n$, and let $U_1'$ be the span of $y_{\delta+1}, \ldots, y_m$. We can then regard $(A, B, U_0', U_1')$ as an object of the category considered in \S \ref{ss:linalg}. By decomposing into indecomposables, and choosing bases for these indecomposables, we can make $A$ and $B$ upper-triangular. (The one subtlety here is that the matrices for $B_n$ and $B_n'$ are not square. However, since $\dim(U_0')=\dim(U_1')$ whenever we have a $B_n$ we can pair it with some $B_m'$, and the matrices for $B_n \oplus B_m'$ are square and can be taken to be upper-triangular.)

Finally, suppose that $(f,g) \in V(\Delta)$. If $\ol{\chi}$ has a non-zero repeated root $\lambda$ then by taking the $A_n(\lambda)$'s first in the decomposition, we see that the first two diagonal entries of $AB$ are $\lambda$. Otherwise, 0 is a repeated root of $\ol{\chi}$, and by putting the $A_n(0)$, $A_n(\infty)$, $B_n$, and $B_n$'s first in the decomposition, the first two diagonal entries of $AB$ are~0.
\end{proof}

\begin{proposition}
The closed set $V(\Delta) \subset Z$ is irreducible; in fact, any two points of $V(\Delta)$ can be joined by an irreducible rational curve in $V(\Delta)$.
\end{proposition}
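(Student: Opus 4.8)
The plan is to realize $V(\Delta)$ as the image of a morphism from an explicit variety assembled out of special linear groups and a quadric cone, and to read off both assertions from that description. If $m-\delta\le 1$ then $\ol{\chi}$ has degree $\le 1$, hence no repeated root, so $V(\Delta)=\emptyset$ and there is nothing to prove; assume therefore $m-\delta\ge 2$, so the blocks $A,B$ of the preceding proposition have size $\ge 2$. Let $G=\GL(V_0)\times\GL(V_1)$ act on $W$ by change of basis, $(P,Q)\cdot(f,g)=(QfP^{-1},PgQ^{-1})$; since $fg\mapsto Q(fg)Q^{-1}$, both $Z$ and $V(\Delta)$ are $G$-stable. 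Let $\cS\subseteq W$ be the affine subspace of pairs in the block form of the preceding proposition (with $A,B$ upper triangular of size $m-\delta$), and $\cS_0\subseteq\cS$ the closed subvariety cut out by $a_{11}b_{11}=a_{22}b_{22}$, the equality of the first two diagonal entries of $AB$. A direct computation with these block forms gives $\ol{\chi}=\prod_i(u-a_{ii}b_{ii})$ for $(f,g)\in\cS$ (this is the computation underlying Proposition~\ref{prop:tildecZ-divide}), so $\cS_0\subseteq V(\Delta)$; conversely the preceding proposition says every point of $V(\Delta)$ is $G$-conjugate into $\cS_0$, so $G$-stability yields $V(\Delta)=\mu(G\times\cS_0)$, where $\mu$ is the action morphism. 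Now $\cS_0$ is irreducible, being the hypersurface $\{a_{11}b_{11}-a_{22}b_{22}=0\}$ in an affine space cut out by a nondegenerate quadratic form in four of the coordinates, which is an irreducible polynomial. Hence $G\times\cS_0$ is irreducible, and so is its image $V(\Delta)$. This proves the first assertion.

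For the refinement, call a variety $X$ \emph{good} if for any $x_1,x_2\in X$ there is a morphism $\bA^1\to X$ carrying $0$ to $x_1$ and $1$ to $x_2$; when $x_1\ne x_2$ the closure of the image is then an irreducible rational curve through both, so it suffices to show $V(\Delta)$ is good. Affine space is good; the image of a good variety under a surjective morphism is good (lift the two points, join the lifts, compose); and a finite product of good varieties is good (parametrize coordinatewise). Moreover $\mathrm{SL}_N$ is good: writing $M_1^{-1}M_2$ as a product $\prod_\ell(I+c_\ell E_{i_\ell j_\ell})$ of elementary matrices, the morphism $t\mapsto M_1\prod_\ell(I+tc_\ell E_{i_\ell j_\ell})$ lands in $\mathrm{SL}_N$ and carries $0$ to $M_1$ and $1$ to $M_2$. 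The quadric cone $\{xy=zw\}\subseteq\bA^4$ is the image of the surjection $\bA^4\to\bA^4$, $(a,b,c,d)\mapsto(ac,bd,ad,bc)$, hence is good, and therefore so is $\cS_0$, which is this cone times an affine space. Finally, $\cS_0$ is stable under rescaling $f$ and $g$ independently (this leaves $AB$, and so the defining equation, unchanged), and every element of $G$ is a scalar matrix times an element of $\mathrm{SL}(V_0)\times\mathrm{SL}(V_1)$, so the orbit description can be taken over $\mathrm{SL}(V_0)\times\mathrm{SL}(V_1)$: we get $V(\Delta)=\mu\bigl(\mathrm{SL}(V_0)\times\mathrm{SL}(V_1)\times\cS_0\bigr)$. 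The source is good, hence $V(\Delta)$ is good. (Note this also reproves irreducibility: two points of $V(\Delta)$ always lie on a common irreducible curve in $V(\Delta)$, and a space with this property is irreducible.)

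The only delicate point I foresee is the last paragraph, specifically the replacement of $\GL$ by $\mathrm{SL}$: on $\GL_N$ the determinant is a nowhere-vanishing regular function and hence constant along any rational curve, so $\GL_N$ itself is not good, and one genuinely needs the rescaling-invariance of $\cS_0$ to push the orbit description down to $\mathrm{SL}(V_0)\times\mathrm{SL}(V_1)$. Granting the preceding structural proposition, everything else is routine bookkeeping.
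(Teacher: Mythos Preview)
Your argument is correct and follows the same strategy as the paper: use the preceding block-form proposition to exhibit $V(\Delta)$ as the $G$-orbit of the locus $\cS_0$ where $a_{11}b_{11}=a_{22}b_{22}$, exploit that this quadric cone is the affine cone over $\bP^1\times\bP^1$, and conclude. The paper carries this out by directly writing down an interpolating curve between two given points; your ``good variety'' abstraction is a clean repackaging of the same construction.

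One point where you are actually more careful than the paper: the paper simply asserts the existence of a polynomial curve $\gamma\colon\Spec(\bC[t])\to\GL(V_0)\times\GL(V_1)$ joining two arbitrary elements $\gamma_1,\gamma_2$, but as you note, the determinant is constant along any morphism $\bA^1\to\GL_N$, so no such curve exists when $\det\gamma_1\ne\det\gamma_2$. Your reduction to $\mathrm{SL}(V_0)\times\mathrm{SL}(V_1)$ via the scaling-invariance of $\cS_0$ is exactly the right fix. A minor quibble on that point: independent rescaling of $f$ and $g$ by $\alpha,\beta$ sends $AB$ to $\alpha\beta\,AB$, not to itself; what matters is that the defining equation $a_{11}b_{11}=a_{22}b_{22}$ is homogeneous in these scalings and is therefore preserved.
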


\begin{proof}
Fix bases for $V_0$ and $V_1$ and let $(f_i,g_i)$ for $i=1,2$ be two points of $V(\Delta)$. Applying the previous proposition, we can find $\gamma_i \in \GL(V_0) \times \GL(V_1)$ such that
\begin{displaymath}
\gamma_i f_i = \begin{pmatrix} 0 & C_i \\ 0 & A_i \end{pmatrix}, \qquad
\gamma_i g_i = \begin{pmatrix} D_i & E_i \\ 0 & B_i \end{pmatrix}
\end{displaymath}
where $A_i$ and $B_i$ are upper-triangular square matrices of size $m-\delta$, and the first two diagonal entries of $A_iB_i$ are equal. We define matrices $A(t), \ldots, E(t)$ with polynomial entries such that:
\begin{itemize}
\item At $t=0$ they coincide with $A_1, \ldots, E_1$
\item At $t=1$ they coincide with $A_2, \ldots, E_2$.
\item The matrices $A(t)$ and $B(t)$ are upper-triangular.
\item The first two diagonal entries of $A(t) B(t)$ agree.
\end{itemize}
We take $C(t)=tC_1+(1-t)C_2$. We use the same formula everywhere else, except for the first two diagonal entries of $A(t)$ and $B(t)$. Consider the subvariety of $\bA^4$ defined by $\alpha_1 \alpha_2=\beta_1 \beta_2$. Any two points on this variety can be joined by a map from $\Spec(\bC[t])$, as it is the affine cone over $\bP^1 \times \bP^1$. We take $(A_{1,1}(t), A_{2,2}(t), B_{1,1}(t), B_{2,2}(t))$ to be such a curve joining $(A_{i,1,1}, A_{i,2,2}, B_{i,1,1}, B_{i,2,2})$.

Now, define
\begin{displaymath}
F(t) = \begin{pmatrix} 0 & C(t) \\ 0 & A(t) \end{pmatrix}, \qquad
G(t) = \begin{pmatrix} D(t) & E(t) \\ 0 & B(t) \end{pmatrix}.
\end{displaymath}
Since $\ker(F(t))$ contains the first $n-m+\delta$ basis vectors, the rank of $F(t)$ is $\le m-\delta$. Thus $(F(t), G(t))$ defines a point of $Z$. The polynomial $\ol{\chi}_{F(t), G(t)}$ is the characteristic polynomial of $A(t) B(t)$, which has a repeated root since the first two diagonal entries coincide. Thus $(F(t), G(t))$ is a point of $V(\Delta)$.

Finally, let $\gamma \colon \Spec(\bC[t]) \to \GL(V_0) \times \GL(V_1)$ be a curve such that $\gamma(0)=\gamma_1$ and $\gamma(1)=\gamma_2$. Then $\gamma (F(t), G(t))$ is a curve in $V(\Delta)$ joining $(f_1,g_1)$ to $(f_2,g_2)$. This completes the proof.
\end{proof}

\begin{proposition}
$\tilde{\cZ}$ and $\tilde{Z}$ are normal.
\end{proposition}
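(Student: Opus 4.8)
The plan is to apply the second normality criterion, Proposition~\ref{prop:norm-crit2}, to $A = \cO_Z$ and $f = \ol{\chi}$, and then to deduce the normality of $\tilde{Z}$ from that of $\tilde{\cZ}$ via Proposition~\ref{prop:fact}(e) (if $\Split_A(f)$ is normal, then so is $\Fact^{p,q}_A(f)$). If $m - \delta \le 1$ then $\ol{\chi}$ has degree $\le 1$ and $\cO_{\tilde{\cZ}} = \cO_{\tilde{Z}} = \cO_Z$, so there is nothing to prove; assume $m - \delta \ge 2$. Two of the three hypotheses of Proposition~\ref{prop:norm-crit2} are already available: $\cO_Z$ is normal by \S\ref{ss:first}, and $\Delta$ is a non-zerodivisor by Proposition~\ref{prop:Deltane0}. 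So the whole task is to verify that $V(\Delta, \partial\Delta)$ has codimension $\ge 2$ in $Z$.

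First I would record two preliminary facts. On the one hand, $\Delta = \det \Syl_{m-\delta,\,m-\delta-1}(\ol{\chi}, \ol{\chi}')$ is a nonzero homogeneous element of positive degree in the domain $\cO_Z$ (degree $2(m-\delta)(m-\delta-1)$ by Proposition~\ref{prop:sylvester}), so $V(\Delta)$ is pure of codimension~$1$ by Krull's principal ideal theorem, and it is irreducible by the preceding proposition. On the other hand, $Z^{\sing}$ has codimension $\ge 2$ in $Z = Z_0 \times W_1$: when $\delta = 0$ we have $Z_0 = W_0$, and when $\delta \ge 1$ the singular locus of the determinantal variety $Z_0$ is the locus where the rank drops below $m - \delta$, of codimension $\delta + (r-s) + 1 \ge 2$ in $Z_0$ (using $n - (m-\delta) = r-s$). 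Hence $V(\Delta)$ is not contained in $Z^{\sing}$, and $V(\Delta) \cap Z^{\mathrm{sm}}$ is a dense open, hence irreducible, subvariety of $V(\Delta)$.

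The crux is to exhibit a single $\bC$-point $z$ of $V(\Delta) \cap Z^{\mathrm{sm}}$ with $\Delta \notin \fm_z^2$. Granted this, $V(\Delta, \partial\Delta) \cap Z^{\mathrm{sm}}$ is a closed (by the discussion in \S\ref{ss:norm2}) subset of the irreducible variety $V(\Delta) \cap Z^{\mathrm{sm}}$ that is not all of it, hence has codimension $\ge 2$ in $Z$; and $V(\Delta, \partial\Delta) \cap Z^{\sing} \subseteq Z^{\sing}$ also has codimension $\ge 2$; so $V(\Delta, \partial\Delta)$ has codimension $\ge 2$, as needed. To produce $z$, fix bases $x_1, \dots, x_n$ of $V_0$ and $y_1, \dots, y_m$ of $V_1$ together with distinct nonzero scalars $\lambda, \lambda_3, \dots, \lambda_{m-\delta}$; set $g(y_i) = x_i$ for $i \le m-\delta$ and $g(y_i) = 0$ otherwise; and let $f_t$ send $x_1 \mapsto \lambda y_1 + y_2$, $x_2 \mapsto t y_1 + \lambda y_2$, $x_i \mapsto \lambda_i y_i$ for $3 \le i \le m-\delta$, and $x_i \mapsto 0$ for $i > m-\delta$. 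Since the image of $f_t$ always lies in $\langle y_1, \dots, y_{m-\delta}\rangle$, the curve $t \mapsto (f_t, g)$ lies in $Z$; it passes through $z := (f_0, g)$, which is a smooth point of $Z$ because $f_0$ has rank exactly $m-\delta$. On $\langle y_1, \dots, y_{m-\delta}\rangle$ the operator $f_t g$ equals $\left(\begin{smallmatrix}\lambda & t\\ 1 & \lambda\end{smallmatrix}\right) \oplus \operatorname{diag}(\lambda_3, \dots, \lambda_{m-\delta})$ and it vanishes on the complementary span, so $\ol{\chi}_{(f_t,g)}(u) = \bigl((u - \lambda)^2 - t\bigr)\prod_{i \ge 3}(u - \lambda_i)$, whose discriminant is $4t$ times a polynomial in $t$ that is nonzero at $t = 0$. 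Thus $\Delta$ vanishes to order exactly~$1$ at $z$ along this curve, which forces $\Delta \notin \fm_z^2$; and $z \in V(\Delta)$ since $\ol{\chi}_{(f_0,g)}$ has a repeated root.

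Putting everything together, Proposition~\ref{prop:norm-crit2} applies and shows that $\Delta$ is squarefree and that $\cO_{\tilde{\cZ}} = \Split_{\cO_Z}(\ol{\chi})$ is normal, so $\tilde{\cZ}$ is normal; and since $\cO_{\tilde{Z}} = \Fact^{s,\,m-\delta-s}_{\cO_Z}(\ol{\chi})$, Proposition~\ref{prop:fact}(e) then gives that $\tilde{Z}$ is normal. I expect the main obstacle to be the construction in the third paragraph: one must find a deformation \emph{within} $Z$ that crosses the discriminant hypersurface transversally (order~$1$) rather than tangentially (order~$2$), which is why one perturbs the lower-left entry of a $2\times2$ Jordan block of $fg$ instead of merely separating two coincident eigenvalues; the singular-locus bookkeeping in the second paragraph is routine.
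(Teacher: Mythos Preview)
Your proof is correct and follows essentially the same approach as the paper's: reduce to verifying the codimension condition in Proposition~\ref{prop:norm-crit2}, use irreducibility of $V(\Delta)$ and the codimension-$2$ bound on $Z^{\sing}$ to reduce to exhibiting a single smooth point of $V(\Delta)$ where $\Delta$ vanishes to order exactly one, and produce that point by perturbing the off-diagonal entry of a $2\times 2$ Jordan block in $fg$. The only cosmetic difference is that the paper works over $\bC[\epsilon]/(\epsilon^2)$ to detect $d\Delta \ne 0$, whereas you use an honest one-parameter family $t \mapsto (f_t,g)$; these are equivalent.
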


\begin{proof}  
It suffices to show that $\tilde{\cZ}$ is normal (due to Proposition~\ref{prop:fact}(e)). If $m-\delta \le 1$ then $\deg(\ol{\chi}) \le 1$ and $\tilde{\cZ}=Z$ is normal. We thus assume that $m-\delta \ge 2$ in what follows.

We verify the conditions of Proposition~\ref{prop:norm-crit2}. We already know that $\cO_Z$ is normal and that $\Delta$ is a non-zerodivisor (Proposition~\ref{prop:Deltane0}). It thus suffices to show that $V(\Delta,\partial \Delta)$ has codimension $\ge 2$ in $Z$. Since $Z$ is normal, its singular locus has codimension $\ge 2$. It is thus enough to show that $Z_{\rm reg} \cap V(\Delta, \partial \Delta)$ has codimension $\ge 2$ in $Z_{\rm reg}$, where $Z_{\rm reg}$ is the regular locus (we will use below that $Z_{\rm reg}$ is the set of matrices with maximal possible rank). The set $Z_{\rm reg} \cap V(\Delta, \partial \Delta)$ is closed in $Z_{\rm reg}$ (see the first paragraph of \S \ref{ss:norm2}) and contained in $V(\Delta)$. Since $V(\Delta)$ is irreducible of codimension~1, it thus suffices to show that $Z_{\rm reg} \cap V(\Delta, \partial \Delta)$ is a proper subset of $Z_{\rm reg} \cap V(\Delta)$. We do this by writing down a point in $Z_{\rm reg} \cap V(\Delta)$ that does not belong to $V(\Delta, \partial \Delta)$.

Pick bases for $V_0$ and $V_1$. We define a $\bC[\epsilon]/(\epsilon^2)$ point of $Z$ by
\begin{displaymath}
f=\begin{pmatrix} 0 & 0 \\ 0 & A \end{pmatrix} \qquad
g=\begin{pmatrix} 0 & 0 \\ 0 & I \end{pmatrix}
\end{displaymath}
where $A$ and $I$ are square matrices of size $m-\delta$, $I$ is the identity matrix, and
\begin{displaymath}
A = \begin{pmatrix}
\lambda & 1 \\
\epsilon & \lambda \\
&& D \end{pmatrix}
\end{displaymath}
where $\lambda \in \bC \setminus \{0\}$ and $D$ is a diagonal matrix with distinct non-zero complex entries that are not $\lambda$. Let $x=(\ol{f}, \ol{g})$ be the $\bC$-point of $Z$ obtained by putting $\epsilon=0$. It is clear that $x$ belongs to $V(\Delta)$, since $\ol{f} \ol{g}$ has a repeated eigenvalue. Since $\ol{f}$ has maximal possible rank, it is a smooth point of $Z_0$, and so $x \in Z_{\rm reg}$. The value of $\Delta$ on $(f,g)$ is a non-zero multiple of $\epsilon$: the key point is that the discriminant of the characteristic polynomial of the top $2 \times 2$ block of $A$ is $4 \epsilon$. This shows that $d\Delta$ is non-zero at $(f,g)$, and so $x \not\in V(\Delta, \partial \Delta)$.
\end{proof}

\subsection{Rational singularities}

We now show that $\tilde{Z}$ and $\tilde{\cZ}$ have rational singularities. To this end, we introduce the following closed subsets of $Z$:
\begin{itemize}
\item Let $D_1$ be the locus of points such that $0$ is a root of $\ol{\chi}(u)$. 
\item Let $D_2$ be the locus such that $\ol{\chi}(u)$ has a repeated root. 
\item Let $D_3 \subset D_2$ be the locus where there is
  \begin{itemize}
  \item a triple root, or 
  \item two repeated roots, or 
  \item a unique repeated root, but the corresponding Jordan block of $fg$ is a scalar.
  \end{itemize}
This set is closed, as we now explain. The first two conditions define a closed set. Letting $U$ be the complement, it suffices to show that $D_3 \cap U$ is closed in $U$. Let $V \subset U \times \bA^1$ be the set of pairs $(f,g,\lambda)$ where $\lambda$ is a root of $\ol{\chi}$ and its derivative. The map $V \to U$ is finite and injective with image $V(\Delta) \cap U$. Let $V_1 \subset V$ be the closed set where $fg-\lambda$ has nullity $\ge 2$. Then $D_3 \cap U$ is the image of $V_1$ in $U$, and thus closed.
\item Let $D_4=D_1 \cap D_2$ and let $D_5=D_1 \cup D_3$.
\end{itemize}
Put $U_i=Z \setminus D_i$. For any space $X$ over $Z$, we let $D_i(X)$ or $U_i(X)$ denote the inverse image of $D_i$ or $U_i$ in $X$.

\begin{proposition} \label{prop:gl-codim2}
The sets $D_3(\cY)$ and $D_4(\cY)$ have codimension $\ge 2$ in $\cY$.
\end{proposition}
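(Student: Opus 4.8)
The plan is to compute the dimensions of $D_3(\cY)$ and $D_4(\cY)$ directly by exhibiting them as (unions of) total spaces of vector bundles over appropriate homogeneous spaces, then compare with $\dim \cY$. Recall that $\cY$ is the total space of a vector bundle over the product of partial flag varieties $\Fl(r-s,\dots,r-s+m-\delta;V_0) \times \Fl(1,\dots,m-\delta;V_1)$, so $\dim\cY$ is $\dim(\text{base}) + \operatorname{rank}(\text{bundle})$, where the rank of the bundle is the dimension of the fiber $\{(f,g) : f(F_{i+r-s})\subseteq G_i, \ g(G_i)\subseteq F_{i+r-s}\}$ at a fixed flag. I would first write down this rank explicitly in terms of $n,m,r,s,\delta$; this is a routine linear-algebra bookkeeping exercise (it is, up to the constant $\dim Z = nm+nm$ contributions, governed by the number of matrix entries forced to vanish by the incidence conditions).

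\textbf{Stratifying $D_4(\cY)$ and $D_3(\cY)$.} For $D_4(\cY)$: a point of $\cY$ lies over $D_4 = D_1\cap D_2$ when $\ol\chi$ has $0$ as a root and $\ol\chi$ has a repeated root. Using Proposition~\ref{prop:tildecZ-divide}, the roots of $\ol\chi$ are the eigenvalues $\lambda_i$ of $fg$ on $G_i/G_{i-1}$, so the condition on a point of $\cY$ is that some $\lambda_j = 0$ and that two of the $\lambda_i$ coincide. I would stratify according to which graded pieces carry the coinciding eigenvalue and which carries $0$, and in each stratum realize the locus as a vector bundle over a partial-flag-variety base whose dimension I can read off. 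The point is that imposing ``$\lambda_j = 0$'' cuts the base (or the fiber) by one obvious condition and imposing ``$\lambda_i = \lambda_k$'' cuts it by another independent condition, so generically one expects codimension $\ge 2$; the content is checking that no stratum accidentally fails to drop by two. For $D_3(\cY)$: the three defining conditions (triple root, two repeated pairs, or a repeated root whose Jordan block is scalar) each impose, on the face of it, two independent conditions beyond simple membership in $V(\Delta)(\cY)$ — a triple root is two coincidences, two repeated pairs is two coincidences, and a scalar Jordan block at a repeated eigenvalue forces an extra vanishing among the entries of $f$ or $g$ connecting the two relevant graded pieces. In each case I would again build the explicit vector-bundle model and compare dimensions.

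\textbf{Carrying it out via $\cA$-modules.} A cleaner bookkeeping device, which I would actually use, is the category $\cA$ from \S\ref{ss:linalg}: a $\bC$-point of $\cY$ over a given point of $\tilde\cZ$ amounts to an object $M=(V_0,V_1,f,g)$ of $\cA$ together with a compatible filtration of prescribed type, and the constraints defining $D_3$ and $D_4$ translate into constraints on the multiset of indecomposable summands $A_n(\lambda)$, $B_n$, $B_n[1]$ of $M$ (via Proposition~\ref{prop:indecomp}). Concretely, $D_2$ says some indecomposable $A_n(\lambda)$ with $n\ge 2$ appears or two distinct $A_{n}(\lambda)$, $A_{m}(\lambda)$ appear (same $\lambda$); $D_1$ says $0$ appears among the $\lambda_i$; the scalar-block condition in $D_3$ says two $A_1(\lambda)$'s occur with equal $\lambda$. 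I would enumerate the finitely many ``worst'' module types, for each compute the dimension of the corresponding locus in $\cY$ (a vector bundle over a flag-variety-of-flags-compatible-with-$M$, whose dimension is again mechanical), and verify the codimension bound in each case. The surjectivity argument in Proposition~\ref{prop:rho-surj} already shows these loci are nonempty and gives the flavor of the needed fiber computations.

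\textbf{Expected obstacle.} The main difficulty will not be any single clever idea but the combinatorial bookkeeping: there are several strata (indexed by where the special eigenvalue and the coincidence sit among the graded pieces, and by whether $\delta=0$ or $\delta>0$, and by whether the repeated eigenvalue is $0$ or not), and one must check the codimension-$\ge 2$ bound uniformly. The genuinely delicate stratum is the ``scalar Jordan block'' part of $D_3$, because there the extra condition is not a coincidence of eigenvalues but a rank/vanishing condition on the maps $f,g$ restricted to the relevant two-dimensional graded piece; one must be careful that this really does cut down the dimension by an additional one on top of membership in $V(\Delta)(\cY)$, rather than being forced by the flag conditions already present. I would handle it by writing the local model for that graded piece as a pair of $2\times 2$ (or $2\times k$) matrices with the incidence constraints and checking that ``equal diagonal entries and the off-diagonal entry of the product vanishes'' is codimension $2$ there.
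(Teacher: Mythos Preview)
Your approach is correct in outline but takes a significantly longer route than the paper. The paper's proof exploits one observation you do not make explicit: since $\cY$ is a $\bG_0$-equivariant vector bundle over the flag product $\Fl(r-s,\dots,r-s+m-\delta;V_0) \times \Fl(1,\dots,m-\delta;V_1)$ and $D_i(\cY)$ is $\bG_0$-stable, the restriction of $D_i(\cY)$ to every fibre is isomorphic, so it suffices to check codimension in a single fibre. In that fibre, choosing bases adapted to the flags puts $f$ and $g$ in an explicit block form with upper-triangular middle blocks; writing $x_1,\dots,x_{m-\delta}$ and $y_1,\dots,y_{m-\delta}$ for the diagonal entries, one has $\lambda_i = x_i y_i$, and then $D_2$ has components $\{x_iy_i=x_jy_j\}$, $D_4$ is cut in each by $\prod_k x_k y_k$, the triple-root and two-pair loci in $D_3$ are cut by two such equations, and the scalar-block locus is a nontrivial closed condition on an irreducible component. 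Each check is then one line.

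By contrast, your plan --- global stratification by indecomposable $\cA$-type and by the positions of the special eigenvalues among the graded pieces, with dimension counts of vector bundles over smaller flag varieties --- reproduces exactly the same codimension facts but with far more cases. Your ``scalar Jordan block'' local model with a pair of $2\times 2$ matrices is precisely the computation the paper does implicitly in the fibre coordinates, so your worry there is well-founded but resolves the same way. In short: nothing in your proposal is wrong, but the equivariance-to-one-fibre reduction collapses almost all of the bookkeeping you anticipate.
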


\begin{proof}
  By equivariance, the restrictions of $D_3(\cY)$ or $D_4(\cY)$ to any fiber over $\Fl(r-s,\dots,r-s+m-\delta; V_0) \times \Fl(1,\dots,m-\delta;V_1)$ is isomorphic to any other. So it suffices to show that in a given fiber of the vector bundle, these restrictions have codimension $\ge 2$. In that case, by picking bases of $V_0$ and $V_1$ adapted to the particular pair of flags, $f$ and $g$ have the following form:
  \begin{itemize}
  \item the first $r-s$ columns of $f$ are $0$,
  \item the next $m-\delta$ columns of $f$ are upper-triangular, 
  \item the remaining columns of $f$ (if they exist) are arbitrary,
  \item the first $r-s$ rows of $g$ are arbitrary,
  \item the next $m-\delta$ rows of $g$ are upper-triangular,
  \item the remaining rows of $g$ (if they exist) are 0.
  \end{itemize}
  So $fg$ is determined by the contents of the upper-triangular $m-\delta$ columns of $f$ and the upper-triangular $m-\delta$ rows of $g$. Let $x_1,\dots,x_{m-\delta}$ and $y_1,\dots,y_{m-\delta}$ be the corresponding diagonal entries of $f$ and $g$, respectively.
  
  Hence over this fiber, the condition that a root is repeated has one component for each equation $x_iy_i = x_jy_j$ for $i<j$. Then $D_4(\cY)$ is cut out in this component by $\prod_i x_iy_i$, and hence has codimension 2. Having a triple root or two repeated roots corresponds to imposing two equations $x_iy_i=x_jy_j$ and $x_ky_k=x_\ell y_\ell$, so has codimension 2. If there is a repeated root which is unique, the condition that the Jordan block is scalar is a nontrivial condition on an irreducible component and hence also has codimension $\ge 2$.
\end{proof}

\begin{proposition} \label{prop:birational}
Assume that $r=s$ or $\delta>0$. Then $\rho \colon U_5(\cY) \to U_5(\tilde{\cZ})$ is an isomorphism, and so $\rho$ is birational. Similarly, $\pi \colon U_5(Y) \to U_5(\tilde{Z})$ is an isomorphism, and so $\pi$ is birational.
\end{proposition}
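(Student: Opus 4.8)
The plan is to prove the slightly stronger statement that $\rho$ and $\pi$ become \emph{isomorphisms} after base change to $U_5$; birationality is then immediate, since $U_5$ is dense in $Z$ (a general $(f,g)$ has $\ol{\chi}$ with distinct nonzero roots, as in the proof of Proposition~\ref{prop:Deltane0}), so $U_5(\cY)$ is dense in $\cY$ and $U_5(\tilde{\cZ})$ in $\tilde{\cZ}$ by Proposition~\ref{prop:rho-surj}. The argument has a formal skeleton and one piece of genuine geometric input. For the skeleton: $\cY$ is smooth and irreducible, $\tilde{\cZ}$ is integral and normal (established in the preceding subsections), $\rho$ is proper over $Z$ and hence over $\tilde{\cZ}$, and $\rho$ is surjective (Proposition~\ref{prop:rho-surj}). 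Granting that $\rho$ is injective on $\bC$-points over $U_5$, the restriction $\rho\colon U_5(\cY)\to U_5(\tilde{\cZ})$ is proper with finite fibers, hence finite; being a finite surjection of integral schemes all of whose geometric fibers are a single point, and working in characteristic zero, it has degree one and so is birational; and a finite birational morphism onto a normal variety is an isomorphism. The identical reasoning applies to $\pi$, using smoothness and irreducibility of $Y$ and integrality and normality of $\tilde{Z}$.

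So everything reduces to injectivity on $\bC$-points over $U_5$. Fix a $\bC$-point $(f,g)$ of $U_5\subset Z$. Since $(f,g)\in U_1$, the multiplicity of $0$ in $\chi$ is exactly $\delta$, so the generalized $0$-eigenspace of $fg$ on $V_1$ has dimension $\delta$; write $V_1=V_1^0\oplus V_1'$ for the corresponding $fg$-stable splitting, so $\dim V_1'=m-\delta$ and $fg|_{V_1'}$ is invertible with characteristic polynomial $\ol{\chi}$. Since $(f,g)\in U_3$, the operator $fg|_{V_1'}$ is either semisimple with distinct eigenvalues, or has a single repeated eigenvalue $\lambda\neq 0$ of multiplicity two whose generalized eigenspace $W_\lambda$ is a single $2\times 2$ Jordan block. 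In either case the $fg$-stable subspaces of $V_1'$ are exactly the sums of a subset of the one-dimensional eigenlines together with one of $0$, $\ker(fg-\lambda)$, or $W_\lambda$; in particular such a subspace is uniquely determined by the characteristic polynomial of $fg$ on it. Now if $(f,g,F_\bullet,G_\bullet)$ is a point of $\cY$ over our point of $\tilde{\cZ}$, then each $G_i$ is $fg$-stable (as $fg(G_i)\subseteq f(F_{i+r-s})\subseteq G_i$), has all eigenvalues nonzero and so lies in $V_1'$, and has characteristic polynomial $\prod_{j\le i}(u-\lambda_j)$; by the previous sentence each $G_i$, hence the whole flag $G_\bullet$, is forced. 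For $\pi$ this says that $R_1=G_s$ is the unique $fg$-stable subspace of $V_1'$ with characteristic polynomial the prescribed degree-$s$ divisor $p$ of $\ol{\chi}$.

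It remains to recover $F_\bullet$ from $G_\bullet$ (resp.\ $R_0$ from $R_1$), and this is exactly where the hypothesis "$r=s$ or $\delta>0$" is used. On $U_5$ one has $\operatorname{rank}(f)=m-\delta$ and $\operatorname{im}(f)=V_1'$: indeed $\operatorname{rank}(fg)\ge\dim V_1'=m-\delta$ while $f\in Z_0$ forces $\operatorname{rank}(f)\le m-\delta$, and $f(g(V_1'))=V_1'$. The defining conditions of $\cY$ give $g(G_i)\subseteq F_{i+r-s}\subseteq f^{-1}(G_i)$, and $g|_{V_1'}$ is injective since $\ker g\subseteq\ker(fg)$ meets $V_1'$ trivially. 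If $\delta>0$, then $\dim\ker f=n-(m-\delta)=r-s$, so $\dim f^{-1}(G_i)=(r-s)+\dim(G_i\cap\operatorname{im}f)=(r-s)+i=\dim F_{i+r-s}$, forcing $F_{i+r-s}=f^{-1}(G_i)$. If $\delta=0$, then under the standing hypothesis $r=s$, the operator $fg$ is invertible on all of $V_1$, so $g$ is injective and $\dim g(G_i)=i=\dim F_i$, forcing $F_i=g(G_i)$. Either way $F_\bullet$ is determined, so $\rho$ is injective on $\bC$-points over $U_5$; the same count, with $R_0=f^{-1}(R_1)$ when $\delta>0$ and $R_0=g(R_1)$ when $\delta=0$, handles $\pi$.

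The delicate step is the lattice description of the $fg$-stable subspaces of $V_1'$ together with its bookkeeping: one must check that the $U_3$ condition genuinely excludes every other Jordan type (a triple root, two repeated roots, a scalar $2\times 2$ block), and then verify that the subspace attached to each $G_i$ has the right dimension, that the $G_i$ nest, and that $fg$ acts on $G_i/G_{i-1}$ by $\lambda_i$ — the only mildly subtle case being when a repeated eigenvalue is distributed across two non-consecutive steps of the flag. By comparison the formal reduction in the first paragraph and the dimension counts in the third are routine, though one should confirm that $\delta=0$ together with the hypothesis really does force $r=s$, so that the two formulas for the flag exhaust all cases.
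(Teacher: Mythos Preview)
Your proof is correct and follows essentially the same approach as the paper: reduce to bijectivity on $\bC$-points over $U_5$ using normality of the target, show each $G_i$ is the unique $fg$-stable subspace of $V_1$ with the prescribed characteristic polynomial (using the $U_1$ and $U_3$ conditions), and then recover $F_\bullet$ from $G_\bullet$ by a dimension count. The only cosmetic difference is in the formula for $F_\bullet$: the paper uses $gf$-eigenspaces (and $\ker(gf)$ when $\delta>0$), whereas you use $F_i=g(G_i)$ when $\delta=0$ and $F_{i+r-s}=f^{-1}(G_i)$ when $\delta>0$; since $\ker f=\ker(gf)$ and $g$ carries $fg$-eigenspaces to $gf$-eigenspaces on $U_5$, these describe the same subspaces.
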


\begin{proof} 
We first prove the statement about $\rho$. Since $\tilde{\cZ}$ is normal and $\rho$ is projective, it suffices to show that $\rho$ induces an isomorphism on $\bC$-points on these two sets. By Proposition~\ref{prop:rho-surj}, we know that $\rho$ is surjective, so it suffices to prove injectivity.

First suppose that $r=s$. Consider a point $(f,g,\prod_{i=1}^{m-\delta}(u-\lambda_i))$ in $U_5(\tilde{\cZ})$, and let $(f,g,F_{\bullet}, G_{\bullet})$ be an inverse image under $\rho$ . If the $\lambda_i$ are distinct then $G_i$ must be the span of the $fg$-eigenspaces for $\lambda_1,\dots,\lambda_i$ and $F_i$ must be the span of the $gf$-eigenspaces for $\lambda_1,\dots, \lambda_i$. Now suppose there is a repeated root. Since our point belongs to $U_5(\tilde{\cZ})$, there is only one repeated root, and the corresponding Jordan block for $fg$ is non-scalar. The spaces $F_i$ and $G_i$ are again uniquely determined: for the repeated eigenvalue, we use the eigenvector first, and the generalize eigenvector second. This shows that there is a unique inverse image under $\rho$.

Now suppose instead that $\delta>0$. As before, there is a unique choice of the flag $G_i$. Also by our definition of $U_1$, we have $\rank(fg)=m-\delta=\rank(gf)$ and hence $\dim \ker(gf) = r-s$. Hence $F_i$ must be the span of $\ker(gf)$ and the eigenspaces for $\lambda_1,\dots,\lambda_i$.

The claim about $\pi$ follows from similar reasoning.
\end{proof}

\begin{corollary} \label{cor:gl-ratsing}
  Assume that $r=s$ or $\delta>0$. Then $U_2(\tilde{\cZ})$ and $U_5(\tilde{\cZ})$ have rational singularities.
\end{corollary}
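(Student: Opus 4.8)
The two open sets are handled by completely different (and short) arguments, since all the real work has been done: for $U_2(\tilde\cZ)$ we use that the splitting ring is étale where the discriminant is invertible, and for $U_5(\tilde\cZ)$ we use that it is an open subscheme of the smooth variety $\cY$.

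\emph{Step 1: $U_2(\tilde\cZ)$ has rational singularities.} Since $D_2=V(\Delta)$, the discriminant $\Delta$ is a unit on $U_2$. By the base-change compatibility of splitting rings, $U_2(\tilde\cZ)=\Spec\bigl(\Split_{\cO_{U_2}}(\ol\chi|_{U_2})\bigr)$, and Proposition~\ref{prop:split}(d) then shows that $U_2(\tilde\cZ)\to U_2$ is étale. By \S\ref{ss:first}, $Z=Z_0\times Z_1$ has rational singularities, hence so does its open subvariety $U_2$. It remains to observe that rational singularities is preserved under étale morphisms: pulling back a resolution $Y\to U_2$ along the étale cover $U_2(\tilde\cZ)\to U_2$ gives a resolution $Y'\to U_2(\tilde\cZ)$ with $Y'$ smooth, and flat base change for $\rR\pi_*$ turns $\rR\pi_*\cO_Y=\cO_{U_2}$ into $\rR\pi'_*\cO_{Y'}=\cO_{U_2(\tilde\cZ)}$. (One could also simply cite that rational singularities is an étale-local property.) Thus $U_2(\tilde\cZ)$ has rational singularities.

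\emph{Step 2: $U_5(\tilde\cZ)$ has rational singularities.} Here we use the hypothesis that $r=s$ or $\delta>0$, which lets us invoke Proposition~\ref{prop:birational}: it asserts that $\rho$ restricts to an isomorphism $U_5(\cY)\xrightarrow{\ \sim\ }U_5(\tilde\cZ)$. Now $\cY$ is the total space of a vector bundle over the smooth projective variety $\Fl(r-s,\dots,r-s+m-\delta;V_0)\times\Fl(1,\dots,m-\delta;V_1)$, hence is smooth; therefore $U_5(\cY)$, and with it $U_5(\tilde\cZ)$, is smooth, and smooth varieties have rational singularities.

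\emph{Main obstacle.} Frankly there is no serious obstacle remaining: Proposition~\ref{prop:split}(d) and Proposition~\ref{prop:birational} do all the work, and the rest is the standard machinery that rational singularities descends to open subschemes and is étale-local. The one point deserving a sentence of care is the last assertion in Step~1 — that an étale cover of a variety with rational singularities again has rational singularities — which is where flat base change for the higher direct images of a resolution is used.
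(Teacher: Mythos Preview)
Your proposal is correct and follows essentially the same approach as the paper: the paper also deduces smoothness of $U_5(\tilde\cZ)$ from Proposition~\ref{prop:birational}, and handles $U_2(\tilde\cZ)$ by noting that $\tilde\cZ\to Z$ is \'etale over $U_2$ and that $Z$ has rational singularities. You have simply fleshed out the details (the identification $D_2=V(\Delta)$, the base-change argument for \'etale pullback of rational singularities) that the paper leaves implicit.
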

  
\begin{proof}
By Proposition~\ref{prop:birational}, $U_5(\tilde{\cZ})$ is smooth, and hence has rational singularities. Next, $\tilde{\cZ} \to Z$ is \'etale over $U_2$, and since $Z$ has rational singularities, the same is true for $U_2(\tilde{\cZ})$.
\end{proof}

\begin{proposition} \label{prop:Z-rat-sing}
The varieties $\tilde{\cZ}$ and $\tilde{Z}$ have rational singularities.
\end{proposition}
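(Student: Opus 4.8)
The plan is to apply Proposition~\ref{prop:rat-sing}, using $\rho\colon\cY\to\tilde{\cZ}$ and $\pi\colon Y\to\tilde{Z}$ as resolutions. We already know that $\tilde{\cZ}$ and $\tilde{Z}$ are normal and Cohen--Macaulay, so the only content is to exhibit, on each of these, an open subset with rational singularities whose preimage in the resolution has complement of codimension $\ge 2$. I would treat $\tilde{\cZ}$ first and deduce the statement for $\tilde{Z}$ from it.

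Assume first that $r=s$ or $\delta>0$, so that $\rho\colon\cY\to\tilde{\cZ}$ is a resolution of singularities: $\cY$ is smooth, being a vector bundle over a product of flag varieties; $\rho$ is proper since the composite $\cY\to\tilde{\cZ}\to Z$ is proper (it factors as a closed immersion into $(\Fl\times\Fl)\times W$ followed by the projection to $W$, then restricted to $Z$) and $\tilde{\cZ}\to Z$ is finite, hence separated; and $\rho$ is birational by Proposition~\ref{prop:birational}. Now take $U$ to be the preimage in $\tilde{\cZ}$ of $U_2\cup U_5\subset Z$. This open set has rational singularities: over $U_2$ the map $\tilde{\cZ}\to Z$ is \'etale by Proposition~\ref{prop:split}(d) and $Z$ has rational singularities, while over $U_5$ the map $\rho$ restricts to an isomorphism onto a smooth variety by Proposition~\ref{prop:birational} --- this is precisely the content of Corollary~\ref{cor:gl-ratsing}, together with the fact that rational singularities is a local and \'etale-local property. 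A short bookkeeping computation with the sets $D_i$ gives $\tilde{\cZ}\setminus U=$ the preimage of $D_2\cap D_5=D_2\cap(D_1\cup D_3)=D_4\cup D_3$, so that $\rho^{-1}(\tilde{\cZ}\setminus U)=D_3(\cY)\cup D_4(\cY)$, which has codimension $\ge 2$ in $\cY$ by Proposition~\ref{prop:gl-codim2}. Proposition~\ref{prop:rat-sing} then shows $\tilde{\cZ}$ has rational singularities.

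When $\delta=0$ and $r>s$ the map $\rho$ is no longer birational (its generic fiber is $\Gr_{r-s}(\bC^{n-m})$), so the argument above does not apply directly; here the key observation is that $\tilde{\cZ}$ does not depend on $r$. Indeed, for $\delta=0$ we have $Z=W$ and $\ol{\chi}=\chi$ is the characteristic polynomial of $fg$, so $\cO_{\tilde{\cZ}}=\Split_{\cO_W}(\chi)$ involves only $n$ and $m$. Thus $\tilde{\cZ}$ is literally the variety attached to the same $n,m$ but with $r$ replaced by $s$; since $n\ge m$ forces $\delta=0$ there, this is covered by the case already handled, and we conclude. For $\tilde{Z}$: by Proposition~\ref{prop:fact}(a) the ring $\cO_{\tilde{\cZ}}$ is an iterated splitting ring over $\cO_{\tilde{Z}}$, so the inclusion $\cO_{\tilde{Z}}\to\cO_{\tilde{\cZ}}$ admits an $\cO_{\tilde{Z}}$-linear splitting by Proposition~\ref{prop:split}(f); since $\tilde{\cZ}$ has rational singularities and this is a split inclusion of finitely generated $\bC$-algebras, Boutot's theorem gives that $\tilde{Z}$ has rational singularities. (Alternatively one can repeat the resolution argument verbatim with $\pi\colon Y\to\tilde{Z}$, once one records the evident analogue of Proposition~\ref{prop:gl-codim2} for $Y$, proved the same way.) Given all the preparatory results, this proof is essentially assembly; the one step that genuinely needs care is the $\delta=0$, $r>s$ case, where rational singularities of $\tilde{\cZ}$ cannot be read off from $\cY$ and one must instead exploit the insensitivity of $\tilde{\cZ}$ to $r$.
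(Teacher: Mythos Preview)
Your argument is correct and follows essentially the same route as the paper's proof: handle the case $r=s$ or $\delta>0$ by applying Proposition~\ref{prop:rat-sing} to the resolution $\rho\colon\cY\to\tilde{\cZ}$ with $U=U_2(\tilde{\cZ})\cup U_5(\tilde{\cZ})$, then handle $\delta=0$ and $r>s$ by observing that $\tilde{\cZ}$ is independent of $r$ and reducing to the case $r=s$. The one place you diverge is in deducing the result for $\tilde{Z}$: the paper notes that $\tilde{Z}=\tilde{\cZ}/(\fS_s\times\fS_{m-\delta-s})$ is a finite group quotient and cites \cite[Proposition~5.13]{kollarmori}, whereas you invoke Boutot's theorem via the $\cO_{\tilde{Z}}$-linear splitting of $\cO_{\tilde{Z}}\hookrightarrow\cO_{\tilde{\cZ}}$ furnished by Propositions~\ref{prop:fact}(a) and~\ref{prop:split}(f). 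Both are valid; the finite-quotient statement is marginally more elementary to cite, while your splitting argument has the virtue of not requiring one to identify the invariant subring explicitly.
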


\begin{proof}
First suppose that $\delta>0$ or $r=s$. By Corollary~\ref{cor:gl-ratsing}, $\tilde{\cZ}$ has rational singularities on $U_2(\tilde{\cZ}) \cup U_5(\tilde{\cZ})$. The complement is $D_3(\tilde{\cZ}) \cap D_4(\tilde{\cZ})$, and the inverse image of this set under $\rho$ has codimension $\ge 2$ by Proposition~\ref{prop:gl-codim2}. Hence $\tilde{\cZ}$ has rational singularities by Proposition~\ref{prop:rat-sing}. Next if $\delta=0$ and $r \ne s$, the variety $\tilde{\cZ}$ is the same as the variety $\tilde{\cZ}$ with $(r,s)$ changed to $(s,s)$, and thus has rational singularities as well. Finally, $\tilde{Z}$ is a quotient of $\tilde{\cZ}$ by a finite group, so has rational singularities by \cite[Proposition 5.13]{kollarmori}.
\end{proof}

\subsection{Cohomology of $Y$ and $\cY$}

We now finish off the proofs by computing the cohomology of $\cO_Y$ and $\cO_{\cY}$. The following result is required to treat the $\delta=0$ case, and is part of Theorem~\ref{thm:detvar}(d) and~\ref{thm:detvar2}(d). Recall that $\delta=0$ is equivalent to the condition $n-m \ge r-s$.

\begin{proposition} \label{prop:delta0-fiber} 
Suppose $\delta=0$.
\begin{enumerate}
\item Let $U \subset Z$ be the locus where $fg \colon V_1 \to V_1$ is an isomorphism, $\Delta \ne 0$, and $\ol{\chi}(0) \ne 0$. Then $U$ is an open dense subset of $Z$.
\item There is a vector bundle $\cE$ on $U$ of rank $n-m$ whose fiber at $(f,g)$ is the kernel of $f$.
\end{enumerate}
Let $U'$ be a dense open subset of $U$ where $\cE$ is trivial and let $\tilde{U}$ and $\tilde{\cU}$ be the inverse images of $U'$ in $\tilde{Z}$ and $\tilde{\cZ}$. Then
\begin{enumerate}[resume]
\item $\pi^{-1}(\tilde{U}) \cong \Gr_{r-s}(\bC^{n-m}) \times \tilde{U}$, and 
\item $\rho^{-1}(\tilde{\cU})\cong \Gr_{r-s}(\bC^{n-m}) \times \tilde{\cU}$.
\end{enumerate}
\end{proposition}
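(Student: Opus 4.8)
The plan is to handle the four assertions in order, the essential content being in (c) and (d), which I will prove by running the same argument for $\pi$ and for $\rho$.

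For (a): each of the three conditions cutting out $U$ is manifestly open, so $U$ is open. Since $\delta=0$ forces $Z_0=W_0$, the variety $Z=W$ is irreducible, so it is enough to exhibit one point of $U$: take $f\colon V_0\to V_1$ surjective and $g\colon V_1\to V_0$ injective with $fg$ diagonalizable with distinct nonzero eigenvalues. (Since $\delta=0$ one has $\ol\chi=\chi$ and $\ol\chi(0)=(-1)^m\det(fg)$, so the first and third conditions here actually coincide, but it is harmless to keep both.) For (b): on $U$ the operator $fg$ is invertible, so $f\colon V_0\otimes\cO_U\to V_1\otimes\cO_U$ is surjective with right inverse $g(fg)^{-1}$; the kernel of a surjection of vector bundles is a subbundle of the expected rank, so $\cE:=\ker f$ is a rank $n-m$ subbundle of $V_0\otimes\cO_U$ with fiber $\ker f$, as claimed. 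Now fix a trivialization of $\cE$ over the dense open $U'\subset U$ appearing in the statement.

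For (c) and (d) I argue on $T$-points. A $T$-point of $\tilde U$ (resp. $\tilde{\cU}$) is a $T$-point $(f,g)$ of $U'$ together with a monic degree-$s$ factor $p$ of $\chi$ (resp. an ordered splitting $\chi(u)=\prod_{i=1}^m(u-\xi_i)$). Because $\Delta$ is a nonvanishing function on $U'$, it is a unit of $\cO_{U'}$, hence a unit of $T$; and since $\Delta=\pm\operatorname{disc}(p)\operatorname{disc}(q)\operatorname{res}(p,q)^2$ with $q=\chi/p$ (resp. $\Delta$ is, up to a constant, $\prod_{i<j}(\xi_i-\xi_j)^2$), the factors $p,q$ generate the unit ideal of $T[u]$ (resp. the $u-\xi_i$ are pairwise coprime). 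Combining a B\'ezout identity with Cayley--Hamilton, $\chi(fg)=0$, then gives an $fg$-stable splitting $(V_1)_T=\cR_1\oplus\cR_1'$ with $\cR_1=\ker p(fg)=\operatorname{im}(b(fg)q(fg))$ a rank-$s$ summand on which $fg$ has characteristic polynomial $p$ (resp. $(V_1)_T=\bigoplus_{i=1}^m\cL_i$ with $\cL_i=\ker(fg-\xi_i)$ a line subbundle on which $fg$ acts as the scalar $\xi_i$). These summands are images of idempotent endomorphisms of locally free modules, so their formation commutes with arbitrary base change.

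The rigidity step is the following. Let $(f,g,R_0,R_1)$ be a $T$-point of $Y$ lying over the given point of $\tilde U$. From $g(R_1)\subseteq R_0$ and $f(R_0)\subseteq R_1$ we get $fg(R_1)\subseteq R_1$; since $R_1$ is a rank-$s$ summand and $fg|_{R_1}$ has characteristic polynomial $p$, the complementary quotient has characteristic polynomial $q$, so $p(fg)$ kills $R_1$ and $R_1\subseteq\ker p(fg)=\cR_1$; comparing ranks forces $R_1=\cR_1$. In the flag case the identical reasoning run inductively on $i$ (using that $fg$ acts on the line bundle $G_i/G_{i-1}$ by the scalar $\xi_i$) forces $G_i=\cL_1\oplus\cdots\oplus\cL_i$. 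With $R_1$ (resp. $G_\bullet$) pinned down, the conditions on $R_0$ read $g(\cR_1)\subseteq R_0\subseteq f^{-1}(\cR_1)$, and a short computation — using that $g$ is injective, $g(\cR_1)\cap\ker f=0$, and $f$ is surjective, so the ranks $s+(n-m)$ match — identifies $f^{-1}(\cR_1)$ with $g(\cR_1)\oplus\cE_T$. Hence $R_0\mapsto R_0/g(\cR_1)$ is a natural bijection between the admissible $R_0$ and the rank-$(r-s)$ summands of $\cE_T\cong\bC^{n-m}\otimes T$, i.e. with $T$-points of $\Gr_{r-s}(\bC^{n-m})$. In the flag case one likewise shows $F_{i+r-s}$ is forced to equal $F_{r-s}\oplus g(G_i)$, with $F_{r-s}$ an arbitrary rank-$(r-s)$ summand of $\cE_T$. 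These bijections are functorial in $T$ and exhibit $\pi^{-1}(\tilde U)$ and $\rho^{-1}(\tilde{\cU})$ as the asserted products. Finally, $\tilde U$ and $\tilde{\cU}$ are dense open since $U'$ is dense open in $Z$ and $\tilde Z\to Z$, $\tilde{\cZ}\to Z$ are finite surjective with irreducible source.

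The main obstacle is the rigidity step carried out over a general base $T$: one cannot split $(V_1)_T$ pointwise by eigenspaces, and must instead produce $\cR_1$ as the image of an explicit idempotent built from $ap+bq=1$ and check that this construction commutes with base change — which is precisely where invertibility of $\Delta$ on $U'$ (not merely $\Delta\neq0$) enters. Everything else reduces to bookkeeping with direct summands of locally free modules, notably the standard equivalence between rank-$k$ summands containing a fixed summand $N$ and rank-$(k-\operatorname{rank}N)$ summands of the quotient by $N$, together with the elementary facts about $f$ and $g$ on $U$ from (b).
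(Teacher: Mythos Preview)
Your argument is correct and follows essentially the same approach as the paper: in both cases one shows that over $U'$ the subspace $R_1$ (resp.\ the flag $G_\bullet$) is uniquely determined by the eigenspace decomposition of $fg$, and that $R_0$ (resp.\ $F_\bullet$) is then determined by $g(R_1)$ together with an arbitrary rank-$(r-s)$ piece of $\ker f$. The paper argues pointwise over $\bC$ and asserts the resulting isomorphism, whereas you carry the same computation through functorially over an arbitrary base $T$ using the B\'ezout/idempotent construction of $\cR_1$; this extra care is welcome but does not change the underlying idea.
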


\begin{proof}
(a) It is clear that $U$ is open. We show that it is non-empty. Since $\delta=0$, we have $n \ge m$. Let $x_1, \ldots, x_n$ be a basis of $V_0$ and $y_1, \ldots, y_m$ a basis of $V_1$. Define $f \colon V_0 \to V_1$ by $f(x_i)=y_i$ for $1 \le i \le m$ and $f(x_i)=0$ for $i>m$. Define $g \colon V_1 \to V_0$ by $g(y_i)=\lambda_i x_i$ where $\lambda_1, \ldots, \lambda_m$ are distinct non-zero complex numbers. Then $(f,g)$ belongs to $U$.

(b) If $(f,g) \in U$ then $f$ is surjective and $\ker(f)$ has dimension $n-m$, so we can take $\cE$ to be the kernel of the generic map $f \colon V_0 \otimes \cO_U \to V_1 \otimes \cO_U$.

(c) Suppose $(f,g,p)$ is a point in $\tilde{U}$. Consider a point $(f,g,R_0,R_1)$ in $Y$ above $(f,g,p)$. Then $R_1$ is uniquely determined: it is the sum of the eigenspaces of $fg$ corresponding to eigenvalues that are roots of $p$. Since $fg$ is an isomorphism, $g$ is injective and its image is linearly disjoint from $\ker(f)$. Thus $R_0= g(R_1) \oplus K$ where $K$ is a subspace of $\ker(f)$ of dimension $r-s$. Moreover, any choice of $K$ leads to a point above $(f,g,p)$. We thus see that the fiber above $(f,g,p)$ is $\Gr_{r-s}(\bC^{n-m})$. The isomorphism $\pi^{-1}(\tilde{U})=\Gr_{r-s}(\bC^{n-m}) \times \tilde{U}$ comes from combining this analysis with a choice of trivialization of $\cE$ over $U'$.

(d) This is similar to (c): the flag $G_{\bullet}$ is uniquely determined, $F_{r-s}$ is an $r-s$ dimensional subspace of $\ker(f)$, and $F_{r-s+i}= g(G_i) + F_{r-s}$ for $1 \le i \le m-\delta$.
\end{proof}

\begin{proposition} \label{prop:pushforward-Z}
We have $\rR \pi_*(\cO_Y)=\cO_{\tilde{Z}}$ and $\rR \rho_*(\cO_{\cY})=\cO_{\tilde{\cZ}}$.
\end{proposition}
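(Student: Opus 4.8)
The plan is to dispose of both statements at once by reducing, in each of two cases, to results already in hand; I will spell out the argument for $\pi\colon Y\to\tilde Z$, the argument for $\rho\colon\cY\to\tilde\cZ$ being word-for-word the same with Theorem~\ref{thm:detvar2} replacing Theorem~\ref{thm:detvar}, the pair $(\cY,\tilde\cZ)$ replacing $(Y,\tilde Z)$, and Proposition~\ref{prop:delta0-fiber}(d) replacing Proposition~\ref{prop:delta0-fiber}(c).

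First suppose $r=s$ or $\delta>0$. By Proposition~\ref{prop:birational}, $\pi$ is birational; it is moreover projective (hence proper) with $Y$ smooth, so $\pi$ is a resolution of singularities of $\tilde Z$. Since $\tilde Z$ has rational singularities by Proposition~\ref{prop:Z-rat-sing}, the characterization of rational singularities recalled at the start of \S\ref{ss:rat-sing} gives $\pi_*(\cO_Y)=\cO_{\tilde Z}$ and $\rR^i\pi_*(\cO_Y)=0$ for $i>0$, i.e.\ $\rR\pi_*(\cO_Y)=\cO_{\tilde Z}$.

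Now suppose $\delta=0$ and $r\ne s$; here $\pi$ need not be birational, so instead I would apply Proposition~\ref{prop:rat-fiber}. Its hypotheses hold: $\pi$ is proper, $Y$ is smooth, and $\tilde Z$ has rational singularities (Proposition~\ref{prop:Z-rat-sing}); and by Proposition~\ref{prop:delta0-fiber}(c) there is a dense open $\tilde U\subseteq\tilde Z$ with $\pi^{-1}(\tilde U)\cong\Gr_{r-s}(\bC^{n-m})\times\tilde U$ as varieties over $\tilde U$. That $\tilde U$ is dense in $\tilde Z$ follows because it is the preimage of a dense open of $Z$ under the finite surjective map $\tilde Z\to Z$, using that $\tilde Z$ is integral by Theorem~\ref{thm:detvar}(a). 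The Grassmannian $\Gr_{r-s}(\bC^{n-m})$ is nonempty since $0\le r-s\le n-m$ when $\delta=0$, and it is smooth, irreducible, and projective, so in particular has rational singularities; and $\rH^i(\Gr_{r-s}(\bC^{n-m}),\cO)=0$ for $i>0$ since a Grassmannian admits a paving by affine spaces (equivalently, this is the structure-sheaf case of Borel--Weil--Bott). Thus Proposition~\ref{prop:rat-fiber} yields $\rR\pi_*(\cO_Y)=\cO_{\tilde Z}$, completing the proof.

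I do not anticipate a genuine obstacle: the substantive work has been done in the preceding subsections, and what remains is bookkeeping---matching the hypotheses of Propositions~\ref{prop:rat-fiber} and~\ref{prop:Z-rat-sing} to the situation at hand, and recalling the standard vanishing of higher cohomology of the structure sheaf of a Grassmannian. The only point that warrants a moment's care is the density of $\tilde U$ in the $\delta=0$ case, which is immediate from integrality of $\tilde Z$ and finiteness of $\tilde Z\to Z$.
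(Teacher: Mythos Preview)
Your proof is correct and follows essentially the same approach as the paper's own proof: split into the cases $r=s$ or $\delta>0$ (birational resolution of a variety with rational singularities) versus $\delta=0$ with $r\ne s$ (apply Proposition~\ref{prop:rat-fiber} via the generic Grassmannian fiber from Proposition~\ref{prop:delta0-fiber}). You have simply filled in the details---density of $\tilde U$, nonemptiness and cohomology vanishing for the Grassmannian---that the paper leaves implicit.
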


\begin{proof}
If $\delta>0$ or $r=s$ then $\pi$ and $\rho$ are birational (Proposition~\ref{prop:birational}), and the result follows from generalities on rational singularities (see \S \ref{ss:rat-sing}). For $\delta=0$, the result follows from Propositions~\ref{prop:rat-fiber} and~\ref{prop:delta0-fiber}.
\end{proof}

\section{Cohomology of the super Grassmannian} \label{s:coh}

\subsection{Statement of results}

We use notation as in \S \ref{ss:detvar}. Additionally, we introduce the following notation:
\begin{itemize}
\item Let $V$ be the super vector space $V_0 \oplus V_1$.
\item Let $X$ be the super Grassmannian $\Gr_{r|s}(V)$ (see \S \ref{ss:supergrass} for background).
\item Let $\bG$ be the super group $\GL(V)$, and let $\bG_0=\GL(V_0) \times \GL(V_1)$ be the underlying ordinary group.
\item Let $A=\rH^*_{\rm sing}(\Gr_s(\bC^{m-\delta}), \bC)$, regarded as a graded $\bC$-algebra.
\item Let $S=\Sym(W^*)$ be the coordinate ring of $W$.
\end{itemize}
The main result of this section is Theorem~\ref{mainthm}, which computes the cohomology of $\cO_X$. We restate the theorem here in our current notation:

\begin{theorem} \label{thm:grcoh}
  We have the following:
\begin{enumerate}
\item We have a natural isomorphism $\rH^*(X, \cO_X)^{\bG}=A$ of graded algebras.
\item There is a canonical graded $\bG$-subrepresentation $E$ of $\rH^*(X, \cO_X)$ such that the natural map $A \otimes E \to \rH^*(X, \cO_X)$ is an isomorphism.
\item We have a canonical isomorphism of $\bG_0$-representations
\begin{displaymath}
E^i = \bigoplus_{p \ge 0} \Tor_p^S(\cO_Z, \bC)_{i+p}.
\end{displaymath}
\end{enumerate}
\end{theorem}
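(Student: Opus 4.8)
The plan is to specialise Theorem~\ref{thm:super-gm} to $X=\Gr_{r|s}(V)$ using the Grothendieck--Springer theory developed in \S\ref{s:gs}, and then analyse the resulting spectral sequence. First one checks the hypotheses: by \S\ref{ss:supergrass} the short exact sequence $0\to\cJ/\cJ^2\to\epsilon\to\eta\to 0$ attached to $X$ is exactly the one of \S\ref{ss:detvar}, so $W=\fg_1$, $S=\Sym(W^*)$, $\Spec\Sym(\eta)=Y$, and the affinization of $Y$ is $\tilde Z$; and $\rH^i(Y,\cO_Y)=0$ for $i>0$ follows from $\rR\pi_*\cO_Y=\cO_{\tilde Z}$ (Proposition~\ref{prop:pushforward-Z}) together with the affineness of $\tilde Z$. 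Theorem~\ref{thm:super-gm} then gives a spectral sequence $\rE_1^{p,q}=\Tor^S_{-q}(\cO_{\tilde Z},\bC)_p\implies\rH^{p+q}(X,\cO_X)$ that is $\bG_0$-equivariant (for $\bG_0=\GL(V_0)\times\GL(V_1)$) and multiplicative (Remark~\ref{rmk:Tor-alg}).

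Next I would rewrite the $\rE_1$-page in terms of the determinantal variety $Z_0$. By Theorem~\ref{thm:detvar}(b),(c), $\cO_{\tilde Z}$ is a free $\cO_Z$-module whose fibre over the origin is $A:=\rH^*_{\sing}(\Gr_s(\bC^{m-\delta}),\bC)$, so graded Nakayama (using that $\bG_0$ is reductive) yields a $\bG_0$-equivariant isomorphism $\cO_{\tilde Z}\cong\cO_Z\otimes_\bC A$ of graded $\cO_Z$-modules. The key point is that $\bG_0$ acts \emph{trivially} on $A$: the coefficients of the characteristic polynomial of $fg$ are $\bG_0$-invariant, since $fg$ transforms by $\GL(V_1)$-conjugation, so $\cO_{\tilde Z}$ is generated over $\cO_Z$ by $\bG_0$-fixed elements; moreover $A$ is concentrated in even internal degrees. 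Combining this with the Künneth identity $\Tor^S_p(\cO_Z,\bC)=\Tor^{S_0}_p(\cO_{Z_0},\bC)$ (where $S=S_0\otimes S_1$ with $S_i=\Sym(W_i^*)$, and $\cO_Z=\cO_{Z_0}\otimes S_1$) we obtain, as graded $\bG_0$-representations,
\[
\rE_1^{p,q}\;=\;\bigoplus_{a+b=p}\Tor^{S_0}_{-q}(\cO_{Z_0},\bC)_a\otimes_\bC A_b,
\]
with each $A_b$ trivial and $A_b=0$ for $b$ odd.

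The heart of the argument is that this spectral sequence degenerates at $\rE_1$. By Lascoux's explicit resolution \cite{lascoux}, the $\bG_0$-representation $\bigoplus_{p\ge 0}\Tor^{S_0}_p(\cO_{Z_0},\bC)$ is multiplicity-free; in particular irreducibles occurring in distinct homological degrees are pairwise distinct, as are those occurring in distinct internal degrees of a fixed $\Tor^{S_0}_p$. Each differential $d_\ell\colon\rE_\ell^{p,q}\to\rE_\ell^{p+\ell,q-\ell+1}$ is $\bG_0$-equivariant, hence vanishes whenever its source and target share no irreducible constituent. For $\ell\ge 2$ the source only involves constituents of $\Tor^{S_0}_{-q}(\cO_{Z_0})$ and the target only those of $\Tor^{S_0}_{-q+\ell-1}(\cO_{Z_0})$ --- distinct homological degrees, hence disjoint --- so $d_\ell=0$. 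For $\ell=1$ both sides involve $\Tor^{S_0}_{-q}(\cO_{Z_0})$, but a common constituent appearing in $\Tor^{S_0}_{-q}(\cO_{Z_0})_a\otimes A_b$ and in $\Tor^{S_0}_{-q}(\cO_{Z_0})_{a'}\otimes A_{b'}$ forces $a=a'$ (multiplicity-freeness) and hence $b'=b+1$; as $A$ is even-graded, one of $A_b,A_{b'}$ vanishes, so $d_1=0$. Thus $\rE_\infty=\rE_1$.

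Degeneration identifies $\gr\rH^*(X,\cO_X)$ with $\Tor^S_\bullet(\cO_{\tilde Z},\bC)$ as bigraded algebras; re-indexing the right-hand side as $\bigoplus_i E^i\otimes A$ with $E^i:=\bigoplus_{p\ge 0}\Tor^S_p(\cO_Z,\bC)_{p+i}$ proves~(c), and gives an isomorphism $\rH^*(X,\cO_X)\cong A\otimes E$ of graded $\bG_0$-representations (when $\delta=0$ one has $\cO_Z=S$, so $E=\bC$ and this recovers Theorem~\ref{mainthm}(a)). To upgrade (a) and (b) to the full supergroup $\bG$, one uses that the $\cJ$-adic filtration is $\bG$-stable, so $A$ occurs as a $\bG$-equivariant quotient algebra of $\rH^*(X,\cO_X)$ on which $\bG$ acts trivially ($\bG_0$ acts trivially, and $\fg_1$ acts trivially by a parity count), while $\rH^*(X,\cO_X)^{\bG}\subseteq\rH^*(X,\cO_X)^{\bG_0}=A$ because the nontrivial syzygies of $\cO_{Z_0}$ carry no $\bG_0$-invariants; realising this copy of $A$ as an honest $\bG$-invariant subalgebra (so that $A=\rH^*(X,\cO_X)^{\bG}$, e.g.\ by lifting the generators of $A$ to $\bG$-invariant classes coming from the tautological superbundles on $X$) gives~(a), and then~(b) follows by proving $\rH^*(X,\cO_X)$ is free over $A=\rH^*(X,\cO_X)^{\bG}$ and letting $E$ be a $\bG$-stable complement to $A_+\rH^*(X,\cO_X)$. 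I expect the degeneration of the spectral sequence to be the main obstacle --- it is precisely where the multiplicity-freeness of Lascoux's resolution and the evenness of $\rH^*_{\sing}$ of a Grassmannian are indispensable --- with the passage from the $\bG_0$-equivariant picture to the $\bG$-equivariant statements in (a) and (b) a secondary technical point.
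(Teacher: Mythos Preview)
Your handling of the degeneration of the spectral sequence is correct and essentially the same as the paper's: both arguments rest on the multiplicity-freeness of $\bigoplus_p\Tor^S_p(\cO_Z,\bC)$ (Corollary~\ref{cor:lascoux}) together with the fact that $A$ is concentrated in even degrees. The paper packages this slightly more efficiently by observing that $\tilde L_k=\bigoplus_i A_{k-i}\otimes L_i$ only involves $L_i$ with $i\equiv k\pmod 2$, so $\tilde L_k$ and $\tilde L_{k+1}$ share no irreducible constituent; your case split on $\ell=1$ versus $\ell\ge 2$ reaches the same conclusion. Part~(c) then follows, as you say.

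The genuine gap is in your treatment of (a) and~(b), where you underestimate the difficulty of passing from $\bG_0$ to $\bG$. Representations of the supergroup $\bG$ are \emph{not} semisimple, so the phrase ``letting $E$ be a $\bG$-stable complement to $A_+\rH^*(X,\cO_X)$'' hides exactly the point at issue: such a complement need not exist a priori. The paper proves this by an inductive argument (Proposition~\ref{prop:E-subrep}) resting on an explicit $\Ext^1_{\bG}$-vanishing (Proposition~\ref{prop:Gext}): if $M\cong L_i$ and $M'\cong L_j$ as $\bG_0$-representations with $i\ne j$, then any extension of $M$ by $M'$ splits, because the action map $V_1^*\otimes V_0\otimes M\to M'$ is forced to vanish by Pieri's rule together with the precise list of partitions in Theorem~\ref{thm:lascoux}. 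Similarly, your ``parity count'' for showing $\fg_1$ acts trivially on the $\bG_0$-invariants is not a proof; the paper's argument (Proposition~\ref{prop:G=G0-invt}) is again that $V_1^*\otimes V_0$ does not occur as a $\bG_0$-constituent of $\rH^*(X,\cO_X)$, which is read off from Lascoux. Your alternative suggestion of lifting generators of $A$ via Chern classes of the tautological superbundles is plausible but would need to be carried out; the paper does not take this route.
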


We note that if $\delta=0$ then $Z=W$, and so $E^i=0$ for $i>0$ and $E^0=\bC$. Moreover, $\bG_0$ acts trivially on $E^0$, and so $\bG$ does as well. Thus, in this case, we have $\rH^*(X,\cO_X)=A$ with trivial $\bG$ action, as in Theorem~\ref{mainthm}(a).

A corollary of the theorem is that the $\bG_0$-action on the linear strands of the resolution of $\cO_Z$ extends to an action of $\bG$. This result was first proved by Pragacz--Weyman \cite{pragacz-weyman}. We discuss this in more detail in \S \ref{ss:pw}.

\subsection{Grothendieck--Springer theory} \label{ss:supergrass}

We now give some background on the super Grassmannian and connect it to varieties studied in \S \ref{s:gs}. While the super Grassmannian can be expressed in the form $\bG/\bP$ for an appropriate parabolic subgroup $\bP$ of $\bG$, we use an alternative approach here. We do this to be more direct, and also because we do not know of existing literature where quotients of supergroups have been treated carefully. We refer to \cite[Ch.~4, \S 3]{manin} for general background on the super Grassmannian.

The super Grassmannian $X=\Gr_{r|s}(V)$ is the super scheme representing the functor that attaches to a super algebra $T$ the set of $T$-submodules of $T \otimes V$ that are locally summands of rank $r|s$. It is not difficult to see that this functor is indeed representable. One can show that $X$ is smooth using the criterion for formal smoothness in the super setting; this is also proved in \cite{manin}. The super scheme $X$ is smooth of dimension $d_0|d_1$ where
\[
  d_0=r(n-r)+s(m-s), \qquad d_1=r(m-s)+ s(n-r).
\]
Let
\begin{displaymath}
0 \to \cR \to \cO_X \otimes V \to \cQ \to 0
\end{displaymath}
be the tautological sequence on $X$, so that $\cR$ is a vector bundle of rank $r|s$. Also, let $X_0$ be the ordinary scheme $\Gr_r(V_0)$ and let
\begin{displaymath}
0 \to \cR_0 \to \cO_{X_0} \otimes V_0 \to \cQ_0 \to 0
\end{displaymath}
be its tautological sequence. Similarly define $X_1 = \Gr_s(V_1)$ and let
\[
  0 \to \cR_1 \to \cO_{X_1} \otimes V_1 \to \cQ_1 \to 0
\]
be its tautological sequence. Restricting the functor of points of $X$ to ordinary algebras, one sees that $X_{\ord}=X_0 \times X_1$. One also finds $\cQ_{\ord}=\cQ_0 \oplus \cQ_1$, and similarly $\cR_{\ord}=\cR_0 \oplus \cR_1$. (We simply write $\cQ_0$ for the pullback of $\cQ_0$ to $X_0 \times X_1$, and similarly in other cases.)

We now determine $\gr(\cO_X)$. The result of the calculation below can be found in \cite{manin} but we prefer to give a short self-contained explanation. We begin with the following observation:

\begin{lemma} \label{lem:map-phi}
Let $T$ be a super algebra, let $J=J_T$, and let
\begin{displaymath}
0 \to M \to T \otimes V \to N \to 0
\end{displaymath}
be an exact sequence of $T$-modules. Then there exists a unique map of $T/J$-modules
\begin{displaymath}
\phi \colon M/JM \to JN/J^2N
\end{displaymath}
satisfying the following condition: if $x$ is a homogeneous element of $M$ of degree $d$ and $x=y_0+y_1$ with $y_i \in T_{d+i} \otimes V_i$ then $\phi(\ol{x})=\ol{y}_{d+1}$, where the bar denotes the image in the quotient module.
\end{lemma}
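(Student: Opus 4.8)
The plan is to define a $T$-linear map $\widetilde\phi\colon M \to N$ sending $x$ to the image in $N$ of the $V_1$-component of $x$, to observe that this map lands in $JN$, and then to obtain $\phi$ by passing to the quotients $JN/J^2N$ and $M/JM$.

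\emph{The map.} Decomposing $T\otimes V = (T\otimes V_0)\oplus(T\otimes V_1)$ as $T$-modules, the ``$V_1$-component'' is recorded by the $T$-linear projection $T\otimes V \to T\otimes V_1$; composing the inclusion $M \hookrightarrow T\otimes V$, this projection, and the quotient map $T\otimes V\to N$ gives a $T$-linear map $\widetilde\phi\colon M \to N$. Explicitly, for homogeneous $x\in M$ of degree $d$, writing $x = y_0 + y_1$ with $y_0 \in T_d\otimes V_0$ and $y_1\in T_{d+1}\otimes V_1$ as in the statement, one has $\widetilde\phi(x) = \ol{y}_1$, the image of $y_1$ in $N$. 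The crucial point is that $\widetilde\phi(M)\subseteq JN$; it suffices to check this on homogeneous $x$, since $M$ is a $\bZ/2$-graded submodule of $T\otimes V$ (maps of modules over a super algebra being even). Here I would invoke exactness: $x\in M$ forces $\ol{y}_0 + \ol{y}_1 = 0$ in $N$, while one of $y_0, y_1$ always has its $T$-coordinate in $T_1\subseteq J$ --- namely $y_1$ when $d$ is even and $y_0$ when $d$ is odd --- so one of $\ol{y}_0,\ol{y}_1$ visibly lies in $JN$, and hence so does the other.

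\emph{Descent and uniqueness.} Composing $\widetilde\phi\colon M\to JN$ with the surjection $JN\to JN/J^2N$ yields a $T$-linear map whose target is a $T/J$-module; it therefore annihilates $JM$ and factors through a $T/J$-linear map $\phi\colon M/JM\to JN/J^2N$. By construction $\phi$ sends $\ol{x}$ to the image of $\ol{y}_1$ for homogeneous $x$, which is the asserted compatibility. For uniqueness, homogeneous elements of $M$ span $M$ over $\bC$, so their images span $M/JM$, and the prescribed formula determines $\phi$ on exactly those generators.

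\emph{Where the difficulty is.} There is no real obstacle here; the proof is essentially bookkeeping with the $\bZ/2$-grading. The only step that genuinely uses the hypothesis --- exactness of the sequence, i.e.\ that $x\in M$ maps to $0$ in $N$ --- is the inclusion $\widetilde\phi(M)\subseteq JN$, and this, together with the triviality $T_1\subseteq J$, is precisely what makes $JN/J^2N$, rather than $N$, the correct codomain.
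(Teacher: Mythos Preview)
Your construction produces a well-defined $T/J$-linear map $M/JM \to JN/J^2N$, but it does not satisfy the condition in the lemma. You have read the required formula as $\phi(\ol x)=\ol y_1$ for every homogeneous $x$, whereas the lemma demands $\phi(\ol x)=\ol y_{d+1}$ with the subscript taken in $\bZ/2$. For even $x$ these agree; for odd $x$ (degree $d=1$) the required value is $\ol y_0$, not $\ol y_1$. Your own exactness argument gives $\ol y_0+\ol y_1=0$ in $N$, so your map equals $-1$ times the required map on the odd part of $M/JM$---a genuine discrepancy, not something that disappears upon passing to $JN/J^2N$.

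The paper's proof avoids this by setting $\wt\phi(x)=y_{d+1}$ directly for $x\in M_d$; since $y_{d+1}\in T_1\otimes V_{d+1}\subset J\otimes V$, the map lands in $J\otimes V$ automatically, with no appeal to exactness. The price is that this $\wt\phi$ is only $T_0$-linear (the chosen component depends on the parity of $x$), so one must verify by hand that $\wt\phi(JM)\subset JM+J^2\otimes V$ before descending. Your projection-to-$V_1$ approach is slicker in that $T$-linearity makes descent automatic, and it is easily salvaged: postcompose with the automorphism of $JN/J^2N$ that is $+1$ on the even part and $-1$ on the odd part (this is $T/J$-linear since $T/J$ is purely even), or simply note that your map differs from the specified one by exactly this sign and that existence and uniqueness transfer.
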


\begin{proof}
Define $\wt{\phi} \colon M \to J \otimes V$ as follows: given $x \in M_d$, write $x=y_0+y_1$ with $y_i \in T_{i+d} \otimes V_i$, and put $\wt{\phi}(x)=y_{d+1}$. Note that $y_{d+1} \in T_1 \otimes V_{d+1}$ does indeed belong to $J \otimes V$. One easily sees that $\wt{\phi}$ is a map of $T_0$-modules. Now, suppose $x=y_0+y_1$ is as above and $a \in T_1$. Then $ax=ay_0+ay_1$ has degree $d+1$ and so $\wt{\phi}(ax)=ay_d=ax-ay_{d+1}$ belongs to $JM+J^2 \otimes V$. It follows that $\wt{\phi}(JM) \subset JM+J^2 \otimes V$, and so $\wt{\phi}$ induces a $T_0$-linear map $\phi \colon M/JM \to JN/J^2N$. Since $T_0 \to T/J$ is surjective, it follows that $\phi$ is $T/J$-linear as well. It is clear that $\phi$ is the unique map satisfying the stated condition.
\end{proof}

\begin{proposition} \label{prop:grOX}
We have a natural isomorphism
\begin{displaymath}
\iota \colon (\cQ_1^* \otimes \cR_0) \oplus (\cQ_0^* \otimes \cR_1) \to \cJ/\cJ^2
\end{displaymath}
of coherent $\cO_{X_{\ord}}$-modules.
\end{proposition}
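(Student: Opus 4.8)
The plan is to construct $\iota$ from the canonical comparison map of Lemma~\ref{lem:map-phi} and then check it is an isomorphism by reduction to a standard affine chart of the super Grassmannian. First I would apply Lemma~\ref{lem:map-phi} to the tautological sequence $0 \to \cR \to \cO_X \otimes V \to \cQ \to 0$: over each affine open $\Spec(T) \subseteq X$ it yields a $T/J$-linear map $\cR/\cJ\cR \to \cJ\cQ/\cJ^2\cQ$, and since the map produced by the lemma is characterized by a condition intrinsic to the sequence, these local maps glue to a map of $\cO_{X_{\ord}}$-modules $\phi \colon \cR_{\ord} \to \cJ\cQ/\cJ^2\cQ$. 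Because $\cQ$ is locally free over $\cO_X$ we have $\cJ\cQ = \cJ \otimes_{\cO_X}\cQ$, hence a canonical identification $\cJ\cQ/\cJ^2\cQ = (\cJ/\cJ^2)\otimes_{\cO_{X_{\ord}}}\cQ_{\ord}$, and I regard $\phi$ as a map $\cR_{\ord}\to(\cJ/\cJ^2)\otimes_{\cO_{X_{\ord}}}\cQ_{\ord}$.

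Next comes the parity analysis, using $\cR_{\ord}=\cR_0\oplus\cR_1$ and $\cQ_{\ord}=\cQ_0\oplus\cQ_1$. A local section of $\cR$ reducing to a section of $\cR_0$ may be chosen of degree $0$; by the description of $\phi$ in Lemma~\ref{lem:map-phi}, its image is the class of its degree-$1$ component, which lies in $T_1\otimes V_1$ and therefore lands in the summand $(\cJ/\cJ^2)\otimes\cQ_1$. Likewise a section of $\cR$ reducing to a section of $\cR_1$ may be chosen odd, and then $\phi$ of it is the class of its $V_0$-component, which lies in $T_1\otimes V_0$ and lands in $(\cJ/\cJ^2)\otimes\cQ_0$. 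Thus $\phi(\cR_0)\subseteq(\cJ/\cJ^2)\otimes\cQ_1$ and $\phi(\cR_1)\subseteq(\cJ/\cJ^2)\otimes\cQ_0$, so $\phi$ is equivalent to the datum of a map
\[
\iota \colon (\cR_0\otimes\cQ_1^*)\oplus(\cR_1\otimes\cQ_0^*)\longrightarrow\cJ/\cJ^2
\]
of coherent $\cO_{X_{\ord}}$-modules, which I take as the definition of $\iota$; naturality is inherited from Lemma~\ref{lem:map-phi}.

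It remains to prove $\iota$ is an isomorphism. Both sides are locally free, and the source has rank $r(m-s)+s(n-r)=d_1$, the rank of $\cJ/\cJ^2$ by the dimension formula for $X$; so it suffices to prove surjectivity, which can be done on a cover. Near a $\bC$-point $(R_0,R_1)$, choose complements $V_0=R_0\oplus Q_0$ and $V_1=R_1\oplus Q_1$; the associated chart of $X$ has coordinate superalgebra generated over $\cO_{X_{\ord}}$ by the matrix entries of the universal super-map $R_0\oplus R_1\to Q_0\oplus Q_1$, the odd entries spanning $(R_0\otimes Q_1^*)\oplus(R_1\otimes Q_0^*)$, and $\cR$ is the graph of this map. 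Over the chart $\cJ$ is generated by the entries of the universal odd maps $\psi\colon R_0\to Q_1$ and $\psi'\colon R_1\to Q_0$, and a direct frame computation shows $\phi$ carries the tautological frame of $\cR_0$ (resp.\ $\cR_1$) to $\psi$ (resp.\ $\psi'$) viewed inside $(\cJ/\cJ^2)\otimes\cQ_1$ (resp.\ $(\cJ/\cJ^2)\otimes\cQ_0$). Hence over this chart $\iota$ matches a frame of $(\cR_0\otimes\cQ_1^*)\oplus(\cR_1\otimes\cQ_0^*)$ with the standard generators of $\cJ/\cJ^2$, so it is an isomorphism there; these charts cover $X_{\ord}$.

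I expect the main obstacle to be this last step: setting up the standard chart of the super Grassmannian precisely enough to run the frame computation, and in particular keeping straight the parities and which of $\cQ_0,\cQ_1$ is paired with which of $\cR_0,\cR_1$. Everything before it is a formal consequence of Lemma~\ref{lem:map-phi} and the local freeness of $\cQ$.
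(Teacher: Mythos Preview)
Your construction of $\iota$ via Lemma~\ref{lem:map-phi} and the parity analysis match the paper exactly. Where you diverge is in proving $\iota$ is an isomorphism: you reduce to a standard affine chart of $\Gr_{r|s}(V)$ and do a direct frame computation showing that $\iota$ sends the tautological generators of $\cR_0\otimes\cQ_1^*$ and $\cR_1\otimes\cQ_0^*$ to the odd coordinate generators of $\cJ/\cJ^2$. The paper instead argues globally: it observes that $\iota$ is $\bG_0$-equivariant and that $(\cQ_1^*\otimes\cR_0)$ and $(\cQ_0^*\otimes\cR_1)$ are non-isomorphic irreducible homogeneous bundles, so injectivity reduces to showing $\iota$ is nonzero on each summand; this is checked by producing a single $T$-point with $T=\bC[\epsilon]/(\epsilon^2)$ where both components of $\phi$ are visibly nonzero. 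Surjectivity then follows by comparing ranks.

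Both arguments are complete. Your approach is more elementary---it avoids invoking the irreducibility of homogeneous bundles and the transitive $\bG_0$-action---at the cost of actually writing down the chart (the description you sketch is the one in \cite[Ch.~4, \S 3]{manin}, and the frame computation goes through as you say). The paper's approach trades that explicit local work for a one-line example, but leans on equivariance machinery. Your anticipated difficulty with parities is not really an issue: once you set up the chart with the universal $\Hom(R_0\oplus R_1,Q_0\oplus Q_1)$-valued matrix, the lemma's recipe $\phi(\ol{x})=\ol{y}_{d+1}$ unambiguously picks out the off-diagonal blocks.
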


\begin{proof}
  Since $\cR/\cJ \cR \cong \cR_0 \oplus \cR_1$ and $\cQ/\cJ \cQ \cong \cQ_0 \oplus \cQ_1$, Lemma~\ref{lem:map-phi} gives a canonical map
\begin{displaymath}
\cR_0 \oplus \cR_1 \to (\cQ_0 \oplus \cQ_1) \otimes_{\cO_X/\cJ} \cJ/\cJ^2
\end{displaymath}
where $\cR_0$ maps to $\cQ_1$ and $\cR_1$ maps to $\cQ_0$. We can thus convert it into a map
\begin{displaymath}
\Phi \colon (\cQ_1^* \otimes \cR_0) \oplus (\cQ_0^* \otimes \cR_1) \to \cJ/\cJ^2.
\end{displaymath}
We claim that $\Phi$ is an isomorphism.

First, we show that $\Phi$ is injective. Note that $\Phi$ is $\bG_0$-equivariant, and that $\cQ_1^* \otimes \cR_0$ and $\cQ_0^* \otimes \cR_1$ are both irreducible homogeneous bundles which are not isomorphic to each other. So it suffices to show that $\Phi$ is not identically 0 on either summand. Using the universal property of $\Gr_{r|s}(V)$, it suffices to give a single superalgebra $T$ together with a rank $r|s$ summand $M$ of $T \otimes V$ such that the pullback of $\Phi$ to each summand is non-zero.

This can be done with $T = \bC[\epsilon] / (\epsilon^2)$ where $\epsilon$ has degree~1. Pick bases $e_1,\dots,e_n$ for $V_0$ and $f_1,\dots,f_m$ for $V_1$. Then we let $M$ be the free $T$-submodule of $T \otimes V$ with basis elements $v_i = 1 \otimes e_i + \epsilon \otimes f_1$ for $i=1,\dots,r$ and $w_j = \epsilon \otimes e_1 + 1\otimes f_j$ for $j=1,\dots,s$. Then $\phi(v_i) = \epsilon f_1$ and $\phi(w_j) = \epsilon e_1$ so that both components are indeed non-zero.

Surjectivity now follows since both sides are equivariant vector bundles of the same dimension. (Note that the rank of $\cJ/\cJ^2$ is the odd part of the dimension of $X$.)
\end{proof}

From the above, we see that we are in the setting of Theorem~\ref{thm:super-gm}. Indeed, $X$ is a smooth supervariety and $X_{\ord}$ is projective, and we have a short exact sequence
\begin{displaymath}
0 \to \cJ/\cJ^2 \to \epsilon \to \eta \to 0
\end{displaymath}
where $\epsilon=W^* \otimes \cO_{X_{\ord}}$, with $W$ as in \S \ref{s:gs}. We remind that we are assuming that $r \ge s$. From this sequence, one easily sees that $\Spec(\Sym(\eta))$ is identified with the variety $Y$. Thus the varieties $Z$ and $\tilde{Z}$ in appearing in \S \ref{ss:gm} and Theorem~\ref{thm:super-gm} match those studied in \S \ref{s:gs} in our setting. We thus find:

\begin{proposition} \label{prop:grass-ss}
There is a natural isomorphism
\begin{displaymath}
\rH^q(X, \gr^{p+q}(\cO_X)) = \Tor_p^S(\cO_{\tilde{Z}}, \bC)_{p+q}
\end{displaymath}
and spectral sequence
\begin{displaymath}
\rE^{p,q}_1 = \Tor^S_{-q}(\cO_{\tilde{Z}}, \bC)_p \implies \rH^{p+q}(X, \cO_X).
\end{displaymath}
\end{proposition}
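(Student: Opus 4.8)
The plan is to deduce this directly from Theorem~\ref{thm:super-gm}, so that the only genuine content of the proof is the verification of that theorem's single hypothesis together with the identification of the relevant objects. First I would record that, since $X$ is a smooth super variety (as explained in \S\ref{ss:supergrass}), the very definition of smoothness gives $\gr^{p+q}(\cO_X) = \lw^{p+q}(\cJ/\cJ^2)$, so the left-hand sides of the two claimed formulas are exactly the quantities that appear in Theorem~\ref{thm:super-gm}. Next I would invoke the discussion immediately preceding the proposition: building on Proposition~\ref{prop:grOX}, it produces a short exact sequence $0 \to \cJ/\cJ^2 \to \epsilon \to \eta \to 0$ of locally free sheaves on $X_{\ord}$ with $\epsilon = W^* \otimes \cO_{X_{\ord}}$ globally free and $W = W_0 \times W_1$ the space of \S\ref{s:gs}, and it identifies $\Spec(\Sym(\eta))$ with the variety $Y$ of \S\ref{ss:detvar}, hence its affinization with $\tilde{Z}$ and the image $Z \subset W$ with the one studied there. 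Thus we are literally in the setting of Theorem~\ref{thm:super-gm}, with $S = \Sym(W^*)$ as stated.

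The hypothesis to check before applying the theorem is that $\rH^i(Y, \cO_Y) = 0$ for $i > 0$, and this is exactly where the work of \S\ref{s:gs} enters. By Theorem~\ref{thm:detvar}(d), the map $\pi \colon Y \to \tilde{Z}$ satisfies $\pi_*(\cO_Y) = \cO_{\tilde{Z}}$ and $\rR^i\pi_*(\cO_Y) = 0$ for $i > 0$, i.e.\ $\rR\pi_*(\cO_Y) = \cO_{\tilde{Z}}$ in the derived category. Since $\tilde{Z}$ is affine, it follows that $\rR\Gamma(Y, \cO_Y) = \rR\Gamma(\tilde{Z}, \rR\pi_*\cO_Y) = \rR\Gamma(\tilde{Z}, \cO_{\tilde{Z}}) = \cO_{\tilde{Z}}$, and in particular $\rH^i(Y, \cO_Y) = 0$ for $i > 0$, as needed.

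With the hypothesis verified, Theorem~\ref{thm:super-gm} yields precisely the natural isomorphism $\rH^q(X, \lw^{p+q}(\cJ/\cJ^2)) = \Tor_p^S(\cO_{\tilde{Z}}, \bC)_{p+q}$ and the $E_1$ spectral sequence $\rE^{p,q}_1 = \Tor^S_{-q}(\cO_{\tilde{Z}}, \bC)_p \implies \rH^{p+q}(X, \cO_X)$; rewriting $\lw^{p+q}(\cJ/\cJ^2)$ as $\gr^{p+q}(\cO_X)$ via smoothness gives the stated form. I expect the only potential subtlety in writing this out carefully to be confirming, once more, that the $Y$, $Z$, $\tilde{Z}$ and the grading coming from the super-geometric construction of \S\ref{ss:gm} genuinely coincide with those of \S\ref{s:gs} (so that the vanishing from Theorem~\ref{thm:detvar}(d) may legitimately be used); but this matching was already set up in the preceding paragraphs and in the grading remark of \S\ref{ss:detvar}. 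The real difficulty lies entirely upstream, in establishing Theorem~\ref{thm:detvar}(d) — and hence the cohomology vanishing — which rests on the full analysis of the Grothendieck--Springer geometry (integrality, normality, and rational singularities of $\tilde{Z}$) carried out in \S\ref{s:gs}. Given that input, the present proposition is pure assembly.
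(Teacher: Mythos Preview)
Your proof is correct and follows exactly the paper's approach: apply Theorem~\ref{thm:super-gm} after identifying the objects via the discussion preceding the proposition, with the sole hypothesis $\rH^i(Y,\cO_Y)=0$ for $i>0$ supplied by Theorem~\ref{thm:detvar}(d). Your extra line explaining how $\rR\pi_*(\cO_Y)=\cO_{\tilde{Z}}$ and affineness of $\tilde{Z}$ give the vanishing is a welcome elaboration of what the paper leaves implicit.
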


\begin{proof}
This follows from Theorem~\ref{thm:super-gm}. The key hypothesis, that the higher cohomology of $\cO_Y$ vanishes, is provided by Theorem~\ref{thm:detvar}(d).
\end{proof}

Recall the super Euler characteristic $\chi$ defined in \S\ref{ss:super-geom}.

\begin{proposition}
  Assume that $r \ge s$. We have
  \[
    \chi(\Gr_{r|s}(\bC^{n|m})) = \begin{cases} \binom{m}{s} & \text{if $n-m \ge r-s$}\\ \binom{n}{s} & \text{if $r=s$ and $m > n$}\\ 0 & \text{otherwise} \end{cases}.
  \]
\end{proposition}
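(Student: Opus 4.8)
The plan is to apply Corollary~\ref{cor:euler}. By the discussion in \S\ref{ss:supergrass} we are in the setting of Theorem~\ref{thm:super-gm}, with $W=W_0\times W_1$, $Z=Z_0\times W_1$, and $\tilde{Z}$ as in \S\ref{s:gs}; the required vanishing $\rH^i(Y,\cO_Y)=0$ for $i>0$ is provided by Theorem~\ref{thm:detvar}(d) (cf.\ Proposition~\ref{prop:grass-ss}). Thus $\chi(X)$ equals the degree of $\tilde{Z}\to W$ over the generic point of $W$, and everything reduces to computing this degree.

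First I would dispose of the case in which $Z$ is a proper subvariety of $W$: then $Y\to W$ is not surjective, so $\chi(X)=0$ by the remark following Corollary~\ref{cor:euler} (equivalently, $\cO_{\tilde{Z}}$ is a torsion $S$-module in this case, so it vanishes after inverting all of $S\setminus\{0\}$). When $Z=W$, the map $\tilde{Z}\to Z=W$ is finite flat with $\cO_{\tilde{Z}}$ free of rank $\binom{m-\delta}{s}$ over $\cO_Z=S$ by Theorem~\ref{thm:detvar}(b), so its generic degree is exactly $\binom{m-\delta}{s}$. Hence the whole computation reduces to deciding when $Z=W$, and to evaluating $\binom{m-\delta}{s}$ in that case.

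Now $Z=Z_0\times W_1=W$ if and only if $Z_0=W_0$, i.e.\ if and only if the rank bound ``$\le m-\delta$'' imposed on maps $V_0\to V_1$ is vacuous. Since such a map can have rank up to $\min(n,m)$ and $m-\delta\le\min(n,m)$ always holds, this happens precisely when $m-\delta=\min(n,m)$. I would then run through the three cases. If $n-m\ge r-s$ then $\delta=0$ and $n\ge m$, so $m-\delta=m=\min(n,m)$: here $Z=W$ and $\chi(X)=\binom{m}{s}$. If $r=s$ and $m>n$ then $\delta=m-n>0$ and $m-\delta=n=\min(n,m)$: again $Z=W$, and $\chi(X)=\binom{m-\delta}{s}=\binom{n}{s}$. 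In the remaining case---$\delta>0$ with $r>s$ (note that $\delta>0$ together with $r=s$ would force $m>n$, which is the previous case, so this is genuinely the complementary situation)---one has $m-\delta=n-r+s<n$ and also $m-\delta<m$ (the latter being equivalent to $\delta>0$), hence $m-\delta<\min(n,m)$, $Z_0\subsetneq W_0$, and $\chi(X)=0$.

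All the real work---the geometry of $\tilde{Z}$, its rational singularities, the freeness of $\cO_{\tilde{Z}}$ over $\cO_Z$, and the vanishing of higher cohomology of $\cO_Y$---is already packaged in Theorem~\ref{thm:detvar}, so this argument is essentially bookkeeping. The only point that needs a little care is verifying that the trichotomy in the statement matches ``$Z=W$ with $m-\delta=m$'' / ``$Z=W$ with $m-\delta=n$'' / ``$Z\subsetneq W$'', which is exactly the elementary case analysis above; I do not anticipate a genuine obstacle.
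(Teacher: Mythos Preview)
Your proposal is correct and follows exactly the approach the paper takes: the paper's proof is the single sentence ``This follows immediately from Corollary~\ref{cor:euler} and Theorem~\ref{thm:detvar},'' and you have simply unpacked that sentence, including the elementary case analysis of when $Z_0=W_0$ (equivalently $m-\delta=\min(n,m)$) needed to match the trichotomy in the statement. There is nothing to add.
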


\begin{proof}
  This follows immediately from Corollary~\ref{cor:euler} and Theorem~\ref{thm:detvar}.
\end{proof}

\subsection{The Lascoux resolution} \label{ss:lascoux}

To make use of Proposition~\ref{prop:grass-ss}, we need to know something about the minimal free resolution of $\cO_{\tilde{Z}}$: specifically, we need to know its terms, i.e., the Tor groups (the differentials will not concern us). Since $\cO_{\tilde{Z}}$ is a free $\cO_Z$-module (Theorem~\ref{thm:detvar}(b)), its Tor groups are direct sums of the Tor groups of $\cO_Z$. Lascoux determined the Tor groups of $\cO_Z$, and we now review his results.

Let $a$ and $b$ be non-negative integers. Given (integer) partitions $\alpha$ and $\beta$ with $\ell(\alpha) \le b$ and $\beta_1 \le b$, define the partitions
\begin{align*}
P_{a,b}(\alpha, \beta) &= (b + \alpha_1, \dots, b + \alpha_b, b^a, \beta_1, \dots, \beta_{\ell(\beta)}),\\
Q_{a,b}(\alpha, \beta) &= (b + \beta^\dagger_1, \dots, b + \beta^\dagger_b, b^a, \alpha^\dagger_1, \dots, \alpha^\dagger_{\alpha_1}),
\end{align*}
which we visualize in terms of Young diagrams as follows:
\begin{displaymath}
  \begin{tikzpicture}[scale=.4]
    \path (-3,3.5)  node {$P_{a,b}(\alpha,\beta) = $};
    \draw (0,0) -- (0,7) -- (8,7) -- (8,6) -- (6,6) -- (6,5) -- (5,5) -- (5,4) -- (3,4) -- (3,2) -- (2,2) -- (2,0) -- (0,0) -- cycle;
\draw (0,3) -- (3,3);
\draw (0,4) -- (3,4) -- (3,7);
\path (1.5,5.5) node {$b \times b$};
\path (1.5,3.5) node {$a \times b$};
\path (4.5,5.5) node {$\alpha$};
\path (1,1.5) node {$\beta$};
\path (12,3.5) node {$Q_{a,b}(\alpha,\beta) = $};
\draw (15,-2) -- (15,7) -- (21,7) -- (21,5) -- (19,5) -- (19,4) -- (18,4) -- (18,1) -- (17,1) -- (17,0) -- (16,0) -- (16,-2) -- cycle;
\draw (15,4) -- (18,4) -- (18,7);
\draw (15,3) -- (18,3);
\path (16.5,5.5) node {$b \times b$};
\path (16.5,3.5) node {$a \times b$};
\path (19.5,6) node {$\beta^\dagger$};
\path (16.5,1.5) node {$\alpha^\dagger$};
\end{tikzpicture}
\end{displaymath}
For a partition $\lambda$, we let $\bS_{\lambda}$ denote the corresponding Schur functor. With this notation, we can state Lascoux's result:

\begin{theorem}[Lascoux] \label{thm:lascoux}
Put $a=m-\delta$. If $q=ab$ for some non-negative integer $b$ then
\begin{displaymath}
\Tor_p^S(\cO_Z, \bC)_{p+q} = \bigoplus_{\substack{\alpha, \beta\\ \ell(\alpha) \le b,\ \beta_1 \le b\\ p = b^2 + |\alpha| + |\beta|}} \bS_{P_{a,b}(\alpha, \beta)}(V_0) \otimes \bS_{Q_{a,b}(\alpha, \beta)}(V_1^*)
\end{displaymath}
as representations of $\bG_0$. If $q$ is not divisible by $b$ then $\Tor^{\cO_W}_p(\cO_Z, \bC)_{p+q}=0$.
\end{theorem}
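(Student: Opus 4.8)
This is Lascoux's theorem \cite{lascoux}; the decomposition stated above is exactly what the geometric method of Kempf--Lascoux--Weyman produces for the generic determinantal variety, and a derivation in that language can be found in \cite[\S 6.1]{weyman}. For completeness I sketch the argument. The plan is: (i) reduce to the honest determinantal variety $Z_0$; (ii) resolve $Z_0$ by a Kempf collapsing over a Grassmannian; (iii) read off the Tor groups from the cohomology of an exterior power using Cauchy's formula and Bott's theorem. For step (i): since $Z=Z_0\times W_1$ and $S=S_0\otimes_\bC S_1$ with $S_0=\Sym(V_0\otimes V_1^*)$ and $S_1=\Sym(W_1^*)$ a polynomial ring over which $\cO_Z=\cO_{Z_0}\otimes_\bC S_1$, the K\"unneth formula for $\Tor$ over a tensor product of rings gives $\Tor^S_p(\cO_Z,\bC)_{p+q}=\Tor^{S_0}_p(\cO_{Z_0},\bC)_{p+q}$ as graded $\bG_0$-representations. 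So it suffices to resolve $\cO_{Z_0}$, where $Z_0\subseteq W_0=\Hom(V_0,V_1)$ consists of the maps of rank $\le a:=m-\delta$. If $\delta=0$ then $Z_0=W_0$ and the resolution is trivial, matching the formula with $b=0$ and $\alpha=\beta=\emptyset$; so I may assume $a<m$.

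For step (ii): let $G=\Gr_a(V_1)$ with tautological sequence $0\to\cR\to V_1\otimes\cO_G\to\cQ\to 0$, with $\cR$ of rank $a$ and $\cQ$ of rank $\delta$. A map of rank $\le a$ factors through some $a$-dimensional subspace of $V_1$, so $Y=\{(\phi,R):\im\phi\subseteq R\}\subseteq W_0\times G$ is the total space of the bundle with fiber $\Hom(V_0,\cR)$; in the notation of \S\ref{ss:gm} it is cut out by $0\to V_0\otimes\cQ^*\to V_0\otimes V_1^*\otimes\cO_G\to V_0\otimes\cR^*\to 0$, so $\xi=V_0\otimes\cQ^*$ and $\eta=V_0\otimes\cR^*$. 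A direct computation (equivalently, the second fundamental theorem of invariant theory for $Z_0$) gives $\Gamma(Y,\cO_Y)=\bigoplus_{\ell(\lambda)\le a}\bS_\lambda(V_0)\otimes\bS_\lambda(V_1^*)=\cO_{Z_0}$, so the affinization of $Y$ is $Z_0$ and $Y\to Z_0$ is a resolution of singularities (proper, and an isomorphism over the dense locus of rank-$a$ maps). Since $Z_0$ has rational singularities \cite[Proposition 6.2.3]{weyman} and is affine, $\rH^{>0}(Y,\cO_Y)=0$, and Proposition~\ref{prop:gm1} yields
\begin{displaymath}
\Tor^{S_0}_p(\cO_{Z_0},\bC)_{p+q}=\rH^q\big(G,\lw^{p+q}(V_0\otimes\cQ^*)\big).
\end{displaymath}

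For step (iii): Cauchy's formula gives $\lw^N(V_0\otimes\cQ^*)=\bigoplus_{\mu}\bS_\mu(V_0)\otimes\bS_{\mu^\dagger}(\cQ^*)$ over partitions $\mu$ with $\ell(\mu)\le n$, $\mu_1\le\delta$ and $|\mu|=N$; the factor $\bS_\mu(V_0)$ is a trivial bundle on $G$, so one is reduced to computing $\rH^\bullet(G,\bS_{\mu^\dagger}(\cQ^*))$ by Bott's theorem on $\Gr_a(V_1)$. Running this calculus shows that $\bS_{\mu^\dagger}(\cQ^*)$ has nonvanishing cohomology exactly when $\mu$ has the fat-hook shape $P_{a,b}(\alpha,\beta)$ for some $b\ge 0$ and partitions $\alpha,\beta$ with $\ell(\alpha)\le b$ and $\beta_1\le b$, in which case the cohomology is concentrated in degree $q=ab$ and equals $\bS_{Q_{a,b}(\alpha,\beta)}(V_1^*)$; since $|P_{a,b}(\alpha,\beta)|=b^2+ab+|\alpha|+|\beta|$ and $N=p+q=p+ab$, this forces $p=b^2+|\alpha|+|\beta|$. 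Summing over all $\mu$ gives the claimed decomposition, and the vanishing when $q$ is not divisible by $a$.

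The hard part is the Bott computation in step (iii). One pads $\mu^\dagger$ to length $\delta$, forms the weight $(0^a;\,-\mu^\dagger_\delta,\dots,-\mu^\dagger_1)$, adds $\rho$, and must decide when the result is regular and how its reordering and inversion count depend on $\mu$. The resulting sequence consists of two strictly decreasing blocks --- one from the $\cR$-directions (the run $m-1,m-2,\dots,m-a$) and one from $\mu^\dagger$ shifted by $(\delta-1,\dots,1,0)$ --- and it is the way these two blocks interleave that produces the parameters $b,\alpha,\beta$, the degree $q=ab$, and the transpose-and-reverse relation between the surviving dominant weight and $Q_{a,b}(\alpha,\beta)$. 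Untangling this interleaving is precisely Lascoux's original combinatorial analysis; everything else (the K\"unneth reduction, the identification of the collapsing data, the vanishing $\rH^{>0}(Y,\cO_Y)=0$, and the Cauchy step) is routine.
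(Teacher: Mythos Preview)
Your proposal is correct and follows the same path as the paper: the paper's proof is simply a citation to \cite[Proposition~6.1.3]{weyman} for the Tor groups of $\cO_{Z_0}$, together with the observation that since $Z=Z_0\times W_1$ the minimal free resolution of $\cO_Z$ over $\cO_W$ is obtained from that of $\cO_{Z_0}$ over $\cO_{W_0}$ by tensoring with $\cO_{W_1}$. Your step~(i) is exactly this reduction, and your steps~(ii)--(iii) are a faithful sketch of the geometric-method argument that sits behind the citation; nothing in your outline deviates from the standard derivation.
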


\begin{proof}
  See \cite[Proposition 6.1.3]{weyman} which contains the description of $\Tor_p^{\cO_{W_0}}(\cO_{Z_0},\bC)$. Note that since $Z=Z_0 \times W_1$, the minimal free resolution for $\cO_Z$ over $\cO_W$ is obtained from the minimal free resolution for $\cO_{Z_0}$ over $\cO_{W_0}$ by tensoring with $\cO_{W_1}$. In particular, the Tor groups agree, and this conversion preserves the grading.
\end{proof}

\begin{corollary} \label{cor:lascoux}
The $\bG_0$-representation $\bigoplus_{p \ge 0} \Tor_p^S(\cO_Z, \bC)$ is multiplicity-free.
\end{corollary}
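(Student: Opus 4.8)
The plan is to combine Lascoux's description (Theorem~\ref{thm:lascoux}) with a short combinatorial argument. Recall first that the irreducible representations of $\bG_0 = \GL(V_0) \times \GL(V_1)$ are precisely the external tensor products $\bS_{\lambda}(V_0) \otimes \bS_{\mu}(V_1^*)$ with $\ell(\lambda) \le n$ and $\ell(\mu) \le m$; distinct pairs $(\lambda,\mu)$ subject to these length bounds give non-isomorphic irreducibles, and $\bS_{\lambda}(V_0)$ vanishes as soon as $\ell(\lambda) > n$ (similarly for $V_1^*$). By Theorem~\ref{thm:lascoux}, the $\bG_0$-representation $\bigoplus_{p \ge 0}\Tor_p^S(\cO_Z,\bC)$ is the direct sum, over all integers $b \ge 0$ and all pairs of partitions $(\alpha,\beta)$ with $\ell(\alpha) \le b$ and $\beta_1 \le b$, of the summands $\bS_{P_{a,b}(\alpha,\beta)}(V_0) \otimes \bS_{Q_{a,b}(\alpha,\beta)}(V_1^*)$, each occurring exactly once, where $a = m - \delta$ is a fixed constant of the problem. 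Hence it suffices to show that the map $(b,\alpha,\beta) \mapsto P_{a,b}(\alpha,\beta)$ is injective for our fixed $a$: if two distinct index triples produced isomorphic nonzero summands, then already their $\GL(V_0)$-factors would agree, forcing the corresponding partitions $P$ to coincide.

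First I would recover $b$ from $\lambda := P_{a,b}(\alpha,\beta) = (b+\alpha_1,\dots,b+\alpha_b,\ b^a,\ \beta_1,\dots,\beta_{\ell(\beta)})$. The hypotheses $\ell(\alpha)\le b$ and $\beta_1 \le b$ guarantee that the three displayed blocks sit in weakly decreasing order, so $\lambda$ is a genuine partition whose $b$-th part satisfies $\lambda_b = b + \alpha_b \ge b$, while $\lambda_{b+1}$ equals $b$ (if $a \ge 1$) or $\beta_1 \le b$ (if $a = 0$), in either case strictly less than $b+1$. Therefore $b = \max\{i : \lambda_i \ge i\}$ is the size of the Durfee square of $\lambda$, and in particular is determined by $\lambda$. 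Knowing $b$ (and the fixed $a$), one reads off $\alpha_i = \lambda_i - b$ for $1 \le i \le b$, and then $\beta_j = \lambda_{b+a+j}$ for $j \ge 1$, since the $a$ parts in positions $b+1,\dots,b+a$ constitute exactly the block $b^a$. Thus $(b,\alpha,\beta)$ is recovered from $\lambda$, which is the desired injectivity, and the corollary follows.

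I do not expect any real obstacle here: the entire content is the identification of $b$ with the Durfee square size of $P_{a,b}(\alpha,\beta)$, and the only point that requires attention is that $a = m-\delta$ is genuinely a fixed constant rather than one of the running indices, so that the reconstruction above is legitimate. The degenerate cases $a = 0$ and $b = 0$ are handled by the same argument without modification.
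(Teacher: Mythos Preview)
Your argument is correct and is exactly the combinatorial verification that the paper leaves implicit: the corollary is stated without proof immediately after Theorem~\ref{thm:lascoux}, and your Durfee-square observation that $b$ (and hence $\alpha,\beta$) is recoverable from $P_{a,b}(\alpha,\beta)$ is precisely what makes the multiplicity-freeness evident from the decomposition. One cosmetic point: your opening sentence slightly overstates things, since not every irreducible $\bG_0$-representation has the form $\bS_\lambda(V_0)\otimes\bS_\mu(V_1^*)$ for partitions $\lambda,\mu$, but this is harmless here because all summands in Theorem~\ref{thm:lascoux} do have that form and distinct such pairs give non-isomorphic irreducibles.
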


\subsection{Proof of Theorem~\ref{thm:grcoh}}

Put
\begin{displaymath}
\tilde{L}_k=\bigoplus_{p \ge 0} \Tor_p^S(\cO_{\tilde{Z}}, \bC)_{p+k}, \qquad
L_k = \bigoplus_{p \ge 0} \Tor_p^S(\cO_Z, \bC)_{p+k}.
\end{displaymath}
These vector spaces carry algebraic representations of $\bG_0$. In particular, they are semi-simple as $\bG_0$-representations, as are all algebraic representations.

\begin{proposition} \label{prop:typeA-syzygies}
  \addtocounter{equation}{-1}
  \begin{subequations}
    The graded vector space $\Tor^S_p(\cO_{\tilde{Z}}, \bC)$ is naturally a graded $A$-module, and the induced map
\begin{equation} \label{eq:tormap}
A \otimes_{\bC} \Tor_p^S(\cO_Z, \bC) \to \Tor_p^S(\cO_{\tilde{Z}}, \bC)
\end{equation}
is an isomorphism of graded $A$-modules.
\end{subequations}
\end{proposition}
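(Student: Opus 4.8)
The plan is to reduce everything to the flatness of $\cO_{\tilde{Z}}$ over $\cO_Z$ (Theorem~\ref{thm:detvar}(b)) and the triviality of the $\cO_Z$-action on $\Tor^S_*(\cO_Z, \bC)$. First I would set up the structures. Since $\cO_{\tilde{Z}}$ is a commutative $S$-algebra and $\bC = S/\fm$, where $\fm$ is the irrelevant ideal of $S$, the space $\Tor^S_*(\cO_{\tilde{Z}}, \bC)$ carries a natural graded-commutative algebra structure, bigraded by homological and internal degree; one obtains it as the homology of $\cO_{\tilde{Z}} \otimes_S K_\bullet$, where $K_\bullet$ is the Koszul DG-algebra resolving $\bC$ over $S$, and the construction is functorial in the first variable. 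Its part in homological degree $0$ is $\Tor^S_0(\cO_{\tilde{Z}}, \bC) = \cO_{\tilde{Z}} \otimes_S \bC$. Since the homogeneous ideal $I_Z$ is contained in $\fm$ we have $\cO_Z \otimes_S \bC = \bC$, so $\cO_{\tilde{Z}} \otimes_S \bC = \cO_{\tilde{Z}} \otimes_{\cO_Z} \bC$, which is $A$ by Theorem~\ref{thm:detvar}(c). Thus $\Tor^S_*(\cO_{\tilde{Z}}, \bC)$ is a graded $A$-algebra and each $\Tor^S_p(\cO_{\tilde{Z}}, \bC)$ a graded $A$-module; composing the algebra map $\Tor^S_*(\cO_Z, \bC) \to \Tor^S_*(\cO_{\tilde{Z}}, \bC)$ induced by $\cO_Z \to \cO_{\tilde{Z}}$ with multiplication by $A$ and restricting to homological degree $p$ gives \eqref{eq:tormap}.

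To see that \eqref{eq:tormap} is an isomorphism, resolve $\bC$ by free $S$-modules $P_\bullet$ and write $\cO_{\tilde{Z}} \otimes_S P_\bullet = \cO_{\tilde{Z}} \otimes_{\cO_Z} (\cO_Z \otimes_S P_\bullet)$; since $\cO_{\tilde{Z}}$ is flat over $\cO_Z$, pulling the functor $\cO_{\tilde{Z}} \otimes_{\cO_Z} (-)$ through homology gives a natural isomorphism $\Tor^S_*(\cO_{\tilde{Z}}, \bC) \cong \cO_{\tilde{Z}} \otimes_{\cO_Z} \Tor^S_*(\cO_Z, \bC)$. Because $\Tor^S_*(\cO_Z, \bC)$ is a module over $\Tor^S_0(\cO_Z, \bC) = \bC$, the $\cO_Z$-action on $\Tor^S_p(\cO_Z, \bC)$ factors through $\cO_Z \to \bC$, whence
\[ \cO_{\tilde{Z}} \otimes_{\cO_Z} \Tor^S_p(\cO_Z, \bC) = (\cO_{\tilde{Z}} \otimes_{\cO_Z} \bC) \otimes_\bC \Tor^S_p(\cO_Z, \bC) = A \otimes_\bC \Tor^S_p(\cO_Z, \bC). \]
A chain-level inspection of the DG-algebra $\cO_{\tilde{Z}} \otimes_S K_\bullet = \cO_{\tilde{Z}} \otimes_{\cO_Z} (\cO_Z \otimes_S K_\bullet)$ then shows that, under this composite identification, the $A$-module structure is multiplication on the $A$-factor while the map $\Tor^S_*(\cO_Z, \bC) \to \Tor^S_*(\cO_{\tilde{Z}}, \bC)$ becomes $x \mapsto 1_A \otimes x$ (the subalgebra $\cO_{\tilde{Z}} \otimes 1$ acts by multiplication on the left tensor factor, and the chain map inducing \eqref{eq:tormap} is $\omega \mapsto 1 \otimes_{\cO_Z} \omega$). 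Hence \eqref{eq:tormap} becomes the identity of $A \otimes_\bC \Tor^S_p(\cO_Z, \bC)$, and since it respects internal degrees it is an isomorphism of graded $A$-modules.

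I expect the main obstacle to be this last verification: that the intrinsic $A$-module structure on $\Tor^S_*(\cO_{\tilde{Z}}, \bC)$ coming from its algebra structure agrees with the one visible after the flat base change — the compatibility of the base-change isomorphism with the multiplicative structures, which is the analogue for the map $\cO_Z \to \cO_{\tilde{Z}}$ of Remark~\ref{rmk:Tor-alg}. Making this precise at the DG-algebra level while keeping track of the two gradings is the only real content; everything else is formal.
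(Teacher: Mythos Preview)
Your argument is correct and follows essentially the same approach as the paper: both proofs rest on Theorem~\ref{thm:detvar}(b,c), using freeness/flatness of $\cO_{\tilde{Z}}$ over $\cO_Z$ together with the identification $\cO_{\tilde{Z}}\otimes_{\cO_Z}\bC\cong A$ to reduce $\Tor^S_p(\cO_{\tilde{Z}},\bC)$ to $A\otimes_\bC \Tor^S_p(\cO_Z,\bC)$. The only cosmetic difference is that the paper lifts $A$ to a graded subspace $A'\subset\cO_{\tilde{Z}}$ and writes $\cO_{\tilde{Z}}\cong A'\otimes_\bC\cO_Z$ as $\cO_Z$-modules before applying $\Tor$, whereas you pull the flat functor $\cO_{\tilde{Z}}\otimes_{\cO_Z}(-)$ through homology and then collapse using that the $\cO_Z$-action on $\Tor^S_p(\cO_Z,\bC)$ factors through $\bC$; these are two phrasings of the same computation.
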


\begin{proof}
The space $\Tor^S_p(\cO_{\tilde{Z}}, \bC)$ is naturally a module over $\Tor^S_0(\cO_{\tilde{Z}}, \bC)=\cO_{\tilde{Z}} \otimes_{\cO_Z} \bC$, which we have seen (Theorem~\ref{thm:detvar}(c)) is isomorphic to $A$. Let $A' \subset \cO_{\tilde{Z}}$ be a homogeneous $\bC$-subspace such that the map $A' \to \cO_{\tilde{Z}} \otimes_{\cO_Z} \bC=A$ is an isomorphism. Then $A'$ is a minimal generating space for $\cO_{\tilde{Z}}$ as an $\cO_Z$-module. Since $\cO_{\tilde{Z}}$ is free as an $\cO_Z$-module (Theorem~\ref{thm:detvar}(b)), it follows that the natural map
\begin{displaymath}
A' \otimes_{\bC} \cO_Z \to \cO_{\tilde{Z}}
\end{displaymath}
is an isomorphism of $\cO_Z$-modules. We thus see that the induced map
\begin{displaymath}
A' \otimes_{\bC} \Tor^S_p(\cO_Z, \bC) \to \Tor^S_p(\cO_{\tilde{Z}}, \bC)
\end{displaymath}
is an isomorphism. This map is isomorphic (in the obvious manner) to \eqref{eq:tormap}, and so \eqref{eq:tormap} is an isomorphism of vector spaces. The map \eqref{eq:tormap} is a homomorphism of graded $A$-modules simply by its definition.
\end{proof}

\begin{proposition} 
The $\bG_0$-representations $\tilde{L}_k$ and $\tilde{L}_{k+1}$ have no simple factors in common.
\end{proposition}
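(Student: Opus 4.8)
The plan is to reduce the statement to the multiplicity-freeness of the syzygies of $\cO_Z$ recorded in Corollary~\ref{cor:lascoux}, after using Proposition~\ref{prop:typeA-syzygies} to express $\tilde{L}_k$ in terms of the $L_j$.

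First I would upgrade Proposition~\ref{prop:typeA-syzygies} to a $\bG_0$-equivariant statement. The isomorphism $A\otimes_{\bC}\cO_Z\xrightarrow{\ \sim\ }\cO_{\tilde{Z}}$ used in its proof can be taken $\bG_0$-equivariant: $\cO_{\tilde{Z}}$ is graded free over the connected graded ring $\cO_Z$ (Theorem~\ref{thm:detvar}(b)), and $A=\cO_{\tilde{Z}}\otimes_{\cO_Z}\bC$ carries the \emph{trivial} $\bG_0$-action, since the coefficients of $\ol{\chi}$ are $\bG_0$-invariant (the characteristic polynomial of $fg$ is conjugation-invariant), so the factorization-ring generators are $\bG_0$-fixed; by semisimplicity one can then choose a $\bG_0$-stable graded $\bC$-subspace of $\cO_{\tilde{Z}}$ mapping isomorphically onto $A$. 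Consequently $A\otimes_{\bC}\Tor^S_p(\cO_Z,\bC)\xrightarrow{\ \sim\ }\Tor^S_p(\cO_{\tilde{Z}},\bC)$ is $\bG_0$-equivariant, with $A$ a trivial $\bG_0$-module concentrated in even internal degrees (it is $\rH^*_{\sing}(\Gr_s(\bC^{m-\delta}),\bC)$ by Theorem~\ref{thm:detvar}(c), consistently with the convention giving the factorization-ring generators even degree). Taking the degree-$(p+k)$ piece and summing over $p$, it follows that the simple $\bG_0$-constituents of $\tilde{L}_k$ are precisely those of $\bigoplus_{\,j\le k,\ j\equiv k\ (2)}L_j$, and likewise those of $\tilde{L}_{k+1}$ are precisely those of $\bigoplus_{\,j\le k+1,\ j\equiv k+1\ (2)}L_j$.

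Next I would observe that $\bigoplus_q L_q=\bigoplus_p\Tor^S_p(\cO_Z,\bC)$ as $\bG_0$-representations (reindex the bigraded pieces $\Tor^S_p(\cO_Z,\bC)_d$ by $p$ and $q=d-p$). By Corollary~\ref{cor:lascoux} this representation is multiplicity-free, so $L_q$ and $L_{q'}$ have no common simple constituent whenever $q\ne q'$. Combining this with the previous paragraph finishes the proof: a simple constituent of $\tilde{L}_k$ lies in some $L_j$ with $j\equiv k\ (2)$, and a simple constituent of $\tilde{L}_{k+1}$ lies in some $L_{j'}$ with $j'\equiv k+1\ (2)$, so $j\ne j'$ and the two cannot coincide. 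The only real input here is Lascoux's computation, via Corollary~\ref{cor:lascoux}; I do not expect a genuine obstacle. The one place that needs care is the first step: making the isomorphism of Proposition~\ref{prop:typeA-syzygies} $\bG_0$-equivariant, and correctly matching the grading conventions (the factor of $2$ in the degrees of the factorization-ring generators), so that indeed only the $L_j$ with $j\equiv k\pmod 2$ contribute to $\tilde{L}_k$.
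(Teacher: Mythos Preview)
Your proof is correct and follows essentially the same route as the paper: use Proposition~\ref{prop:typeA-syzygies} to write $\tilde{L}_k=\bigoplus_i A_{k-i}\otimes L_i$, note that $A$ is concentrated in even degrees so only $L_i$ with $i\equiv k\pmod 2$ appear, and conclude by the multiplicity-freeness of $\bigoplus_i L_i$ from Corollary~\ref{cor:lascoux}. You are in fact more careful than the paper in justifying why the isomorphism of Proposition~\ref{prop:typeA-syzygies} is $\bG_0$-equivariant with $A$ a trivial $\bG_0$-module, a point the paper uses without comment.
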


\begin{proof}
Proposition~\ref{prop:typeA-syzygies} shows that $\tilde{L}_k=\bigoplus_{i \ge 0} A_{k-i} \otimes_{\bC} L_i$. Since $A$ is concentrated in even degrees, we see that $\tilde{L}_k$ is a sum of $L_i$'s with $i$ of the same parity as $k$. Since $\bigoplus_{i \ge 0} L_i$ is multiplicity-free as a $\bG_0$-representation (Corollary~\ref{cor:lascoux}), the claim follows.
\end{proof} 

\begin{proposition} \label{prop:SS-degen}
  The spectral sequence in Proposition~\ref{prop:grass-ss} degenerates at the first page.
\end{proposition}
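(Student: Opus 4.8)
The plan is to exploit $\bG_0$-equivariance of the whole spectral sequence together with the vanishing statement in the preceding proposition. First I would note that $\bG_0=\GL(V_0)\times\GL(V_1)$ acts on the super Grassmannian $X$, and this action preserves the underlying ordinary scheme $X_{\ord}$, the ideal sheaf $\cJ$, and hence the entire $\cJ$-adic filtration. Therefore the spectral sequence of Proposition~\ref{prop:ss} is a spectral sequence of $\bG_0$-representations; and since the identifications in Proposition~\ref{prop:gm1} and Theorem~\ref{thm:super-gm} are natural in all the data ($X_{\ord}$, $Y$, $\tilde{Z}$, $W$, all of which carry compatible $\bG_0$-actions), the induced isomorphism $\rE_1^{p,q}=\Tor^S_{-q}(\cO_{\tilde{Z}},\bC)_p$ of Proposition~\ref{prop:grass-ss} is one of $\bG_0$-representations. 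In particular every differential $d_r$ on every page is $\bG_0$-equivariant.

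Next I would track the total degree. The differential $d_r\colon \rE_r^{p,q}\to \rE_r^{p+r,q-r+1}$ raises $p+q$ by $1$. On the first page, the part in total degree $p+q=k$ is, after collecting the internal grading (set $j=-q\ge 0$, so that $p=k+j$),
\begin{displaymath}
\bigoplus_{j \ge 0}\Tor^S_j(\cO_{\tilde{Z}},\bC)_{k+j}=\tilde{L}_k .
\end{displaymath}
Thus $d_1$ carries a $\bG_0$-subquotient of $\tilde{L}_k$ into a $\bG_0$-subquotient of $\tilde{L}_{k+1}$, and similarly on every later page the total-degree-$k$ piece is a subquotient of $\tilde{L}_k$.

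The conclusion then follows by induction on the page. Algebraic representations of $\bG_0$ are semisimple, so every subquotient of $\tilde{L}_k$ is a sum of simple factors of $\tilde{L}_k$, and likewise for $\tilde{L}_{k+1}$; by the preceding proposition these two representations have no simple factor in common, so any $\bG_0$-equivariant map between such subquotients is zero. Hence $d_1=0$, which gives $\rE_2=\rE_1$; the total-degree-$k$ part of $\rE_2$ is again $\tilde{L}_k$, so the same argument yields $d_2=0$, and so on, giving $d_r=0$ for all $r\ge 1$. Therefore the spectral sequence degenerates at $\rE_1$.

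The only point that will require genuine care is the bookkeeping: checking that the $\bG_0$-equivariance asserted for Proposition~\ref{prop:grass-ss} is really inherited from the naturality of the geometric-method isomorphisms, and that the indexing does place $\tilde{L}_k$ and $\tilde{L}_{k+1}$ in consecutive total degrees, so that the hypothesis of the preceding proposition applies verbatim. The representation-theoretic input — semisimplicity of $\bG_0$-representations plus the no-common-factor statement, which in turn rests on Corollary~\ref{cor:lascoux} and the fact that $A$ is concentrated in even degrees — then does all the work.
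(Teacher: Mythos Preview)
Your argument is correct and matches the paper's own proof essentially verbatim: collect the $\rE_1$-page by total degree into $\tilde{L}_k$, observe that each later page is a $\bG_0$-subquotient, and use semisimplicity together with the preceding proposition to kill every differential. The only difference is expository---you spell out the equivariance and the induction on $r$ a bit more carefully than the paper does.
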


\begin{proof}
Let $E$ be the spectral sequence from Proposition~\ref{prop:grass-ss}, and put $E^k_r=\bigoplus_{p+q=k} E^{p,q}_r$, so that the differential is a map $E^k_r \to E^{k+1}_r$. We have $E^k_1=\tilde{L}_k$. Since $E^k_r$ is a subquotient of $E^k_1$, we see that $E^k_r$ is a semi-simple $\bG_0$-representation, and $E^k_r$ and $E^{k+1}_r$ have no irreducible factors in common. It follows that the differential $E^k_r \to E^{k+1}_r$ must vanish, as it is a map of $\bG_0$-representations. This completes the proof.
\end{proof}

\begin{corollary}
We have canonical $\bG_0$-equivariant isomorphisms
\begin{displaymath}
\rH^i(X, \cO_X)=\gr(\rH^i(X, \cO_X))=\tilde{L}_i
\end{displaymath}
\end{corollary}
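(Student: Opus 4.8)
The plan is to extract both isomorphisms directly from the now-degenerate spectral sequence of Proposition~\ref{prop:grass-ss}, the only subtle point being the word ``canonical'' attached to the first one.

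First I would record that the spectral sequence
\[
\rE_1^{p,q} = \Tor^S_{-q}(\cO_{\tilde{Z}}, \bC)_p \implies \rH^{p+q}(X, \cO_X)
\]
of Proposition~\ref{prop:grass-ss} is one of $\bG_0$-representations (it comes from the $\cJ$-adic filtration on $\cO_X$, which is $\bG_0$-stable, and all the identifications feeding into it are $\bG_0$-equivariant), and that by Proposition~\ref{prop:SS-degen} it degenerates at the first page. Hence the filtration it induces on $\rH^i(X,\cO_X)$ is a finite filtration by $\bG_0$-subrepresentations, with associated graded
\[
\gr \rH^i(X, \cO_X) \;=\; \bigoplus_{p+q=i} \rE_\infty^{p,q} \;=\; \bigoplus_{p+q=i} \rE_1^{p,q} \;=\; \bigoplus_{p \ge 0} \Tor^S_p(\cO_{\tilde{Z}}, \bC)_{p+i} \;=\; \tilde{L}_i ,
\]
where the second equality is Proposition~\ref{prop:SS-degen}, the third is the relabeling sending a pair $(p,q)$ with $p+q=i$ to the nonnegative integer $-q$ (which becomes the new $p$, and the old $p$ equals the new $p$ plus $i$), and the last is the definition of $\tilde{L}_i$. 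This proves the isomorphism $\gr \rH^i(X,\cO_X) = \tilde{L}_i$, and it is canonical since every identification above --- convergence of the spectral sequence and the isomorphism of Proposition~\ref{prop:grass-ss} --- is.

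It remains to split the filtration, i.e.\ to produce a canonical $\bG_0$-equivariant isomorphism $\rH^i(X,\cO_X) \cong \gr \rH^i(X,\cO_X)$. Since $\bG_0$ is reductive and $\rH^i(X,\cO_X)$ is a finite-dimensional algebraic $\bG_0$-representation ($X$ being proper), any filtration by subrepresentations splits, which already yields the isomorphism as $\bG_0$-representations. To upgrade this to a canonical splitting I would bring in the scaling action of $\bG_m$ on $X$ along the odd directions --- concretely, the torus action for which $\cO_X \cong \gr(\cO_X) = \lw(\cJ/\cJ^2)$ exhibits the super Grassmannian as a split super scheme, with $\cJ/\cJ^2$ of weight one. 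This action commutes with $\bG_0$, so it equips each $\rH^i(X,\cO_X)$ with an internal $\bZ$-grading by $\bG_0$-subrepresentations for which the spectral-sequence filtration is precisely the associated weight filtration; the weight decomposition is then the required canonical $\bG_0$-equivariant splitting.

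The main obstacle, I expect, is exactly this last point: making the $\bG_m$-action on $X$ precise in the functor-of-points language of \S\ref{ss:supergrass} and checking that it commutes with the $\bG_0$-action and acts with weight one on $\cJ/\cJ^2$ (equivalently, that the super Grassmannian is split in a $\bG_0$-equivariant way). Everything else --- the degeneration input and the index bookkeeping in the second display --- is routine. If one is content with a $\bG_0$-equivariant rather than strictly canonical isomorphism, even this obstacle evaporates: the corollary is then a purely formal consequence of the degeneration of Proposition~\ref{prop:SS-degen} together with the reductivity of $\bG_0$.
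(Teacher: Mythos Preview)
Your derivation of the second isomorphism $\gr(\rH^i(X,\cO_X)) = \tilde L_i$ is fine and matches the paper. The gap is in your argument for the canonical splitting $\rH^i(X,\cO_X) \cong \gr(\rH^i(X,\cO_X))$. You propose to use a $\bG_m$-action scaling the odd directions, i.e., an isomorphism $\cO_X \cong \lw(\cJ/\cJ^2)$ exhibiting $X$ as a split superscheme. But super Grassmannians are \emph{not} split in general (this goes back to Manin \cite{manin}; already $\Gr_{1|1}(\bC^{2|2})$ is non-split), so no such $\bG_m$-action is available. This is precisely the obstacle you flagged as ``the main obstacle,'' and it is a genuine one: the approach does not go through.

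The paper obtains the canonical splitting by a different route, using the explicit description of the $\rE_1$-page coming from Lascoux's theorem. By Corollary~\ref{cor:lascoux} the representation $\bigoplus_{p} \Tor_p^S(\cO_Z,\bC)$ is multiplicity-free as a $\bG_0$-representation, and by Proposition~\ref{prop:typeA-syzygies} we have $\Tor_p^S(\cO_{\tilde Z},\bC) \cong A \otimes \Tor_p^S(\cO_Z,\bC)$ with $\bG_0$ acting trivially on $A$. Hence for fixed $i$ the graded pieces $\Tor_p^S(\cO_{\tilde Z},\bC)_{p+i}$ of the filtration, for distinct values of $p$, contain no irreducible $\bG_0$-constituent in common. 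The isotypic decomposition of $\rH^i(X,\cO_X)$ therefore refines the filtration, and grouping isotypic components according to which graded piece they land in gives the canonical $\bG_0$-equivariant splitting. In short, the canonicity comes from representation theory (Lascoux's multiplicity-free statement), not from any extra geometric structure on $X$.
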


\begin{proof}
Proposition~\ref{prop:SS-degen} gives a canonical isomorphism $\gr(\rH^i(X, \cO_X))=\tilde{L}_i$. It follows that $\gr(\rH^i(X, \cO_X))$ is multiplicity free as a representation of $\bG_0$, and so the same is true of $\rH^i(X, \cO_X)$. Thus the filtration on $\rH^i(X, \cO_X)$ canonically splits, which yields a canonical isomorphism $\rH^i(X, \cO_X)=\gr(\rH^i(X, \cO_X))$.
\end{proof}

\begin{proposition} \label{prop:G=G0-invt}
We have $\rH^i(X, \cO_X)^{\bG}=\rH^i(X, \cO_X)^{\bG_0}$.
\end{proposition}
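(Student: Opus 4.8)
The inclusion $\rH^i(X,\cO_X)^{\bG}\subseteq\rH^i(X,\cO_X)^{\bG_0}$ is immediate. For the reverse inclusion the plan is to use the equivalence between representations of the supergroup $\bG$ and representations of the Harish--Chandra pair $(\bG_0,\fg)$, where $\fg$ is the Lie superalgebra of $\bG$: a vector in a $\bG$-representation is $\bG$-invariant precisely when it is $\bG_0$-invariant and is annihilated by $\fg$, and since $\bG_0$-invariance already forces annihilation by $\fg_0$, it is $\bG$-invariant precisely when it is $\bG_0$-invariant and is killed by the odd part $W=\Hom(V_0,V_1)\oplus\Hom(V_1,V_0)$ of $\fg$. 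So it suffices to show that $W$ annihilates $\rH^i(X,\cO_X)^{\bG_0}$.

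Fix $v\in\rH^i(X,\cO_X)^{\bG_0}$. Because $v$ is $\bG_0$-fixed, the map $W\to\rH^i(X,\cO_X)$, $x\mapsto x\cdot v$, is $\bG_0$-equivariant, and so its image is a $\bG_0$-equivariant quotient of $W=(V_0^*\otimes V_1)\oplus(V_0\otimes V_1^*)$, a sum of two irreducible $\bG_0$-modules. Hence it is enough to check that neither $V_0^*\otimes V_1$ nor $V_0\otimes V_1^*$ occurs as a constituent of the $\bG_0$-module $\rH^i(X,\cO_X)$. By Propositions~\ref{prop:typeA-syzygies} and~\ref{prop:SS-degen} --- and the fact that $A$ carries the trivial $\bG_0$-action --- $\rH^i(X,\cO_X)$ is, as a $\bG_0$-module, a direct sum of copies of the modules $L_j$, so by Theorem~\ref{thm:lascoux} each of its irreducible constituents is of the form $\bS_P(V_0)\otimes\bS_Q(V_1^*)$ for genuine partitions $P$ and $Q$. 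Now $V_0^*\otimes V_1$ is not of this form, since $V_0^*$ is never $\bS_P(V_0)$ for a partition $P$; and $V_0\otimes V_1^*=\bS_{(1)}(V_0)\otimes\bS_{(1)}(V_1^*)$ has internal degree $1$, while when $m-\delta\ge 1$ the ideal of $Z$ is generated in degrees $m-\delta+1\ge 2$, so the minimal free resolution of $\cO_Z$ over $S$ has nothing in internal degree $1$. Thus $W\cdot v=0$, as needed.

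It remains to dispose of the boundary case $m-\delta=0$. If $m=0$ then $\bG=\bG_0$ and there is nothing to prove; otherwise $\delta=m>0$, which forces $r=n$ and $s=0$, so that $X=\Gr_{n|0}(V)$ and $X_{\ord}$ is a single point. Then $\rH^i(X,\cO_X)=0$ for $i>0$, and the same ``no linear syzygies'' analysis (trivial constituents occur in $\bigoplus_p\Tor_p^S(\cO_Z,\bC)$ only in $\Tor_0^S$) shows that $\rH^0(X,\cO_X)^{\bG_0}$ consists precisely of the scalar multiples of $1$. Since $\fg$ acts on the ring $\Gamma(X,\cO_X)$ by superderivations, it annihilates $1$, so $W$ kills $\rH^\bullet(X,\cO_X)^{\bG_0}$ in this case as well.

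The main obstacle is really just the first paragraph --- setting up the passage from $\bG$ to the pair $(\bG_0,\fg)$ carefully, and bearing in mind that the odd operators in $\fg$ reverse parity, so that ``is not a constituent of $\rH^i(X,\cO_X)$'' is to be read for the underlying ungraded $\bG_0$-module. Granting that, the remaining ingredients are light: the degeneration of the spectral sequence (Proposition~\ref{prop:SS-degen}) and the essentially trivial fact, read off from the shape of the determinantal resolution, that $\cO_Z$ has no linear part in its resolution once $m-\delta\ge 1$.
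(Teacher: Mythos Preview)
Your proof is correct and follows essentially the same approach as the paper: both reduce to showing that $\fg_1$ annihilates the $\bG_0$-invariants by checking that neither $V_0\otimes V_1^*$ nor $V_0^*\otimes V_1$ occurs as a constituent of $\rH^i(X,\cO_X)$ via the Lascoux description, and then handle the boundary case $m=\delta$ by observing that the $\bG_0$-invariants are just the scalars. The only cosmetic difference is that you rule out $V_0\otimes V_1^*$ by a degree count (nothing in internal degree~$1$ when $m-\delta\ge 1$), whereas the paper reads the condition $P_{a,b}(\alpha,\beta)=Q_{a,b}(\alpha,\beta)=(1)\Rightarrow a=0$ directly from the partition shapes.
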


\begin{proof}
First assume that $m>\delta$. Let $E=\rH^i(X, \cO_X)^{\bG_0}$. The action of the upper-triangular nilpotent piece of the Lie algebra of $\bG$ on $W$ is a $\bG_0$-equivariant map
\begin{displaymath}
V_1^* \otimes V_0 \otimes E \to \rH^i(X, \cO_X).
\end{displaymath}
First note that $\rH^i(X, \cO_X)$ does not contain a $\bG_0$-subrepresentation isomorphic to $V_1^* \otimes V_0$: in the notation of Theorem~\ref{thm:lascoux}, this is only possible if $P_{a,b}(\alpha,\beta) = Q_{a,b}(\alpha,\beta) = (1)$, which is only possible if $a=m-\delta=0$. It follows that this map must be~0. Similarly, the corresponding map for the lower-triangular piece is~0. Thus $E$ is annihilated by the Lie algebra of $\bG$. It follows that $\bG$ acts trivially on $E$.

Finally, if $m=\delta$, this means that $r=n$ and $s=0$. In this case, $X$ is a point and its structure sheaf is the exterior algebra on $V_1^* \otimes V_0$. The $\bG_0$-invariant space is spanned by the unit element, and since $\bG$ acts via algebra automorphisms, the unit must also be invariant under $\bG$.
\end{proof}

Let $U$ be a graded vector space. We define the \defn{trivial filtration} on $U$ by $\Fil^i(U)=\bigoplus_{j \ge i} U_j$. With respect to this filtration, we have a natural isomorphism $U=\gr(U)$.

\begin{proposition}
We have a natural isomorphism $\rH^*(X, \cO_X)^{\bG}=A$ of graded algebras. Moreover, the filtration on $\rH^*(X, \cO_X)$ induces the trivial filtration on $A$.
\end{proposition}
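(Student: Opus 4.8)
The plan is to reduce to $\bG_0$-invariants, identify the associated graded of the cohomology via the now-degenerate spectral sequence, and then use Lascoux's theorem to see that almost nothing survives.

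First I would record that $\rH^*(X,\cO_X)^{\bG}=\rH^*(X,\cO_X)^{\bG_0}$ by Proposition~\ref{prop:G=G0-invt}. The $\cJ$-adic filtration on $\cO_X$ is $\bG$-stable, so it induces a $\bG_0$-stable filtration on $\rH^*(X,\cO_X)$; since $\bG_0$ is reductive, taking $\bG_0$-invariants is exact and symmetric monoidal, and hence commutes with passage to the associated graded. The spectral sequence of Proposition~\ref{prop:grass-ss} is an algebra spectral sequence (Remark~\ref{rmk:Tor-alg}) and degenerates (Proposition~\ref{prop:SS-degen}), so $\gr\,\rH^*(X,\cO_X)$ is identified, as a bigraded algebra, with $\bigoplus_{p\ge 0}\Tor^S_p(\cO_{\tilde{Z}},\bC)$; applying $(-)^{\bG_0}$ gives
\[
\gr\bigl(\rH^*(X,\cO_X)^{\bG}\bigr) = \Bigl(\bigoplus_{p\ge 0}\Tor^S_p(\cO_{\tilde{Z}},\bC)\Bigr)^{\bG_0}
\]
as bigraded algebras.

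Next I would compute the right-hand side. By Proposition~\ref{prop:typeA-syzygies} there is a $\bG_0$-equivariant isomorphism $\Tor^S_p(\cO_{\tilde{Z}},\bC)\cong A\otimes_{\bC}\Tor^S_p(\cO_Z,\bC)$ on which $\bG_0$ acts trivially through the first factor: indeed $\cO_{\tilde{Z}}$ is generated as an $\cO_Z$-algebra by the coefficients $b_1,\dots,b_s$ of the universal degree-$s$ factor $p$ of $\ol{\chi}$ (see \S\ref{ss:factor}), and these are $\bG_0$-invariant functions on $\tilde{Z}$, because $\bG_0$ acts on $fg$ by conjugation and therefore fixes the coefficients of $\ol{\chi}$ and sends $(f,g,p)$ to $((a,b)\cdot f,(a,b)\cdot g,p)$; as $A=\cO_{\tilde{Z}}\otimes_{\cO_Z}\bC$ is a quotient ring of $\cO_{\tilde{Z}}$, the $\bG_0$-action on $A$ is trivial. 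Lascoux's theorem (Theorem~\ref{thm:lascoux}) then shows that the trivial $\bG_0$-representation occurs in $\bigoplus_p\Tor^S_p(\cO_Z,\bC)$ exactly once, namely in $\Tor^S_0(\cO_Z,\bC)_0=\bC$: a summand $\bS_P(V_0)\otimes\bS_Q(V_1^*)$ can be trivial only if $P$ and $Q$ are both empty, which forces $b=0$ and then $\alpha=\beta=\emptyset$, whence $p=q=0$. Therefore the $\bG_0$-invariants of $\bigoplus_p\Tor^S_p(\cO_{\tilde{Z}},\bC)$ are concentrated in homological degree $0$, where they form the subalgebra $\Tor^S_0(\cO_{\tilde{Z}},\bC)=\cO_{\tilde{Z}}\otimes_{\cO_Z}\bC=A$, the last equality being Theorem~\ref{thm:detvar}(c). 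Thus $\gr\bigl(\rH^*(X,\cO_X)^{\bG}\bigr)=A$ as graded algebras.

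Finally I would descend from the associated graded. The same computation, done one cohomological degree at a time, shows that $\gr^q\bigl(\rH^p(X,\cO_X)^{\bG}\bigr)=\bigl(\Tor^S_{q-p}(\cO_{\tilde{Z}},\bC)_q\bigr)^{\bG_0}$ vanishes unless $q=p$, in which case it equals $A_p$. Hence each $\rH^p(X,\cO_X)^{\bG}$ lies in a single step of the $\cJ$-adic filtration; equivalently, that filtration on $\rH^*(X,\cO_X)^{\bG}$ is $\Fil^q=\bigoplus_{p\ge q}\rH^p(X,\cO_X)^{\bG}$, the filtration attached to the cohomological grading. A filtered algebra of this shape is canonically isomorphic to its associated graded as a graded algebra, so $\rH^*(X,\cO_X)^{\bG}=A$ as graded algebras, and under this identification the $\cJ$-adic filtration becomes $\Fil^q(A)=\bigoplus_{p\ge q}A_p$, which is exactly the trivial filtration. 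The only non-formal input is the computation of the invariant subalgebra: checking that $\bG_0$ acts trivially on $A$, and that the invariants reduce to $\Tor^S_0(\cO_{\tilde{Z}},\bC)$ carrying the ring structure supplied by Theorem~\ref{thm:detvar}(c); once these are in place, everything else follows formally from results already established.
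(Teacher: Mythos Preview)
Your proposal is correct and follows essentially the same route as the paper: reduce to $\bG_0$-invariants via Proposition~\ref{prop:G=G0-invt}, pass to the associated graded using semisimplicity and degeneration, apply Proposition~\ref{prop:typeA-syzygies} and Lascoux's theorem to isolate the single trivial summand in $\Tor^S_\bullet(\cO_Z,\bC)$, and then observe that the resulting filtration on $A$ is concentrated in a single step. The one place you add genuine content beyond the paper is your explicit verification that $\bG_0$ acts trivially on $A$ (via the invariance of the $b_i$); the paper uses this fact tacitly when it writes $(A\otimes\Tor^S_{p-q}(\cO_Z,\bC))^{\bG_0}=A\otimes\Tor^S_{p-q}(\cO_Z,\bC)^{\bG_0}$, so your extra sentence is a welcome clarification rather than a departure.
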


\begin{proof}
Let $B=\rH^*(X, \cO_X)^{\bG}$. By Proposition~\ref{prop:G=G0-invt}, we have $B=\rH^*(X, \cO_X)^{\bG_0}$. We have
\begin{align*}
\gr^p(B_q)
&=\gr^p(\rH^q(X, \cO_X))^{\bG_0}=\rH^q(X, \gr^p(\cO_X))^{\bG_0} \\
&= \Tor_{p-q}^S(\cO_{\tilde{Z}}, \bC)_p^{\bG_0} = \bigoplus_{i+j=p} A_i \otimes \Tor_{p-q}^S(\cO_Z, \bC)_j^{\bG_0}.
\end{align*}
In the first step we used the the $\bG_0$ action is semi-simple; in the second, that the spectral sequence degenerates (Proposition~\ref{prop:SS-degen}); in the third, Proposition~\ref{prop:grass-ss}; and in the fourth, Proposition~\ref{prop:typeA-syzygies}. Now, it is easy to see directly that $\Tor_k^S(\cO_Z, \bC)^{\bG_0}$ vanishes for $k \ne 0$, and that $\Tor_0^S(\cO_Z, \bC)^{\bG_0}$ is one-dimensional and concentrated in degree~0; this can also be read off of Theorem~\ref{thm:lascoux}. We thus find that
\begin{displaymath}
\gr^p(B_q) = \begin{cases}
A_p & \text{if $p=q$} \\
0 & \text{otherwise} \end{cases}.
\end{displaymath}
This shows that $\gr(B)=A$, and the isomorphism is one of rings by Remark~\ref{rmk:Tor-alg}. It also shows that the filtration on $B$ is trivial, and so $B$ is isomorphic to $\gr(B)$ as a ring.
\end{proof}

We regard $\rH^*(X, \cO_X)$ as a graded $A$-module via the above proposition. We have isomorphisms of $\bG_0$-representations
\begin{displaymath}
\rH^i(X, \cO_X) \cong \gr(\rH^i(X, \cO_X)) = \tilde{L}_i = \bigoplus_{j \ge 0} A_{i-j} \otimes L_j.
\end{displaymath}
It follows that $\rH^i(X, \cO_X)$ contains a unique $\bG_0$-subrepresentation isomorphic to $L_i$. (Corollary~\ref{cor:lascoux} is important here.) Call this subspace $E^i$, and let $E=\bigoplus_{i \ge 0} E^i$.

\begin{proposition}
The natural map $A \otimes E \to \rH^*(X, \cO_X)$ is an isomorphism.
\end{proposition}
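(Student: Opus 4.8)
The plan is to use the fact that the two sides of the map are \emph{abstractly} isomorphic as graded $\bG_0$-representations, and then deduce the isomorphism from surjectivity, which we can check on associated gradeds.

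First I would record the abstract isomorphism. Summing the isomorphisms of Proposition~\ref{prop:typeA-syzygies} over homological degrees gives $\tilde L_i=\bigoplus_{j\ge 0}A_{i-j}\otimes_\bC L_j$, and by Proposition~\ref{prop:SS-degen} and the corollary following it we have $\rH^i(X,\cO_X)\cong\gr\rH^i(X,\cO_X)=\tilde L_i$ as graded $\bG_0$-representations. On the other hand $E^j\cong L_j$ by construction, so $(A\otimes E)_i=\bigoplus_{j}A_{i-j}\otimes E^j\cong\bigoplus_j A_{i-j}\otimes L_j$ as well. Hence $A\otimes E$ and $\rH^*(X,\cO_X)$ are finite-dimensional and of the same dimension in each graded degree; since $\bG_0$ is reductive, the natural map will be an isomorphism as soon as we show it is surjective, i.e. that $E$ generates $\rH^*(X,\cO_X)$ as an $A$-module.

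For surjectivity I would pass to associated gradeds. The ring $\rH^*(X,\cO_X)$ is a filtered algebra; $A=\rH^*(X,\cO_X)^{\bG}$ is a subalgebra carrying the trivial filtration (by the proposition computing $\gr(B)$) and the trivial $\bG$-action (being the $\bG$-invariants); and $E$ inherits a filtration. The multiplication map $A\otimes E\to\rH^*(X,\cO_X)$ is filtered, and because all filtrations in sight are exhaustive, separated and finite-dimensional in each degree, it is surjective as soon as its associated graded is; that associated graded map is the multiplication $\gr A\otimes\gr E\to\gr\rH^*(X,\cO_X)$ in the graded algebra $\gr\rH^*(X,\cO_X)$. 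Now by Remark~\ref{rmk:Tor-alg} together with Proposition~\ref{prop:SS-degen}, $\gr\rH^*(X,\cO_X)\cong\Tor^S_*(\cO_{\tilde Z},\bC)$ as graded algebras; the part $\Tor^S_0(\cO_{\tilde Z},\bC)=\cO_{\tilde Z}\otimes_{\cO_Z}\bC\cong A$ lies in even total degree, hence is central, and combining Proposition~\ref{prop:typeA-syzygies} with the $\cO_Z$-linear splitting of $\cO_Z\to\cO_{\tilde Z}$ (Proposition~\ref{prop:fact}(e)) upgrades the $A$-module isomorphism to a graded algebra isomorphism $\gr\rH^*(X,\cO_X)\cong A\otimes_\bC\Tor^S_*(\cO_Z,\bC)$ with $A$ acting through the first factor. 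Under this identification $\gr A=A$; moreover $\gr E^i\cong L_i$ as a $\bG_0$-representation, and since $A$ is $\bG_0$-trivial while $\bigoplus_j L_j$ is multiplicity free (Corollary~\ref{cor:lascoux}), the irreducibles occurring in $L_i$ occur in $\gr\rH^i(X,\cO_X)=\bigoplus_j A_{i-j}\otimes L_j$ only through the $j=i$ summand $1\otimes L_i$, so $\gr E^i=1\otimes L_i$ and $\gr E=1\otimes\Tor^S_*(\cO_Z,\bC)$. Therefore $A\cdot\gr E=A\otimes\Tor^S_*(\cO_Z,\bC)=\gr\rH^*(X,\cO_X)$, and the graded multiplication map is surjective.

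Putting this together, $A\otimes E\to\rH^*(X,\cO_X)$ is surjective, hence — by the degreewise equality of dimensions from the first step — an isomorphism; it is an isomorphism of graded $A$-modules by construction, and it is $\bG$-equivariant since $A$ is $\bG$-invariant and $E$ is $\bG$-stable. I expect the main obstacle to be the bookkeeping behind the identification $\gr\rH^*(X,\cO_X)\cong A\otimes_\bC\Tor^S_*(\cO_Z,\bC)$: one must check that this isomorphism respects both the ring structure and the (bi)grading precisely enough that the "$L_i$-isotypic'' argument genuinely singles out $\gr E^i$ as a single tensor summand. Once that is in place, everything else is formal.
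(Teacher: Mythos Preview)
Your argument is correct and follows essentially the same route as the paper's proof: pass to the associated graded, use the identification $\gr\rH^*(X,\cO_X)\cong A\otimes\bigoplus_i L_i$ as an $A$-module (via Propositions~\ref{prop:typeA-syzygies} and~\ref{prop:SS-degen} together with Remark~\ref{rmk:Tor-alg}), pin down $\gr E^i$ as the copy of $L_i$ by the multiplicity-free argument, and then lift back to the filtered level. The paper phrases the last step as ``the graded map is an isomorphism, hence so is the filtered map,'' while you separate it into a dimension count plus surjectivity; these are equivalent packagings of the same idea.

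Two small remarks. First, you do not actually need the \emph{algebra} isomorphism $\gr\rH^*(X,\cO_X)\cong A\otimes_\bC\Tor^S_*(\cO_Z,\bC)$; the $A$-module freeness suffices, and that is all the paper uses (and all you use in the end). Second, your closing sentence that the map is $\bG$-equivariant because ``$E$ is $\bG$-stable'' is premature: at this point in the paper $E$ has only been defined as a $\bG_0$-subrepresentation, and its $\bG$-stability is the content of the \emph{next} proposition (Proposition~\ref{prop:E-subrep}). This does not affect the proof of the present statement, which makes no $\bG$-equivariance claim, but you should drop that clause.
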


\begin{proof}
We have
\begin{displaymath}
\gr(\rH^*(X, \cO_X)) = \bigoplus_{p,q \ge 0} \Tor_p^S(\cO_{\tilde{Z}}, \bC)_q = A \otimes \bigoplus_{p,q \ge 0} \Tor_p(\cO_Z, \bC)_q = A \otimes \bigoplus_{i \ge 0} L_i.
\end{displaymath}
We note that the $L_i$ in the final direct sum belongs to $\gr(\rH^i(X, \cO_X))$. We thus see that $\gr(\rH^*(X, \cO_X))$ is a free $A$-module (this relies on Remark~\ref{rmk:Tor-alg}). Moreover, if $\ol{E}{}^i \subset \gr(\rH^i(X, \cO_X))$ is the associated graded of $E^i$ then $\ol{E}{}^i$ corresponds to the copy of $L_i$ in the final direct sum above. Letting $\ol{E}=\bigoplus_{i \ge 0} \ol{E}{}^i$, we thus see that the natural map $A \otimes \ol{E} \to \gr(\rH^*(X, \cO_X))$ is an isomorphism. The result follows from this.
\end{proof}

\begin{proposition} \label{prop:Gext}
Suppose that $M$ and $M'$ are representations of $\bG$ such that $M \cong L_i$ and $M' \cong L_j$ as $\bG_0$-representations, with $i \ne j$. Then $\Ext^1_{\bG}(M, M')=0$.
\end{proposition}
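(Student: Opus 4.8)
I would prove Proposition~\ref{prop:Gext} by reducing it, via Harish--Chandra-pair bookkeeping, to a $\bG_0$-equivariant vanishing statement, and then settling that statement with Lascoux's description of the $\bG_0$-constituents together with one elementary fact about Durfee squares.

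The first step is formal. Since $\bG_0$ is reductive, any short exact sequence $0 \to M' \to N \to M \to 0$ of $\bG$-modules splits upon restriction to $\bG_0$; I would fix a $\bG_0$-equivariant splitting, so that $N = M' \oplus M$ as $\bG_0$-modules. A $\bG$-module structure on $N$ refining this $\bG_0$-structure is determined, in addition to the $\bG_0$-action, by the action of the odd part of the Lie superalgebra of $\bG$, which we identify with $W = \Hom(V_0,V_1)\oplus\Hom(V_1,V_0)$. As $M'$ is a submodule, the only ``new'' datum is, for $x \in W$, the off-diagonal component of the action on the complement of $M'$, and this assembles into a $\bG_0$-equivariant map $\phi\colon W\otimes M\to M'$; if $\phi=0$ then the complement is $W$-stable, hence (being $\bG_0$-stable) $\bG$-stable, so the extension splits. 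Thus it suffices to prove
\[
\Hom_{\bG_0}(W\otimes L_i, L_j)=0\qquad(i\neq j).
\]
(The hypothesis $i\neq j$ is genuinely used below: this Hom-space is typically nonzero when $i=j$.)

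Next I would bring in Theorem~\ref{thm:lascoux}. By that theorem $L_k=0$ unless $a\mid k$, where $a=m-\delta$; write $i=ab$ and $j=ab'$, and note that if $a=0$ then $L_k=0$ for all $k\neq 0$ and there is nothing to prove, so assume $a\ge 1$. Every $\bG_0$-constituent of $L_k$ has the form $\bS_{P_{a,b}(\alpha,\beta)}(V_0)\otimes\bS_{Q_{a,b}(\alpha,\beta)}(V_1^*)$ with both partitions honest, where $b=k/a$. Decomposing $W\otimes L_i$ by Pieri's rule and its dual and keeping only the ``partition $\otimes$ partition'' constituents (the only type that can occur in $L_j$), one finds each such constituent is obtained from a constituent $(P,Q)$ of $L_i$ by adding one box to each of $P$ and $Q$ (contribution of the summand $\Hom(V_1,V_0)=V_0\otimes V_1^*$) or by removing one box from each (contribution of $\Hom(V_0,V_1)=V_1\otimes V_0^*$). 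The key elementary observation is that the Durfee square size of $P_{a,b}(\alpha,\beta)$ equals $b$, and likewise for $Q_{a,b}(\alpha,\beta)$: its $b$-th part is $b+\alpha_b\ge b$ while its $(b+1)$-st part is $b<b+1$ since $a\ge 1$. As adding or removing a single box changes the Durfee square size by at most one, a constituent shared by $W\otimes L_i$ and $L_j$ forces $b'\in\{b-1,b,b+1\}$, i.e.\ $j\in\{i-a,i,i+a\}$; the value $j=i$ is excluded, so it remains to rule out $j=i\pm a$.

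This last step is the main obstacle, and I would handle it by inspecting the normal forms directly. For $j=i+a$: to raise the Durfee square from $b$ to $b+1$ by adding a single box to $P_{a,b}(\alpha,\beta)$ one must add the corner box at position $(b+1,b+1)$, giving $(b+\alpha_1,\dots,b+\alpha_b,\,b+1,\,b^{a-1},\,\beta_1,\dots)$; but $P_{a,b+1}(\alpha',\beta')$ has $(b+2)$-nd part equal to $b+1$ (it lies in the $(b+1)^a$ block), whereas the partition just produced has $(b+2)$-nd part equal to $b$ if $a\ge 2$ and to $\beta_1\le b$ if $a=1$ --- a contradiction either way. For $j=i-a$ (so $b\ge 1$): to lower the Durfee square from $b$ to $b-1$ by removing a single box one must remove the box at $(b,b)$, but this is not a removable corner of $P_{a,b}(\alpha,\beta)$, since its $(b+1)$-st part is again equal to $b$ --- a contradiction. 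Hence $\Hom_{\bG_0}(W\otimes L_i,L_j)=0$, so $\phi=0$, so $\Ext^1_{\bG}(M,M')=0$. The delicate part is exactly this tracking of the effect of a single box on the $P_{a,b}$ and $Q_{a,b}$ normal forms; everything preceding it is formal.
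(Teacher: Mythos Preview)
Your proof is correct and follows the same strategy as the paper's: split the extension over $\bG_0$ and then argue, via Lascoux's theorem and Pieri's rule, that the off-diagonal component of the odd action is a $\bG_0$-equivariant map into a representation with no common constituents, hence zero. The paper compresses the combinatorial verification into a one-line citation of Theorem~\ref{thm:lascoux} and Pieri, whereas your Durfee-square analysis of the cases $j=i\pm a$ spells out exactly why no constituent of $W\otimes L_i$ appears in $L_j$ for $j\neq i$.
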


\begin{proof}
Consider an extension
\begin{displaymath}
0 \to M' \to N \to M \to 0.
\end{displaymath}
Since $i \ne j$, there is a unique $\bG_0$-splitting of this sequence. We thus regard $M$ as a $\bG_0$-subrepresentation of $N$. Consider the map
\begin{displaymath}
V_1^* \otimes V_0 \otimes M \to N
\end{displaymath}
giving the action of the upper-triangular nilpotent piece of the Lie algebra of $\bG$ on $M$. Since this map is $\bG_0$-equivariant, it follows from Theorem~\ref{thm:lascoux} and Pieri's rule \cite[Equation (6.8)]{fultonharris} that this map is zero. Similarly for the lower-triangular piece. Thus $M$ is $\bG$-subrepresentation of $N$, which completes the proof.
\end{proof}

\begin{proposition} \label{prop:E-subrep}
$E$ is a $\bG$-subrepresentation of $\rH^*(X, \cO_X)$.
\end{proposition}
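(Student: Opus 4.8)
The plan is to reduce to an infinitesimal statement and verify it with Lascoux's description together with Pieri's rule, in the spirit of the proof of Proposition~\ref{prop:Gext}. Since $\bG$ acts on $X$ by automorphisms it acts on each cohomology group separately, so $\rH^*(X,\cO_X)=\bigoplus_i\rH^i(X,\cO_X)$ as $\bG$-representations; as $E=\bigoplus_i E^i$ with $E^i\subseteq\rH^i(X,\cO_X)$, it suffices to show that each $E^i$ is a $\bG$-subrepresentation of $\rH^i(X,\cO_X)$. A subspace of a $\bG$-module is a $\bG$-submodule exactly when it is stable under $\bG_0$ and under the Lie superalgebra $\fg=\fg_0\oplus\fg_1$ of $\bG$ (the Harish--Chandra pair description of $\bG$-modules; this is the mechanism already used in Propositions~\ref{prop:G=G0-invt} and~\ref{prop:Gext}). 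Since $E^i$ is $\bG_0$-stable by construction and $\fg_0=\Lie(\bG_0)$, we are reduced to proving that $E^i$ is stable under the odd part $\fg_1=\Hom(V_0,V_1)\oplus\Hom(V_1,V_0)$ of $\fg$.

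I would then study the $\bG_0$-equivariant action map $\fg_1\otimes E^i\to\rH^i(X,\cO_X)$. Recall that $\rH^i(X,\cO_X)\cong\tilde{L}_i=\bigoplus_{j\le i}A_{i-j}\otimes L_j$ as $\bG_0$-representations, with $\bG_0$ acting trivially on $A$; since $\bigoplus_j L_j$ is multiplicity-free (Corollary~\ref{cor:lascoux}), $E^i$ is precisely the $L_i$-isotypic component of $\rH^i(X,\cO_X)$, while $\tilde{L}_i$ contains no constituent of $L_j$ for $j>i$ and contains the constituents of $L_j$ for $j<i$ with multiplicity $\dim A_{i-j}$. Hence it suffices to show that no $\bG_0$-constituent of $\fg_1\otimes L_i$ that occurs in $\rH^*(X,\cO_X)$ is a constituent of $L_j$ for some $j\ne i$. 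By Pieri's rule, the $\bG_0$-constituents of $\fg_1\otimes L_i\cong\fg_1\otimes E^i$ are, apart from summands involving a non-polynomial representation of $\GL(V_0)$ or $\GL(V_1)$ (which by Theorem~\ref{thm:lascoux} cannot occur in $\rH^*(X,\cO_X)$), obtained from the Lascoux pairs $(\lambda,\mu)$ appearing in $L_i$ by adding one box to each of $\lambda,\mu$, or by removing one box from each.

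Everything thus comes down to the following combinatorial assertion, which I expect to be the main obstacle. Write $a=m-\delta$. If $(\lambda,\mu)$ is a pair appearing in the Lascoux resolution of $\cO_{Z_0}$ with parameter $b$ — so that it lies in $L_i$ with $i=ab$ — and $(\lambda',\mu')$ is obtained from it by adding a box to each of $\lambda,\mu$ or removing a box from each, and $(\lambda',\mu')$ again appears in the Lascoux resolution of $\cO_{Z_0}$ (equivalently, in $\rH^*(X,\cO_X)$), then $(\lambda',\mu')$ again lies in $L_i$. Granting this, every $\bG_0$-constituent of $\fg_1\otimes L_i$ occurring in $\rH^*(X,\cO_X)$ occurs in $L_i$, so the image of $\fg_1\otimes E^i\to\rH^i(X,\cO_X)$ lies in its $L_i$-isotypic component $E^i$; this gives $\fg_1\cdot E^i\subseteq E^i$ and completes the proof. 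To prove the assertion I would use that, for $a\ge1$, the parameter $b$ of a Lascoux shape $\lambda=P_{a,b}(\alpha,\beta)$ is recovered from $\lambda$ alone as the largest $c$ with $\lambda_{a+c}\ge c$, together with the fact that $\lambda_{a+b}=b$ exactly. A single box cannot carry this configuration to the one for $b\pm1$, and a short case analysis of the positions in which a box can be added to or removed from $P_{a,b}(\alpha,\beta)$ (using $\lambda_{a+b}=b$ to dispose of the apparent borderline cases) shows that any box modification which is still a Lascoux shape for rank $a$ has the same parameter $b$; the modifications that change $b$ turn out to be Lascoux shapes for rank $a\mp1$ instead, hence do not occur in $\rH^*(X,\cO_X)$. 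The degenerate case $a=0$, in which $X$ is a point, is handled separately and trivially.
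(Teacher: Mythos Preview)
Your argument is correct and rests on the same combinatorial fact that underlies Proposition~\ref{prop:Gext}: for $a\ge1$, no constituent of $\fg_1\otimes L_i$ is a constituent of any $L_j$ with $j\ne i$, which one checks by Pieri's rule together with Lascoux's description (Theorem~\ref{thm:lascoux}). The paper packages this differently: it proceeds by induction on $i$, uses the inductive hypothesis to identify a $\bG$-subrepresentation $\sum_{j<i}A_{i-j}E^j\subset\rH^i(X,\cO_X)$, and then invokes the Ext vanishing of Proposition~\ref{prop:Gext} to split off $E^i$ as a $\bG$-complement. Your route is more direct---it avoids the induction and the Ext formalism by checking $\fg_1$-stability of $E^i$ immediately from the combinatorics---while the paper's route has the side benefit of isolating Proposition~\ref{prop:Gext} as a statement of independent interest. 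Both approaches are valid, and the combinatorial core (that a single box added to or removed from $P_{a,b}(\alpha,\beta)$ cannot, for $a\ge1$, produce a shape $P_{a,b'}(\alpha',\beta')$ with $b'\ne b$) is identical.
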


\begin{proof}
Put $H^i=\rH^i(X, \cO_X)$. We prove that $E^i$ is a $\bG$-subrepresentation of $H^i$ by induction on $i$. Thus suppose that $i$ is given and $E^j$ is a $\bG$-subrepresentation of $H^j$ for all $j<i$. Let $X=\sum_{j=0}^{i-1} A_{i-j} E^j$, which is a $\bG$-subrepresentation of $H^i$ by the inductive hypothesis, and let $Y=H^i/X$. We thus have a short exact sequence
\begin{displaymath}
0 \to X \to H^i \to Y \to 0
\end{displaymath}
of $\bG$-representations. Since the map $A \otimes E \to H^*$ is an isomorphism, we see that $X \cong \bigoplus_{j=0}^{i-1} A_{i-j} \otimes E^j$ and that $E^i$ is a complementary subspace to $X$ in $H^i$. In particular, the map $E^i \to Y$ is an isomorphism of $\bG_0$-representations. It now follows from Proposition~\ref{prop:Gext} that $\Ext^1_{\bG}(Y, X)=0$. Indeed, we have $E^j \cong L_j$ and $Y \cong L_i$ as $\bG_0$-representations and $i \ne j$. We thus see that there is a $\bG$-splitting $\sigma \colon Y \to H^i$. Since $E^i$ is the unique $\bG_0$-subrepresentation of $H^i$ isomorphic to $L_i$, we see that $\sigma$ is unique and $\sigma(Y)=E^i$. Thus $E^i$ is a $\bG$-subrepresentation, as required.
\end{proof}

\subsection{The $\bG$ action on the Lascoux resolution} \label{ss:pw}

Our original motivation for this work was to give a geometric explanation for the result of \cite{pragacz-weyman} (and \cite{sam}) that the linear strands of the Tor groups of the determinantal variety $Z_0$ carry an action of $\bG$. Of course, it is equivalent to do this for $Z$, since $Z$ and $Z_0$ have the same Tor groups. This follows from Proposition~\ref{prop:E-subrep}, as $E^i=L_i$ is a $\bG$-subrepresentation of $\rH^i(X, \cO_X)$. If one is simply interested in constructing the $\bG$ action on $L_i$, it suffices to consider the case where $s=0$; here we have
\begin{displaymath}
L_i=\rH^i(X, \cO_X),
\end{displaymath}
and so the obvious $\bG$ action on the right induces one on the left.

\begin{remark} 
The action of $\bG$ of $L_i$ induces an action of its Lie superalgebra on $L_i$. With respect to the decomposition $\fgl(n|m) = \Hom(\bC^n, \bC^m) \oplus (\fgl(n) \times \fgl(m)) \oplus \Hom(\bC^m, \bC^n)$, the middle piece corresponds to the obvious action of $\GL(n) \times \GL(m)$ on determinantal varieties, while one of the other pieces (depending on conventions) corresponds to the action of the Tor algebra on linear strands by Remark~\ref{rmk:Tor-alg}. With that structure in place, the $L_i$ are highest weight representations of $\bG$, and if $m>\delta$, the results of \cite{pragacz-weyman, sam} show that they are irreducible (and hence there is a unique way to extend the action of 2 out of the 3 pieces to the third). If $m=\delta$, then $L_i$ is only non-zero for $i=0$, in which case it is a Koszul complex, which we can understand as a Kac module, i.e.,  induced from the trivial representation of $\Hom(\bC^n, \bC^m) \oplus (\fgl(n) \times \fgl(m))$ (again depending on conventions). It is indecomposable, but not irreducible in general.
\end{remark}


\begin{thebibliography}{GKR}

\bibitem[AW1]{akin-weyman} Kaan Akin, Jerzy Weyman. Minimal free resolutions of determinantal ideals and irreducible representations of the Lie superalgebra $\gl(m \vert n)$. {\it J.\ Algebra} {\bf 197} (1997), no.~2, 559--583.

\bibitem[AW2]{akin-weyman2} Kaan Akin, Jerzy Weyman. The irreducible tensor representations of $\gl(m|1)$ and their generic homology. {\it J.\ Algebra} {\bf 230} (2000), no.~1, 5--23.

\bibitem[AW3]{akin-weyman3} Kaan Akin, Jerzy Weyman. Primary ideals associated to the linear strands of Lascoux's resolution and syzygies of the corresponding irreducible representations of the Lie superalgebra $\gl(m|n)$. {\it J.\ Algebra} {\bf 310} (2007), no.~2, 461--490.

\bibitem[Co]{coulembier} Kevin Coulembier. Bott--Borel--Weil theory, BGG reciprocity and twisting functors for Lie superalgebras. \textit{Transform.\ Groups} \textbf{21} (2016), no.~3, 681--723.
 \arxiv{1404.1416v4}

\bibitem[DW]{DerksenWeyman} Harm Derksen, Jerzy Weyman. \textit{An introduction to quiver representations}. Graduate Studies in Mathematics \textbf{184}, American Mathematical Society, Providence, RI, 2017.

\bibitem[EL]{ekedahl-laksov} T. Ekedahl, D. Laksov. Splitting algebras, symmetric functions and Galois theory. {\it J. Algebra Appl.} {\bf 4} (2005), no.~1, 59--75. \arxiv{math/0211125v1}

\bibitem[FH]{fultonharris} William Fulton, Joe Harris. {\it Representation theory}. Graduate Texts in Math. {\bf 129}, Springer-Verlag, 2004.
  
\bibitem[GSS]{GSS} Daniel R. Grayson, Alexandra Seceleanu, Michael E. Stillman. Computations in intersection rings of flag bundles. \arxiv{1205.4190v1}
  
\bibitem[GS]{gruson} Caroline Gruson, Vera Serganova. Cohomology of generalized supergrassmannians and character formulae for basic classical Lie superalgebras. {\it Proc. Lond. Math. Soc.} {\bf 101} (2010), no.~3, 852--892. \arxiv{0906.0918v2}
  
\bibitem[Ha]{hartshorne} Robin Hartshorne. {\it Generalized divisors on Gorenstein schemes}. Proceedings of Conference on Algebraic Geometry and Ring Theory in honor of Michael Artin, Part III (Antwerp, 1992), vol. 8, 1994, 287--339.
  
\bibitem[KM]{kollarmori} J\'anos Koll\'ar, Shigefumi Mori. {\it Birational geometry of algebraic varieties}. Cambridge Tracts in Math. {\bf 134}, Cambridge Univ. Press, Cambridge, 1998.

\bibitem[Lk]{laksov} Dan Laksov. Splitting algebras and Gysin homomorphisms. {\it J. Commut. Algebra} {\bf 2} no.~3, 401--425.

\bibitem[Ls]{lascoux} Alain Lascoux. Syzygies des vari\'et\'es d\'eterminantales. {\it Adv.\ in Math.} {\bf 30} (1978), no.~3, 202--237.

\bibitem[Ma]{manin} Yuri I.~Manin. {\it Gauge field theory and complex geometry}, second edition. Grundlehren der Mathematischen Wissenschaften {\bf 289}, Springer-Verlag, Berlin, 1997.

\bibitem[MS]{musson-serganova} Ian M. Musson, Vera Serganova. Combinatorics of character formulas for the Lie superalgebra $\mathfrak{gl}(m,n)$. {\it Transform. Groups} {\bf 16} (2011), no.~2, 555--578. \arxiv{1104.1668v1}

\bibitem[NSS]{isomeric} Rohit Nagpal, Steven V Sam, Andrew Snowden. On the geometry and representation theory of isomeric matrices. \arxiv{2101.10422v2}
  
\bibitem[Pe]{penkov} I. Penkov. Borel--Weil--Bott theory for classical Lie superalgebras. \textit{J.\ Soviet Math.} \textbf{51} (1990), 2108--2140.

\bibitem[PS]{penkov-serganova} I. Penkov, V. Serganova. Characters of irreducible $G$-modules and cohomology of $G/P$ for the Lie supergroup $G = Q(N)$. \textit{J.\ Math.\ Sci.} \textbf{84} (1997), no.~5, 1382--1412.

\bibitem[PW]{pragacz-weyman} Piotr Pragacz, Jerzy Weyman. Complexes associated with trace and evaluation. Another approach to Lascoux's resolution. {\it Adv.\ in Math.} {\bf 57} (1985), no.~2, 163--207.

\bibitem[RW]{raicu-weyman} Claudiu Raicu, Jerzy Weyman. Syzygies of determinantal thickenings and representations of the general linear Lie superalgebra. {\it Acta Math. Vietnam.} {\bf 44} (2019), no.~1, 269--284. \arxiv{1808.05649v1}.

\bibitem[Sa1]{sam} Steven V Sam. Derived supersymmetries of determinantal varieties. {\it J.\ Commut.\ Algebra} {\bf 6} (2014), no.~2, 261--286. \arxiv{1207.3309v1}

\bibitem[Sa2]{sam-osp} Steven V Sam. Orthosymplectic Lie superalgebras, Koszul duality, and a complete intersection analogue of the Eagon--Northcott complex. {\it J. Eur. Math. Soc (JEMS)} {\bf 18} (2016), no.~12, 2691--2732. \arxiv{1312.2255v2}

\bibitem[SS1]{superres2} Steven V Sam, Andrew Snowden. Cohomology of flag supervarieties and resolutions of determinantal ideals II. In preparation.

\bibitem[SS2]{superbwb} Steven V Sam, Andrew Snowden. Towards a Borel--Weil--Bott theory for the super Grassmannian. In preparation.
  
\bibitem[Se]{serganova} Vera Serganova. Kazhdan--Lusztig polynomials and character formula for the Lie superalgebra $\mathfrak{gl}(m|n)$. {\it Selecta Math. (N.S.)} {\bf 2} (1996), no.~4, 607--651.

\bibitem[SP]{stacks-project} The Stacks project authors, The Stacks project, \url{https://stacks.math.columbia.edu} (2021)
  
\bibitem[We]{weyman} Jerzy Weyman. {\it Cohomology of vector bundles and syzygies}. Cambridge Tracts in Math. {\bf 149}, Cambridge University Press, Cambridge, 2003.

\end{thebibliography}
\end{document}